\pdfoutput=1
%
%

\documentclass[11pt,letterpaper,]{thesis2}
%
%
%
%
%

%
%
%
%
%
%
\usepackage[nonumberlist]{glossaries}
\makeglossaries

\usepackage[ruled,vlined,linesnumbered,algochapter]{algorithm2e}
\usepackage{algorithm2e}
\usepackage{graphicx,color,psfrag}
\usepackage{pstool}
\usepackage{paralist}
\usepackage{multirow}
\usepackage[retainorgcmds]{IEEEtrantools}
\usepackage{url,xspace}
\usepackage{subfig}
\usepackage{array}
\usepackage{floatrow}
\newfloatcommand{capbtabbox}{table}[][\FBwidth]
\floatsetup[table]{capposition=top}
\usepackage{booktabs}
\captionsetup[subtable]{position=top}
\usepackage{url,doi}
\usepackage{tikz}
\usetikzlibrary{arrows,shapes}
\newlength\imagewidth
\newlength\imagescale
\newcommand{\ignore}[1]{}
\newcommand{\GEQRTWO}{\ensuremath{\mathit{GEQR2}}\xspace}

\newcommand{\GEQRT}{\ensuremath{\mathit{GEQRT}}\xspace}
\newcommand{\TSQRT}{\ensuremath{\mathit{TSQRT}}\xspace}

\newcommand{\UNMQR}{\ensuremath{\mathit{UNMQR}}\xspace}
\newcommand{\TSMQR}{\ensuremath{\mathit{TSMQR}}\xspace}
\newcommand{\TTQRT}{\ensuremath{\mathit{TTQRT}}\xspace}
\newcommand{\TTMQR}{\ensuremath{\mathit{TTMQR}}\xspace}
\newcommand{\coarse}{\ensuremath{\mathit{coarse}}\xspace}
\newcommand{\BS}{\ensuremath{\mathit{BS}}\xspace}
\newcommand{\s}{\ensuremath{\star}\xspace}

\newcommand{\SK}{{\sc Sameh-Kuck}\xspace}
\newcommand{\MC}{{\sc Fibonacci}\xspace}
\newcommand{\BT}{{\sc BinaryTree}\xspace}
\newcommand{\Greedy}{{\sc Greedy}\xspace}
\newcommand{\GA}{{\sc GrASAP}\xspace}
\newcommand{\FT}{{\sc FlatTree}\xspace}
\newcommand{\PT}{{\sc PlasmaTree}\xspace}
\newcommand{\ASAP}{{\sc Asap}\xspace}

\newcommand{\TiledQRURL}{\url{http://graal.ens-lyon.fr/~mjacquel/tiledQR.html}\xspace}
\newcommand{\elim}{\mathit{elim}}
\def\Section{\S}

\newtheorem{thm}{Theorem}[chapter]
\newtheorem{lemma}[thm]{Lemma}
\newtheorem{cor}[thm]{Corollary}
\newtheorem{prop}[thm]{Proposition}
\newtheorem{defin}[thm]{Definition}


\newenvironment{proof}{{\bf Proof:} }{\hfill\rule{2.1mm}{2.1mm}}

\newcounter{example}

\begin{document}

\pagenumbering{roman}
\pagestyle{empty}

\title{Tiled Algorithms for Matrix Computations on Multicore Architectures}
\author{Henricus M}{Bouwmeester}
\otherdegrees{B.S., Colorado Mesa University, 1998\\M.S., Colorado State University, 2000}
\degree{Doctor of Philosophy}{Ph.D., Applied Mathematics}
\degreeyear{2012} \dept{Applied Mathematics}{}
\titlepage

\pagestyle{fancy}

\advisor{Associate Professor}{Julien Langou}
\readerone{Gita Alaghband}
\readerthree{Stephen Billups}
\readertwo{Elizabeth R. Jessup}
\chair{Lynn Bennethum}
\approvalpage

\abstractpage{%
%
Current computer architecture has moved towards the multi/many-core
structure.  However, the algorithms in the current sequential dense numerical
linear algebra libraries (\emph{e.g.} LAPACK) do not parallelize well on
multi/many-core architectures. A new family of algorithms, the \emph{tile
algorithms}, has recently been introduced to circumvent this problem.  Previous
research has shown that it is possible to write efficient and scalable tile
algorithms for performing a Cholesky factorization, a (pseudo) LU factorization,
and a QR factorization.  The goal of this thesis is to study tiled algorithms in
a multi/many-core setting and to provide new algorithms that exploit the
current architecture to improve performance relative to current state-of-the-art
libraries while maintaining the stability and robustness of these libraries.

In Chapter~\ref{chp:cholinv}, we confront the problem of computing the inverse
of a symmetric positive definite matrix with tiled algorithms. We observe that,
using a dynamic task scheduler, it is relatively painless to translate existing
LAPACK code to obtain a ready-to-be-executed tile algorithm.  However we
demonstrate that nontrivial compiler techniques (array renaming, loop reversal
and pipelining) need to be applied to further increase the parallelism of
our application, both theoretically and experimentally.

Chapter~\ref{chp:tiledqr} revisits existing algorithms for the QR factorization
of rectangular matrices composed of $p \times q$ tiles, where $p \geq q$, for an
unlimited number of processors.  Within this framework, we study the critical
paths and performance of algorithms such as \SK, \MC, \Greedy, and those found
within PLASMA.  We also provide a monotonically increasing function to transform
the elimination list of a coarse-grain algorithm to a tiled algorithm.  Although
the optimality from the coarse-grain \Greedy algorithm does not translate
to the tiled algorithms, we propose a new algorithm and show that it is optimal
in the tiled context.

In Chapters~\ref{chp:cholinv} and~\ref{chp:tiledqr}, our context includes an unbounded
number of processors. The exercise was to find algorithmic variants with short
critical paths. Since the number of resources is unlimited, any task is
executed as soon as all its dependencies are satisfied.  In
Chapters~\ref{chp:cholfact} and~\ref{chp:qrfact}, we set ourselves in the more
realistic context of bounded number of processors.  In this context, at a given
time, the number of ready-to-go tasks can exceed the number of available
resources, and therefore a schedule which prescribes which tasks to execute when
needs to be defined. For the Cholesky factorization, we study standard
schedules and find that the critical path schedule is best. We also derive a
lower bound on the time to solution of the optimal schedule. We conclude that
the critical path schedule is nearly optimal for our study. For the QR
factorization problem, we study the problem of optimizing the reduction trees
(therefore the algorithm) and the schedule simultaneously. This is
considerably harder than the Cholesky factorization where the algorithm is
fixed and so, for Cholesky factorization, we are concerned only with
schedules. We provide a lower bound for the time to solution for any tiled QR
algorithm and any schedule. We also show that, in some cases, the optimal
algorithm for an unbounded number of processors (found in
Chapter~\ref{chp:tiledqr}) cannot be scheduled to solve optimally the combined
problem.  We compare our algorithms and schedules with our lower bound.


Finally, in Chapter~\ref{chp:strassen} we study a recursive tiled algorithm in
the context of matrix multiplication using the Winograd-Strassen algorithm using
a dynamic task scheduler.  Whereas most implementations obtain either one or two
levels of recursion, our implementation supports any level of recursion.  
}


\newpage

\tableofcontents
\listoffigures
\listoftables

\setcounter{page}{0}
\pagenumbering{arabic}

%
%

\newglossaryentry{BLAS}{
    name={BLAS},
    description={
        Basic Linear Algebra Subprograms are routines that provide standard
        building blocks for performing basic vector and matrix operations
    }
}

\newglossaryentry{LAPACK}{
    name={LAPACK},
    description={
        Linear Algebra PACKage provides routines for solving systems of
        simultaneous linear equations, least-squares solutions of linear systems
        of equations, eigenvalue problems, and singular value problems
    }
}

\newglossaryentry{ScaLAPACK}{
    name={ScaLAPACK},
    description={
        Scalale Linear Algebra PACKage is a library of high-performance linear
        algebra routines for parallel distributed memory machines which solves
        dense and banded linear systems, least squares problems, eigenvalue
        problems, and singular value problems
    }
}

\newglossaryentry{DAG}{
    name={DAG},
    description={
        Directed Acyclic Graphs where each node represents a task and each each
        represents the data dependencies between the tasks
    }
}

\newglossaryentry{PLASMA}{
    name={PLASMA},
    description={
        Parallel Linear Algebra Software for Multicore Architectures provides
        routines to solve dense general systems of linear equations, symmetric
        positive definite systems of linear equations and linear least squares
        problems, using LU, Cholesky, QR and LQ factorizations
    }
}

\newglossaryentry{Anti-dependency}{
    name={Anti-dependency},
    description={
        Dependency which occurs when an instruction requires a value that is
        later updated. Also known as a Write-After-Read dependency
    }
}

\newglossaryentry{WAR}{
    name={WAR},
    description={
        Write-After-Read. An anti-dependency
    }
}

\newglossaryentry{SYRK}{
    name={SYRK},
    description={
        Routine which is part of the BLAS that computes the rank–k update of the
        upper or lower triangular component of a symmetric matrix
    }
}

\newglossaryentry{GEMM}{
    name={GEMM},
    description={
        Routine which is part of the BLAS that computes the product of two general matrices
    }
}

\newglossaryentry{TRSM}{
    name={TRSM},
    description={
        Routine which is part of the BLAS that solves a triangular linear
        equation
    }
}

\newglossaryentry{TRMM}{
    name={TRMM},
    description={
        Routine which is part of the BLAS that computes the product of a general
        matrix with an upper or lower triangular matrix
    }
}

\newglossaryentry{POTRF}{
    name={POTRF},
    description={
        Routine which is part of the LAPACK that computes the Cholesky
        factorization of a symmetric positive definite matrix
    }
}

\newglossaryentry{TRTRI}{
    name={TRTRI},
    description={
        Routine which is part of the LAPACK that computes the inverse of a upper
        or lower triangular matrix
    }
}

\newglossaryentry{LAUUM}{
    name={LAUUM},
    description={
        Routine which is part of the LAPACK that computes the product of a upper
        or lower triangular matrix with its transpose
    }
}

\newglossaryentry{Strong Scalability}{
    name={Strong Scalability},
    description={
        Scalability which shows how the solution time varies with the number of
        processors for a fixed total problem size
    }
}

\newglossaryentry{Pipelining}{
    name={Pipelining},
    description={
        An implementation technique where multiple instructions are overlapped
        in execution
    }
}

\newglossaryentry{GEQRT}{
    name={GEQRT},
    description={
        Routine which is part of the LAPACK that constructs a compact WY
        representation to apply a sequence of $i_b$ Householder reflections
    }
}

\newglossaryentry{TTQRT}{
    name={TTQRT},
    description={
        Routine which constructs a compact WY representation of a matrix
        composed of two upper triangular matrices stacked one on top of the
        other
    }
}

\newglossaryentry{UNMQR}{
    name={UNMQR},
    description={
        Multiplies a complex matrix by the unitary matrix $Q$ of the QR
        factorization formed by GEQRT.
    }
}

\newglossaryentry{TTMQR}{
    name={TTMQR},
    description={
        Multiplies a complex matrix formed via stacking two square matrices one
        on top of the other by the unitary matrix $Q$ of the QR
        factorization formed by TTQRT
    }
}

\newglossaryentry{TSQRT}{
    name={TSQRT},
    description={
        Routine which constructs a compact WY representation of a matrix
        composed of a square matrix stacked on top of an upper triangular
        matrix
    }
}

\newglossaryentry{TSMQR}{
    name={TSMQR},
    description={
        Multiplies a complex matrix formed via stacking two square matrices one
        on top of the other by the unitary matrix $Q$ of the QR
        factorization formed by TSQRT
    }
}

\newglossaryentry{Elimination List}{
    name={Elimination List},
    description={
        Table which provides the ordered list of the transformations used to
        zero out all the tiles below the diagonal 
    }
}

\chapter{Introduction}\label{chp:intro}
%
%

%

The High-Performance Computing (HPC) landscape is trending more towards a
multi/many-core architecture~\cite{Asanovic:2009:VPC:1562764.1562783} as is
evidenced by the recent projects of major chip manufacturers and reports of
surveys conducted by consulting companies~\cite{intersect360}.  The
computational algorithms for dense linear algebra need to be re-examined to make
better use of these architectures and provide higher levels of efficiency.  Some
of these algorithms may have a straight forward translation from the current
state-of-the-art libraries while others require much more thought and effort to
gain performance increases.  In this thesis, we endeavor to achieve algorithms
that exploit
the architecture to improve performance relative to current state-of-the-art
computational libraries while maintaining the stability and robustness of these
libraries.

Our research will make use of the BLAS (Basic Linear Algebra
Subprograms)~\cite{blasurl} and the LAPACK (Linear Algebra
PACKage)~\cite{LAPACK_1999_guide} libraries.  The BLAS are a standard to perform
basic linear algebra operations involving vectors and matrices while LAPACK
performs the more complicated and higher level linear algebra operations.  

The BLAS divide numerical linear algebra operations into three distinct
groupings based upon the amount of input data and the computational cost.
Operations involving only vectors are considered Level 1; those involving both
vectors and matrices are Level 2; and those involving only matrices are Level 3.
For matrices of size $n\times n$, the Level 3 operations are most coveted since
they use $O(n^2)$ data for $O(n^3)$ computations and inherently reduce the
amount of memory traffic.  Since the BLAS provide fundamental linear algebra
operations, hardware and software vendors such as Intel, AMD, and IBM provide
optimized BLAS libraries for a variety of architectures.  The BLAS library can
be multithreaded to make use of multi/many-core architectures and most of the
vendor supplied libraries are multithreaded.

The LAPACK library is a collection of subroutines for solving most of the common
problems in numerical linear algebra and was developed to make use of Level 3
BLAS operations as much as possible.  Algorithms within LAPACK are written to
make use of panels which can be either a block of columns or a block of rows so that
updates are performed using matrix multiplications instead of vector operations.
LAPACK can make use of a multithreaded BLAS to exploit multi/many-core
architectures but this may not be enough to fully exploit the capability of the
architecture.

As an example, let us consider the Cholesky decomposition to factorize a symmetric
positive definite (SPD) matrix into its triangular factor.  There are three
variants for performing the Cholesky decomposition: bordered, left-looking, and
right-looking.  All three work on either the upper or lower triangular portion
of the matrix and produce the same triangular factor, but depending on the
usage, one may have an advantage over the others.  In Figure~\ref{fig:threechol}
we depict the steps involved in each variant using the lower triangular
formulations.  At the start of each variant, the matrix is subdivided into
blocks of rows and columns, or panels, which will take advantage of the Level 3 BLAS.
\linespread{1.0}
\begin{figure}[htbp]
    \centering
    \subfloat[Bordered variant]{%
        \label{fig:threechol_b}%
        \resizebox{.75\textwidth}{!}{
            \includegraphics[width=.25\textwidth]{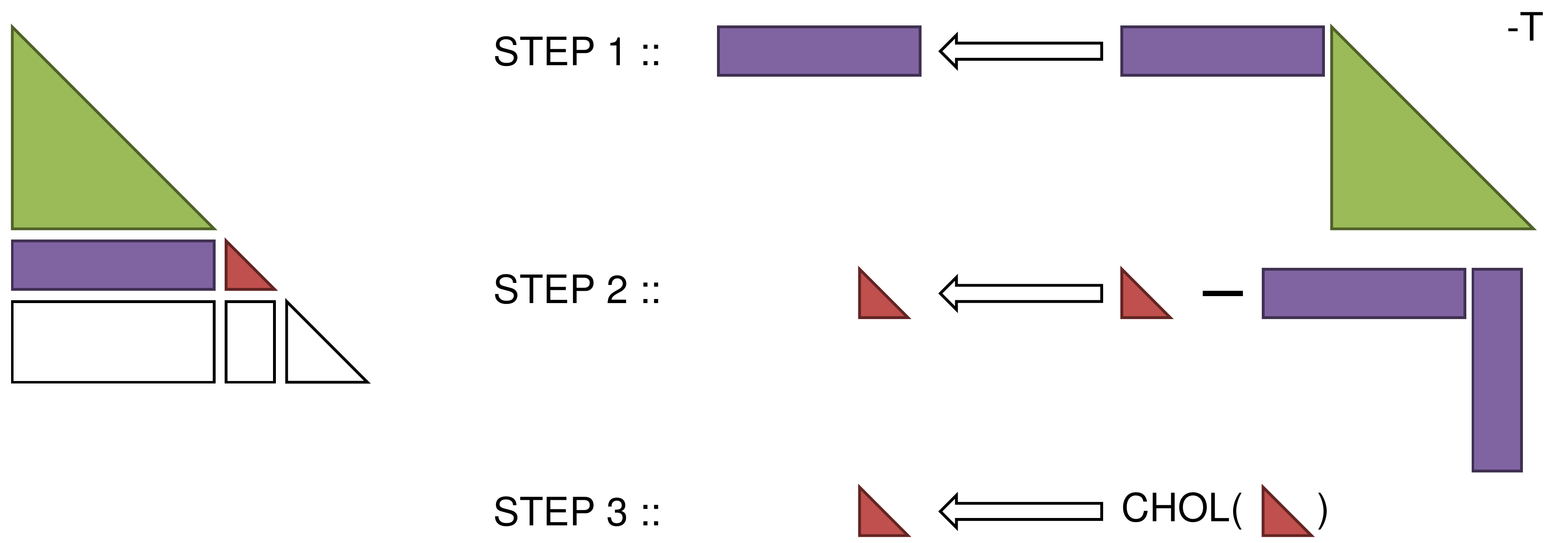}
        }%
    }\\
    \vspace{5mm}
    \subfloat[Left-looking variant]{%
        \label{fig:threechol_l}%
        \resizebox{.75\textwidth}{!}{
            \includegraphics[width=.25\textwidth]{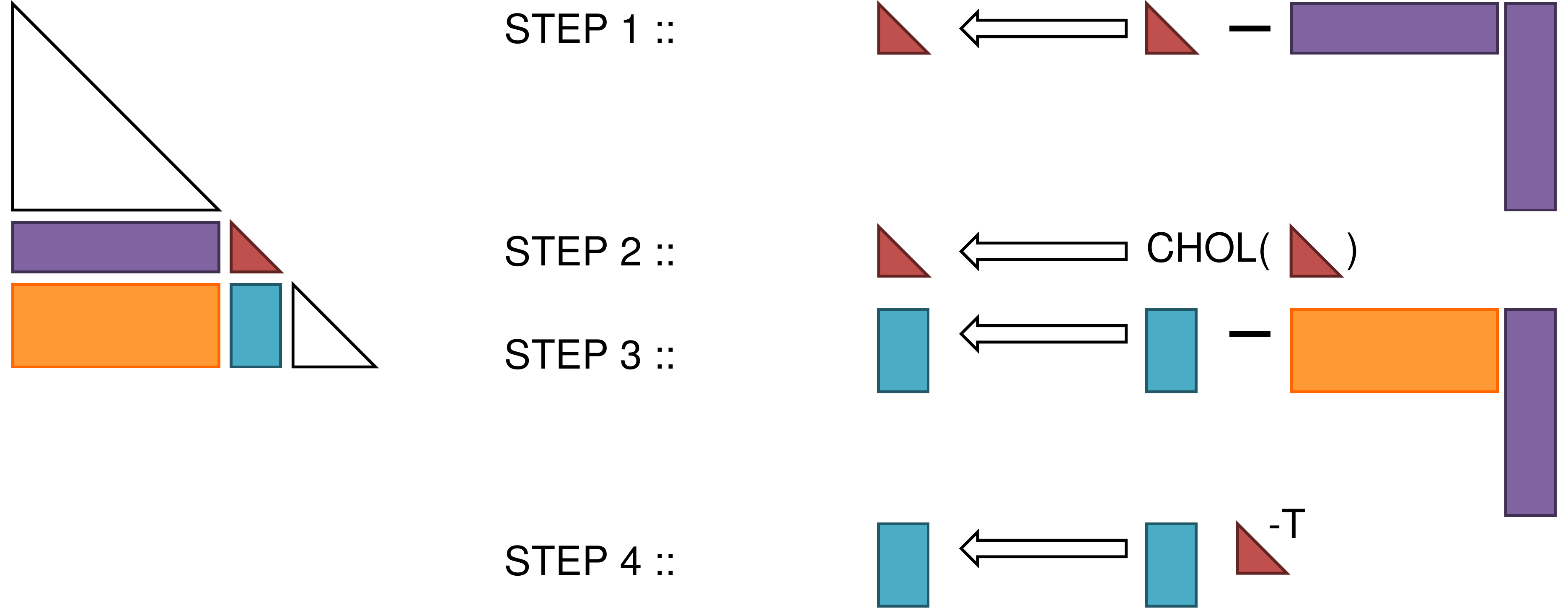}
        }%
    }\\
    \vspace{5mm}
    \subfloat[Right-looking variant]{%
        \label{fig:threechol_r}%
        \resizebox{.75\textwidth}{!}{
            \includegraphics[width=.25\textwidth]{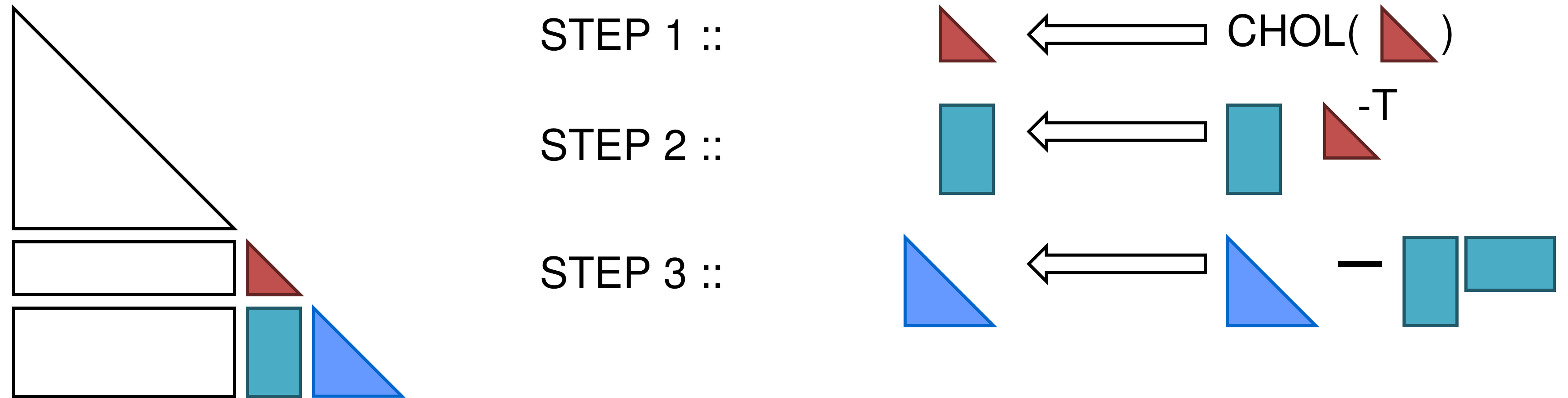}
        }%
    }
    \caption{
    \label{fig:threechol}
    Three variants for the Cholesky decomposition}
\end{figure}

The `bordered' variant, as depicted in Figure~\ref{fig:threechol_b}, involves a
loop over three steps.  The first step updates the purple row block using the already
factorized green portion, the second step updates the next triangular block to be
factorized (in red), and the third step performs the factorization of the triangular
block.  This is then repeated until the entire matrix is factorized.  Note that
the lower portion of the matrix is not touched by the preceding steps.  

The `left-looking' variant (see Figure~\ref{fig:threechol_l}) involves four
steps.  The first step updates the triangular block in red which is then factorized in the
second step, the third step updates the block column (in cyan) below the triangular block using
the previous columns, and the last step updates the column block using the
factorization of step 2.  It is called left-looking since the algorithm does not 
affect the portion to the right of the current block of the matrix and only
`looks' to the left for its updates.  The top most triangular portion of the
matrix is in its final form and will not change in the suceeding steps of the
algorithm.

The `right-looking' variant (see Figure~\ref{fig:threechol_r}) involves three
steps.  It performs the factorization of the red triangular block, updates the block
column (in cyna) and then updates the blue trailing matrix on the right.  This is called
right-looking since it does not require anything from the previous factorized
matrix and pushes its updates to the right part of the matrix.  The entire
matrix to the left of the column block in which the algorithm is currently
working is in its final form.

The advantage of the bordered variant is that it does the least amount of
operations to determine if a matrix is SPD.  The advantage of the right-looking
variant is that it provides the most parallelism.  A major disadvantage of the
left-looking variant is the added fork-join that it must perform between the
steps as compared to the other two variants which will negatively affect its
parallel performance.

The LAPACK scheme of using panels has three distinct disadvantages which limit
its performance.  The first can be seen in the third step of the right-looking
Cholesky decomposition (Figure~\ref{fig:threechol_r}) where potentially a large
symmetric rank $k$ operation is performed; the memory architecture will bound
the performance of the algorithm.  Secondly, there is some impact of the
synchronizations that must be performed between each step.  Third, the idea of
panels does not allow for fine-grained tasks.  We alleviate the last two of these
restrictions through the use of tiled algorithms whereas the first is overcome
through the use of Level 3 BLAS operations within the tiled algorithm.

We approach this via tiling a matrix which means reordering the data of the
matrix into smaller regions of contiguous memory as depicted in
Figure~\ref{fig:tiledlayout}.  By varying the tile size,
this data layout allows us to tune the algorithm such that the data needed for
the kernels is present in the cache of the processor core.  Moreover, we are
able to increase the amount of parallelism and minimize the synchronizations.

\begin{figure}
    \centering
    \includegraphics[width=.75\textwidth]{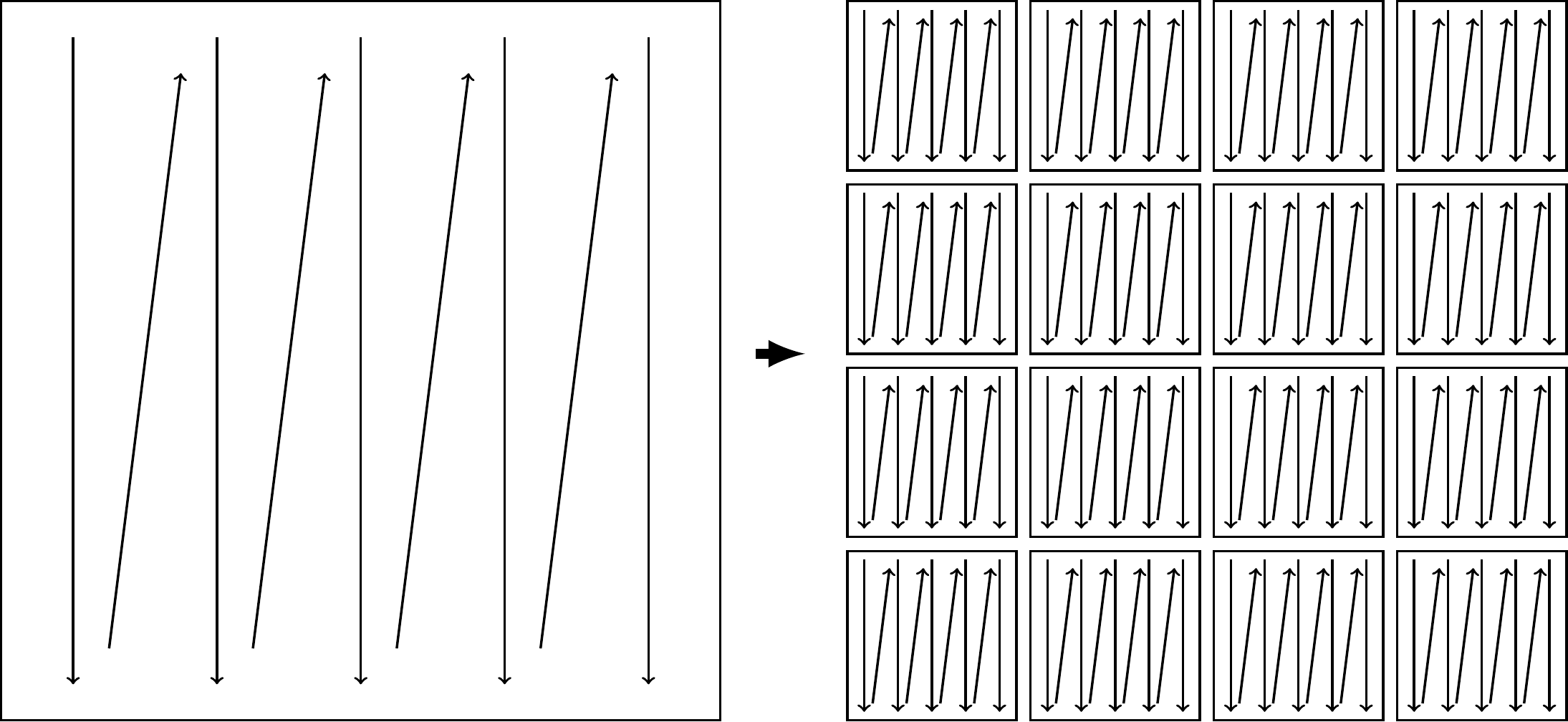}
    \caption{Data layout of tiled matrix}
    \label{fig:tiledlayout}
\end{figure}

Let us revisit the Cholesky decomposition as described earlier and apply each
step to the tiled matrix.  In Figure~\ref{fig:tiledlapack} we present the
directed acyclic graphs (DAG) for the three variants on a tiled matrix of $4
\times 4$ tiles.  In each of the DAGs, the tasks are represented as the nodes
and the data dependencies are the edges.  The dashed horizontal lines designate a
full sweep through all of the steps in each algorithm.  

\begin{figure}[htpb]
    \centering
    \pgfmathsetlength{\imagewidth}{10cm} 
    \pgfmathsetlength{\imagescale}{\imagewidth/600} 
    \subfloat[Bordered variant]{%
        \resizebox{0.25\linewidth}{80mm}{%
            \begin{tikzpicture}[x=\imagescale,y=-\imagescale]
                \node[anchor=north west,inner sep=0pt,outer sep=0pt] at (-0.5,-0.5)
                {\includegraphics[trim=0mm 0mm 23mm 0mm, clip, width=\imagewidth]{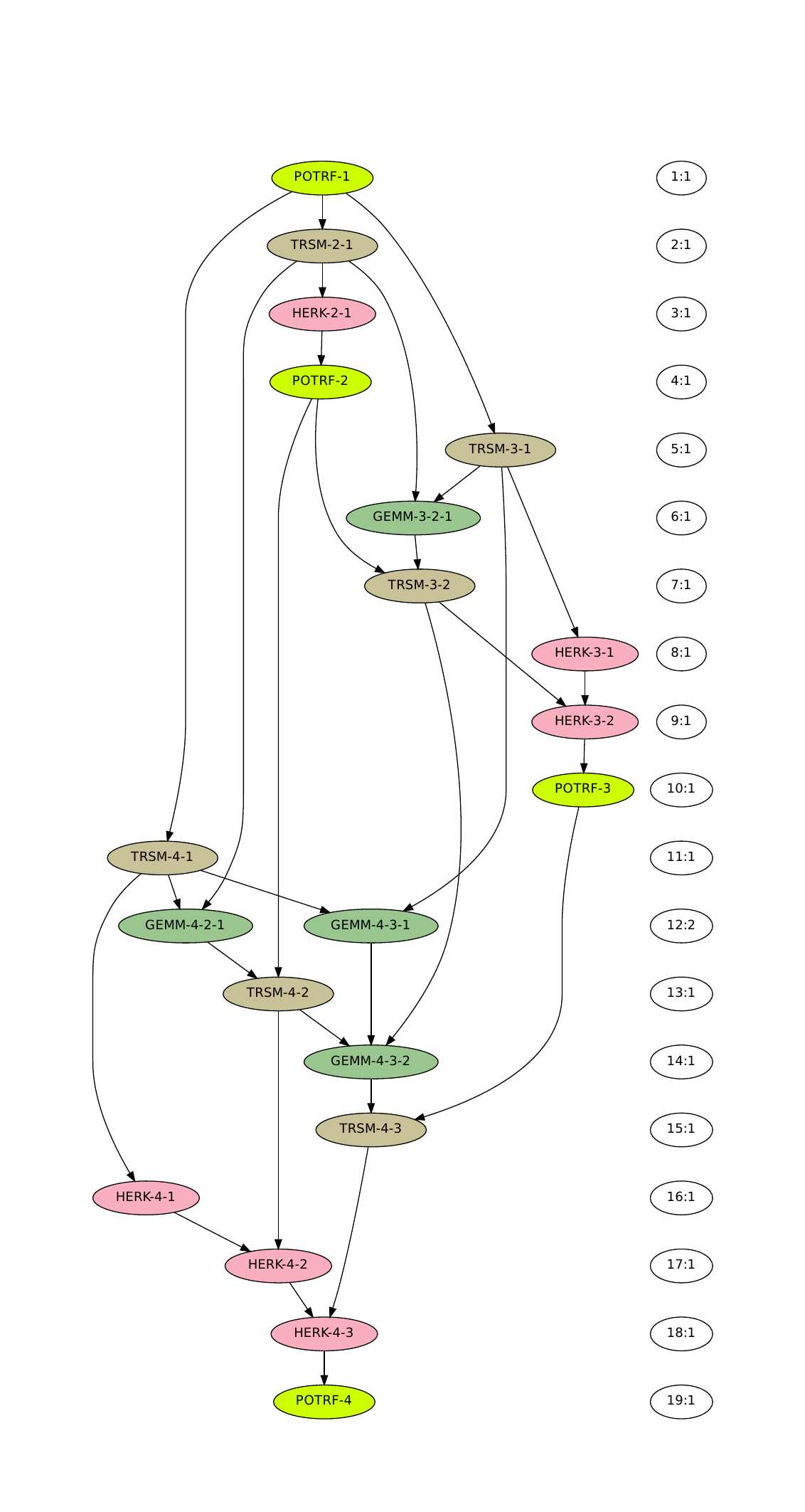}};
                \draw [dashed] (0,195) -- (600,195);
                \draw [dashed] (0,385) -- (600,385);
                \draw [dashed] (0,765) -- (600,765);
            \end{tikzpicture}
        }
    }
    \subfloat[Left-looking variant]{%
        \resizebox{0.25\linewidth}{68mm}{%
            \begin{tikzpicture}[x=\imagescale,y=-\imagescale]
                \node[anchor=north west,inner sep=0pt,outer sep=0pt] at (-0.5,-0.5)
                {\includegraphics[trim=0mm 0mm 25mm 0mm, clip, width=\imagewidth]{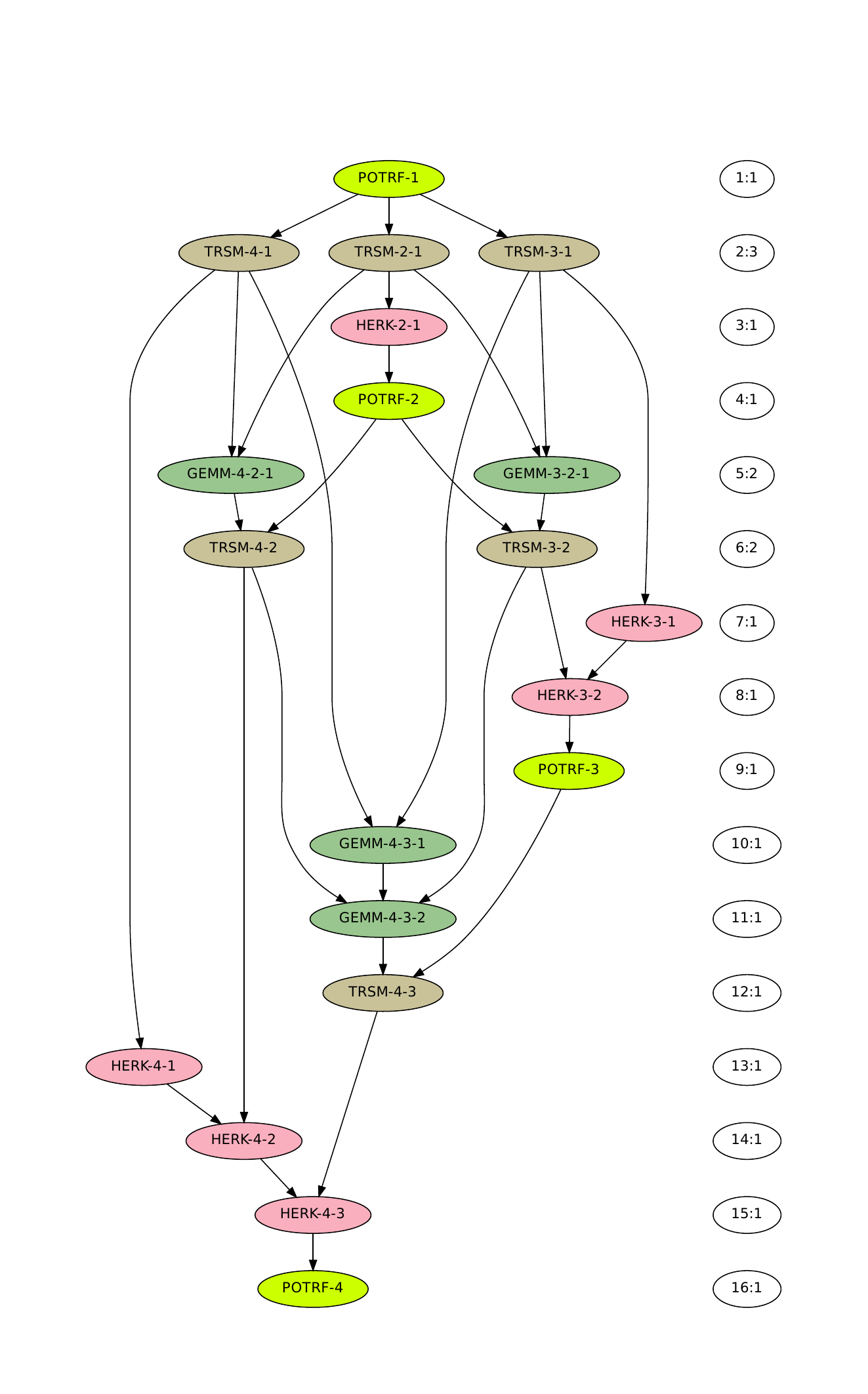}};
                \draw [dashed] (0,245) -- (600,245);
                \draw [dashed] (0,500) -- (600,500);
                \draw [dashed] (0,870) -- (600,870);
            \end{tikzpicture}
        }
    }
    \subfloat[Right-looking variant]{%
        \resizebox{0.325\linewidth}{!}{%
            \begin{tikzpicture}[x=\imagescale,y=-\imagescale]
                \node[anchor=north west,inner sep=0pt,outer sep=0pt] at (-0.5,-0.5)
                {\includegraphics[trim=31mm 0mm 0mm 0mm, clip, width=\imagewidth]{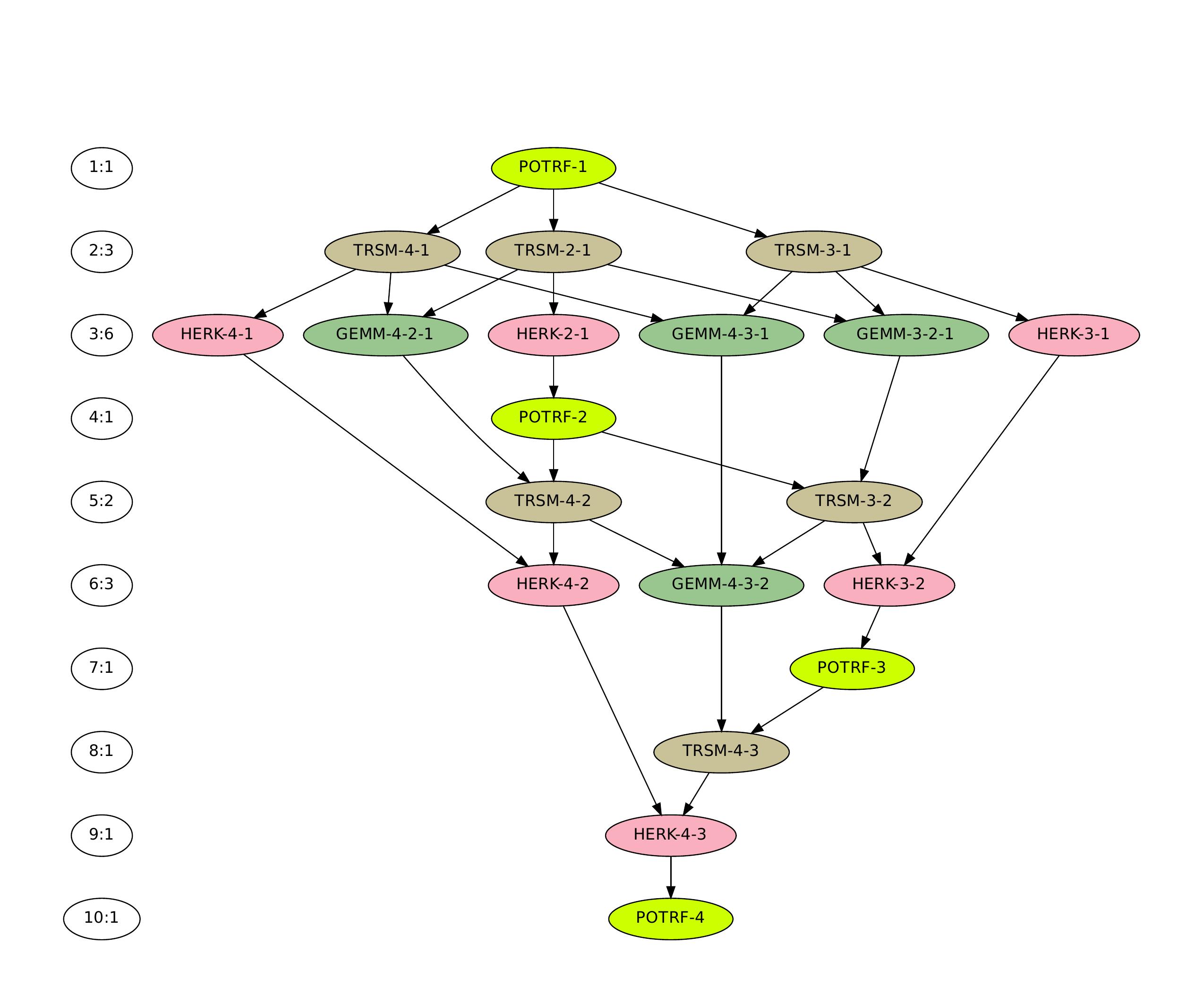}};
                \draw [dashed] (0,210) -- (600,210);
                \draw [dashed] (0,350) -- (600,350);
                \draw [dashed] (0,490) -- (600,490);
            \end{tikzpicture}
        }
    }
    \caption{Three variants of the Cholesky decomposition applied to a tiled
    matrix of $4 \times 4$ tiles.}
    \label{fig:tiledlapack}
\end{figure}

The first observation that one makes is that the height of each DAG varies
according to which variant is chosen.  The bordered variant is the tallest since
the tasks become almost sequential where the only portion that is not sequential
is that of the row block update.  The left-looking variant is almost of height
$t^2$ where $t$ is the number of tiles in a column of the tiled matrix.  It
gains parallelism from being able to update the column block of the final step
within the loop in parallel.  As before, the right-looking variant is the
shortest and provides the most parallelism.

However, in the tiled versions, the synchronizations between each step of the
LAPACK algorithms is superficial and can be removed.  By doing so, these three
variants reduce to only the right-looking variant. 

The main difference between the tiled version and the blocked version is the
amount of parallelism that is gained from updates of the trailing matrix.
Instead of performing a large symmetric rank $k$ update where $k$ is the number
of rows in a row block or the number of columns in a column block, the operation
is decomposed into smaller symmetric rank $n_b$ updates and associated matrix
multiplications where $n_b$ is the size of a tile such that $N = t\cdot n_b$.
In the right-looking variant, for an $N\times N$ matrix the size of the first
trailing matrix is $(N-k) \times (N-k)$ so that the update operation for this
first matrix has a computational cost of $O(kN^2)$.  The tiled update consists
of both symmetric updates and matrix multiplications of tiles of size $n_b
\times n_b$ so that the computational cost per task is $O(n_b^3)$. 

The size of the tiles will determine the granularity of the tasks for a tiled
algorithm.  For a matrix of size $N\times N$, the tile size of the tiled $t
\times t$ matrix can vary from the entire size of the matrix ($t=1$) down to a
scalar ($t=N$), but is held constant throughout the execution of the algorithm.
However, a balance must be kept between the efficiency of the kernel and the
amount of data movement.

Therefore, a tiled algorithm does overcome the restriction on the granularity
imposed by the panels concept of LAPACK as well as alleviate some of the
synchronizations and associated overhead.  The memory bound is still present due
to needing to move the updates of the trailing matrices.  

The Parallel Linear Algebra Software for Multicore Architectures
(PLASMA)~\cite{plasma_users_guide} library provides the framework for the tiled
algorithms.  For the experimental portions, our main assumption will be a shared
memory machine architecture wherein each processor has direct access to all portions of
the memory in which the matrices are stored.  

In Chapter~\ref{chp:cholinv} we describe more fully the implementation of the tiled Cholesky
decomposition and further the tiled Cholesky Inversion algorithm.  It shows that
translating from LAPACK to PLASMA can be straight forward, but that there are
caveats to be taken into account.  In Chapter~\ref{chp:tiledqr} we have
implemented a tiled QR decomposition showing that the implementation is not
straight-forward.  Moreover, results from previous work do not translate
directly to the tiled algorithm.  

Unlike Chapters~\ref{chp:cholinv} and~\ref{chp:tiledqr} where an unbounded number
of processors is assumed, Chapters~\ref{chp:cholfact} and~\ref{chp:qrfact}
restrict the number of processors and provide bounds on the performance of the
algorithms.  We observe the theoretical speed-up and efficiency and provide more
realistic bounds on the performance.

Finally, Chapter~\ref{chp:strassen} presents a study on a tiled implementation
of the Strassen-Winograd algorithm for matrix-matrix multiplication.  Unlike the
other algorithms presented, it is a recursive algorithm which becomes
interesting in the scope of tiled matrices.

\chapter{Cholesky Inversion}\label{chp:cholinv}
%
%

In this chapter, we present joint work with Emmanuel Agullo, Jack Dongarra,
Jakub Kurzak, Julien Langou, and Lee
Rosenberg~\cite{Agullo:2010:TET:1964238.1964254}.

The appropriate direct method to compute the solution of a symmetric positive
definite system of linear equations consists of computing the Cholesky
factorization of that matrix and then solving the underlying triangular
systems. It is not recommended to use the inverse of the matrix in this case.
However some applications need to explicitly form the inverse of the matrix.  A
canonical example is the computation of the variance-covariance matrix in
statistics. Higham~\cite[p.260,\S3]{higham} lists more such applications.

With their advent, multicore architectures~\cite{sutterlunch} induce the need
for algorithms and libraries that fully exploit their capacities.  A class of
such algorithms -- called tile algorithms~\cite{Buttari2008,tileplasma} -- has
been developed for one-sided dense factorizations (Cholesky, LU and QR) and made
available as part of the Parallel Linear Algebra Software for Multicore
Architectures (PLASMA) library~\cite{plasma_users_guide}. In this chapter, we
extend this class of algorithms to the case of the (symmetric positive definite)
matrix inversion. Besides constituting an important functionality for a library
such as PLASMA, the study of the matrix inversion on multicore architectures
represents a challenging algorithmic problem. Indeed, first, contrary to
standalone one-sided factorizations that have been studied so far, the matrix
inversion exhibits many anti-dependences~\cite{compil-AK} (Write after Read).
This is a false or artificial dependency which is reliant on the name of the
data and not the actual data flow. For example, given two operations where the
first only reads the data in the matrix $A$ and the second only writes to the
location of $A$, then in a parallel execution there may be a case where the data
being read by the first operation is wrong since the second may have already
written to the location.  By copying the data from $A$ beforehand, both
operations can be executed in parallel.  Those anti-dependences can be a
bottleneck for parallel processing, which is critical on multicore
architectures. It is thus essential to investigate (and adapt) well known
techniques used in compilation such as using temporary copies of data to remove
anti-dependences to enhance the degree of parallelism of the matrix inversion.
This technique is known as \emph{array renaming}~\cite{compil-AK} (or
\emph{array privatization}~\cite{compil-privatization}).  Second, \emph{loop
reversal}~\cite{compil-AK} is to be investigated. Third, the matrix inversion
consists of three successive steps (first of which is the Cholesky
decomposition). In terms of scheduling, it thus represents an opportunity to
study the effects of \emph{pipelining}~\cite{compil-AK} those steps on
performance.

The current version of PLASMA (version 2.1) is scheduled statically. Initially
developed for the IBM Cell processor~\cite{qr-static-scheduling-cell}, this
static scheduling relies on POSIX threads and simple synchronization
mechanisms. It has been designed to maximize data reuse and load balancing
between cores, allowing for very high performance~\cite{plasmaperf} on today's
multicore architectures. However, in the case of matrix inversion, the
design of an ad-hoc static scheduling is a time consuming task and raises load
balancing issues that are much more difficult to address than for a stand-alone
Cholesky decomposition, in particular when dealing with the pipelining of
multiple steps. Furthermore, the growth of the number of cores and the more
complex memory hierarchies make executions less deterministic.  In this chapter,
we rely on an experimental in-house dynamic scheduler~\cite{gust}.  This
scheduler is based on the idea of expressing an algorithm through its
sequential representation and unfolding it at runtime using data hazards (Read
after Write, Write after Read, Write after Write) as constraints for parallel
scheduling.  The concept is rather old and has been validated by a few
successful projects.  We could have as well used schedulers from the Jade
project from Stanford University \cite{jade_1993_computer} or from the SMPSs
project from the Barcelona Supercomputer Center \cite{cellss_2007_ibm_jrd}.

Our discussions are illustrated with experiments conducted on a dual-socket
quad-core machine based on an Intel Xeon EMT64 processor operating at $2.26$
GHz. The theoretical peak is equal to $9.0$ Gflop/s per core or $72.3$ Gflop/s
for the whole machine, composed of 8 cores.  The machine is running Mac OS X
10.6.2 and is shipped with the Apple vecLib v126.0 multithreaded
BLAS~\cite{blasurl} and LAPACK vendor library, as well as
LAPACK~\cite{LAPACK_1999_guide} v3.2.1 and
ScaLAPACK~\cite{ScaLAPACK_1997_guide} v1.8.0 references.


\section{Tile in-place matrix inversion}
\label{sec:inplace}

Tile algorithms are a class of Linear Algebra algorithms that allow
for fine granularity parallelism and asynchronous scheduling, enabling
high performance on multicore architectures~\cite{plasmaperf,Buttari2008,tileplasma,Quintana:2009}. The
matrix of order $n$ is split into $t\times t$ square submatrices of
order $b$ ($n=b\times t$). Such a submatrix is of small granularity
(we fixed $b = 200$ in this chapter) and is called a \emph{tile}. So
far, tile algorithms have been essentially used to implement one-sided
factorizations~\cite{plasmaperf,Buttari2008,tileplasma,Quintana:2009}.

\linespread{1.2}
\begin{algorithm}
  \KwIn{$A$, Symmetric Positive Definite matrix in tile storage ($t\times t$ tiles).}
  \KwResult{$A^{-1}$, stored in-place in $A$.}
  \emph{Step~1: Tile Cholesky Factorization (compute L such that $A=LL^T$)}\;
  \For{$j=0$ \KwTo $t-1$}{
    \For{$k=0$ \KwTo $j-1$}{
      $A_{j,j} \leftarrow A_{j,j} - A_{j,k} \ast A_{j,k}^T$ (SYRK(j,k)) \;
    }
    $A_{j,j} \leftarrow CHOL(A_{j,j})$ (POTRF(j)) \;
    \For{$i=j+1$ \KwTo $t-1$}{
      \For{$k=0$ \KwTo $j-1$}{
        $A_{i,j} \leftarrow A_{i,j} - A_{i,k} \ast A_{j,k}^T$ (GEMM(i,j,k)) \;
      }
    }
    \For{$i=j+1$ \KwTo $t-1$}{
      $A_{i,j} \leftarrow A_{i,j} / A_{j,j}^T$ (TRSM(i,j)) \;
    }
  }
  \emph{Step~2: Tile Triangular Inversion of $L$ (compute $L^{-1}$)}\;
  \For{$j=t-1$ \KwTo $0$}{
    $A_{j,j} \leftarrow TRINV(A_{j,j})$ (TRTRI(j)) \;
    \For{$i=t-1$ \KwTo $j+1$}{
      $A_{i,j} \leftarrow A_{i,i} \ast A_{i,j}$ (TRMM(i,j)) \;
      \For{$k=j+1$ \KwTo $i-1$}{
        $A_{i,j} \leftarrow A_{i,j} + A_{i,k} \ast A_{k,j} $ (GEMM(i,j,k)) \;
      }
      $A_{i,j} \leftarrow - A_{i,j} \ast A_{i,i}$ (TRMM(i,j)) \;
    }
  }
  \emph{Step~3: Tile Product of Lower Triangular Matrices (compute $A^{-1}={L^{-1}}^TL^{-1}$)}\;
  \For{$i=0$ \KwTo $t-1$}{
    \For{$j=0$ \KwTo $i-1$}{
      $A_{i,j} \leftarrow A_{i,i}^T \ast A_{i,j}$ (TRMM(i,j)) \;
    }
    $A_{i,i} \leftarrow A_{i,i}^T \ast A_{i,i}$ (LAUUM(i)) \;
    \For{$j=0$ \KwTo $i-1$}{
      \For{$k=i+1$ \KwTo $t-1$}{
        $A_{i,j} \leftarrow A_{i,j} + A_{k,i}^T \ast A_{k,j}$ (GEMM(i,j,k)) \;
      }
    }
    \For{$k=i+1$ \KwTo $t-1$}{
      $A_{i,i} \leftarrow A_{i,i} + A_{k,i}^T \ast A_{k,i}$ (SYRK(i,k)) \;
    }
  }
  \caption{Tile In-place Cholesky Inversion (lower format). Matrix
    $A$ is the on-going updated matrix (in-place algorithm).}
  \label{alg:InPlace}
\end{algorithm}
\renewcommand{\baselinestretch}{\normalspace}

Algorithm~\ref{alg:InPlace} extends this class of algorithms to the
case of the matrix inversion. As in state-of-the-art libraries
(LAPACK, ScaLAPACK), the matrix inversion is performed \emph{in-place},
\emph{i.e.}, the data structure initially containing matrix~$A$ is
directly updated as the algorithm is progressing, without using any
significant temporary extra-storage; eventually, $A^{-1}$ replaces
$A$. Algorithm~\ref{alg:InPlace} is composed of three steps. Step~1 is
a Tile Cholesky Factorization computing the Cholesky factor $L$ (lower
triangular matrix satisfying $A=LL^T$). This step was studied
in~\cite{tileplasma}. Step~2 computes $L^{-1}$ by inverting
$L$. Step~3 finally computes the inverse matrix
$A^{-1}={L^{-1}}^TL^{-1}$. Each step is composed of multiple fine
granularity tasks (since operating on tiles). These tasks are part of
the BLAS (SYRK, GEMM, TRSM, TRMM) and LAPACK (POTRF, TRTRI, LAUUM)
standards. A more detailed description is beyond the scope of this
extended chapter and is not essential to the understanding of the
rest of the chapter. Indeed, from a high level point of view, an
operation based on tile algorithms can be represented as a Directed
Acyclic Graphs (DAG)~\cite{graph} where nodes represent the fine
granularity tasks in which the operation can be decomposed and the
edges represent the dependences among them. For instance,
Figure~~\ref{fig:dag-inplace} represents the DAG of Step~3 of
Algorithm~\ref{alg:InPlace}. 
\linespread{1.0}
\begin{figure}[htbp]
   \centering
    \subfloat[In-place (Algorithm~\ref{alg:InPlace})]{
      \label{fig:dag-inplace}
      \hspace{0mm}\resizebox{.25\textwidth}{!}{\includegraphics[trim=0mm 0cm 0mm 0cm, clip=true]{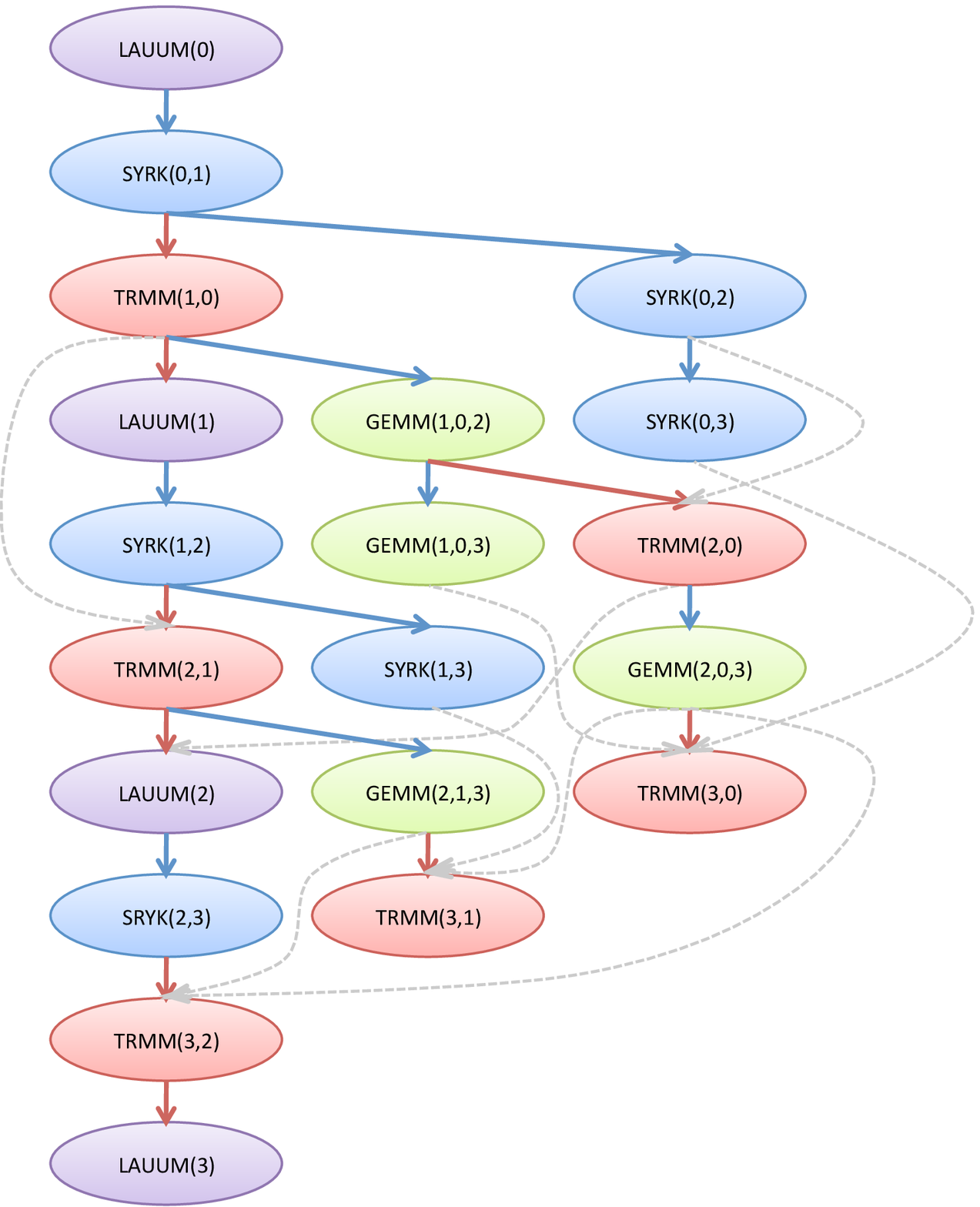}}
      \hspace{4mm}
    }
    \subfloat[Out-of-place (variant introduced in \Section~\ref{sec:study})]{
      \label{fig:dag-outofplace}
      \hspace{0mm}\resizebox{.6\textwidth}{!}{\includegraphics[trim=0mm 0cm 0mm 0cm, clip=true]{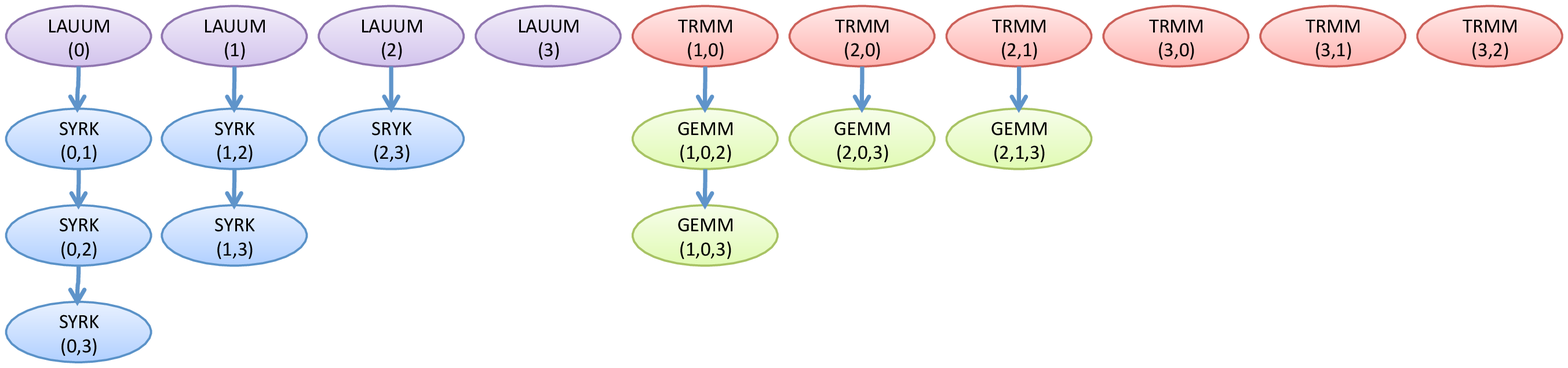}}
    }
  \caption{DAGs of Step 3 of the Tile Cholesky Inversion ($t=4$).}
  \label{fig:dags}
\end{figure}
\renewcommand{\baselinestretch}{\normalspace}


Algorithm~\ref{alg:InPlace} is based on the variants used in LAPACK 3.2.1.
Bientinesi, Gunter and van de Geijn~\cite{BientinesiGunterVanDeGeijn:08}
discuss the merits of algorithmic variations in the case of the computation
of the inverse of a symmetric positive definite matrix.  Although of definite
interest, this is not the focus of this extended chapter.

We have implemented Algorithm~\ref{alg:InPlace} using our dynamic
scheduler introduced in the beginning of the chapter.
Figure~\ref{fig:perf} shows its performance against state-of-the-art
libraries and the vendor library on the machine described in
the beginning of the chapter. For a matrix of small size, it is
difficult to extract parallelism and have a full use of all the
cores~\cite{plasmaperf,Buttari2008,tileplasma,Quintana:2009}. We indeed observe a limited scalability
($N=1000$, Figure~\ref{fig:perf-1000}). However, tile algorithms
(Algorithm~\ref{alg:InPlace}) still benefit from a higher degree of
parallelism than blocked algorithms~\cite{plasmaperf,Buttari2008,tileplasma,Quintana:2009}. Therefore
Algorithm~\ref{alg:InPlace} (in place) consistently achieves a
significantly better performance than vecLib, ScaLAPACK and LAPACK libraries.
A larger matrix size ($N=4000$, Figure~\ref{fig:perf-4000}) allows for
a better use of parallelism. In this case, an optimized implementation
of a blocked algorithm (vecLib) competes well against tile algorithms
(in place) on few cores (left part of
Figure~\ref{fig:perf-1000}). However, only tile algorithms scale to a
larger number of cores (rightmost part of Figure~\ref{fig:perf-4000})
thanks to a higher degree of parallelism. In other words, the tile
Cholesky inversion achieves a better \emph{strong scalability} than the
blocked versions, similarly to what had been observed for the
factorization step~\cite{plasmaperf,Buttari2008,tileplasma,Quintana:2009}.
\linespread{1.0}
\begin{figure}[htbp]
   \centering
   \subfloat[$n=1000$]{
      \label{fig:perf-1000}
      \hspace{0mm}\resizebox{.475\textwidth}{!}{\includegraphics[trim=4mm 0cm 4mm 0cm, clip=true]{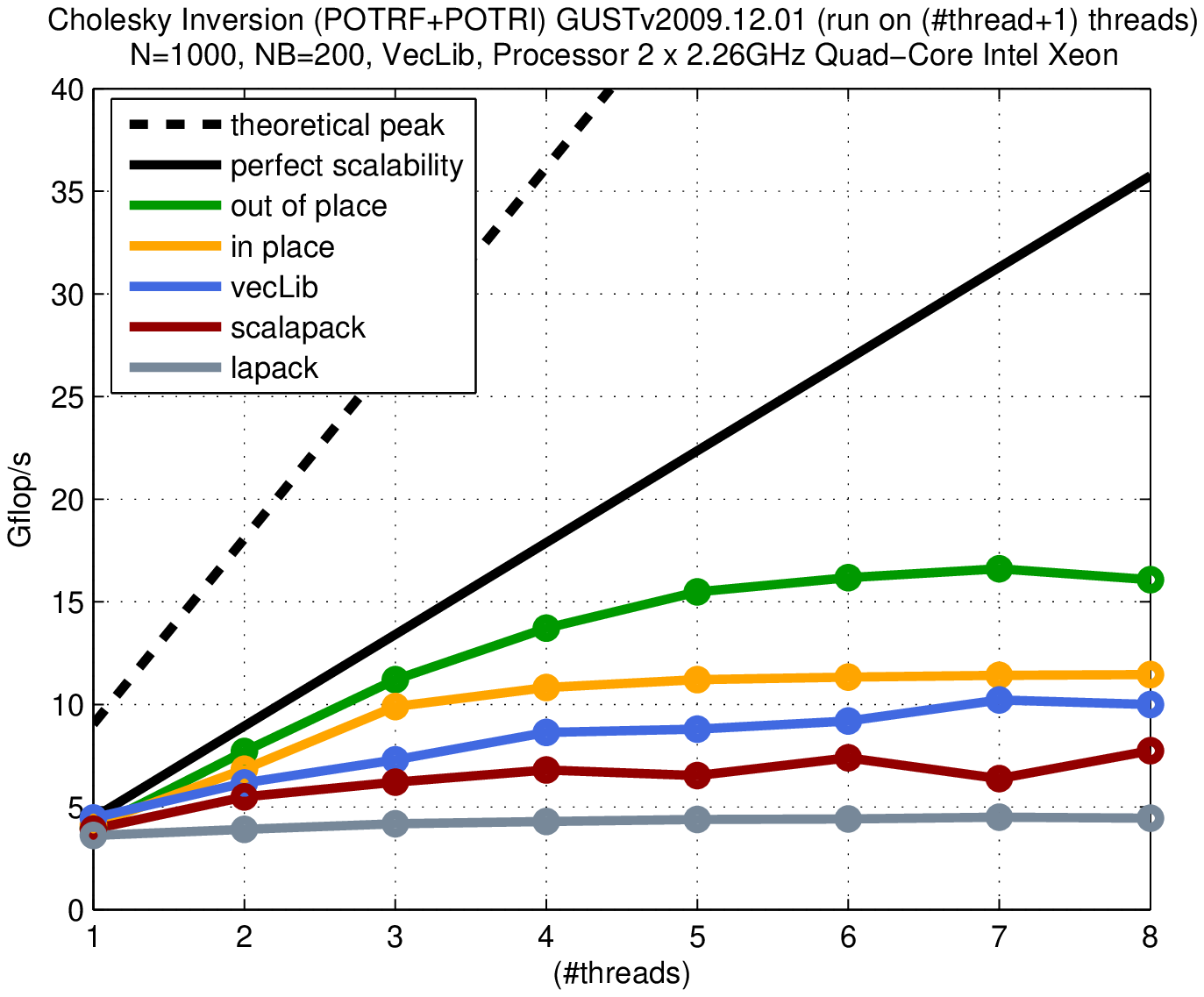}}
    }
    \subfloat[$n = 4000$]{
      \label{fig:perf-4000}
      \hspace{0mm}\resizebox{.475\textwidth}{!}{\includegraphics[trim=4mm 0cm 4mm 0cm, clip=true]{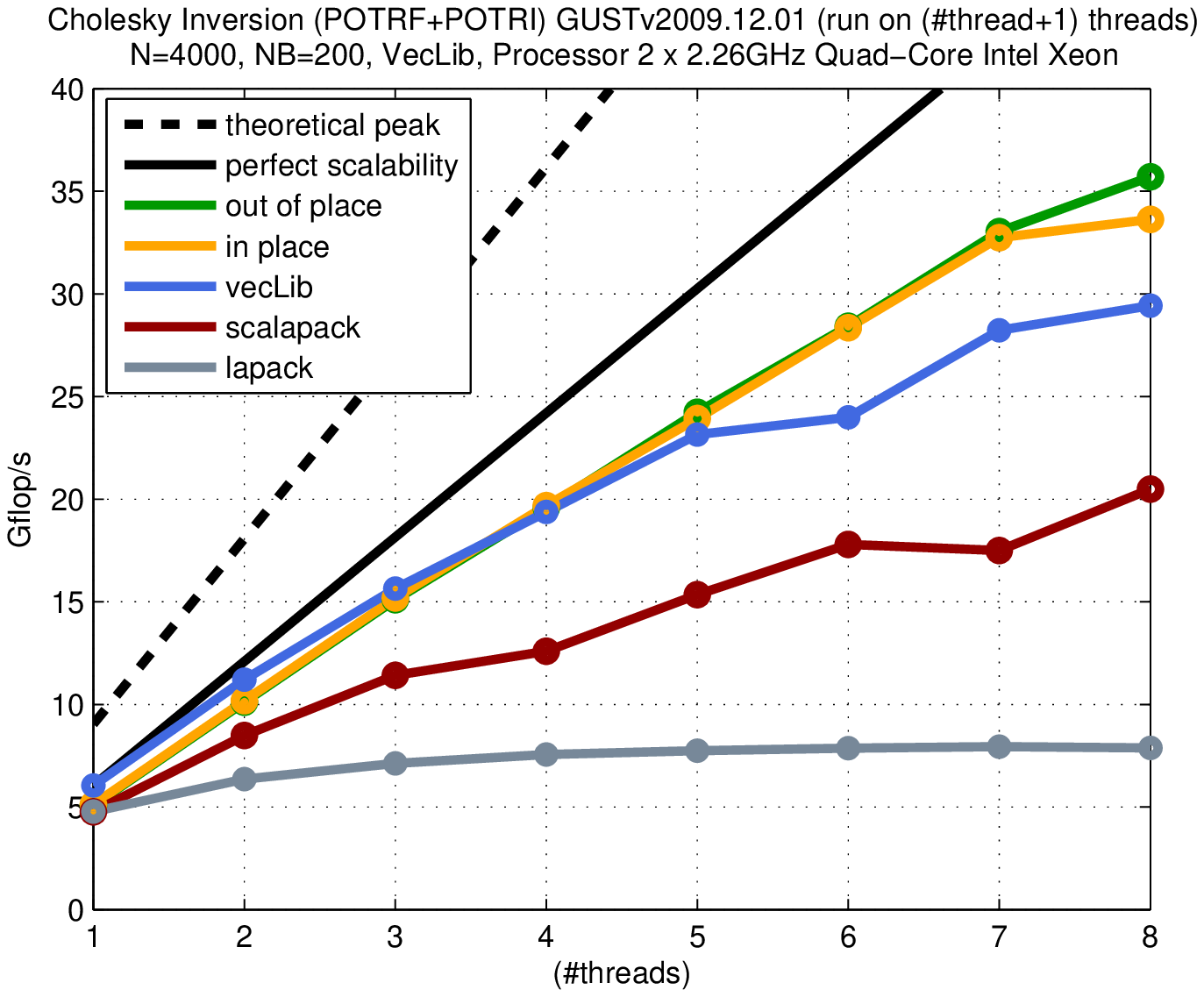}}
    }
  \caption{Scalability of Algorithm~\ref{alg:InPlace} (in place) and
    its out-of-place variant introduced in \Section~\ref{sec:study},
    using our dynamic scheduler against vecLib, ScaLAPACK and
    LAPACK libraries.}
  \label{fig:perf}
\end{figure}
\renewcommand{\baselinestretch}{\normalspace}

%

\section{Algorithmic study}
\label{sec:study}

In the \Section~\ref{sec:inplace}, we compared the performance of the tile
Cholesky inversion against state-the-art libraries. In this section,
we focus on tile Cholesky inversion and we discuss the impact of several
variants of Algorithm~\ref{alg:InPlace} on performance.


\linespread{1.0}
\begin{table}[tb]
\centering
\begin{tabular}{@{}rcc@{}}
   \toprule
           & In-place case & Out-of-place case\\%
   \cmidrule(lr){2-2}
   \cmidrule(l ){3-3}
      Step 1   & $3t-2$       & $3t-2$          \\%
      Step 2   & $3t-3$       & $2t-1$       \\%
      Step 3   & $3t-2$       & $t$ \\%
   \bottomrule
\end{tabular}
\caption{Length of the critical path as a function of the number of
  tiles $t$.}
\label{table:criticalpath}
\end{table}
\renewcommand{\baselinestretch}{\normalspace}

{\bf Array renaming (removing anti-dependences).} The dependence
between SYRK(0,1) and TRMM(1,0) in the DAG of Step~3 of
Algorithm~\ref{alg:InPlace} (Figure~\ref{fig:dag-inplace}) represents
the constraint that the SYRK operation (l.~28 of
Algorithm~\ref{alg:InPlace}) needs to read $A_{k,i}=A_{1,0}$ before
TRMM (l.~22) can overwrite $A_{i,j}=A_{1,0}$. This anti-dependence
(Write after Read) can be removed thanks to a temporary copy of $A_{1,0}$.
Similarly, all the SYRK-TRMM anti-dependences, as well as TRMM-LAUMM
and GEMM-TRMM anti-dependences can be removed. 
We have designed a variant of Algorithm~\ref{alg:InPlace}
that removes all the anti-dependences thanks to the use of a large working
array (this technique is called \emph{array renaming}~\cite{compil-AK} in 
compilation~\cite{compil-AK}). 
The subsequent DAG (Figure~\ref{fig:dag-outofplace})
is split in multiple pieces (Figure~\ref{fig:dag-outofplace}), leading
to a shorter critical path (Table~\ref{table:criticalpath}).
We implemented the out-of-place algorithm, based on our dynamic scheduler
too. Figure~\ref{fig:perf-1000} shows that our dynamic scheduler exploits its higher
degree of parallelism to achieve a much higher strong scalability even
on small matrices ($N=1000$). For a larger matrix
(Figure~\ref{fig:perf-4000}), the in-place algorithm already achieved
very good scalability. Therefore, using up to $7$ cores, their performance
are similar. However, there is not enough parallelism with a
$4000\times 4000$ matrix to use efficiently all $8$ cores with the
in-place algorithm; thus the higher performance of the out-of-place
version in this case (leftmost part of Figure~\ref{fig:perf-4000}).

\ignore{
\begin{algorithm}
  \KwIn{$A$, Symmetric Positive Definite matrix in tile storage
    ($t\times t$ tiles).}
  \KwResult{$A^{-1}$, eventually stored in $A$}
  \KwData{$B$ and $C$, temporary matrices ($t\times t$ tiles each).}
  \emph{Step~1: Tile Cholesky Factorization (compute L such that $A=LL^T$)}\;
  \For{$j=0$ \KwTo $t-1$}{
  \For{$k=0$ \KwTo $j-1$}{
  $A_{j,j} \leftarrow A_{j,j} - A_{j,k} \ast A_{j,k}^T$ (SYRK) \;
  }
  $A_{j,j} \leftarrow CHOL(A_{j,j})$ (POTRF) \;
  \For{$i=j+1$ \KwTo $t-1$}{
  \For{$k=0$ \KwTo $j-1$}{
  $A_{i,j} \leftarrow A_{i,j} - A_{i,k} \ast A_{j,k}^T$ (GEMM) \; 
  }
  }
  \For{$i=j+1$ \KwTo $t-1$}{
  $A_{i,j} \leftarrow A_{i,j} / A_{j,j}^T$ (TRSM) \;
  }
  }
  \emph{Step~2: Tile Triangular Inversion of $L$ (compute $L^{-1}$)}\;
  $B \leftarrow A$ (tile by tile copy) \;
  \For{$j=t-1$ \KwTo $0$}{
  $A_{j,j} \leftarrow TRINV(A_{j,j})$ (TRTRI) \;
  \For{$i=t-1$ \KwTo $j+1$}{
  $A_{i,j} \leftarrow A_{i,i} \ast A_{i,j}$ (TRTRM) \;
  \For{$k=j+1$ \KwTo $i-1$}{
  $A_{i,j} \leftarrow A_{i,j} + A_{i,k} \ast B_{k,j}$ (GEMM) \;
  }
  $A_{i,j} \leftarrow - A_{i,j} \ast A_{i,i}$ (TRMM) \;
  }
  }
  \emph{Step~3: Tile Product of Lower Triangular Matrices (compute $A^{-1}={L^{-1}}^TL^{-1}$)}\;
  $C \leftarrow A$ (tile by tile copy) \;
  \For{$i=0$ \KwTo $t-1$}{
  \For{$j=0$ \KwTo $i-1$}{
  $A_{i,j} \leftarrow C_{i,i}^T \ast A_{i,j}$ (TRMM) \;
  }
  $A_{i,i} \leftarrow A_{i,i}^T \ast A_{i,i}$ (LAUUM) \;
  \For{$j=0$ \KwTo $i-1$}{
  \For{$k=i+1$ \KwTo $t-1$}{
  $A_{i,j} \leftarrow A_{i,j} + C_{k,i}^T \ast C_{k,j}$ (GEMM) \;
  }
  }
  \For{$k=i+1$ \KwTo $t-1$}{
  $A_{i,i} \leftarrow A_{i,i} + C_{k,i}^T \ast C_{k,i}$ (SYRK) \;
  }
  }
  \caption{Tile Out-of-place Cholesky Factorization (lower
    format). Copies of the matrix are used to remove
    anti-dependences.}
  \label{alg:OutOfPlace}
\end{algorithm}
}%

\ignore{TODO:8 performance graphs (4*2): 
  (step 1, step 2, step 3, pipeline) * (inplace , outofplace); 
  we can arrange in two plots: 
  - 1 plot for 6 graphs (separate steps), 
  - 1 plot for 2 graphs (pipelines)}


{\bf Loop reversal (exploiting commutativity).} The most internal loop
of each step of Algorithm~\ref{alg:InPlace} (l.~8, l.~17 and l.~26)
consists in successive commutative GEMM operations. Therefore they can
be performed in any order, among which increasing order and decreasing
order of the loop index. Their ordering impacts the length of the
critical path. Algorithm~\ref{alg:InPlace} orders those three loops in
increasing (U)
and decreasing (D)
order, respectively. We had manually chosen these respective 
orders (UDU) because they minimize 
the critical path of each step (values reported in
Table~\ref{table:criticalpath}). A
naive approach would have, for example, been comprised of consistently ordering the loops
in increasing order (UUU). In this case (UUU), the critical path of TRTRI
would have been equal to $t^2-2t+3$ (in-place) or
($\frac{1}{2}t^2-\frac{1}{2}t+2$) (out-of-place) instead of $3t-3$
(in-place) or $2t-1$ (out-of-place) for (UDU). Figure~\ref{fig:perf-loops} shows
how loop reversal impacts performance.
\linespread{1.0}
\begin{figure}[htbp]
   \centering
   \subfloat[$n=1000$]{
      \label{fig:perf-loops-1000}
      \hspace{0mm}\resizebox{.475\textwidth}{!}{\includegraphics[trim=4mm 0cm 4mm 0cm, clip=true]{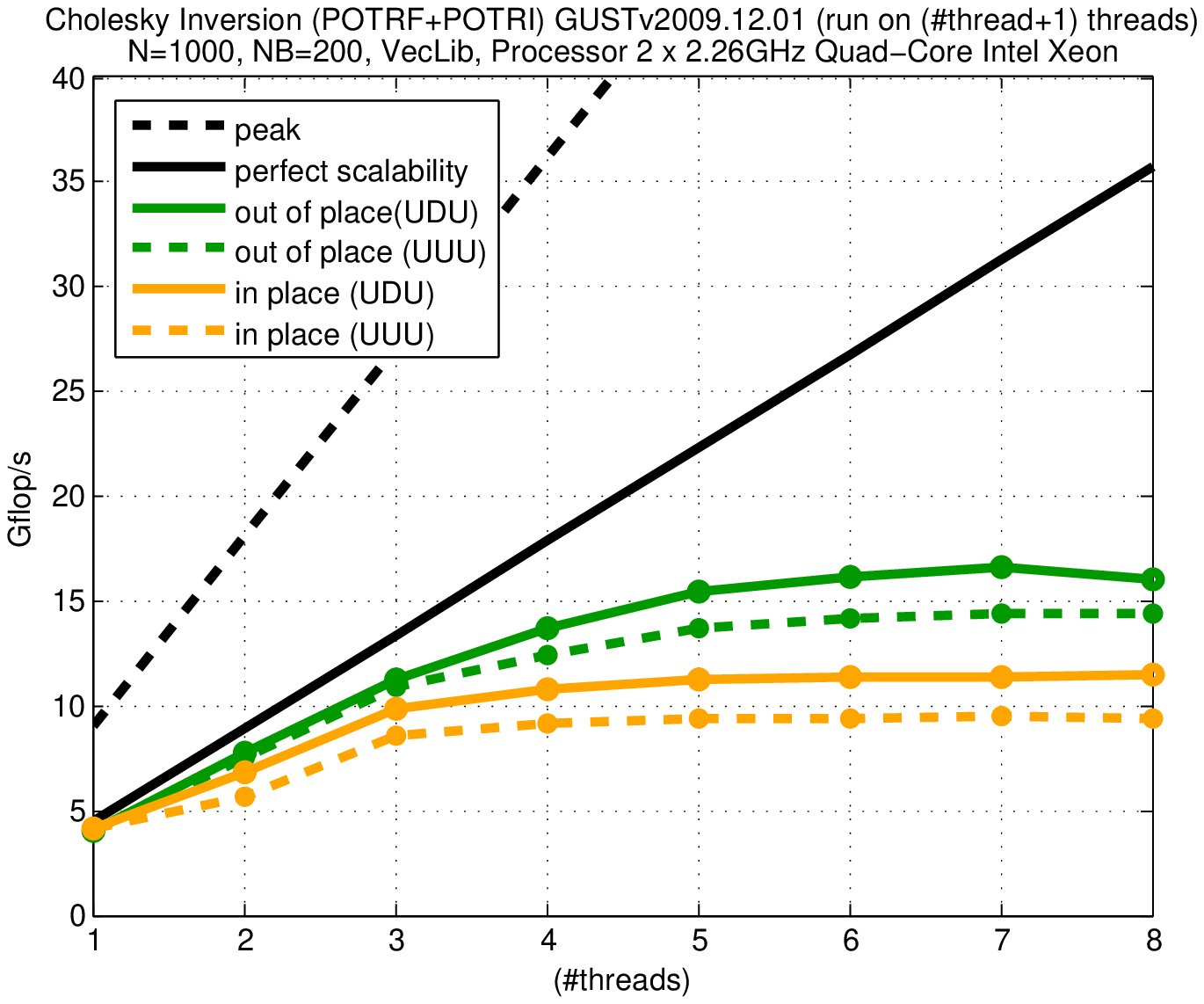}}
    }
    \subfloat[$n = 4000$]{
      \label{fig:perf-loop-4000}
      \hspace{0mm}\resizebox{.475\textwidth}{!}{\includegraphics[trim=4mm 0cm 4mm 0cm, clip=true]{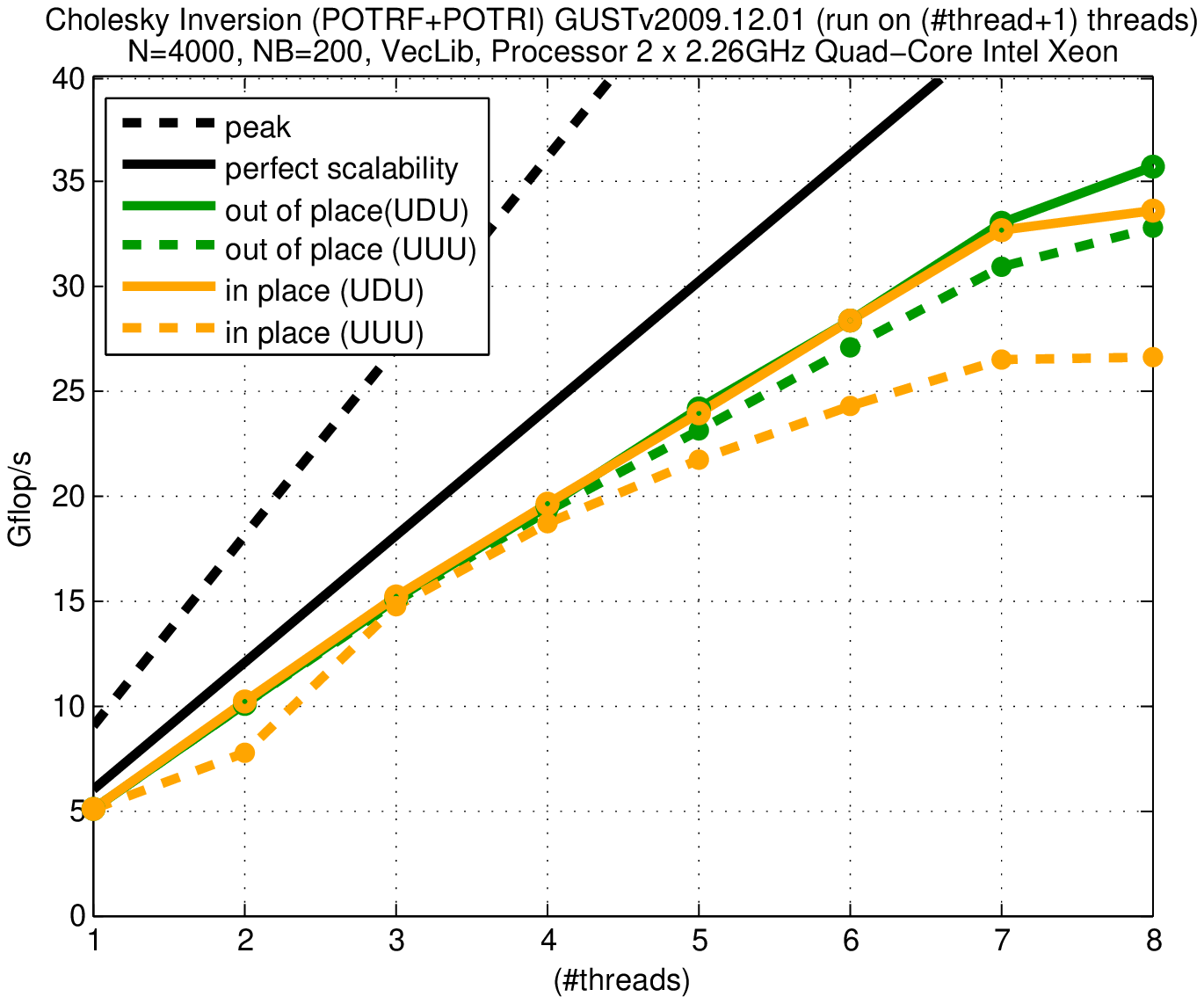}}
    }
  \caption{Impact of loop reversal on performance.}
  \label{fig:perf-loops}
\end{figure}
\renewcommand{\baselinestretch}{\normalspace}



{\bf Pipelining.} Pipelining the multiple steps of the inversion
reduces the length of its critical path. For the in-place case, the
critical path is reduced from $9t-7$ tasks ($t$ is the number of
tiles) to $9t-9$ tasks (negligible). For the out-of-place case, it is reduced from
$6t-3$ to $5t-2$ tasks. We studied the effect of pipelining on the
performance of the inversion on a $8000\times 8000$ matrix with an
artificially large tile size ($b=2000$ and $t=4$). As expected, we observed almost
no effect on performance of the in-place case (about $36.4$ seconds 
with or without pipelining).
For the out-of-place case, the elapsed time grows from
$25.1$ to $29.2$ seconds (16\% overhead) when
pipelining is prevented.

\section{Conclusion and future work}
\label{sec:conclusion}

We have proposed a new algorithm to compute the inverse of a symmetric
positive definite matrix on multicore architectures. An experimental
study has shown both an excellent scalability of our algorithm and a
significant performance improvement compared to state-of-the-art
libraries. Beyond extending the class of so-called tile algorithms,
this study brought back to the fore well known issues in the domain of
compilation. Indeed, we have shown the importance of loop reversal,
array renaming and pipelining.

The use of a dynamic scheduler allowed an out-of-the-box pipeline of
the different steps whereas loop reversal and array renaming required
a manual change to the algorithm. The future work directions consist
in enabling the scheduler to perform itself loop reversal and array
renaming. We exploited the commutativity of GEMM operations to perform
array renaming. Their associativity would furthermore allow to process them
in parallel (following a binary tree); the subsequent impact on
performance is to be studied. Array renaming requires extra-memory.
It will be interesting to address the problem of the maximization of
performance under memory constraint. This work aims to be incorporated
into PLASMA.

\chapter{QR Factorization}\label{chp:tiledqr}
%
%


In this chapter we present joint work with Mathias Jacquelin, Julien Langou, and
Yves Robert~\cite{rr-inria-7601}.

Given an $m$-by-$n$ matrix $A$ with $n \leq m$, we consider the computation of
its QR factorization, which is the factorization $A = QR$, where $Q$ is an
$m$-by-$n$ unitary matrix ($Q^HQ = I_n$), and $R$ is upper triangular.

The QR factorization of an $m$-by-$n$ matrix with $n \leq m$ is the time
consuming stage of some important numerical computations.  It is needed for
solving a linear least squares problem with $m$ equations (observations) and $n$
unknowns and is used to compute an orthonormal basis (the $Q$-factor) of the
column span of the initial matrix $A$.  For example, all block iterative
methods (used to solve large sparse linear systems of equations or computing
some relevant eigenvalues of such systems) require orthogonalizing a set of
vectors at each step of the process. For these two usage examples, while $n\leq
m$, $n$ can range from $n \ll m$ to $ n = m$. We note that the
extreme case $n=m$ is also relevant:  the QR factorization of a matrix can be
used to solve (square) linear systems of equations. While this requires twice
as many flops as an LU factorization, using a QR factorization (a) is
unconditionally stable (Gaussian elimination with partial pivoting or pairwise
pivoting is not) and (b) avoids pivoting so it may well be faster in some
cases.

To obtain a QR factorization, we consider algorithms which apply a sequence of
$m$-by-$m$ unitary transformations, $U_i$, ($U_i^HU_i=I$,), $i=1,\dots,\ell$,
on the left of the matrix $A$, such that after $\ell$ transformations the
resulting matrix $ R = U_\ell \ldots U_1 A $ is upper triangular, in which
case, $R$ is indeed the $R$-factor of the QR factorization.  The $Q$-factor (if
needed) can then be obtained by computing $ Q = U_1^H \ldots U_\ell^H $.  These
types of algorithms are in regular use, e.g., in the LAPACK and ScaLAPACK
libraries, and are favored over others algorithms (Cholesky QR or Gram-Schmidt)
for their stability.

The unitary transformation $U_i$ is chosen so as to introduce some zeros in the
current update matrix $ U_{i-1} \ldots U_1 A $. The two basic transformations
are Givens rotations and Householder reflections.  One Givens rotation
introduces one additional zero; the whole triangularization requires $mn -
n(n+1)/2$ Givens rotations for $n<m$. One elementary Householder reflection
simultaneously introduces $m-i$ zeros in position $i+1$ to $m$ in column $i$;
the whole triangularization requires $n$ Householder reflections for $n<m$.
(See LAPACK subroutine \GEQRTWO.) The LAPACK \GEQRT subroutine constructs a
compact WY representation to apply a sequence of $i_b$ Householder reflections,
this enables one to introduce the appropriate zeros in $i_b$ consecutive columns
and thus leverage optimized Level 3 BLAS subroutines during the update. The
blocking of Givens rotations is also possible but is more costly in terms of
flops.

The main interest of Givens rotations over Householder transformations is that
one can concurrently introduce zeros using disjoint pairs of rows, in other
words, two transformations $U_i$ and $U_{i+1}$ may be applicable concurrently.
This is not possible using the original Householder reflection algorithm since
the transformations work on whole columns and thus do not exhibit this type
of intrinsic parallelism, forcing this kind of Householder reflections to be
applied sequentially. The advantages of Householder reflections over Givens
rotations are that, first, Householder reflections perform fewer flops, and second,
the compact WY transformation enables high sequential performance of the
algorithm. In a multicore setting, where data locality and parallelism are
crucial algorithmic characteristics for enabling performance, the tiled QR
factorization algorithm combines both ideas: use of Householder reflections for
high sequential performance and use of a scheme such as Givens rotations to enable
parallelism within cores. In essence, one can think either (i) of the tiled QR
factorization as a Givens rotation scheme but on tiles ($m_b$-by-$n_b$
submatrices) instead of on scalars ($1$-by-$1$ submatrices) as in the original
scheme, or (ii) of it as a blocked Householder reflection scheme
where each reflection is confined to an extent much less than the full column
span, which enables concurrency with other reflections.

Tiled QR factorization in the context of multicore architectures has been
introduced in~\cite{Buttari2008,tileplasma,Quintana:2009}. Initially the focus
was on square matrices and the sequence of unitary transformations presented
was analogous to \SK~\cite{SamehKuck78}, which corresponds to reducing the
panels with flat trees.  The possibility of using any tree in order to either
maximize parallelism or minimize communication is explained in~\cite{CAQR}. The
focus of this chapter is on maximizing parallelism.
We reduce the communication (data movement between memory hierarchy) within the algorithm to acceptable levels by tiling the operations.
Stemming from the observation that a binary tree is best for tall
and skinny matrices and a flat tree is best for square matrices, Hadri et
al.~\cite{Hadri_ipdps_2010}, propose to use trees which combine flat trees at
the bottom level with a binary tree at the top level in order to exhibit more
parallelism.  Our theoretical and experimental work explains that we can adapt
\MC~\cite{ModiClarke84} and \Greedy~\cite{j12,j14} to tiles, resulting in yet
better algorithms in terms of parallelism. Moreover our new algorithms do not
have any tuning parameter such as the domain size in the case of~\cite{Hadri_ipdps_2010}.

The sequential kernels of the Tiled QR factorization (executed on a core) are
made of standard blocked algorithms such as LAPACK encoded in kernels; the
development of these kernels is well understood. The focus of this chapter is on
improving the overall degree of parallelism of the algorithm.  Given a
$p$-by-$q$ tiled matrix, we seek to find an appropriate sequence of unitary
transformations on the tiled matrix so as to maximize parallelism (minimize
critical path length). We will get our inspiration from previous work from the
1970s/80s on Givens rotations where the question was somewhat related: given an
$m$-by-$n$ matrix, find an appropriate sequence of Givens rotations as to
maximize parallelism. This question is essentially answered
in~\cite{j12,j14,ModiClarke84,SamehKuck78}; we call this class of
algorithms ``{\em coarse-grain algorithms}.''

Working with tiles instead of scalars, we introduce four essential differences
between the analysis and the reality of the tiled algorithms versus the
coarse-grain algorithms.  First, while there are only two states for a scalar
(nonzero or zero), a tile can be in three states (zero, triangle or full).
Second, there are more operations available on tiles to introduce zeros; we
have a total of three different tasks which can introduce zeros in a matrix.
Third, in the tiled algorithms, the factorization and the update are dissociated
to enable factorization stages to overlap with update stages; whereas, in the
coarse-grain algorithm, the factorization and the associated update are
considered as a single stage. Lastly, while coarse-grain algorithms have only
one task, we end up with six different tasks: three from the factorizations
(zeroing of tiles) and three for each of the associated updates (since these
have been unlinked from the factorization). Each of these six tasks have
different computational weights; this dramatically complicates the critical
path analysis of the tiled algorithms.

While the \Greedy algorithm is optimal for coarse-grain algorithms, we show
that it is not in the case of tiled algorithms. However, we have devised and
proved that there does exist an optimal tiled algorithm. 



\section{The QR factorization algorithm}
\label{sec.QR}

Tiled algorithms are expressed in terms of tile operations rather than
elementary operations.  Each tile is of size $n_b \times n_b$, where $n_b$ is a
parameter tuned to squeeze the most out of arithmetic units and memory
hierarchy. Typically, $n_b$ ranges from $80$ to $200$ on state-of-the-art
machines~\cite{sc09-agullo}. Algorithm~\ref{alg.QR} outlines a naive tiled QR
algorithm, where loop indices represent tiles:

\linespread{1.2}
\begin{algorithm}[htb]
  \DontPrintSemicolon
  \For{$\textnormal{k} = 1$ to $\min(p,q)$}{
      \ForAll{$\textnormal{i} \in \left\{ k+1,\ldots, p\right\}$ using any
  ordering, }{
     $\elim(i, piv(i,k), k)$
    }
  }
\caption{Generic QR algorithm for a tiled $p \times q$ matrix.}
\label{alg.QR}
\end{algorithm}
\renewcommand{\baselinestretch}{\normalspace}

In Algorithm~\ref{alg.QR}, $k$ is the panel index, and $\elim(i, piv(i,k), k)$
is an orthogonal transformation that combines rows $i$ and $piv(i,k)$ to zero
out the tile in position $(i,k)$. However, this formulation is somewhat
misleading, as there is much more freedom for QR factorization algorithms than,
say, for Cholesky algorithms (and contrarily to LU elimination algorithms,
there are no numerical stability issues).  For instance in column $1$, the
algorithm must eliminate all tiles $(i,1)$ where $i>1$, but it can do so in
several ways. Take $p=6$. Algorithm~\ref{alg.QR} uses the transformations
\[\elim(2, 1, 1), \elim(3, 1, 1), \elim(4, 1, 1), \elim(5, 1, 1), \elim(6, 1, 1)\]
But the following scheme is also valid:
\[\elim(3, 1, 1), \elim(6, 4, 1), \elim(2, 1, 1), \elim(5, 4, 1), \elim(4, 1, 1)\]
In this latter scheme, the first two transformations  $\elim(3, 1, 1)$ and $\elim(6, 4, 1)$
use distinct pairs of rows, and they can execute in parallel. On the contrary, $\elim(3, 1, 1)$ and
$\elim(2, 1, 1)$ use the same pivot row and must be sequentialized.
To complicate matters, it is possible to have two orthogonal transformations that execute in parallel
but involve zeroing a tile in two different columns. For instance we can add $\elim(6, 5, 2)$
to the previous transformations and run it concurrently with, say, $\elim(2, 1, 1)$.
Any tiled QR algorithm will be characterized by an \emph{elimination list}, which provides
the ordered list of the transformations used to zero out all the tiles below the diagonal.
This elimination list must obey certain conditions so that the factorization is valid.
For instance, $\elim(6, 5, 2)$ must follow $\elim(6, 4, 1)$ and $\elim(5, 4, 1)$
in the previous list, because there is a flow dependence between these transformations.
Note that, although the elimination list is given as a totally ordered sequence, some transformations can execute
in parallel, provided that they are not linked by a dependence: in the example,
$\elim(6, 4, 1)$ and $\elim(2, 1, 1)$  could have been swapped, and the elimination list
would still be valid.

In order to describe more fully the dependencies inherent in the eliminations we
shall observe a snippet of an example.  In Figure~\ref{fig:coarsedepexample},
to the left we have the row identifications, the empty circles represent zeroed
elements, and the filled circles represent the pivots used to zero out the
elements.  The first column's eliminations are shown in green and the second in
red.  From the elimination list, we define $I_{s,k}$ as the set of rows in
column $k$ that are zeroed out at time step $s$. 
\linespread{1.2}
\begin{figure}[htbp]
    \subfloat[Diagram of elimination list]{
        \label{fig:coarsedepexample}
        \begin{minipage}[c][2\width]{0.225\textwidth}
            \centering%
            \includegraphics[width=0.8\textwidth]{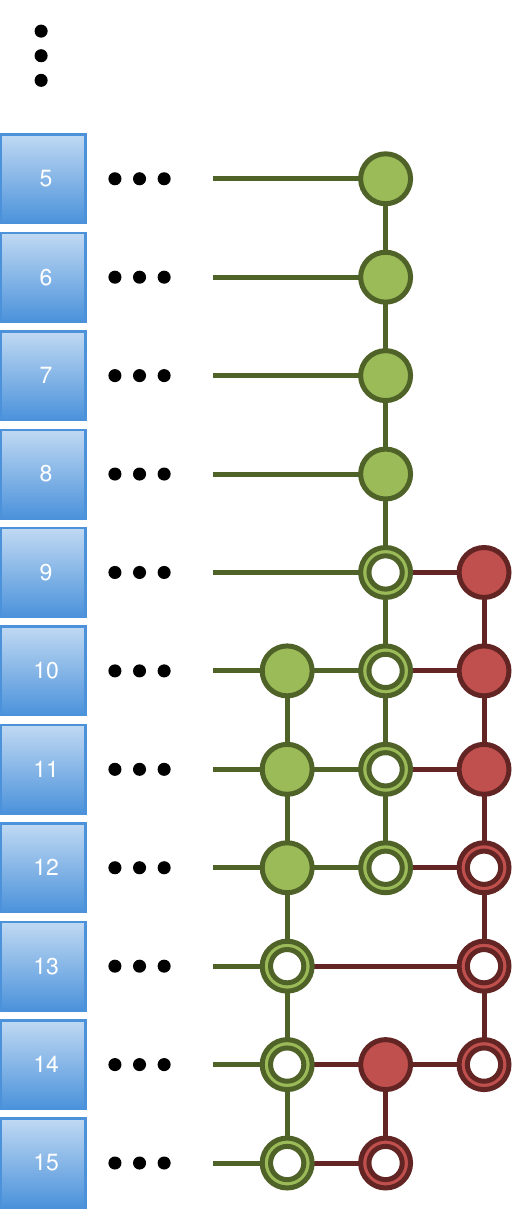}
        \end{minipage}
    }
    \subfloat[Elimination list]{
        \begin{minipage}[c][0.65\width]{0.7\textwidth}
            \centering%
            \begin{tabular}{lccl}
                $\displaystyle \left. \begin{array}{l}
                    \elim(13,10,1)\\
                    \elim(14,11,1)\\
                    \elim(15,12,1)\\
                \end{array} \right.$ & $\displaystyle \left. \begin{array}{l}
                    \quad\\\quad\\\quad\end{array}\right\} I_{s_{1},1}$ & $\Rightarrow$ &
                    $\elim(I_{s_{1},1},1)$\\
                $\displaystyle \left. \begin{array}{l}
                    \elim(9,5,1)\\
                    \elim(10,6,1)\\
                    \elim(11,7,1)\\
                    \elim(12,8,1)\\
                \end{array} \right.$ & $\displaystyle \left. \begin{array}{l}
                    \quad\\\quad\\\quad\\\quad\end{array}\right\} I_{s_{2},1}$ & $\Rightarrow$ &
                    $\elim(I_{s_{2},1},1)$\\
                $\displaystyle \left. \begin{array}{l}
                    \elim(15,14,2)\\
                \end{array} \right.$ & $\displaystyle \left. \begin{array}{l}
                    \quad\end{array}\right\} I_{s_{1},2}$ & $\Rightarrow$ &
                    $\elim(I_{s_{1},2},2)$\\
                $\displaystyle \left. \begin{array}{l}
                    \elim(12,9,2)\\
                    \elim(13,10,2)\\
                    \elim(14,11,2)\\
                \end{array} \right.$ & $\displaystyle \left. \begin{array}{l}
                    \quad\\\quad\\\quad\end{array}\right\} I_{s_{2},2}$ & $\Rightarrow$ &
                    $\elim(I_{s_{2},2},2)$\\
            \end{tabular}
        \end{minipage}
    }
\end{figure}
\renewcommand{\baselinestretch}{\normalspace}

What may not be so evident from the elimination list but is more apparent in the
diagram of the elimination list are the following dependency relationships (note
that $\prec$ indicates that the operation on the left must finish prior to the
operation on the right starting):
\begin{subequations}
    \label{eqn:coarsedeps}
    \begin{flalign}
        \quad & \elim(piv(I_{s,k},k), k-1) \prec \elim(I_{s,k}, k)
        \label{eqn:coarsedeps_piv}\\%
        \quad & \elim(I_{s,k}, k-1) \prec \elim(I_{s,k}, k)
        \label{eqn:coarsedeps_cprev}\\%
        \quad & \elim(I_{s-1,k}, k) \prec \elim(I_{s,k}, k)
        \label{eqn:coarsedeps_sprev}
    \end{flalign}%
\end{subequations}
However, not all of these dependencies may cause an elimination to be locked to
a particular time step.  In fact, some dependencies may not be needed for a
particular instance, but the addition of these will not create an artificial
lock.  For example, $\elim(I_{s_2,2},2)$ is dependent upon
$\elim(piv(I_{s_2,2}),1)$, $\elim(I_{s_2,2},1)$, and $\elim(I_{s_1,2},2)$ but
$\elim(I_{s_1,2},2)$ only depends upon $\elim(piv(I_{s_1,2}),1)$ and
$\elim(I_{s_1,2},1)$.

Before formally stating the conditions that guarantee the validity of (the
elimination list of) an algorithm, we explain how orthogonal transformations
can be implemented.

\subsection{Kernels}

To implement a given orthogonal transformation $\elim(i, piv(i,k),
k)$, one can use six different kernels, whose costs are given in
Table~\ref{tab.kernels}.  In this table, the unit of time is the time to
perform $\frac{n_b^3}{3}$ floating-point operations.


\linespread{1.2}
\begin{table*}
\centering
\scalebox{0.9}{
\begin{tabular}{@{}lcccc@{}}
  \toprule
  & \multicolumn{2}{c@{}}{Panel} & \multicolumn{2}{c@{}}{Update} \\
  \cmidrule(l){2-3}
  \cmidrule(l){4-5}
  Operation & Name & Cost & Name & Cost \\ 
  \cmidrule( r){1-1}
  \cmidrule(lr){2-2}
  \cmidrule(lr){3-3}
  \cmidrule(lr){4-4}
  \cmidrule(l ){5-5}
  Factor square into triangle        & \GEQRT & 4 & \UNMQR & 6  \\
  Zero square with triangle on top   & \TSQRT & 6 & \TSMQR & 12 \\
  Zero triangle with triangle on top & \TTQRT & 2 & \TTMQR & 6  \\
  \bottomrule
\end{tabular}
}
\caption{Kernels for tiled QR. The unit of time is $\frac{n_b^3}{3}$, where $n_b$ is the blocksize.}
\label{tab.kernels}
\end{table*}
\renewcommand{\baselinestretch}{\normalspace}

There are two main possibilities to implement an orthogonal transformation $\elim(i,
piv(i,k), k)$: The first version eliminates tile $(i,k)$ with the \emph{TS (Triangle
on top of square)} kernels, as shown in Algorithm~\ref{alg.elimSQ}:

\linespread{1.2}
\begin{algorithm}[htb]
  \DontPrintSemicolon
  $\GEQRT(piv(i,k), k)$\\
  $\TSQRT(i,piv(i,k), k)$\\
  \For{$\textnormal{j} = k+1$ to $q$}{
     $\UNMQR(piv(i,k), k, j)$\\
     $\TSMQR(i, piv(i,k), k, j)$
  }
\caption{Elimination $\elim(i, piv(i,k), k)$ via \emph{TS (Triangle on top of square)} kernels.}
\label{alg.elimSQ}
\end{algorithm}
\renewcommand{\baselinestretch}{\normalspace}

\vspace*{-0.2cm}
Here the tile panel $(piv(i,k), k)$ is factored into a triangle (with \GEQRT).
The transformation is applied to subsequent tiles $(piv(i,k),j)$, $j>k$, in row
$piv(i,k)$ (with \UNMQR). Tile $(i,k)$ is zeroed out (with \TSQRT), and
subsequent tiles $(i,j)$, $j>k$, in row $i$ are updated (with \TSMQR). The flop
count is $4+6+(6+12)(q-k)=10+18(q-k)$ (expressed in same time unit as in
Table~\ref{tab.kernels}).  Dependencies are the following:
\[
\begin{array}{ll}%
\GEQRT(piv(i,k), k) \prec \TSQRT(i, piv(i,k), k)\\%
\GEQRT(piv(i,k), k) \prec \UNMQR(piv(i,k), k, j) & \text{ for } j>k\\%
\UNMQR(piv(i,k) ,k, j) \prec \TSMQR(i, piv(i,k), k, j) & \text{ for } j>k\\%
\TSQRT(i,piv(i,k),k) \prec \TSMQR(i, piv(i,k), k, j) & \text{ for } j>k%
\end{array}%
\]
$\TSQRT(i, piv(i,k), k)$ and $\UNMQR(piv(i,k), k, j)$ can be executed in
parallel, as well as \UNMQR operations on different columns $j, j' >k$. With an
unbounded number of processors, the parallel time is thus $4+6+12=22$
time-units.

\linespread{1.2}
\begin{algorithm}[htb]
    \DontPrintSemicolon
            \If{$k>0$}{
                $\TTQRT(i, piv(i,k), k)$
            }
            \If{$k<q$}{
                \For{$\textnormal{j} = k+1$ to $q$}{
                    \If{$k>0$}{
                        $\TTMQR(i, piv(i,k), k, j)$
                    }
                }
                $\GEQRT(i, k+1)$\\
                \For{$\textnormal{j} = k+2$ to $q$}{
                    $\UNMQR(i, k, j)$
                }
            }
\caption{Elimination $\elim(i, piv(i,k), k)$ via \emph{TT (Triangle on top of triangle)} kernels.}
\label{alg.elimTR}
\end{algorithm}
\renewcommand{\baselinestretch}{\normalspace}

The second approach to implement the orthogonal transformation
\linebreak$\elim(i, piv(i,k), k)$ is with the \emph{TT (Triangle on top of
triangle)} kernels, as shown in Algorithm~\ref{alg.elimTR}. Here tile $(i,k)$
is zeroed out (with \TTQRT) and subsequent tiles $(i,j)$ and $(piv(i,k),j)$,
$j>k$, in rows $i$ and $piv(i,k)$ are updated (with \TTMQR).  Immediately
following,  tile $(i,k+1)$ is factored into a triangle and the corresponding
transformations are applied to the remaining columns in row $i$.  Necessarily,
\TTQRT must have the triangularization of tile $(i,k)$ and $(piv(i,k),k)$
completed in order to proceed.  Hence for the first column there are no updates
to be applied from previous columns such that the triangularization of these
tiles (with \GEQRT) is completed and can be considered a preprocessing step.
The flop count is $2(4+6(q-k))+2+6(q-k)=10+18(q-k)$, just as before.
Dependencies are the following:
%
%
\begin{subequations}
    \label{eqn:ttdeps}
    \begin{flalign}
        \quad & \GEQRT(piv(i,k), k) \prec \UNMQR(piv(i,k), k, j)       & \text{
        for } j>k \quad\label{eqn:ttdeps_gpivu}\\%
        \quad & \GEQRT(i, k) \prec \UNMQR(i, k, j)                     & \text{
        for } j>k \quad\label{eqn:ttdeps_gu}\\%
        \quad & \GEQRT(piv(i,k), k) \prec \TTQRT(i, piv(i,k), k) &
        \label{eqn:ttdeps_gpivt}\\%
        \quad & \GEQRT(i, k) \prec \TTQRT(i, piv(i,k), k) &
        \label{eqn:ttdeps_gt}\\%
        \quad & \TTQRT(i, piv(i,k), k) \prec \TTMQR(i, piv(i,k), k, j) & \text{
        for } j>k \quad\label{eqn:ttdeps_tm}\\%
        \quad & \UNMQR(piv(i,k), k, j) \prec \TTMQR(i, piv(i,k), k, j) & \text{
        for } j>k \quad\label{eqn:ttdeps_upivm}\\%
        \quad & \UNMQR(i, k, j) \prec \TTMQR(i, piv(i,k), k, j)        & \text{
        for } j>k \quad\label{eqn:ttdeps_um}
    \end{flalign}%
\end{subequations}
Now the factor operations in row $piv(i,k)$ and $i$ can be executed in parallel.
Moreover, the \UNMQR updates can be run in parallel with the \TTQRT
factorization.  Thus, with an unbounded number of processors, the parallel time
is $4+6+6=16$ time-units.

Recall our definition of the set $I_{s,k}$ to be the set of rows in column $k$ that
will be zeroed out at time step $s$ in the coarse-grain algorithm.  Thus the
following dependencies are a direct consequence of~\ref{eqn:coarsedeps_sprev} as
applied to the zeroing of a tile and the corresponding updates.
\begin{subequations}
    \label{eqn:ttdeps2}
    \begin{flalign}
        \quad & \TTQRT(I_{s-1}, piv(I_{s-1},k), k) \prec \TTQRT(I_s, piv(I_s,k), k) &
        \label{eqn:ttdeps_tt}\\%
        \quad & \TTQRT(I_{s-1}, piv(I_{s-1},k), k, j) \prec \TTMQR(I_s, piv(I_s,k), k, j) & \text{
        for } j>k \quad\label{eqn:ttdeps_mm}%
    \end{flalign}%
\end{subequations}

In Algorithm~\ref{alg.elimSQ} and~\ref{alg.elimTR}, it is understood that if a
tile is already in triangular form, then the associated \GEQRT and update kernels
do not need to be applied.

All the new algorithms introduced in this chapter are based on \emph{TT} kernels.
From an algorithmic perspective, \emph{TT} kernels are more appealing than
\emph{TS} kernels, as they offer more parallelism.  More precisely, we can
always break a \emph{TS} kernel into two \emph{TT} kernels: We can replace a
$\TSQRT(i,piv(i,k), k)$ (following a $\GEQRT(piv(i,k), k)$) by a $\GEQRT(i, k)$
and a $\TTQRT(i, piv(i,k), k)$. A similar transformation can be made for the
updates. Hence a \emph{TS}-based tiled algorithm can always be executed with
\emph{TT} kernels, while the converse is not true.  However, the \emph{TS}
kernels provide more data locality, they benefit from a very efficient
implementation (see \Section\ref{sec.experiments}), and several existing
algorithms use these kernels. For all these reasons, and for comprehensiveness,
our experiments will compare approaches based on both kernel types.

At this point, the PLASMA library only contains \emph{TS} kernels. We have
mapped the PLASMA algorithm to \emph{TT} kernel algorithm using this
conversion. Going from a \emph{TS} kernel algorithm to a \emph{TT} kernel
algorithm is implicitly done by Hadri et al.~\cite{Hadri_enhancingparallelism} when going
from their ``Semi-Parallel'' to their ``Fully-Parallel'' algorithms.

\subsection{Elimination lists}

As stated above, any algorithm factorizing a tiled matrix
of size $p \times q$ is characterized by its elimination list.
Obviously, the algorithm must zero out all tiles below the diagonal: for each tile $(i,k)$, $i>k$,
$1 \leq k \leq \min(p,q)$, the list must contain exactly one entry $\elim(i, \s, k)$, where \s denotes some
row index $piv(i,k)$ . There are two conditions for a transformation $\elim(i, piv(i,k), k)$ to be valid:
\begin{compactitem}
  \item both rows $i$ and $piv(i,k)$ must be ready, meaning that all their
     tiles left of the panel (of indices $(i,k')$ and $(piv(i,k),k')$ for $1
     \leq k' < k$) must have already been zeroed out: all transformations
     $\elim(i, piv(i,k'), k')$ and $\elim(piv(i,k), piv(piv(i,k),k'), k')$ must
     precede $\elim(i, piv(i,k), k)$ in the elimination list
  \item row $piv(i,k)$ must be a potential annihilator, meaning that tile
     $(piv(i,k),k)$ has not been zeroed out yet: the transformation
     $\elim(piv(i,k), piv(piv(i,k),k), k)$ must follow $\elim(i, piv(i,k), k)$
     in the elimination list
\end{compactitem}
Any algorithm that factorizes the tiled
matrix obeying these conditions is called a \emph{generic tiled algorithm} in
the following.

\begin{thm}
    \label{thm.flops}
    No matter what elimination list (any combination of TT, TS) is used the
    total weight of the tasks for performing a tiled QR factorization algorithm
    is constant and equal to $6pq^2- 2q^3$.
\end{thm}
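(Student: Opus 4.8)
The plan is to account for the total work by summing the cost of zeroing out each tile $(i,k)$ with $i > k$, together with the cost of the associated updates to the trailing columns $j > k$, and to show this sum is independent of the elimination list. First I would observe that every generic tiled algorithm, whether it uses TS or TT kernels for a given transformation $\elim(i, piv(i,k), k)$, pays exactly $10 + 18(q-k)$ time units for that transformation: this was computed explicitly in the discussion preceding the theorem for both Algorithm~\ref{alg.elimSQ} (the TS version, with flop count $4 + 6 + (6+12)(q-k)$) and Algorithm~\ref{alg.elimTR} (the TT version, with flop count $2(4 + 6(q-k)) + 2 + 6(q-k)$). The key structural fact is that the two kernel choices have identical total weight, so the total work depends only on \emph{which} tiles get zeroed at \emph{which} panel index $k$ — and that is forced: the algorithm must contain exactly one entry $\elim(i,\s,k)$ for each tile $(i,k)$ with $i>k$ and $1 \le k \le \min(p,q)$.

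Next I would handle the bookkeeping subtlety raised in the remark just before the theorem: "if a tile is already in triangular form, then the associated \GEQRT\ and update kernels do not need to be applied." This means the naive count $10 + 18(q-k)$ per elimination overcounts. The clean way to organize the sum is per-tile rather than per-transformation: the \GEQRT\ (cost $4$) and \UNMQR\ (cost $6$ each, for $q-k$ columns) on a pivot row are really the cost of \emph{triangularizing} tile $(piv(i,k),k)$ and propagating it, which happens once per row that ever serves as a fresh pivot; the \TTQRT/\TSQRT\ (cost $2$ or $6$) and \TTMQR/\TSMQR\ updates are charged to the tile being \emph{zeroed}. Rather than track pivots, I would instead argue that in \emph{any} valid factorization, for each column $k$, among the $p-k+1$ tiles in positions $(k,k),(k+1,k),\dots,(p,k)$, exactly one (namely $(k,k)$ after all updates, or whichever tile ends up on the diagonal after the reduction of that panel) is left as the surviving triangle and the other $p-k$ are zeroed; and every tile that is not yet triangular when first touched in column $k$ must be brought to triangular form exactly once. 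Counting: each of the $p-k$ zeroed tiles in column $k$ costs $2 + 6(q-k)$ (a \TTQRT\ plus \TTMQR\ across the $q-k$ trailing columns — and the TS alternative costs the same total), and each of the $p-k+1$ tiles in column $k$ that needs triangularizing costs $4 + 6(q-k)$ (a \GEQRT\ plus \UNMQR\ across the trailing columns); but exactly one tile per column arrives already triangular (it was the survivor of the previous panel's reduction) except in column... — this is precisely the place I expect the main obstacle, since the "already triangular" accounting must be pinned down carefully so that it is manifestly list-independent.

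The cleanest route around that obstacle is probably to \emph{not} split hairs about which specific tile is pre-triangularized, but to use a global invariant. Every column $k$ (for $1 \le k \le q$, since $p \ge q$) contributes: $p-k$ zeroing transformations and $(p-k)+1$ "make-triangle" operations minus one freebie — i.e. effectively $p-k$ zeroings and $p-k$ triangularizations if we net the surviving triangle of panel $k-1$ against the freebie in panel $k$, with the very first panel ($k=1$, where all $p$ tiles are full) supplying the one uncancelled \GEQRT. So the total weight is
\[
\sum_{k=1}^{q} \Big[ (p-k)\big(2 + 6(q-k)\big) + (p-k)\big(4 + 6(q-k)\big) \Big] \;+\; \text{(lower-order boundary terms)},
\]
and I would then verify by direct summation — using $\sum_{k=1}^q (p-k) = pq - q(q+1)/2$, $\sum_{k=1}^q (p-k)(q-k)$, etc. — that this collapses to exactly $6pq^2 - 2q^3$, the boundary terms cancelling. (As a sanity check one can set $p = q$ and recover the known square-case total, or check small cases like $q=1$: the single column has $p-1$ eliminations of cost $2$ plus $p$ \GEQRT-only of cost $4$... which must match $6p - 2$; getting these tiny cases to balance is exactly the diagnostic for whether the freebie-accounting is right.) The main thing to get right is that the argument never used which rows were chosen as pivots or in which order eliminations occur, only the multiset of $(i,k)$ pairs being zeroed — which is the same for every generic tiled algorithm — hence the constancy claim.
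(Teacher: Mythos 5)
There is a genuine gap in your accounting, and it sits exactly where you suspected it would. Your displayed sum charges only $p-k$ triangularizations to column $k$ and asserts that the boundary terms cancel; in fact the correct count (in the all-TT case) is $p-k+1$ \GEQRT{}s per column, because \emph{no} tile in column $k\geq 2$ arrives pre-triangularized: the \TTMQR/\TSMQR updates triggered by the eliminations in column $k-1$ turn every tile of column $k$ back into a full square, and the ``survivor'' of panel $k-1$ remains in column $k-1$ as the diagonal $R$-block --- it never migrates to provide a free triangle for panel $k$. Consequently the sum you wrote evaluates to
\[
\sum_{k=1}^{q}(p-k)\bigl(6+12(q-k)\bigr)\;=\;6pq^2-2q^3-3q^2-q,
\]
and the ``boundary terms'' are precisely the $q$ diagonal \GEQRT{}s and their $q(q-1)/2$ \UNMQR{}s, contributing $4q+3q(q-1)=3q^2+q$; they do not cancel, they are needed to reach $6pq^2-2q^3$. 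Your own $q=1$ sanity check already betrays this: you correctly use $p$ \GEQRT{}s there (giving $2(p-1)+4p=6p-2$), which contradicts the $p-k=p-1$ in your general formula. The fix is simply to charge each of the $p-k$ zeroed tiles of column $k$ the full $6+12(q-k)$ (which is kernel-independent, since $\TSQRT=\GEQRT+\TTQRT$ and $\TSMQR=\UNMQR+\TTMQR$ in weight) and to add, once per column, the cost $4+6(q-k)$ of the surviving diagonal triangle.

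For comparison, the paper avoids this per-column bookkeeping entirely: it writes four linear relations among the kernel counts that hold for every elimination list ($\#\GEQRT=\#\TTQRT+q$, $\#\UNMQR=\#\TTMQR+\tfrac12 q(q-1)$, $\#\TTQRT+\#\TSQRT=pq-\tfrac12 q(q+1)$, $\#\TTMQR+\#\TSMQR=\tfrac12 pq(q-1)-\tfrac16 q(q-1)(q+1)$) and observes that the weighted combination $4L_1+6L_2+6L_3+12L_4$ reproduces exactly the kernel weights, hence is an invariant. Your direct-summation route is perfectly workable once the triangularization count is corrected, but as written the argument does not produce the stated constant.
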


\begin{proof}
    We have that the quantity of each kernel is given by the following 
    \[\begin{array}{lcl}
        L_1 & :: & GEQRT = TTQRT + q\\
        L_2 & :: & UNMQR = TTMQR + (1/2)q(q-1)\\
        L_3 & :: & TTQRT + TSQRT = pq - (1/2)q(q+1)\\
        L_4 & :: & TTMQR + TSMQR = (1/2)pq(q-1) - (1/6)q(q-1)(q+1)
    \end{array}\]
    The quantity of \TTQRT provides the number of tiles zeroed out via a
    triangle on top of a triangle kernel.  Thus equation $L_1$ is composed of
    two parts:  necessarily, the diagonal tiles must be triangularized and each
    \TTQRT must admit one more triangularization in order to provide a pairing.
    The number of updates of these triangularizations, given by $L_2$,  is
    simply the sum of the updates from the triangularization of the tiles on the diagonal
    and the updates from the zeroed tiles via \TTQRT.  The combination of \TTQRT
    and \TSQRT, equation $L_3$, is exactly the total number of tiles that are
    zeroed, namely every tile below the diagonal.  Hence, the total number of
    updates, provided by $L_4$, is the number of tiles below the diagonal beyond
    the first column minus the sum of the progression through the columns. 
    Now we define 
    \[ L_5 = 4L_1 + 6L_2 + 6L_3 + 12L_4 \]
    then
    \begin{flalign*}
        L_5 &= 4GEQRT + 6TSQRT + 2TTQRT + 6UNMQR + 6TTMQR + 12TSMQR.
    \end{flalign*}
    As can be noted in $L_5$, the coefficients of each term correspond precisely
    to the weight of the kernels as derived from the number of flops each kernel
    incurs.  Simplifying $L_5$, we have our result
    \[ L_5 = 6pq^2 - 2q^3. \]
\end{proof}

A critical result of Theorem~\ref{thm.flops} is that no matter what elimination
list is used, the total weight of the tasks for performing a tiled QR
factorization algorithm is constant and by using our unit task weight of
$n_b^3/3$, with $m = p n_b$, and $n=q n_b$, we obtain $2 mn^2 -2/3 n^3 $ flops
which is the exact same number as for a standard Householder reflection
algorithm as found in LAPACK (e.g.,~\cite{lawn41}).

\subsection{Execution schemes}

In essence, the execution of a generic tiled algorithm is fully determined by
its elimination list.  This list is statically given as input to the scheduler,
and the execution progresses dynamically, with the scheduler executing all
required transformations as soon as possible.  More precisely, each
transformation involves several kernels, whose execution starts as soon as they
are ready, i.e., as soon as all dependencies have been enforced.
Recall that a tile $(i,k)$ can be zeroed out only after all tiles $(i,k')$, with
$k' < k$, have been zeroed out. Execution progresses as follows:
\begin{compactitem}
\item Before being ready for elimination, tile $(i,k)$, $i>k$, must be updated
    $k-1$ times, in order to zero out the $k-1$ tiles to its left (of index
    $(i,k')$, $k' < k$). The last update is a transformation $\TTMQR(i, piv(i,k-1),
    k-1, k)$ for some row index $piv(i,k-1)$ such that  $\elim(i, piv(i,k-1), k-1)$
    belongs to the elimination list. When completed, this transformation
    triggers the transformation $\GEQRT(i,k)$, which can be executed
    immediately after the completion of the \TTMQR. In turn, $\GEQRT(i,k)$
    triggers all updates $\UNMQR(i, k, j)$ for all $j > k$.  These updates
    are executed as soon as they are ready for execution.

\item The elimination $\elim(i, piv(i,k), k)$ is performed as  soon as possible
    when both rows $i$ and $piv(i,k)$ are ready. Just after the completion of
    $\GEQRT(i, k)$ and  $\GEQRT(piv(i,k), k)$, kernel
    $\TTQRT(i,$ $piv(i,k), k)$ is launched. When finished, it
    triggers the updates  $\TTMQR(i,piv(i,k), k, j)$ for all $j > k$.
\end{compactitem}

Obviously, the degree of parallelism that can be achieved depends upon the
eliminations that are chosen.  For instance, if all eliminations in a given
column use the same factor tile, they will be sequentialized.  This corresponds
to the flat tree elimination scheme described below: in each column $k$, it
uses $\elim(i, k, k)$ for all $i>k$. On the contrary, two eliminations
$\elim(i, piv(i,k), k)$  and $\elim(i', piv(i',k), k)$ in the same column can
be fully parallelized provided that they involve four different rows. Finally,
note that several eliminations can be initiated in different columns
simultaneously, provided that they involve different pairs of rows, and that
all these rows are ready (i.e., they have the desired number of leftmost
zeros).

The following lemma will prove very useful; it states that we can assume
w.l.o.g.\ that each tile is zeroed out by a tile above it, closer to the
diagonal.

\begin{lemma}
\label{th.above}
Any generic tiled algorithm can be modified, without changing its
execution time, so that all eliminations $\elim(i, piv(i,k), k)$ satisfy $i >
piv(i,k)$.
\end{lemma}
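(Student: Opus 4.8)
The plan is to argue by a local exchange argument on the elimination list, iteratively repairing any "bad" elimination that zeroes a tile using a pivot row farther from the diagonal than the row being eliminated. Concretely, suppose the list contains a transformation $\elim(i, piv(i,k), k)$ with $i < piv(i,k)$; I would show that swapping the roles of the two rows — replacing this entry by $\elim(piv(i,k), i, k)$ — yields another valid generic tiled algorithm with exactly the same dependence structure (and hence the same critical path and execution time), while strictly decreasing some nonnegative potential (e.g. the number of "bad" eliminations, or $\sum_{\text{eliminations}} \max(0, piv(i,k) - i)$). Since the potential is a bounded nonnegative integer, finitely many such swaps terminate in a list with no bad eliminations, which is the desired algorithm.

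The key steps, in order: (1) Fix one bad elimination $\elim(i, piv(i,k), k)$ with $i < piv(i,k)$ and define the modified list by replacing it with $\elim(piv(i,k), i, k)$, leaving every other entry and the total order unchanged. (2) Check that the modified list is still a valid generic tiled algorithm: I must re-verify the two validity conditions from the excerpt — that both rows are "ready" (this is symmetric in $i$ and $piv(i,k)$, so it is unaffected by the swap) and that the new pivot row $i$ is still a potential annihilator, i.e. tile $(i,k)$ has not yet been zeroed at that point in the list. Here I use that originally tile $(i,k)$ was being zeroed by this very transformation, so no earlier entry zeroed it; after the swap the new entry is the one that zeroes $(piv(i,k),k)$ instead, and I check that tile $(piv(i,k),k)$ was not zeroed earlier either — which holds because originally $piv(i,k)$ served as an annihilator here, so its tile in column $k$ was still present. (3) Verify that the dependences involving this transformation are unchanged: the flow/anti dependences induced by $\elim(i,\star,k)$ depend only on the unordered pair $\{i, piv(i,k)\}$ and on which tiles are zeroed in column $k$, and the set of tiles zeroed in column $k$ after all of column $k$'s eliminations is identical; the only change is which of the two rows remains "active" going into column $k+1$, but this does not alter the critical path since the argument can be iterated column by column. (4) Conclude via the potential/termination argument.

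The main obstacle I anticipate is step (3): showing that exchanging which row survives into later columns genuinely does not lengthen the critical path. One must be careful because a later elimination $\elim(i', piv(i',k'), k')$ with $k' > k$ might reference row $i$ or row $piv(i,k)$ as a pivot, and after the swap the "surviving" row has changed identity in column $k$ — but note that both rows $i$ and $piv(i,k)$ are still present (neither is zeroed) going into column $k+1$ regardless of which one did the annihilating, so any later transformation that used one of them as pivot can still use the same one, and the dependence chain into it is isomorphic. The cleanest way to handle this is to process columns left to right and observe that the swap in column $k$ only relabels which row "owns" the zero in $(\cdot, k)$ without changing the set of available rows for column $k+1$; I would make this precise by exhibiting an explicit bijection between the task DAG before and after the swap that preserves edges and task weights, so critical path length (and in fact the whole schedule) is literally identical.
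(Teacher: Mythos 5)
There is a genuine gap in your step (1): replacing the single entry $\elim(i, piv(i,k), k)$ by $\elim(piv(i,k), i, k)$ while ``leaving every other entry and the total order unchanged'' does not produce a valid elimination list. For a reverse elimination we have $piv(i,k) > i > k$, so tile $(piv(i,k),k)$ lies strictly below the diagonal and the original list must contain its own entry $\elim(piv(i,k), \star, k)$ somewhere later; after your swap that tile is zeroed twice while tile $(i,k)$ is never zeroed, violating the requirement that each subdiagonal tile appear exactly once as the target of an elimination. Worse, row $piv(i,k)$ may serve as the pivot for several further eliminations $\elim(i_2, piv(i,k), k), \dots, \elim(i_r, piv(i,k), k)$ in the \emph{same} column; once your swapped entry has zeroed $(piv(i,k),k)$, all of these violate the ``potential annihilator'' condition. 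Your step (2) only checks validity at the position of the swapped entry, and your step (3) only worries about later columns, so neither catches these same-column conflicts.

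The repair is the one the paper uses: the exchange must be global within the column, not local to one entry. Taking $k_0$ to be the first column containing a reverse elimination, $i_0$ the largest row index appearing as a pivot of a reverse elimination there, and $i_1$ the first row it eliminates, one swaps the roles of $i_0$ and $i_1$ throughout column $k_0$: the chain $\elim(i_1,i_0,k_0), \elim(i_2,i_0,k_0),\dots,\elim(i_r,i_0,k_0)$ followed by $\elim(i_0,piv(i_0,k_0),k_0)$ becomes $\elim(i_0,i_1,k_0), \elim(i_2,i_1,k_0),\dots,\elim(i_r,i_1,k_0)$ followed by $\elim(i_1,piv(i_0,k_0),k_0)$. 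This preserves the set of tiles zeroed in column $k_0$, keeps $i_1$ alive as an annihilator until its own elimination, induces exactly the task-DAG isomorphism you were aiming for (hence unchanged execution time), and strictly decreases the largest row index involved in a reverse elimination of column $k_0$, which gives termination. Your underlying intuition --- that the dependence structure is symmetric in the unordered pair of rows --- is correct, but it has to be applied to the whole chain of eliminations pivoted by $i_0$, not to a single list entry.
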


\begin{proof}
   Define a \emph{reverse} elimination as an elimination $\elim(i, piv(i,k), k)$
   where $i < piv(i,k)$.  Consider a generic tiled algorithm whose
   elimination list contains some reverse eliminations. Let $k_0$ be the first
   column to contain one of them. Let $i_{0}$ be the largest row index involved in
   a reverse elimination in column $k_0$.  The elimination list in column $k_0$
   may contain several reverse eliminations  $\elim(i_1, i_0, k_0)$, $\elim(i_2,
   i_0, k_0)$, \dots, $\elim(i_r, i_0, k_0)$, in that order, before row $i_0$ is
   eventually zeroed out by the transformation $\elim(i_0,$ $ piv(i_0,k_0), k_0)$.
   Note that $piv(i_0,k_0) < i_0$ by definition of $i_0$.  We modify
   the algorithm by exchanging the roles of rows $i_0$ and $i_1$ in column $k_0$:
   the elimination list now includes $\elim(i_0, i_1, k_0)$, $\elim(i_2, i_1,
   k_0)$, \dots, $\elim(i_r, i_1, k_0)$, and $\elim(i_1, piv(i_0,k_0), k_0)$. All
   dependencies are preserved, and the execution time is unchanged. Now the largest
   row index involved in a reverse elimination in column $k_0$ is strictly smaller
   than $i_0$, and we repeat the procedure until there does not remain any reverse
   elimination in column $k_0$. We proceed inductively to the following columns,
   until all reverse eliminations have been suppressed.
\end{proof}

\section{Critical paths}
\label{sec.CP}

In this section we describe several generic tiled algorithms, and we provide
their critical paths, as well as optimality results. These algorithms are
inspired by algorithms that have been introduced twenty to thirty years
ago~\cite{SamehKuck78,ModiClarke84,j14,j12}, albeit for a much simpler,
\emph{coarse-grain} model.  In this ``old'' model, the time-unit is the time needed to
execute an orthogonal transformation across two matrix rows, regardless of the
position of the zero to be created, hence regardless of the length of these rows.
Although the granularity is much coarser in this model, any existing algorithm
for the old model can be transformed into a generic tiled algorithm, just by
enforcing the very same elimination list provided by the algorithm.
Critical paths are obtained using a discrete event based simulator specially developed
to this end, based on the Simgrid framework~\cite{simgrid}. It carefully
handles dependencies across tiles, and allows for the analysis of both static and dynamic
algorithms.\footnote{The discrete event based simulator, together with the code for all tiled algorithms,
is publicly available at \TiledQRURL}

\subsection{Coarse-grain algorithms}\label{sec:Coarsegrain}

We start with a short description of three algorithms for the coarse-grain model.
These algorithms are illustrated in Table~\ref{tab.coarse} for a $15 \times 6$ matrix.\\

\subsubsection{\SK algorithm}
The \SK algorithm~\cite{SamehKuck78} uses the panel row for all eliminations in
each column, starting from below the diagonal and proceeding downwards.
Time-steps indicate the time-unit at which the elimination can be done,
assuming unbounded resources. Formally, the elimination list is
\[ \left\{ \left( \elim(i, k, k), i=k+1, k+2, \dots, p \right), k =1, 2, \dots, \min(p,q) \right\} \]
This algorithm is also referred as \FT.

\subsubsection{\MC algorithm}
The \MC algorithm is the Fibonacci scheme of order $1$ in~\cite{ModiClarke84}.
Let $\coarse(i,k)$ be the time-step at which tile $(i,k)$, $i>k$, is zeroed
out.  These values are computed as follows. In the first column, there are one
$5$, two $4$'s, three $3$'s, four $2$'s and four $1$'s (we would have had five
$1$'s with $p=16$).  Given $x$ as the least integer such that $x(x+1)/2 \geq
p-1$, we have $\coarse(i,1)= x-y+1$ where $y$ is the least integer such that $
i \leq y(y+1)/2 +1$.  Let the row indices of the $z$ tiles that are zeroed out at
step $s$, $1 \leq s \leq x$, range from $i$ to $i+z-1$. The elimination list
for these tiles is $\elim(i+j, piv(i+j,1), 1)$, with $piv(i+j) = i+j-z$ for $0
\leq j \leq z-1$. In other words, to eliminate a bunch of $z$ consecutive tiles
at the same time-step, the algorithm uses the $z$ rows above them, pairing them
in the natural order.  Now the elimination scheme of the next column is the
same as that of the previous column, shifted down by one row, and adding two
time-units: $\coarse(i,k) = \coarse(i-1,k-1)+2$, while the pairing obeys the
same rule.

\subsubsection{\Greedy algorithm}
At each step, the \Greedy algorithm~\cite{j12,j14} eliminates as many tiles as
possible in each column, starting with bottom rows. The pairing for the
eliminations is done exactly as for \MC.  There is no closed-form formula to
compute $\coarse(i,k)$, the time-step at which tile $(i,k)$ is eliminated, but
it is possible to provide recursive expressions (see~\cite{j12,j14}).

\linespread{1.0}
\begin{table}[htb]
    \centering
    \resizebox{0.9\linewidth}{!}{%
        \begin{tabular}{|rrrrrr|rrrrrr|rrrrrr|}%
            \hline%
            \multicolumn{6}{|c|}{ (a) \SK } & \multicolumn{6}{c|}{ (b) \MC }  &\multicolumn{6}{c|}{ (c) \Greedy } \\%
            \hline%
            \s &    &    &    &    &          &\s  &      &     &     &      &        &    \s  &     &    &     &     &    \\%
              1 & \s &    &    &    &         & 5  & \s   &     &     &      &        &     4  & \s  &    &     &     &    \\%
              2 &   3 & \s &    &    &        & 4  &  7   & \s  &     &      &        &     3  &  6  & \s &     &     &    \\%
               3 &   4 &   5 & \s &    &      & 4  &  6   & 9  &  \s &      &         &     3  &  5  &  8 &  \s &     &    \\%
               4 &   5 &   6 &   7 & \s &     & 3  &  6   & 8  &  11 &   \s &         &     2  &  5  &  7 &  10 &  \s &    \\%
               5 &   6 &   7 &   8 &   9 & \s & 3  &  5   & 8  &  10 &   13 &   \s    &     2  &  4  &  7 &   9 &  12 &  \s\\%
               6 &   7 &   8 &   9 &  10 &  11& 3  &  5   & 7  &  10 &   12 &   15    &     2  &  4  &  6 &   9 &  11 &  14\\%
               7 &   8 &   9 &  10 &  11 &  12& 2  &  5   & 7  &   9 &   12 &   14    &     2  &  4  &  6 &   8 &  10 &  13\\%
               8 &   9 &  10 &  11 &  12 &  13& 2  &  4   & 7  &   9 &   11 &   14    &     1  &  3  &  5 &   8 &  10 &  12\\%
               9 &  10 &  11 &  12 &  13 &  14& 2  &  4   & 6  &   9 &   11 &   13    &     1  &  3  &  5 &   7 &   9 &  11\\%
              10 &  11 &  12 &  13 &  14 &  15& 2  &  4   & 6  &   8 &   11 &   13    &     1  &  3  &  5 &   7 &   9 &  11\\%
              11 &  12 &  13 &  14 &  15 &  16& 1  &  4   & 6  &   8 &   10 &   13    &     1  &  3  &  4 &   6 &   8 &  10\\%
              12 &  13 &  14 &  15 &  16 &  17& 1  &  3   & 6  &   8 &   10 &   12    &     1  &  2  &  4 &   6 &   8 &  10\\%
              13 &  14 &  15 &  16 &  17 &  18& 1  &  3   & 5  &   8 &   10 &   12    &     1  &  2  &  4 &   5 &   7 &   9\\%
              14 &  15 &  16 &  17 &  18 &  19& 1  &  3   & 5  &   7 &   10 &   12    &     1  &  2  &  3 &   5 &   6 &   8\\%
            \hline%
        \end{tabular}%
    }
    \caption{Time-steps for coarse-grain algorithms.}
    \label{tab.coarse}
\end{table}
\renewcommand{\baselinestretch}{\normalspace}

Consider a rectangular $p \times q$ matrix, with $p>q$.  With the coarse-grain
model, the critical path of \SK is $p+q-2$, and that of \MC is $x + 2q - 2$,
where $x$ is the least integer such that $x(x+1)/2 \geq p-1$.  The critical path
of \Greedy is unknown, but the critical path of \Greedy is optimal.  For square
$q \times q$ matrices, critical paths are slightly different ($2q-3$ for \SK,
$x+2q-4$ for \MC).

\linespread{1.0}
\begin{algorithm}[ht]
   \DontPrintSemicolon
   \For{ $j = 1$ to $q$}{
        \tcc{$nz(j)$ is the number of tiles which have been eliminated in column $j$}
        $nZ(j) = 0$\;
        \tcc{$nT(j)$ is the number of tiles which have been triangularized in column $j$}
        $nT(j) = 0$\;
    }

\While{ column $q$ is not finished}{

   \For{ $j = q$ down to $1$}{

        \If{$j==1$}{
            \tcc{Triangularize the first column if not yet done}
            $nT_{\textnormal{new}} = nT(j) + ( p - nT(j) )$\;
            \If{ $p-nT(j) > 0$ }{
                \For{$k = p$ down to $1$ }{
                    $\GEQRT(k, j)$\;
                    \For{$\textnormal{jj} = j+1$ to $q$}{
                        $\UNMQR(k, j, jj)$\;
                     }
                }
            }
        }
        \Else{
            \tcc{Triangularize every tile having a zero in the previous column}
            $nT_{\textnormal{new}} = nZ(j-1)$\;
            \For{$k = nT(j)$ to $nT_{\textnormal{new}}-1$}{
                $\GEQRT(p-k, j)$\;
                \For{$\textnormal{jj} = j+1$ to $q$}{
                  $\UNMQR(p-k, j, jj)$\;
                }
            }
        }

        \tcc{Eliminate every tile triangularized in the previous step}
        $nZ_{\textnormal{new}} = nZ(j) + \lfloor \dfrac{ nT(j) - nZ(j) }{2} \rfloor$\;
        \For{ $kk = nZ(j)$ to $nZ_{\textnormal{new}}-1$}{
            $piv(p-kk) = p-kk-nZ_{\textnormal{new}}+nZ(j)$\;
            $\TTQRT(p-kk, piv(p-kk), j)$\;
            \For{$\textnormal{jj} = j+1$ to $q$}{
                $\TTMQR(p-kk, piv(p-kk), j, jj)$\\
            }
        }

        \tcc{Update the number of triangularized and eliminated tiles at the next step}
        $nT(j) = nT_{\textnormal{new}}$\;
        $nZ(j) = nZ_{\textnormal{new}}$\;
    }
}
\caption{\Greedy algorithm via \emph{TT} kernels.}
\label{alg.tiled-greedy}
\end{algorithm}
\renewcommand{\baselinestretch}{\normalspace}

\linespread{1.0}
\begin{table*}[tb]
\centering
\resizebox{\linewidth}{!}{%
\begin{tabular}{|rrrrrr|rrrrrr|rrrrrr|rrrrrr|rrrrrr|}%
\hline
\multicolumn{6}{|c|}{ (a) \SK }       & \multicolumn{6}{c|}{ (b) \MC }        &\multicolumn{6}{c|}{ (c) \Greedy }& \multicolumn{6}{c|}{ (d) \BT }  & \multicolumn{6}{c|}{ (e) \PT ($\BS = 5$) }  \\%
\hline%
 \s  &     &     &     &      &     & \s  &     &     &     &     &        &  \s&    &    &    &    &        &  \s &     &     &     &     &    & \s &    &    &    &    &   \\%
  6  & \s  &     &     &      &     &  14 & \s  &     &     &     &        &  12&  \s&    &    &    &        &   6 & \s  &     &     &     &    &  6 &  \s &    &    &    &   \\%
  8  &  28 & \s  &     &      &     &  12 &  48 & \s  &     &     &        &  10&  42&  \s&    &    &        &   8 &  28 & \s  &     &     &    &  8 &  28 &  \s &    &    &   \\%
 10  &  34 &  50 &  \s &      &     &  12 &  46 &  70 & \s  &     &        &  10&  40&  64&  \s&    &        &   6 &  36 &  56 &  \s &     &    & 10 &  34 &  50 &  \s &    &   \\%
 12  &  40 &  56 &  72 &  \s  &     &  10 &  42 &  68 &  92 & \s  &        &   8&  36&  62&  86& \s &        &  10 &  34 &  70 &  90 & \s  &    & 12 &  40 &  56 &  72 &  \s &   \\%
 14  &  46 &  62 &  78 &  94  &   \s&  10 &  40 &  64 &  90 & 114 & \s     &   8&  34&  56&  84& 106& \s     &   6 &  44 &  68 & 104 & 124 & \s & 14 &  46 &  62 &  78 &  94 &  \s\\%
 16  &  52 &  68 &  84 & 100  &  116&  10 &  40 &  62 &  86 & 112 & 136    &   8&  34&  56&  78& 102& 128    &   8 &  28 &  78 & 102 & 138 & 158&  6 &  54 &  74 &  90 & 106 & 122\\%
 18  &  58 &  74 &  90 & 106  &  122&   8 &  36 &  62 &  84 & 108 & 134    &   8&  30&  52&  78& 100& 122    &   6 &  42 &  62 & 112 & 136 & 172&  8 &  28 &  82 & 102 & 118 & 134\\%
 20  &  64 &  80 &  96 & 112  &  128&   8 &  34 &  58 &  84 & 106 & 130    &   6&  28&  50&  72& 100& 118    &  12 &  40 &  76 &  96 & 146 & 170& 10 &  34 &  50 & 110 & 130 & 146\\%
 22  &  70 &  86 & 102 & 118  &  134&   8 &  34 &  56 &  80 & 106 & 128    &   6&  28&  50&  72&  94& 116    &   6 &  46 &  74 & 110 & 130 & 180& 12 &  40 &  56 &  72 & 138 & 158\\%
 24  &  76 &  92 & 108 & 124  &  140&   8 &  34 &  56 &  78 & 102 & 128    &   6&  28&  50&  68&  94& 116    &   8 &  28 &  80 & 108 & 144 & 164& 16 &  52 &  68 &  84 & 100 & 166\\%
 26  &  82 &  98 & 114 & 130  &  146&   6 &  28 &  56 &  78 & 100 & 122    &   6&  28&  44&  66&  88& 110    &   6 &  36 &  56 & 114 & 142 & 178&  6 &  56 &  80 &  96 & 112 & 128\\%
 28  &  88 & 104 & 120 & 136  &  152&   6 &  28 &  50 &  78 & 100 & 122    &   6&  22&  44&  66&  88& 110    &  10 &  34 &  64 &  84 & 148 & 176&  8 &  28 &  84 & 108 & 124 & 140\\%
 30  &  94 & 110 & 126 & 142  &  158&   6 &  28 &  44 &  72 & 100 & 122    &   6&  22&  44&  60&  82& 104    &   6 &  38 &  62 &  92 & 112 & 182& 10 &  34 &  50 & 112 & 136 & 152\\%
 32  & 100 & 116 & 132 & 148  &  164&   6 &  22 &  44 &  60 &  94 & 116    &   6&  22&  38&  60&  76&  98    &   8 &  28 &  66 &  90 & 114 & 134& 12 &  40 &  56 &  72 & 140 & 164\\%
\hline
\end{tabular}%
}
  \caption{Time-steps for tiled algorithms.}
  \label{tab.tiled}
\end{table*}
\renewcommand{\baselinestretch}{\normalspace}

\subsection{Tiled algorithms}\label{sec:TiledAlgorithms}

As stated above, each coarse-grain algorithm can be transformed into a tiled
algorithm, simply by keeping the same elimination list, and triggering the
execution of each kernel as soon as possible.  However, because the weights of
the factor and update kernels are not the same, it is much more difficult to
compute the critical paths of the transformed (tiled) algorithms.
Table~\ref{tab.tiled} is the counterpart of Table~\ref{tab.coarse}, and depicts
the time-steps at which tiles are actually zeroed out. Note that the tiled
version of \SK is indeed the \FT algorithm in
PLASMA~\cite{Buttari2008,tileplasma}, and we have renamed it accordingly.  As an
example, Algorithm~\ref{alg.tiled-greedy} shows the \Greedy algorithm for the
tiled model.

A first (and quite unexpected) result is that \Greedy is no longer optimal, as
shown in the first two columns of Table~\ref{table.15x2x3} for a $15\times2$
matrix.  In each column and at each step, ``{\em the \ASAP algorithm}'' starts
the elimination of a tile as soon as there are at least two rows ready for the
transformation.  When $s \geq 2$ eliminations can start simultaneously, \ASAP
pairs the $2s$ rows just as \MC and \Greedy, the first row (closest to the
diagonal) with row $s+1$, the second row with row $s+2$, and so on.  As a matter
of a fact, when processing the second column, both \ASAP and \Greedy begin with
the elimination of lines 10 to 15 (at time step 20).  However, once tiles
$(13,2)$, $(14,2)$ and $(15,2)$ are zeroed out (i.e.  at time step 22), \ASAP
eliminates $4$ zeros, in rows $9$ through $12$. On the contrary, \Greedy waits
until time step $26$ to eliminate 6 zeros in rows $6$ through $12$.  In a sense,
\ASAP is the counterpart of \Greedy at the tile level.  However, \ASAP is not
optimal either, as shown in Table~\ref{table.15x2x3} for a $15\times3$ matrix.
On larger examples, the critical path of \Greedy is better than that of \ASAP,
as shown in Table~\ref{table.biggermat}.

We can however use the optimality of the coarse-grain \Greedy to devise an
optimal tiled algorithm.  Let us define the following algorithm:
\begin{defin}
    \label{def.grasap}
    Given a matrix of $p \times q$ tiles, with $p > q$, the \GA($i$) algorithm
    \begin{enumerate}
        \item uses Algorithm~\ref{alg.elimTR} to execute \Greedy on the first
            $q-i$ columns and propagate the updates through column $q$.
        \item and for column(s) $q-i+1$ through $q$, apply the \ASAP algorithm.
    \end{enumerate}
\end{defin}

Clearly, if we let $i = q$ we obtain the \ASAP algorithm. We define \GA to be
\GA(1), i.e., only the elimination of the last column will differ from \Greedy,
and we will show that \GA is an optimal tiled algorithm.

\linespread{1.0}
\begin{table}[htb]
\centering
\resizebox{\linewidth}{!}{%
\subfloat[\Greedy nor \ASAP are optimal.\label{table.15x2x3}]{%
\begin{tabular}{c|rrr|rrr|c}%
\cline{2-7}%
~~ & \multicolumn{3}{|c|}{ (a)  \Greedy  }  &\multicolumn{3}{c|}{ (b) \ASAP } & ~~ \\%
\cline{2-7}%
~~ &                \s&    &                &         \s  &     &             & ~~ \\%
~~ &                12&  \s&                &         12  & \s  &             & ~~ \\%
~~ &                10&  42&  \s            &         10  & 40  & \s          & ~~ \\%
~~ &                10&  40&  64            &         10  & 36  & 86          & ~~ \\%
~~ &                 8&  36&  62            &          8  & 34  & 80          & ~~ \\%
~~ &                 8&  34&  56            &          8  & 32  & 74          & ~~ \\%
~~ &                 8&  34&  56            &          8  & 30  & 68          & ~~ \\%
~~ &                 8&  30&  52            &          8  & 28  & 62          & ~~ \\%
~~ &                 6&  28&  50            &          6  & 28  & 56          & ~~ \\%
~~ &                 6&  28&  50            &          6  & 26  & 50          & ~~ \\%
~~ &                 6&  28&  50            &          6  & 24  & 46          & ~~ \\%
~~ &                 6&  28&  44            &          6  & 24  & 44          & ~~ \\%
~~ &                 6&  22&  44            &          6  & 22  & 44          & ~~ \\%
~~ &                 6&  22&  44            &          6  & 22  & 40          & ~~ \\%
~~ &                 6&  22&  38            &          6  & 22  & 38          & ~~ \\%
\cline{2-7}%
\end{tabular}%
}

~~
\subfloat[\Greedy generally outperforms \ASAP.]{%
\label{table.biggermat}%
\begin{tabular}{|c|l|r|r|r|r|}%
\cline{3-6}%
\multicolumn{2}{c}{}             & \multicolumn{4}{|c|}{$q$}\\%
\hline%
$p$                  & \multicolumn{1}{c|}{Algorithm} & \multicolumn{1}{c|}{16}  & \multicolumn{1}{c|}{32} & \multicolumn{1}{c|}{64} & \multicolumn{1}{c|}{128}    \\%
\hline%
\multirow{2}{*}{16}  & \Greedy   & 310 & \multirow{2}{*}{~}   &  \multirow{4}{*}{~}  &  \multirow{6}{*}{~}      \\%
                     & \ASAP     & 310 &    &    &        \\%
\cline{1-4}%
\multirow{2}{*}{32}  & \Greedy   & 360 &650 &    &        \\%
                     & \ASAP     & 402 &656 &    &        \\%
\cline{1-5}%
\multirow{2}{*}{64}  & \Greedy   & 374 &726 & 1342&        \\%
                     & \ASAP     & 588 &844 & 1354&        \\%
\hline%
\multirow{2}{*}{128} & \Greedy   & 396 &748 & 1452 & 2732    \\%
                     & \ASAP     & 966 &1222 & 1748& 2756    \\%
\hline%
\end{tabular}%
}
}
\caption{Neither \Greedy nor \ASAP are optimal.\label{table.greedyvsasap}}
\end{table}
\renewcommand{\baselinestretch}{\normalspace}

Although we cannot provide an elimination list for the entire tiled matrix of
size $p \times q$, we do provide an elimination list for the first $q-1$
columns.  This tiled elimination list describes the time-steps at which
Algorithm~\ref{alg.elimTR} is complete, i.e., all of the factorization kernels
are complete for $k\leq q-1$ and corresponding update kernels are complete for
all columns $k<j\leq q$.

We must make note of one consequence from the coarse-grain elimination list
before proceeding.  We will use this repeatedly within the proof of translating
a coarse-grain elimination list to a tiled elimination list.
\begin{lemma}
    \label{lem:scoarse}
    Given an elimination list from any coarse-grain algorithm, let\\
    $s = coarse(i,k)$ be the time step at which element $(i,k)$ is eliminated and
    let 
    \[ I_{s,k} = \left\{ i \vert s = coarse(I_{s,k},k) \right\}.\] 
    Then for any $s$ we have
    \[ s - 1 = coarse(I_{s,k},k) - 1 \geq \max \left( \begin{array}{l}
            coarse(I_{s,k},k-1)\\coarse(piv(I_{s,k},k),k-1)
\end{array}\right) \]
and in particular
\[ s_1 - 1 = \max \left( \begin{array}{l}
        coarse(I_{s_1,k},k-1)\\coarse(piv(I_{s_1,k},k),k-1)
\end{array}\right) \]
where $s_1 = \min_{k+1\leq i \leq p}(coarse(i,k))$.
\end{lemma}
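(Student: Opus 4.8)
The plan is to read off both claims directly from the coarse-grain dependence DAG, using that in this model every elimination costs exactly one time-unit, so that $coarse(\cdot,\cdot)$ is simply the length of the longest dependence chain: $coarse(i,k)$ equals one plus the maximum of $coarse$ over all predecessors of $\elim(i,piv(i,k),k)$ in the DAG (an empty maximum being $0$). Throughout I fix a column $k\ge 2$ (for $k=1$ there is no column $k-1$ and the statement is vacuous), I write $coarse(J,k')=\max_{j\in J}coarse(j,k')$ for a set of rows $J$, and by Lemma~\ref{th.above} I may assume $piv(i,k)<i$ (this last point is not actually needed).

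For the inequality, fix $s$ and any $i\in I_{s,k}$. The relations~\eqref{eqn:coarsedeps_cprev} and~\eqref{eqn:coarsedeps_piv} are genuine flow dependences --- the tiles $(i,k-1)$ and $(piv(i,k),k-1)$ must be zeroed out before $\elim(i,piv(i,k),k)$ may touch rows $i$ and $piv(i,k)$ --- so $\elim(i,\s,k-1)$ and $\elim(piv(i,k),\s,k-1)$ both precede $\elim(i,piv(i,k),k)$. Since a predecessor run at time $t$ forbids its successor before time $t+1$, this gives $coarse(i,k-1)\le s-1$ and $coarse(piv(i,k),k-1)\le s-1$; taking the maximum over $i\in I_{s,k}$ yields $s-1\ge\max\bigl(coarse(I_{s,k},k-1),\,coarse(piv(I_{s,k},k),k-1)\bigr)$, which is the first assertion (and, at $s=s_1$, the ``$\ge$'' half of the equality).

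For the reverse inequality at $s=s_1=\min_{k+1\le i\le p}coarse(i,k)$, pick $i^{*}\in I_{s_1,k}$. By~\eqref{eqn:coarsedeps_cprev}--\eqref{eqn:coarsedeps_sprev} the predecessors of $\elim(i^{*},piv(i^{*},k),k)$ lie among $\elim(i^{*},\s,k-1)$, $\elim(piv(i^{*},k),\s,k-1)$, and the column-$k$ eliminations of $I_{s_1-1,k}$; but $I_{s_1-1,k}=\emptyset$ since $s_1$ is the smallest time-step used in column $k$. Hence $coarse(i^{*},k)\le 1+\max\bigl(coarse(i^{*},k-1),\,coarse(piv(i^{*},k),k-1)\bigr)$, i.e.\ $\max\bigl(coarse(i^{*},k-1),\,coarse(piv(i^{*},k),k-1)\bigr)\ge s_1-1$, and combining with the previous paragraph (at $s=s_1$) gives $s_1-1=\max\bigl(coarse(I_{s_1,k},k-1),\,coarse(piv(I_{s_1,k},k),k-1)\bigr)$.

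The delicate step is the claim just used: that~\eqref{eqn:coarsedeps_cprev}--\eqref{eqn:coarsedeps_sprev} truly exhaust the dependences feeding an elimination, so that no hidden constraint inflates $coarse(i^{*},k)$ beyond $1+\max(\cdots)$. Making this airtight means tracking the read/write footprint of a coarse-grain elimination (it rewrites rows $i$ and $piv(i,k)$ from column $k$ on) and checking that every conflict between two eliminations is subsumed, through the transitive closure of~\eqref{eqn:coarsedeps_cprev}--\eqref{eqn:coarsedeps_sprev}, in particular that the last write to either of rows $i^{*},piv(i^{*},k)$ before column $k$ is exactly the zeroing of its column-$(k-1)$ tile. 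A cleaner alternative route is to argue directly that at time $1+\max\bigl(coarse(i^{*},k-1),coarse(piv(i^{*},k),k-1)\bigr)$ both rows $i^{*}$ and $piv(i^{*},k)$ are ready while tile $(piv(i^{*},k),k)$ is still nonzero --- nothing in column $k$ having fired yet, as $s_1$ is minimal --- so the elimination is legal then and an as-soon-as-possible schedule performs it no later.
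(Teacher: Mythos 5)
Your proof is correct and follows the same route as the paper, whose entire argument for this lemma is the single sentence that it ``follows directly from the dependencies'' \eqref{eqn:coarsedeps_piv}--\eqref{eqn:coarsedeps_sprev}. Your write-up is a careful unpacking of that one-liner --- and you rightly flag that the equality at $s_1$ needs not just the dependences (which only give ``$\geq$'') but also the fact that $coarse(\cdot,\cdot)$ records as-soon-as-possible completion times under exactly those dependences, which is how all the coarse-grain time-step tables in the paper are defined.
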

\begin{proof}
    This follows directly from the dependencies given
    in~\eqref{eqn:coarsedeps_piv}-\eqref{eqn:coarsedeps_sprev}.  
\end{proof}

\begin{thm}
    \label{thm.lowerbound}
    Given the elimination list of a coarse-grain algorithm for a matrix of size
    $p \times q$, using Algorithm~\ref{alg.elimTR}, the tiled elimination list
    for all but the last column is given by
    \[ tiled(i,k) = 10k + 6 \cdot coarse(i,k), \qquad 1\leq i \leq p, 1 \leq
        k < q-1\]
    where $coarse(i,k)$ is the elimination list of the coarse-grain algorithm.
\end{thm}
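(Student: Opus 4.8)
The plan is to establish the formula by a double induction: an outer induction on the column index $k$, and, for each fixed $k$, an inner induction on the coarse time-step $s$ at which tiles of column $k$ are zeroed. Besides $tiled(i,k)$ --- which I take to mean the finish time of the last \TTMQR of the elimination $\elim(i,piv(i,k),k)$ implemented by Algorithm~\ref{alg.elimTR}, i.e. the moment rows $i$ and $piv(i,k)$ have been updated through column $q$ --- I carry three auxiliary quantities in the unbounded-resources ``as soon as possible'' schedule: $g(i,k)$, the finish time of $\GEQRT(i,k)$; $u(i,k,j)$ for $j>k$, the finish time of $\UNMQR(i,k,j)$; and, for $i\in I_{s,k}$, $z_s(i,k)$ and $M_s(i,k,j)$, the finish times of the \TTQRT and of the column-$j$ \TTMQR of the step-$s$ elimination. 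The outer induction hypothesis for column $k$ is: $g(i,k)=tiled(i,k-1)+4$; $u(i,k,j)=tiled(i,k-1)+10$, and this is independent of $j$; $z_s(i,k)\le 10k+6s-10$; and $M_s(i,k,j)=10k+6s$, again independent of $j$. The last assertion is the statement, since $tiled(i,k)=M_{coarse(i,k)}(i,k,j)$.

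For the base case $k=1$, the $\GEQRT(i,1)$ are a preprocessing step with no predecessors, so $g(i,1)=4$, and each tile $(i,j)$ with $j>1$ is the input data, ready at time $0$, so $u(i,1,j)=g(i,1)+6=10$. The inner induction on $s$ uses \eqref{eqn:ttdeps_gpivt}--\eqref{eqn:ttdeps_gt} together with the $\TTQRT$-chain \eqref{eqn:ttdeps_tt}: each triangle fed to a step-$s$ \TTQRT was last touched by a \GEQRT (at time $4$) or by an earlier-step \TTQRT (at time $z_{s'}\le z_{s-1}$), whence $z_1=6$ and $z_s(i,1)=z_{s-1}+2=2s+4\le 10\cdot 1+6s-10$. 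Then by \eqref{eqn:ttdeps_tm}--\eqref{eqn:ttdeps_um} and the $\TTMQR$-chain \eqref{eqn:ttdeps_mm} one has $M_s(i,1,j)=\max\!\big(z_s(i,1),u(i,1,j),u(piv(i,1),1,j),M_{s-1}(\cdot,1,j)\big)+6$; the dominating term is the $\UNMQR$ term for $s=1$ (so $M_1=16$) and $M_{s-1}(\cdot,1,j)=10+6(s-1)$ for $s\ge 2$, giving $M_s(i,1,j)=10+6s$, independent of $j$.

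For the inductive step, assume the hypothesis for all columns below $k$. A tile $(i,k)$ becomes a full square only after row $i$'s elimination in column $k-1$, whose last \TTMQR hits column $k$ at time $tiled(i,k-1)$, after which Algorithm~\ref{alg.elimTR} launches $\GEQRT(i,k)$; so $g(i,k)=tiled(i,k-1)+4$, and $\UNMQR(i,k,j)$, which waits on $\GEQRT(i,k)$ and on $\TTMQR(i,piv(i,k-1),k-1,j)$, finishes at $u(i,k,j)=tiled(i,k-1)+10=10k+6\,coarse(i,k-1)$, independent of $j$ by the column induction. Fix $i\in I_{s,k}$. Lemma~\ref{lem:scoarse} gives $\max\!\big(coarse(i,k-1),coarse(piv(i,k),k-1)\big)\le s-1$, with equality when $s$ is the first step $s_1$ of column $k$; hence $u(i,k,j),u(piv(i,k),k,j)\le 10k+6(s-1)$, and a short sub-induction exactly as in the base case gives $z_s(i,k)\le 10k+6s-10$, so the \TTQRT never dominates. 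For $s=s_1$ the equality case forces $\max\!\big(u(i,k,j),u(piv(i,k),k,j)\big)=10k+6(s_1-1)$, so $M_{s_1}(i,k,j)=10k+6(s_1-1)+6=10k+6s_1$. For $s>s_1$, the $\TTMQR$-chain \eqref{eqn:ttdeps_mm} (which links consecutive coarse steps, as these are consecutive for the algorithms in Section~\ref{sec:Coarsegrain}) gives $M_s(i,k,j)\ge M_{s-1}(\cdot,k,j)+6=10k+6s$, while the bounds above give $M_s(i,k,j)\le\max\!\big(10k+6s-10,\,10k+6(s-1)\big)+6=10k+6s$. Thus $M_s(i,k,j)=10k+6s$, independent of $j$, closing both inductions and yielding $tiled(i,k)=10k+6\,coarse(i,k)$.

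The step I expect to be the real obstacle is keeping the \TTQRT off the critical path of each elimination, i.e. the inequality $z_s(i,k)\le 10k+6s-10$: the triangle being factored may have been refreshed by several earlier \TTQRT's (every time its row served as a pivot), so this needs its own induction chasing those refreshes back through \eqref{eqn:ttdeps_tt}. Closely tied to this is verifying that $u$, and hence $M$, are genuinely independent of the update column $j$, so that the ``$+6$ per coarse step'' propagates cleanly; and one must confirm that the structure of the coarse algorithm enters only through the equality case of Lemma~\ref{lem:scoarse} at the first step of each column, the later steps being pinned by the chain \eqref{eqn:ttdeps_mm} alone.
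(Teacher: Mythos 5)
Your proposal is correct and follows essentially the same route as the paper's proof: the same double induction (outer on the column $k$, inner on the coarse time-step $s$), the same auxiliary finish times for \GEQRT, \UNMQR, \TTQRT and \TTMQR, the same invocation of Lemma~\ref{lem:scoarse} (with equality at $s_1$ and inequality thereafter), and the same observation that the \TTQRT finish time satisfies only an upper bound $10(k-1)+6s$ for $s>s_1$ and hence never dominates the final \TTMQR. The step you flagged as the likely obstacle is handled in the paper exactly as you anticipate, by the sub-induction through the \TTQRT chain \eqref{eqn:ttdeps_tt}.
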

\begin{proof}
    In this analysis, when $k$ is clear, we will use $I_{s}$ instead of
    $I_{s,k}$.  By abuse of notation, we will write $\GEQRT(i,k)$ to denote the
    time at which the task $\GEQRT(i,k)$ is complete and this will be the same
    for all of the kernels.  Thus we will prove that
    \[ tiled(i,k) = \TTMQR(i,piv(i,k),k,j) \quad \textnormal{for $j>k$}.\] 
    Note that $j$ represents the column in which the updates are applied and all
    columns $j$ for $j>k$ have the same update history.  In
    Algorithm~\ref{alg.elimTR}, the two $j$-loops spawn mutually independent
    tasks.  Since we have an unbounded number of processors, these tasks can all
    run simultaneously.  So $j$ represents any one of these columns.

    We will proceed by induction on $k$.  For the first column, $k=1$, we do not
    have any dependencies which concern the $GEQRT$ operations.  Thus from 
    \eqref{eqn:ttdeps_gu} we have for $1\leq i \leq p$,
    \begin{eqnarray}
        \GEQRT(i,1) &=& 4 \label{eqn:geqrt1}\\
        \UNMQR(i,1,j) &=& 4 + 6 = 10 \label{eqn:unmqr1}
    \end{eqnarray}
    Since each column in the coarse-grain elimination list is composed of one
    or more time steps, we must also proceed with induction on the time steps.
    Let
    \begin{equation}
        s_1 = \min_{2 \leq i \leq p}\left( coarse(i,1) \right).
        \label{eqn:s}
    \end{equation}
    In the case $k=1$, we have that
    \[ s_1 = 1. \]
    In other words, the first tasks finish at time step 1 for the coarse-grain
    algorithm.  This is a complicated manner in which to state that $s_1 = 1$,
    but it will be needed in the general setting.

    So for $s_1$, from \eqref{eqn:ttdeps_gpivt} and \eqref{eqn:ttdeps_gt} we have
    \begin{flalign*}
        \TTQRT(I_{s_1},piv(I_{s_1},1),1) &= \max \left(
        \begin{array}{c}
            \GEQRT(piv(I_{s_1},1),1)\\\GEQRT(I_{s_1},1)
        \end{array}\right) + 2\\
        &= 4 + 2\\
    \end{flalign*}
    thus
    \begin{equation}
        \TTQRT(I_{s_1},piv(I_{s_1},1),1) = 4 + 2s_1.
        \label{eqn:ttqrts1k1}
    \end{equation}
    Now from \eqref{eqn:ttdeps_tm}, \eqref{eqn:ttdeps_upivm}, and \eqref{eqn:ttdeps_um} we have
    \begin{flalign*} 
        \TTMQR(I_{s_1},piv(I_{s_1},1),1,j) &= \max\left( \begin{array}{c}TTQRT\left(
            I_{s_1}, piv(I_{s_1},1), 1 \right)\\ 
            \UNMQR\left( piv(I_{s_1},1), 1, j \right)\\
            \UNMQR\left( I_{s_1}, 1, j \right)
        \end{array}\right) + 6\\
        &= 10\cdot 1 + 6s_1\\ 
        &= 10\cdot 1 + 6\cdot coarse(I_{s_1},1)\\ 
    \end{flalign*}
    Therefore,
    \begin{equation}
        \TTMQR(I_{s_1},piv(I_{s_1},1),1,j) = tiled(I_{s_1},1).
        \label{eqn:ttmqr_s1k1}
    \end{equation}
    Assume that for $1 \leq t \leq s-1$ we have
    \begin{eqnarray}
        \TTQRT(I_t,piv(I_t,1),1) &=& 4 + 2t
        \label{eqn:ttqrt_tk1}\\
        \TTMQR(I_t,piv(I_t,1),1,j) &=& 10 + 6t
        \label{eqn:ttmqr_tk1}
    \end{eqnarray}
    then from \eqref{eqn:ttdeps_gpivt}, \eqref{eqn:ttdeps_gt}, and \eqref{eqn:ttdeps_tt} we have
    \begin{flalign*}
        \TTQRT(I_s,piv(I_s,1),1) &= \max\left( \begin{array}{c}
            \GEQRT\left( piv(I_s,1), 1 \right)\\ 
            \GEQRT\left( I_s,1 \right) \\ 
            \TTQRT\left( I_{s-1}, piv(I_{s-1},1),1 \right)
        \end{array}\right) + 2\\
        &= \max\left( \begin{array}{c}
            4\\4\\4+2(s-1)
        \end{array}\right) + 2\\
    \end{flalign*}
    Thus
    \begin{equation}
        TTQRT(I_s,piv(I_s,1),1) = 4 + 2s.
        \label{eqn:ttqrt_sk1}
    \end{equation}
    From \eqref{eqn:ttdeps_tm},  \eqref{eqn:ttdeps_upivm}, \eqref{eqn:ttdeps_um},
    and \eqref{eqn:ttdeps_mm} we have
    \begin{flalign*} 
        \TTMQR(I_s,piv(I_s,1),1,j) &= \max\left( \begin{array}{c}
            \TTQRT\left( I_s, piv(I_s,1), 1 \right)\\ 
            \UNMQR\left( piv(I_s,1), 1, j \right)\\
            \UNMQR\left( I_s, 1, j \right)\\
            \TTMQR\left( I_{s-1}, piv(I_{s-1},1),1,j \right)
        \end{array}\right) + 6\\
        &= \max\left( \begin{array}{c}
            4 + 2s\\10\\10\\10+6(s-1)
        \end{array}\right) + 6\\
        &= 10 + 6(s-1) + 6\\
        &= 10 + 6s\\
        &= 10\cdot 1 + 6 \cdot coarse(I_s,1)
    \end{flalign*}
    Thus
    \begin{equation}
        \TTMQR(I_s,piv(I_s,1),1,j) = tiled(I_s,1)
        \label{eqn:ttmqr_sk1}
    \end{equation}
    establishing our base case for the induction on $k$.

    Now assume that for $1 \leq h \leq k-1$ we have, for any $s$ in column $h$,
    \[ TTMQR(I_s,piv(I_s,h),h,j) = tiled(I_s,h). \]
    In order to start the elimination of the next column, we must have that all
    updates from the elimination of the previous column are complete.  Thus
    using the induction assumption, we have 
    \[ \GEQRT(i,k) = \TTMQR(i,piv(i,k-1),k-1,k) + 4 \]
    so that
    \begin{equation}
        \GEQRT(i,k) = 10(k-1) + 6\cdot coarse(i,k-1) + 4
        \label{eqn:geqrt_k}
    \end{equation}
    and
    \[ \UNMQR(i,k,j) = \max \left( \begin{array}{c}
            \GEQRT(i,k)\\\TTMQR(i,piv(i,k-1),k-1,j) \end{array} \right) + 6 \]
    so that
    \begin{equation}
        \UNMQR(i,k,j) = 10k + 6 \cdot coarse(i,k-1).
        \label{eqn:unmqr_k}
    \end{equation}
    Again, we must proceed with an induction on the time steps in column $k$.
    Let
    \begin{equation}
        s_1 = \min_{k+1 \leq i \leq p}\left( coarse(i,k) \right).
        \label{eqn:sk}
    \end{equation}
    From \eqref{eqn:ttdeps_gpivt} and \eqref{eqn:ttdeps_gt} we have

    \begin{flalign*}
        \TTQRT(I_{s_1},piv(I_{s_1},k),k) &= \max\left( \begin{array}{c}
                \GEQRT\left( piv(I_{s_1},k), k \right)\\ 
                \GEQRT\left( I_{s_1},k \right)
            \end{array}\right) + 2\\
        &= \max\left( \begin{array}{c}
            \scalebox{.95}{
                $10(k-1) + 6 \cdot coarse(piv(I_{s_1},k),k-1) + 4$}\\
            \scalebox{.95}{
                $10(k-1) + 6 \cdot coarse(I_{s_1},k-1) + 4$}
            \end{array}\right) + 2\\
        &= 10(k-1) + 6 \max\left( \begin{array}{c}
            \scalebox{.95}{ $coarse(piv(I_{s_1},k),k-1)$}\\
            \scalebox{.95}{ $coarse(I_{s_1},k-1)$}\\
            \end{array}\right) + 4 + 2\\
    \end{flalign*}
    From the application of Lemma~\ref{lem:scoarse}, we have
    \[ coarse(I_{s_1},k) - 1 = \max \left( \begin{array}{l}
        coarse(I_{s_1,k},k-1)\\coarse(piv(I_{s_1,k},k),k-1)
    \end{array}\right) \]
    such that
    \[ \TTQRT(I_{s_1},piv(I_{s_1},k),k) = 10(k-1) + 6 \left[ coarse(I_{s_1},k) -
        1 \right] + 4 + 2. \] 
    Therefore,
    \begin{equation}
        \TTQRT(I_{s_1},piv(I_{s_1},k),k) = 10(k-1) + 6s_1.
        \label{eqn:ttqrt_s1k}
    \end{equation}

    For the updates, we must again examine the three dependencies which result
    from \eqref{eqn:ttdeps_tm}, \eqref{eqn:ttdeps_upivm}, and
    \eqref{eqn:ttdeps_um} such that we have
    \begin{flalign*} 
        TTMQR(I_{s_1},piv(I_{s_1},k),k,j) &= \max\left( \begin{array}{l}
            UNMQR\left( piv(I_{s_1},k), k, j \right)\\
            UNMQR\left( I_{s_1}, k, j \right)\\
            TTQRT\left( I_{s_1}, piv(I_{s_1},k), k \right)
        \end{array}\right) + 6\\
        &= \max\left( \begin{array}{l}
            10k+6\cdot coarse(I_{s_1},k-1)\\
            10k+6\cdot coarse(piv(I_{s_1},k),k-1)\\
            10(k-1)+6s_1
        \end{array}\right) + 6
    \end{flalign*}
    Using Lemma~\ref{lem:scoarse}, we have
    \begin{flalign*} 
        TTMQR(I_{s_1},piv(I_{s_1},k),k,j) &= \max\left( \begin{array}{l}
            10k+6(s_1 - 1)\\
            10(k-1)+6s_1
        \end{array}\right) + 6\\
        &= 10k + \max\left( \begin{array}{l}
            6s_1 - 6\\
            6s_1 - 10
        \end{array}\right) + 6\\
        &= 10k + 6s_1
    \end{flalign*}
    Therefore
    \begin{equation}
        TTMQR(I_{s_1},piv(I_{s_1},k),k,j) = tiled(I_{s_1},k).
        \label{eqn:ttmqr_s1k}
    \end{equation}
    Now assume that for $s_1 \leq t \leq s - 1$ we have
    \begin{eqnarray}
        \TTQRT(I_t,piv(I_t,k),k) &\leq& 10(k-1) + 6t
        \label{eqn:ttqrt_tk}\\
        \TTMQR(I_t,piv(I_t,k),k,j) &=& 10k + 6t.
        \label{eqn:ttmqr_tk}
    \end{eqnarray}
    and note that we do not have equality for $s>s_1$ via
    Lemma~\ref{lem:scoarse}.  From \eqref{eqn:ttdeps_gpivt},
    \eqref{eqn:ttdeps_gt}, and \eqref{eqn:ttdeps_tt} we have
    \begin{flalign*}
        \TTQRT(I_s,piv(I_s,k),k) &= \max\left( \begin{array}{c}
            \GEQRT\left( piv(I_s,k), k \right)\\ 
            \GEQRT\left( I_s,k \right) \\ 
            \TTQRT\left( I_{s-1}, piv(I_{s-1},k),k \right)
        \end{array}\right) + 2\\
        &\leq \max\left( \begin{array}{c}
            10(k-1) + 6 \cdot coarse(piv(I_s,k),k-1) + 4\\
            10(k-1) + 6 \cdot coarse(I_s,k-1) + 4\\
            10(k-1) + 6(s-1)
        \end{array}\right) + 2
    \end{flalign*}
    Note that from Lemma~\ref{lem:scoarse}
    \[ s - 1 \geq \max \left( \begin{array}{c} coarse(piv(I_s,k),k-1)\\
            coarse(I_s,k-1) \end{array} \right)\]
    such that
    \[ \TTQRT(I_s,piv(I_s,k),k) \leq 10(k-1) + 6(s-1) + 4 + 2. \]
    Thus
    \begin{equation}
        \TTQRT(I_s,piv(I_s,k),k) \leq 10(k-1) + 6s.
        \label{eqn:ttqrt_sk}
    \end{equation}

    For the updates, we must examine the four dependencies which result
    from \eqref{eqn:ttdeps_tm}, \eqref{eqn:ttdeps_upivm}, \eqref{eqn:ttdeps_um}, 
    and \eqref{eqn:ttdeps_mm} such that we have
    \begin{flalign*} 
        TTMQR(I_s,piv(I_s,k),k,j) &= \max\left( \begin{array}{l}
            UNMQR\left( piv(I_s,k), k, j \right)\\
            UNMQR\left( I_s, k, j \right)\\
            TTQRT\left( I_s, piv(I_s,k), k \right)\\ 
            TTMQR\left( I_{s-1}, piv(I_{s-1},k), k, j \right)
        \end{array}\right) + 6\\
        &= \max\left( \begin{array}{l}
            10k+6\cdot coarse(I_s,k-1)\\
            10k+6\cdot coarse(piv(I_s,k),k-1)\\
            10(k-1)+6s\\
            10k + 6(s-1)
        \end{array}\right) + 6.
    \end{flalign*}
    As before, Lemma~\ref{lem:scoarse} allows us to write
    \begin{flalign*} 
        TTMQR(I_s,piv(I_s,k),k,j) &= \max\left( \begin{array}{l}
            10k+6(s-1)\\
            10(k-1)+6s
        \end{array}\right) + 6\\
        &= 10k + \max\left( \begin{array}{l}
            6s-6\\
            6s-10
        \end{array}\right) + 6\\
        &= 10k + 6s
    \end{flalign*}
    Therefore
    \begin{equation}
        TTMQR(I_s,piv(I_s,k),k,j) = tiled(i,k).
        \label{eqn:ttmqr_sk}
    \end{equation}
\end{proof}

\begin{cor}
    \label{cor.cplowup}
    Given an elimination list for a coarse-grain algorithm on a matrix of size
    $p \times q$ where $p>q$, the critical path length of the corresponding
    tiled algorithm is bounded by
    \[ tiled(p,q-1) + 4 + 2 \leq CP(p,q) < tiled(p,q). \]
\end{cor}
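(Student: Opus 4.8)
The plan is to leverage Theorem~\ref{thm.lowerbound}, which gives the exact completion times of all the \TTMQR updates for columns $k < q-1$, and then argue separately about what happens in the last two columns.

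First I would establish the lower bound $tiled(p,q-1) + 4 + 2 \leq CP(p,q)$. The key observation is that the tile $(p,q)$ must be zeroed out in the critical path of the full factorization, and by Lemma~\ref{th.above} we may assume this elimination uses a pivot row above row $p$ in column $q$. Before this $\TTQRT(p, piv(p,q), q)$ can even start, row $p$ must have all its tiles in columns $1,\ldots,q-1$ zeroed out; in particular the last update $\TTMQR(p, piv(p,q-1), q-1, q)$ on row $p$ in column $q-1$ must complete, and by Theorem~\ref{thm.lowerbound} this completes no earlier than $tiled(p,q-1)$ (strictly, one should check that the theorem's formula, stated for $k < q-1$, still controls the time at which row $p$ becomes ready for column $q$; since $tiled(p,q-1)$ is precisely defined as the time Algorithm~\ref{alg.elimTR} completes for column $q-1$, this is essentially a matter of noting the formula extends to $k = q-1$ as well). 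After that, $\GEQRT(p,q)$ takes $4$ more time units, and then $\TTQRT(p, piv(p,q), q)$ takes at least $2$ more (using the kernel cost from Table~\ref{tab.kernels}). Hence $CP(p,q) \geq tiled(p,q-1) + 4 + 2$.

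Next, the upper bound $CP(p,q) < tiled(p,q)$. Here I would use the fact that $tiled(p,q)$ is, by the formula, $10q + 6\cdot coarse(p,q)$, which one can think of as the time at which the \emph{update} kernels in a hypothetical column $q+1$ would complete. But for an honest $p \times q$ matrix there is no column $q+1$: once a tile in the last column is zeroed out, the corresponding \TTMQR updates $\TTMQR(i, piv(i,q), q, j)$ for $j > q$ simply do not exist, and neither do the \UNMQR updates $\UNMQR(i, q, j)$ for $j > q$. So in column $q$ the critical path stops after the \TTQRT that zeroes tile $(p,q)$ (and the \GEQRT that precedes it), which by the analysis in the proof of Theorem~\ref{thm.lowerbound} happens at time $\TTQRT(p, piv(p,q), q) \leq 10(q-1) + 6\cdot coarse(p,q)$. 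Since $10(q-1) + 6\cdot coarse(p,q) < 10q + 6\cdot coarse(p,q) = tiled(p,q)$, we obtain the strict upper bound. One must also check that no other task finishing after this \TTQRT lies on the critical path of the full factorization, but any such task would be an update in some column $k \leq q$ feeding into a later column, and all those are already accounted for by $tiled(\cdot, k) \leq tiled(p, q-1) < tiled(p,q)$ for $k \leq q-1$, while in column $q$ the only tasks are the \GEQRT, \UNMQR, \TTQRT, \TTMQR kernels whose completion times are all bounded by $\TTQRT(p,piv(p,q),q)$ or by the (nonexistent) updates.

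The main obstacle I anticipate is the bookkeeping at the boundary $k = q-1$: Theorem~\ref{thm.lowerbound} is stated for $1 \leq k < q-1$, so to use $tiled(p,q-1)$ in the lower bound I need to confirm that the same inductive argument carries through to column $q-1$ for the \emph{readiness} of rows entering column $q$ — equivalently, that the quantity $tiled(p,q-1)$ as defined (completion of Algorithm~\ref{alg.elimTR} for column $q-1$) genuinely equals $10(q-1) + 6\cdot coarse(p,q-1)$. This is really just one more step of the same induction, but it is the place where the proof could go wrong if the last column's missing updates interact with column $q-1$ in a way the clean formula does not capture; checking that column $q-1$'s updates (which do exist, since they target column $q$) behave exactly as the generic inductive step predicts is the crux.
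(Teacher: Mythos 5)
Your proof is correct and follows essentially the same route as the paper's: the lower bound comes from observing that after the last column-$(q-1)$ update completes at time $tiled(p,q-1)$, column $q$ still requires a \GEQRT\ ($4$ units) and, since $p>q$, at least one \TTQRT\ ($2$ units), while the upper bound comes from noting that $tiled(p,q)$ charges for updates onto a nonexistent column $q+1$. Your extra care about extending Theorem~\ref{thm.lowerbound} to $k=q-1$ (the theorem's stated range is $k<q-1$) is a legitimate bookkeeping point that the paper's much terser proof glosses over, but it does not change the argument.
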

\begin{proof}
    For any tiled matrix, the last column will necessarily need to be factorized
    which explains the addition of four time steps and since $p>q$ at least one
    \TTQRT will be present which accounts for the two time steps thereby
    establishing the lower bound.  By including one more column, the upper bound
    not only includes the factorization of column $q$, but also the respective
    updates onto column $q+1$ such that the critical path of the $p \times q$
    tiled matrix must be smaller.
\end{proof}

\begin{cor}
    \label{cor.cpsquare}
    Given an elimination list for a coarse-grain algorithm on a matrix of size
    $p \times q$ where $p=q$, the critical path length of the corresponding
    tiled algorithm is
    \[ CP(p,q) = tiled(p,q-1) + 4.\]
\end{cor}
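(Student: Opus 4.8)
The plan is to isolate the single new feature of the square case: when $p=q$ the last panel, column $q$, contains no sub-diagonal tile, so the only task remaining there is the triangularization $\GEQRT(q,q)$ of the bottom-right tile --- there is no $\TTQRT$ and no $\TTMQR$ in column $q$. This is precisely why the lower-bound term $4+2$ of Corollary~\ref{cor.cplowup} collapses to $4$: the $\GEQRT$ still costs $4$, but the $\TTQRT$ that contributed the extra $2$ when $p>q$ is now absent, and the upper bound, instead of being a strict ``$<tiled(p,q)$'', becomes the exact value because nothing else happens in column $q$.

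For the lower bound I would trace the dependence chain feeding $\GEQRT(q,q)$. Every tile $(q,k)$ with $1\le k\le q-1$ is below the diagonal, so row $q$ is the eliminated row in each of the first $q-1$ columns; by Lemma~\ref{th.above} row $q$ is never a pivot, hence the only updates ever applied to tile $(q,q)$ are the kernels $\TTMQR(q,piv(q,k),k,q)$ for $k=1,\dots,q-1$, and these are totally ordered by the flow dependence in \eqref{eqn:coarsedeps_sprev} (and the requirement that $(q,k-1)$ be zeroed before $(q,k)$). The last one, $\TTMQR(q,piv(q,q-1),q-1,q)$, completes at $tiled(q,q-1)=tiled(p,q-1)$ by Theorem~\ref{thm.lowerbound}; since $\GEQRT(q,q)$ cannot start before it and costs $4$, we obtain $CP(p,q)\ge tiled(p,q-1)+4$.

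For the matching upper bound I would argue that no task finishes after $tiled(p,q-1)+4$. The tasks in columns $1\le k\le q-1$ have the completion times produced by the proof of Theorem~\ref{thm.lowerbound} --- the values in \eqref{eqn:geqrt_k}, \eqref{eqn:unmqr_k}, \eqref{eqn:ttqrt_sk}, \eqref{eqn:ttmqr_sk} --- the latest being a $\TTMQR$ onto column $q$ finishing at $tiled(i,k)=10k+6\,coarse(i,k)$; and $\GEQRT(q,q)$ finishes at exactly $tiled(p,q-1)+4$ by the chain above (its unique data predecessor is that last $\TTMQR$, so nothing pushes it further). So the whole statement reduces to checking $10k+6\,coarse(i,k)\le 10(q-1)+6\,coarse(p,q-1)$ for every sub-diagonal tile $(i,k)$. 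Writing $f(k)=\max_i coarse(i,k)$ for the finishing time of column $k$ in the coarse schedule, this is equivalent to saying that $k\mapsto 10k+6f(k)$ is nondecreasing, i.e.\ $f(k+1)\ge f(k)-1$; given that, $10(k{+}1)+6f(k{+}1)\ge 10k+6f(k)+4$ is immediate, the maximum is attained at $k=q-1$, and combining with the lower bound gives $CP(p,q)=tiled(p,q-1)+4$.

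The main obstacle is exactly that profile inequality $f(k+1)\ge f(k)-1$. It is \emph{not} merely ``$coarse(p,q-1)$ is the largest coarse value'' --- a contrived coarse list can make some $coarse(i,k)$ with $k<q-1$ exceed $coarse(p,q-1)$, and it is the gain in the $10k$ term that rescues the bound. I would prove it by a case analysis on the row $i^{\star}$ attaining $f(k)=coarse(i^{\star},k)$: if $i^{\star}>k+1$, then tile $(i^{\star},k+1)$ is below the diagonal and \eqref{eqn:coarsedeps_cprev} gives $coarse(i^{\star},k+1)\ge f(k)+1$; the delicate case is $i^{\star}=k+1$, where one must show, via the pivot structure (Lemma~\ref{th.above}) and \eqref{eqn:coarsedeps_piv}, that the lateness of $(k+1,k)$ is inherited by column $k+1$ --- either because row $k+1$ is reused as a pivot there, or because the congestion that delayed $(k+1,k)$ equally delays the tiles below it in column $k$ and hence their successors in column $k+1$. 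For the standard coarse schemes \SK, \MC and \Greedy one can instead read $f(k+1)\ge f(k)-1$ (in fact $f$ nondecreasing) directly off their closed-form time-steps, which makes this step routine.
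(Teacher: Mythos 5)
Your proof follows the same decomposition as the paper's: when $p=q$ the last column contains no sub-diagonal tile, so the only task remaining there is $\GEQRT(q,q)$, whose binding predecessor is the final $\TTMQR$ onto tile $(q,q)$, finishing at $tiled(p,q-1)$; this is exactly the paper's (two-sentence) argument, and your lower-bound chain through row $q$ is correct. Where you go beyond the paper is in insisting on the matching upper bound --- that no $\TTMQR$ in an earlier column finishes after $tiled(p,q-1)+4$ --- a point the paper's proof passes over in silence, and your reduction of it to the profile inequality $f(k+1)\ge f(k)-1$ with $f(k)=\max_i coarse(i,k)$ is the right crux. The one loose end, your ``delicate case'' $i^{\star}=k+1$, closes more cleanly than you suggest: after the normalization of Lemma~\ref{th.above}, the topmost eliminated row of column $k+1$, namely row $k+2$, must take as its pivot some row $j$ with $k+1\le j<k+2$, hence $j=k+1$, so $coarse(k+2,k+1)\ge coarse(k+1,k)+1$; combined with $coarse(j,k+1)\ge coarse(j,k)+1$ from~\eqref{eqn:coarsedeps_cprev} for $j\ge k+2$, this yields $f(k+1)\ge f(k)+1$ outright, and the vaguer ``congestion'' alternative is not needed. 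With that step tightened your argument is complete and, unlike the paper's own proof, actually justifies why $10k+6\,coarse(i,k)$ is maximized in column $q-1$ rather than at some late tile of an earlier column.
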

\begin{proof}
    In the last column, we need only to factorize the diagonal tile which
    explains the additional four time steps.  Moreover, there are no further
    columns to apply any updates to nor any tiles below the diagonal that need
    to be eliminated.  Thus the result is obtained.
\end{proof}
%
%

In the remainder of this chapter, we will make use of diagrams to clarify
certain aspects of the proofs and provide examples to further illustrate the
points being made.  These diagrams make use of the kernel representations as
shown in Figure~\ref{fig:treekernels}.
\linespread{1.0}
\begin{figure*}[h]
    \centering
    \begin{minipage}{0.45\textwidth}
        \begin{tabular}{ m{15mm} m{25mm} m{12mm}}
            kernel & & weight\\
            \hline 
            \hline\\
            GEQRT &
            \includegraphics[width=0.30\textwidth]{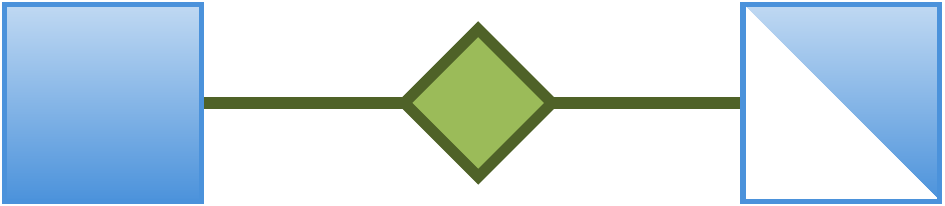}
            & \quad 4\\
            TTQRT &
            \includegraphics[width=0.30\textwidth]{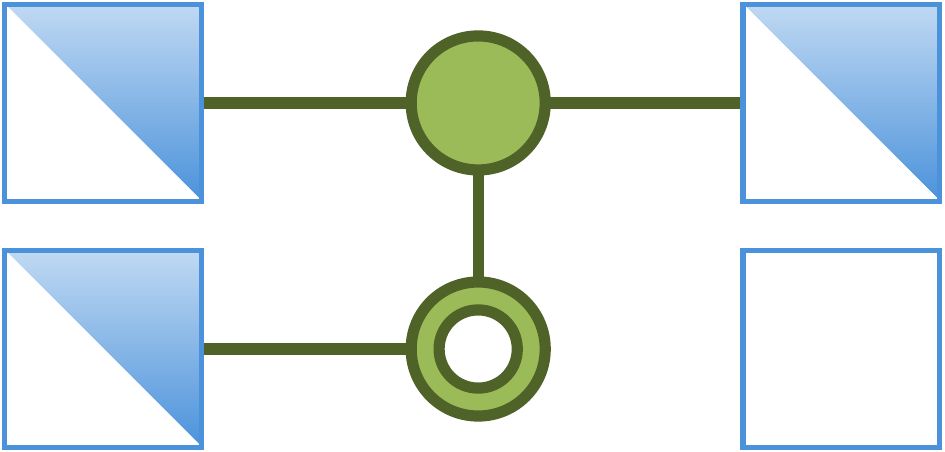}
            & \quad 2\\
        \end{tabular}
    \end{minipage}
    \begin{minipage}{0.45\textwidth}
        \begin{tabular}{ m{18mm} m{25mm} m{12mm}}
            kernel & & weight\\
            \hline 
            \hline\\
            UNMQR &
            \includegraphics[width=0.30\textwidth]{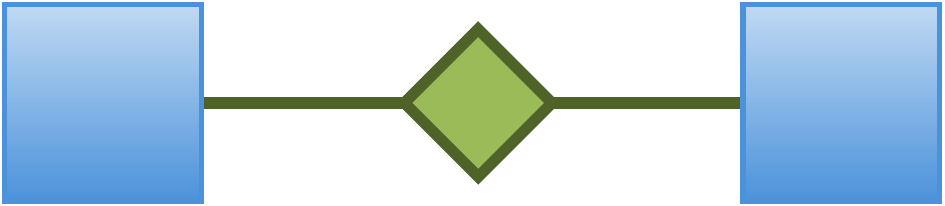}
            & \quad 6\\
            TTMQR &
            \includegraphics[width=0.30\textwidth]{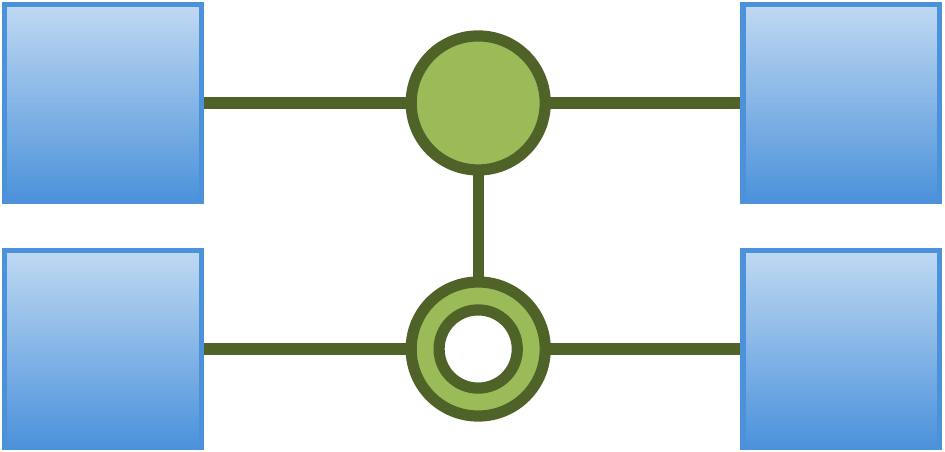}
            & \quad 6\\
        \end{tabular}
    \end{minipage}
    \caption{\label{fig:treekernels} Icon representations of the kernels}
\end{figure*}
\renewcommand{\baselinestretch}{\normalspace}

We have a closed-form expression for the critical path of tiled \FT for all
three cases: single tiled column, square tiled matrix, and rectangular tiled
matrix of more than one column. 
\begin{prop}
\label{th.ft}
Consider a tiled matrix of size $p \times q$, where $p \geq q \geq 1$. The critical
path length of \FT is
\[ CP_{ft}(p,q) = 
    \left\{ 
    \begin{array}{ll}
        2p+2,       &\mbox{if $q=1$;}\\
        22p-24,     &\mbox{if $p=q>1$;}\\
        6p+16q-22,\quad  &\mbox{if $p>q>1$.}\\
    \end{array}
    \right. \]
\end{prop}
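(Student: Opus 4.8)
The plan is to prove the three cases of the formula separately, reusing Corollary~\ref{cor.cplowup} and Corollary~\ref{cor.cpsquare} for the two cases with $q>1$ and arguing directly when $q=1$. Throughout I will use the fact that for \FT\ (the \SK\ scheme) the coarse-grain elimination function is $coarse(i,k)=i+k-2$ whenever $1\le k<i\le p$ and $k\le q$: this follows from the description of \SK---each column is processed top to bottom with the panel row as pivot, so $coarse(i,1)=i-1$ and in general $coarse(i,k)=\max\bigl(coarse(i,k-1),\,coarse(k,k-1),\,coarse(i-1,k)\bigr)+1$---together with a one-line verification that $i+k-2$ satisfies this recurrence. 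In particular $coarse(p,q-1)=p+q-3$, and $coarse(q,q-1)=2q-3$ in the square case.

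\textbf{Cases $q=1$ and $p=q>1$.} When $q=1$ there are no update kernels: the $\GEQRT(i,1)$, $1\le i\le p$, are mutually independent and all finish at time $4$, and the eliminations $\elim(i,1,1)$ then form a single sequential chain of \TTQRT\ kernels, since each \TTQRT\ depends on the previous one through~\eqref{eqn:ttdeps_tt} (and on the already-completed $\GEQRT$'s through~\eqref{eqn:ttdeps_gpivt}--\eqref{eqn:ttdeps_gt}). Hence $\TTQRT(i,1,1)$ finishes at $4+2(i-1)=2i+2$, the last tile is zeroed at $2p+2$, and for $p=1$ the chain is empty so the cost is the $4=2p+2$ of $\GEQRT(1,1)$. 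When $p=q>1$, Corollary~\ref{cor.cpsquare} gives $CP_{ft}(p,q)=tiled_{ft}(p,q-1)+4$, and Theorem~\ref{thm.lowerbound} gives $tiled_{ft}(q,q-1)=10(q-1)+6\,coarse(q,q-1)=10(q-1)+6(2q-3)=22q-28$; hence $CP_{ft}(p,q)=22q-24=22p-24$.

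\textbf{Case $p>q>1$.} Corollary~\ref{cor.cplowup} already supplies the lower bound $CP_{ft}(p,q)\ge tiled_{ft}(p,q-1)+6=10(q-1)+6(p+q-3)+6=6p+16q-22$, so it remains to match it from above, which I do by tracing the last column. By Theorem~\ref{thm.lowerbound}, for $q\le i\le p$ tile $(i,q)$ becomes ready exactly at $tiled_{ft}(i,q-1)=10(q-1)+6(i+q-3)=6i+16q-28$ (its unique, hence last, update from column $q-1$ is the \TTMQR\ of $\elim(i,q-1,q-1)$), so $\GEQRT(i,q)$ finishes at $6i+16q-24$. In column $q$, \FT\ zeroes tiles top to bottom with pivot row $q$, so $\TTQRT(i,q,q)=\max\bigl(\GEQRT(q,q),\,\GEQRT(i,q),\,\TTQRT(i-1,q,q)\bigr)+2$ for $i>q+1$ and $\TTQRT(q+1,q,q)=\max\bigl(\GEQRT(q,q),\,\GEQRT(q+1,q)\bigr)+2$. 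Since $\GEQRT(i,q)=6i+16q-24$ strictly exceeds $\GEQRT(q,q)=22q-24$ (as $i>q$) and, by the induction hypothesis, exceeds $\TTQRT(i-1,q,q)$, an induction on $i$ yields $\TTQRT(i,q,q)=6i+16q-22$ for $q+1\le i\le p$. The last tile is therefore zeroed at $\TTQRT(p,q,q)=6p+16q-22$, and no task finishes later: every kernel in a column $k\le q-1$ finishes by $tiled_{ft}(p,q-1)=6p+16q-28$ (from Theorem~\ref{thm.lowerbound} and $coarse\le p+q-3$), every $\GEQRT(i,q)$ finishes by $6p+16q-24$, and column $q$ carries no \UNMQR\ or \TTMQR\ kernel because there is no column $q+1$. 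Hence $CP_{ft}(p,q)=6p+16q-22$.

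The step I expect to be the main obstacle is the last paragraph: one must justify carefully that the arrival times $\GEQRT(i,q)$, which increase by $6$ per row, always dominate the accumulating sequential \TTQRT\ chain in column $q$, which increases by only $2$ per row, so that no slack-induced delay builds up; and one must confirm that no path through the earlier columns---in particular through the \TTMQR\ updates feeding column $q$---is longer than this final chain. Once the coarse-grain values $coarse(i,k)=i+k-2$ are in hand, the other two cases reduce cleanly to the corollaries.
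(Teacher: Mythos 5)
Your proposal is correct, and for the two cases with $q>1$ it takes a genuinely different route from the paper. The paper's proof handles $q=1$ by the same induction on the sequential \TTQRT chain that you use, but for $p=q>1$ and $p>q>1$ it counts time units directly along an explicit pipeline (an initial $10$ for the first panel, $6(p-1)$ to drain the first column's updates, $16$ per intermediate column, a final $\GEQRT$, and in the rectangular case one last $\TTQRT$), reading the path off Figure~\ref{fig:wflattree-4x4} and dispatching the rectangular case with a one-sentence ``analogous, plus 2.'' You instead instantiate the translation Theorem~\ref{thm.lowerbound} with the closed form $coarse(i,k)=i+k-2$ for \SK, invoke Corollary~\ref{cor.cpsquare} to finish the square case in one line, and close the rectangular case by matching the Corollary~\ref{cor.cplowup} lower bound with an explicit induction along the last column's \TTQRT chain, checking that the $\GEQRT(i,q)$ arrival times (growing by $6$ per row) dominate the chain (growing by $2$ per row) and that no path through earlier columns is longer. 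What your route buys is rigor exactly where the paper is terse: the justification that the rectangular case costs exactly $2$ more than ``square-like'' and that no other task finishes later is spelled out rather than asserted. Interestingly, your derivation is essentially the formula $CP(p,q)=10(q-1)+6\,coarse(p,q-1)+4+2(coarse(p,q)-coarse(p,q-1))$ that the paper states \emph{after} the proposition without proof, so you have in effect proved that remark. The only mild caveat is that you apply Theorem~\ref{thm.lowerbound} at $k=q-1$, whereas its stated index range reads $k<q-1$; since both Corollaries~\ref{cor.cplowup} and~\ref{cor.cpsquare} already evaluate $tiled(p,q-1)$, this is consistent with the paper's own usage (the restriction appears to be a typo), but it is worth flagging explicitly.
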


\begin{proof}
    Consider first the case $q = 1$. We shall proceed by
    induction on $p$ to show that the critical path of \FT is of length $2p+2$.
    If $p=1$, then from Table~\ref{tab.kernels} the result
    is obtained since only $\GEQRT(1,1)$ is required.  With the base case
    established, now assume that this holds for all $p-1 > q = 1$.  Thus at
    time $t=2(p-1) + 2 = 2p$, we have that for all $p-1 \geq i \geq 1$ tile
    $(i,1)$ has been factorized into a triangle and for all $p-1 \geq i > 1$,
    tile $(i,1)$ has been zeroed out.  Therefore, tile $(p,1)$ will be zeroed
    out with $\TTQRT(p,1)$ at time $t+2 = 2(p-1) + 2 + 2 = 2p + 2$.

    Considering the second case $p=q>1$, we will be using
    Figure~\ref{fig:wflattree-4x4} to illustrate.  We initialize with a
    triangularization of the first column and send the update to the remaining
    column(s), $10$ time units.  The we fill the pipeline with the updates onto
    the remaining column(s) from the zeroing operations of the first column,
    $6(p-1)$ time units.  Then for each column after the first, except the last
    one, we fill the pipeline with the triangularization, update of
    triangularization, and update of zeroing for the bottom most tile, $(4+6+
    6)(p-2)$ time units.  In the last column, we then triangularize the bottom
    most tile, $4$ time units.  Thus
    \[ 10 + 6(p-1) + (4+6+6)(q-2) + 4 = 6p + 16q - 24 = 22p - 24\]

    The third case is analogous to the second case but we still need to zero out
    the bottom most tile in the last column which explains the difference of 2
    in the formula from the square case.
\end{proof}

\linespread{1.0}
\begin{figure*}[htb]
\centering
    \resizebox{.95\textwidth}{!}{\includegraphics{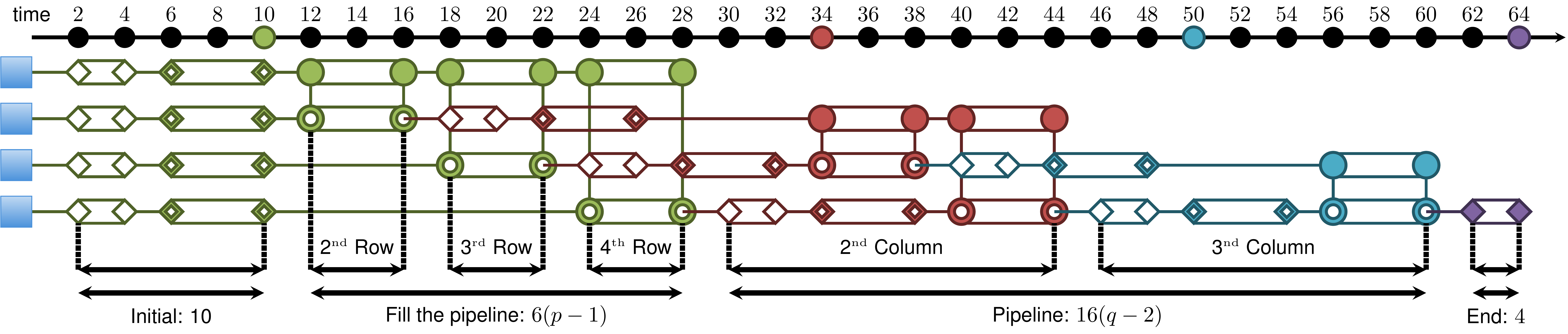}}%
\caption{ Critical Path length for the weighted \FT on a matrix of $4 \times 4$
tiles.}
\label{fig:wflattree-4x4}
\end{figure*}
\renewcommand{\baselinestretch}{\normalspace}

We remind that for the coarse algorithm,
\[ coarse(p,q) = 
    \left\{ 
    \begin{array}{ll}
        0,       &\mbox{if $q<1$;}\\
        0,       &\mbox{if $p=q=1$;}\\
        p+q-2,\quad  &\mbox{if $p>q\geq1$;}\\
        2p-3,     &\mbox{if $p=q>1$.}\\
    \end{array}
    \right. 
\]
So we find that considering a tiled matrix of size $p \times q$, where $p \geq q
\geq 1$. The critical path length of \FT is given as
\[ \resizebox{\textwidth}{!}{$CP(p,q) = 10(q-1) + 6\left( coarse(p,q-1) \right)  +
    4 + 2 \left( coarse(p,q) - coarse(p,q-1) \right).$} \]

%

\begin{prop}
    \label{thm.bt}
    The critical path length of \MC is greater than $22q - 30$ and less than $22q + 6
    \lceil \sqrt{2p} \rceil.$
\end{prop}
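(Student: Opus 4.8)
The plan is to evaluate the tiled critical path of \MC by feeding the closed-form elimination times of Theorem~\ref{thm.lowerbound} into the critical-path sandwich of Corollaries~\ref{cor.cplowup} and~\ref{cor.cpsquare}, once we have sharp enough control of the coarse-grain times $coarse(i,k)$ of the Fibonacci scheme. First I would unroll the column recurrence $coarse(i,k)=coarse(i-1,k-1)+2$ into $coarse(i,k)=coarse(i-k+1,1)+2(k-1)$, and use the two elementary facts about the first column of \MC that come out of the counting argument in \Section~\ref{sec:Coarsegrain}: every entry satisfies $1\le coarse(i,1)\le x$, where $x$ is the least integer with $x(x+1)/2\ge p-1$, and the value $x$ is attained precisely at the topmost eliminated tile $(2,1)$. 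Together these give, for every tile $(i,k)$ with $i>k$,
\[ 2k-1 \;\le\; coarse(i,k) \;\le\; x+2k-2, \]
with equality on the right exactly when $i=k+1$; in particular $coarse(q+1,q)=x+2q-2$ and $coarse(q,q-1)=x+2q-4$. I would also record $x\le\lceil\sqrt{2p}\rceil$, since $\lceil\sqrt{2p}\rceil^2\ge 2p$ gives $\lceil\sqrt{2p}\rceil(\lceil\sqrt{2p}\rceil+1)/2\ge\lceil\sqrt{2p}\rceil^2/2\ge p>p-1$ and $x$ is minimal.

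Next I substitute into the tiled elimination times: combining Theorem~\ref{thm.lowerbound} with Corollary~\ref{cor.cplowup}, the tiled \MC algorithm has $tiled(i,k)=10k+6\,coarse(i,k)$ on every column before the last, and for $p>q$ it satisfies $tiled(p,q-1)+6\le CP(p,q)<tiled(p,q)$. The upper end is $10q+6\,coarse(p,q)\le 10q+6(x+2q-2)=22q+6x-12\le 22q+6\lceil\sqrt{2p}\rceil-12<22q+6\lceil\sqrt{2p}\rceil$, and the lower end is $10(q-1)+6\,coarse(p,q-1)+6\ge 10(q-1)+6(2q-3)+6=22q-22>22q-30$. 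For $p=q$, Corollary~\ref{cor.cpsquare} gives $CP(p,q)=tiled(p,q-1)+4=10(q-1)+6\,coarse(q,q-1)+4=22q+6x-30$, which lies strictly between $22q-30$ (as $x\ge 1$) and $22q+6\lceil\sqrt{2p}\rceil$ (as $x\le\lceil\sqrt{2p}\rceil$). The single-column case $q=1$ is immediate: there the critical path is a single $\GEQRT$ followed by a chain of $x$ $\TTQRT$ kernels, of length $4+2x\le 4+2\lceil\sqrt{2p}\rceil$, which again lies in the asserted interval.

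The only substantive work sits in the first step: rewriting the Fibonacci times in the shifted form $coarse(i,k)=coarse(i-k+1,1)+2(k-1)$, and certifying that the first-column profile lies in $[1,x]$ with its maximum at row $2$. Everything afterward is plain substitution into the already-established Corollaries~\ref{cor.cplowup} and~\ref{cor.cpsquare}. The one point that needs care is that the identity $tiled(i,k)=10k+6\,coarse(i,k)$ is invoked only on columns before the last --- here $k=q$ for the upper bound and $k=q-1$ for the lower bound --- and that the boundary regimes $p=q$ (where the trailing-column updates vanish) and $q=1$ are disposed of with the matching corollary rather than with Theorem~\ref{thm.lowerbound} by itself. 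I do not anticipate a genuine obstacle: the stated bounds are deliberately slack (the argument in fact yields $22q-22$ on the low side and $22q+6\lceil\sqrt{2p}\rceil-12$ on the high side), leaving ample room to absorb the off-by-a-constant effects from the exact location of the critical tile.
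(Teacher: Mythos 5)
Your proposal is correct and follows essentially the same route as the paper: substitute the coarse-grain \MC critical path $x+2q-2$ into the tiled translation of Theorem~\ref{thm.lowerbound} via the sandwich of Corollary~\ref{cor.cplowup}, then bound $x$ by $\lceil\sqrt{2p}\rceil$. Your version is somewhat more careful than the paper's (explicit first-column profile, a rigorous proof that $x\le\lceil\sqrt{2p}\rceil$, and separate treatment of the $p=q$ and $q=1$ cases via Corollary~\ref{cor.cpsquare}), but the underlying argument is the same.
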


\begin{proof}
    The critical path length of the coarse-grain \MC algorithm for a $p\times
    q$ matrix is 
    \[ coarse(p,q) = x + 2q - 2. \]
    Thus from Proposition~\ref{cor.cplowup} we have 
    \[ 10(q-1) + 6(x+2(q-1)-2) + 4 \leq CP(p,q) < 10q + 6(x+2q-2).\]
    Recall that $x$ is the least integer such that
    \[ \frac{x(x+1)}{2} \geq p-1 \]
    whereby
    \[ x = -\frac{1}{2} + \frac{\sqrt{8p-7}}{2}.\]
    Thus $x \leq \left\lceil \sqrt{2p} \right\rceil$ and therefore
    \[ 22q - 30 < CP(p,q) < 22q - 12 + 6  \left\lceil \sqrt{2p} \right\rceil. \]
\end{proof}


Similarly to~\cite{j14} in which an iterate of a column is defined for the
coarse-grain algorithms, we define a weighted iterate and our notation will
follow in the same manner.

A column of length $n$ is a sequence of $n$ integers:
\[ a = a_1^{n_1} \cdots a_q^{n_q} \]
where power means concatenation with the following restrictions:
\begin{alignat*}{3}
    a_1 \geq 0 & \quad & a_{i+1} > a_i, & \quad & 1 \leq i \leq q-1;\\
    n_i > 0, && 1 \leq i \leq q; && n_1 + \cdots + n_q = n. 
\end{alignat*}
We define on the set of columns of length $n$ the classical partial ordering of
$\mathbb{R}^n$:
\[ x \leq y \iff (x_i \leq y_i, i\leq i \leq n)\]
and the $s$-truncate ($1\leq s \leq n$) of $a$ is a column of length $s$
composed of the $s$ first elements of $a$ and is denoted $a^s$.
\begin{defin}
    \label{def.iter}
    Given a task weight of $w$ and column $a = a_1^{n_1} \cdots a_q^{n_q}$, the column $c = c_1^{m_1} \cdots c_p^{m_p}$ is called an iterate of $a$, or
    $c= iter(a)$, if
    \begin{enumerate}[(i)]
        \item $c$ is a column of length $n-1$
        \item $a_1 + w \leq c_1$
            \begin{enumerate}
                \item if $a_1 + w \leq c_1 \leq a_2$ then $m_1 \leq \left\lfloor
                    n_1 / 2 \right\rfloor$
                \item if there exists an $h$ such that $a_{k-1} + w \leq c_h
                    \leq a_k$ then 
                    \[m_h \leq \left\lfloor \left( n_1 + \cdots + n_{k-1} - m_1
                        - \cdots - m_{h-1} \right) / 2 \right\rfloor\]
                        for $2 \leq k \leq q$ and $1 \leq
                    h \leq p$ with $m_0 = 0$.
                \item else $a_j \leq c_h$ and 
                    \[m_h \leq \left\lfloor \left( n_1
                    + \cdots + n_j - m_1 - \cdots - m_{h-1} \right) / 2
                    \right\rfloor\] 
                    where $j = \min (p+1,q)$.
            \end{enumerate}
    \end{enumerate}
\end{defin}
\begin{defin}
    \label{def.optiter}
    Given a task weight of $w$, let $a = a_1^{n_1}\cdots a_q^{n_q}$ be a column
    iterate of length $n$ then the sequence $b = b_1^{m_1} \cdots b_p^{m_p}$, or
    $b = opiter(a)$, is defined as
    \begin{enumerate}[(i)]
        \item for $b_1$ and $m_1$
            \begin{enumerate}
                \item if $n_1 = 1$, then $b_1 = a_2 + w$ and $m_1 = \left\lfloor \left(
                    n_1 + n_2 \right) / 2 \right\rfloor$.
                \item if $n_1 > 1$, then $b_1 = a_1 + w$ and $m_1 = \left\lfloor \left(
                    n_1 \right) / 2 \right\rfloor$.
            \end{enumerate}
        \item if there exists $k$ such that $a_{k-1} + w \leq b_{i-1} \leq a_k$,
            then
            \[ r_{i-1} = n_1 + \cdots + n_{k-1} - m_1 - \cdots - m_{i-1} \geq 1.\]
            \begin{enumerate}
                \item if $b_{i-1} < a_k$ and $r_{i-1} > 1$, then $b_i =
                    b_{i-1} + w$, and $m_i = \left\lfloor r_{i-1} / 2
                    \right\rfloor$.
                \item else $b_i = a_k + w$, $m_i = \left\lfloor (n_k +
                    r_{i-1}) / 2 \right\rfloor$.
            \end{enumerate}
        \item if $b_{i-1} > a_j$ where $j = \min (i,q)$, then $b_i = b_{i-1} +
            w$, and 
            \[m_i \leq \left\lfloor \left( n_1 + \cdots + n_j - m_1 - \cdots - m_{i-1} \right) / 2
                    \right\rfloor.\] 
    \end{enumerate}
\end{defin}

\begin{prop}
    \label{prop.optiter}
    Given an iterated column $a$ of length $n$, the sequence 
    \[b = b_1^{m_1}\cdots b_p^{m_p},\] 
    or $b = optiter(a)$ is an iterate of $a$.
\end{prop}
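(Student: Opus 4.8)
The goal is to show that the sequence $b = opiter(a)$ defined in Definition~\ref{def.optiter} satisfies every clause of Definition~\ref{def.iter}, so that $b$ is a legitimate iterate of $a$. Concretely, I must check: (I) $b = b_1^{m_1}\cdots b_p^{m_p}$ is a column, i.e., $b_1 \geq 0$, the $b_i$ are strictly increasing, the $m_i$ are positive, and $m_1 + \cdots + m_p = n-1$; (II) $a_1 + w \leq b_1$; and (III) each multiplicity $m_h$ is bounded by the floor quantity dictated by the interval in which $b_h$ falls (cases (a)--(c) of Definition~\ref{def.iter}(ii)). The plan is an induction on the construction index $i = 1, \dots, p$, maintaining the invariants that $b_1 < \cdots < b_i$, that every $b_j$ ($j \leq i$) sits in one of the intervals $[a_1+w, a_2]$, $[a_{k-1}+w, a_k]$ ($2 \leq k \leq q$), or the ``beyond $a_q$'' regime of case (c), with $m_j$ at most the associated bound, together with a running accounting identity for $r_i := (n_1 + \cdots + n_{k(i)}) - (m_1 + \cdots + m_i)$, where $k(i)$ is the block currently being consumed.

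For the base step $i = 1$, Definition~\ref{def.optiter}(i) gives $b_1 = a_2 + w$, $m_1 = \lfloor (n_1+n_2)/2 \rfloor$ if $n_1 = 1$, and $b_1 = a_1 + w$, $m_1 = \lfloor n_1/2 \rfloor$ if $n_1 > 1$. In either case $b_1 \geq a_1 + w \geq w > 0$ (using $a_1 \geq 0$), which settles (II) and the $b_1 \geq 0$ part of (I); and a direct comparison of $m_1$ against cases (a)/(b)/(c) of Definition~\ref{def.iter}(ii) shows $m_1$ does not exceed the prescribed bound, while $m_1 \geq 1$ holds because $n_1 + n_2 \geq 2$ (resp.\ $n_1 \geq 2$).

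For the inductive step I would run through the three branches of Definition~\ref{def.optiter}(ii)--(iii). In branch (ii)(a) we set $b_i = b_{i-1} + w > b_{i-1}$, which still lies in the current interval $[a_{k-1}+w, a_k]$, and $m_i = \lfloor r_{i-1}/2 \rfloor$ equals $\lfloor (n_1 + \cdots + n_{k-1} - m_1 - \cdots - m_{i-1})/2\rfloor$, exactly the bound of Definition~\ref{def.iter}(ii)(b); since branch (ii)(a) requires $r_{i-1} > 1$ we also get $m_i \geq 1$. In branch (ii)(b) (entered when $b_{i-1} = a_k$ or $r_{i-1} = 1$) we set $b_i = a_k + w$; because $b_{i-1} \leq a_k$ this is again strictly larger than $b_{i-1}$, it moves into the next interval $[a_k + w, a_{k+1}]$ (or into the case-(c) regime when $k = q$), and $m_i = \lfloor (n_k + r_{i-1})/2 \rfloor = \lfloor (n_1 + \cdots + n_k - m_1 - \cdots - m_{i-1})/2 \rfloor$ is the bound for that interval; positivity follows from $n_k \geq 1$ and $r_{i-1} \geq 1$. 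Branch (iii) (values that have overshot $a_q$) is handled identically against case (c) of Definition~\ref{def.iter}, with $j = \min(i, q)$. Thus the strict monotonicity of $b$, the interval-membership invariant, and the per-step multiplicity bounds all propagate.

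It remains to establish the exact length $m_1 + \cdots + m_p = n - 1$. Here I would exploit the accounting invariant: whenever the construction finishes a block $n_k$ and passes to $n_{k+1}$, the survivors (unpaired rows) carried forward number $\lceil \cdot/2 \rceil$ against the $\lfloor \cdot/2 \rfloor$ that are eliminated, and telescoping these block-by-block identities shows that precisely one unit of the original $n$ is never consumed, i.e., the construction halts with $p$ blocks of $b$ and $\sum_i m_i = n-1$. The delicate point --- and the step where I expect to spend the most effort --- is exactly this bookkeeping: I must verify that the quantities $r_{i-1}$ stay $\geq 1$ so that every $b_i$ genuinely lands in one of the intervals used by Definition~\ref{def.iter} (and that that interval is non-empty), and that the floor/ceiling arithmetic produces the ``$-1$'' once and only once, independently of how the block boundaries interleave with the value jumps. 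This is the weighted analogue of the corresponding counting argument for the coarse-grain model in~\cite{j14}; I would follow that argument, systematically replacing the unit shift by $+w$ and re-checking that no floor identity degenerates when $w > 1$.
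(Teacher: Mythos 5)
Your proposal is correct in approach and is in fact far more detailed than the paper's own proof, which consists of the single sentence that the result ``follows directly from the definition.'' You are carrying out exactly the direct clause-by-clause verification of Definition~\ref{def.iter} against the construction in Definition~\ref{def.optiter} that the paper intends, so the two arguments coincide in substance.
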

\begin{proof}
    The proof follows directly from the definition.
\end{proof}
\begin{prop}
    \quad
    \begin{enumerate}[(i)]
        \item Let $a_n$ be a column of length $n$ and $c_{n-1} = iter(a_n)$ an
            iterated column of $a_n$.  Then
            \[ b_{n-1} = optiter(a_n) \leq iter(a_n) = c_{n-1}.\]
        \item Let $a_n$ and $c_n$ be two columns of length $n$ such that $a_n
            \leq c_n$.  Then
            \[ optiter(a_n) \leq optiter(c_n). \]
    \end{enumerate}
\end{prop}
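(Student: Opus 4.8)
The plan is to prove both parts by induction, after rewriting the coordinatewise order on columns of a fixed length in terms of \emph{count profiles}: for a column $x = x_1^{r_1}\cdots x_q^{r_q}$, let $N_x(v)$ be the number of entries of $x$ (in expanded form) that are $\le v$; then, for two columns $x,y$ of the same length, $x \le y$ if and only if $N_x(v) \ge N_y(v)$ for every integer $v$. The point of this reformulation is that the admissibility conditions of Definition~\ref{def.iter} and the construction of $optiter$ in Definition~\ref{def.optiter} are really bounds on, respectively recipes for, how fast the profile is allowed to grow, respectively is made to grow, as $v$ increases: the $\lfloor\,\cdot/2\rfloor$ clauses say that across a value at which new rows have just become available at most half of the currently available-but-unconsumed rows may be added, and $optiter$ is exactly the choice that realizes this maximum at the earliest admissible value at each step. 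With this language, part~(i) asserts that $optiter(a_n)$ is the coordinatewise minimum over all iterates of $a_n$ (a well-posed statement, since by Proposition~\ref{prop.optiter} the set being minimized over is nonempty: $optiter(a_n)$ is itself an iterate), and part~(ii) asserts that $optiter$ is monotone.

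For part~(i), fix an iterate $c = iter(a_n)$, write $b = optiter(a_n) = b_1^{m_1}\cdots b_p^{m_p}$, and prove by induction on $i$ the conjunction of: (1)~$N_c(v)\le N_b(v)$ for all $v \le b_i$; and (2)~$N_b(b_i)$ is the largest value of $N_{c'}(b_i)$ attainable by an iterate $c'$ of $a_n$ (the ``greedy maximality'' of $optiter$ through step $i$). The base case splits on $n_1$: if $n_1 > 1$ then $b_1 = a_1 + w$, and condition~(ii) of Definition~\ref{def.iter} gives $c_1 \ge a_1 + w = b_1$; if $n_1 = 1$ then $b_1 = a_2 + w$, and here one uses that the multiplicities of $c$ are positive, so $c_1 \le a_2$ is impossible (it would force the first multiplicity of $c$ to be $\le \lfloor n_1/2\rfloor = 0$), whence $c_1 \ge a_2 + w = b_1$; in both cases the floor bounds of Definition~\ref{def.iter} also give~(1) and~(2) at $i=1$. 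For the inductive step, Definition~\ref{def.optiter} either advances, $b_i = b_{i-1} + w$, while the supply of ready rows is not yet exhausted ($r_{i-1} > 1$), or jumps, $b_i = a_k + w$, exactly when it is ($r_{i-1} = 1$, or $b_{i-1} = a_k$). In the advance case, the bound of Definition~\ref{def.iter} applied to $c$ at any value in $(b_{i-1},\, b_i]$ cannot exceed what $b$ has already consumed, using~(2). In the jump (``dead-time'') case, no admissible $c$ can consume anything in the open interval $(b_{i-1},\, b_i)$, because by~(2) $b$ has already taken all rows available before time $a_k$ save at most one; hence $N_c \le N_b$ persists across the gap, and the floor clause at $b_i = a_k + w$ re-establishes~(2). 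Chaining~(1) over all $i$ and translating back through the profile characterization gives $b \le c$.

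For part~(ii), assume $a_n \le c_n$, i.e.\ $N_{a_n}(v) \ge N_{c_n}(v)$ for all $v$, and show $N_{optiter(a_n)}(v) \ge N_{optiter(c_n)}(v)$ for all $v$, again by induction on the successive values produced by Definition~\ref{def.optiter}. The inductive content is that, at each step $i$, both the value $b_i$ and the running multiplicity $m_1 + \cdots + m_i$ produced from an input column depend monotonically on its profile: the input thresholds $a_k$ enter the construction only through ``how many rows are available by time $t$,'' which is pointwise at least as large for $a_n$ as for $c_n$; the count newly consumed at step $i$ is $\lfloor r_{i-1}/2\rfloor$, with $r_{i-1}$ the difference of an input-profile value and an already-consumed count to which the inductive hypothesis applies; and the jump-to-$a_k+w$ branch is triggered no later for $a_n$ than for $c_n$. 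A term-by-term comparison through the recursion of Definition~\ref{def.optiter} therefore propagates the profile inequality, and translating back yields $optiter(a_n) \le optiter(c_n)$.

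The step I expect to be the main obstacle is the inductive step of part~(i): one must match the three-way case split of Definition~\ref{def.iter}(ii)(a)--(c) --- which interval $[a_{k-1}+w,\, a_k]$ the next value of the iterate $c$ lies in --- against the two transitions of $optiter$ (advance by $w$ versus jump to $a_k + w$), and in particular control the ``dead-time'' intervals in which $optiter$ has no ready rows left. Showing that a valid iterate cannot exploit such an interval is exactly where the greedy-maximality half~(2) of the inductive hypothesis is indispensable, which is also why the induction must carry~(2) alongside~(1) rather than~(1) by itself. The boundary situations $n_1 = 1$ and columns of length $1$ or $2$ must be checked separately, but they are routine once this induction is in place.
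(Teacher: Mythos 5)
Your proof is correct in outline and is considerably more rigorous than the paper's own argument, which disposes of both parts in a few lines by observing that $b_i$ is ``chosen as small as possible'' at every step and concluding $b_i \le c_i$ directly from Definitions~\ref{def.iter} and~\ref{def.optiter}. Your route differs in two substantive ways: you first recast the coordinatewise order on same-length columns as a pointwise inequality of count profiles $N_x(v)$ (a correct equivalence for nondecreasing sequences), and you then run a genuine greedy-stays-ahead induction whose hypothesis carries both the profile domination~(1) \emph{and} the maximality statement~(2). That second ingredient is exactly what the paper's proof omits: a locally minimal choice does not by itself dominate every competitor globally, so your strengthened induction is what actually closes the argument rather than merely restating it. The cost is length; the benefit is that the proof would survive scrutiny, and your treatment of part~(ii) (monotonicity of the construction through the available-row counts) is likewise a faithful elaboration of what the paper waves at with ``follows along the same argument.''

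One sub-claim is stated too strongly and should be repaired. In the jump (``dead-time'') case you assert that \emph{no} admissible iterate $c$ can consume anything in the open interval $(b_{i-1}, b_i)$ because $b$ has exhausted the supply. That does not follow: an iterate $c$ that has consumed fewer rows than $b$ by time $b_{i-1}$ may still have several unconsumed rows and can legitimately perform eliminations inside the gap. What is true, and what your induction actually needs, is only that $N_c \le N_b$ persists across the gap; this follows because $c$ has $R_c = (\text{available}) - N_c(b_{i-1}) \ge R_b = 1$ unconsumed rows, and repeated application of the $\lfloor\,\cdot/2\rfloor$ bound lets $c$ consume at most $R_c - 1$ further rows before $a_k$, leaving $N_c \le (\text{available}) - 1 = N_b$. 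With that correction the inductive step goes through as you intend; the rest of the proposal, including the base-case split on $n_1$ and the handling of degenerate lengths, is sound.
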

\begin{proof}
    \quad
    \begin{enumerate}[(i)]
        \item Clearly, $b_1 \leq c_1$ by definition since $b_1$ is chosen to be
            as small as possible.  Moreover, by definition $b_i \leq c_j$ for $i
            \leq j$ since (i) if $b_{i-1} < a_k$ and $r_{i-1} > 1$, meaning there
            are enough elements available to perform a pairing, then $b_i$ is
            again chosen as small as possible, (ii) otherwise $b_i = a_k + w$ which
            is the smallest again, (iii) else $b_{i-1} > a_j$ and $b_i$ is
            chosen as the next smallest element.  Thus 
            \[ b_{n-1}^{m_1 + \cdots + m_i} \leq c_{n-1}^{m_1 + \cdots + m_i},
                \quad 1 \leq i \leq p \]
            so that $b_{n-1} \leq c_{n-1}$.
        \item This is another direct application of the definition and follows
            along the same argument.
    \end{enumerate}
\end{proof}

Clearly, letting $w = 1$ gives the definitions of $iter$ and $optiter$ of the coarse-grain
algorithms as presented in~\cite{j14}.  Definition~\ref{def.optiter} is the
\ASAP algorithm on a single tiled column and can be viewed as the counter part of the
coarse-grain \Greedy algorithm in the tiled case and follows a bottom to top
elimination of the tiles.  In order to preserve the bottom to top elimination,
the weight of the updates must be an integer multiple of the iterated column
weight.  

\begin{thm}
    \label{thm.grasapopt}
    Given a matrix of $p \times q$ tiles, a factorization kernel weight of
    $\gamma$, an elimination kernel weight of $\alpha$, and an update kernel
    weight of $\beta = n \alpha$ for some $n \in \mathbb{N}$, the \GA algorithm
    is optimal in the context of the class of algorithms that progress left to
    right, bottom to top.
\end{thm}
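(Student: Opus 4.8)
The plan is to split the problem into two independent pieces: showing that the first $q-1$ matrix columns are processed optimally precisely when the coarse-grain \Greedy ordering is used on them, and showing that, from whatever state those columns leave behind, the \ASAP rule empties the last column as fast as possible. First I would normalize the algorithms: by Lemma~\ref{th.above} we may assume every elimination uses a pivot closer to the diagonal, and it suffices to treat \emph{TT}-kernel algorithms executed as soon as possible, since \emph{TS} kernels can always be simulated by \emph{TT} kernels and, with an unbounded number of processors, no schedule that delays a ready task can beat the as-soon-as-possible schedule (the total task weight is fixed by Theorem~\ref{thm.flops}, so postponing work can only lengthen a critical path). This is exactly the ``left to right, bottom to top'' class in the statement, and \GA belongs to it.

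Next I would re-run the induction of Theorem~\ref{thm.lowerbound} with the abstract weights, replacing $\GEQRT\mapsto\gamma$, $\TTQRT\mapsto\alpha$, and $\UNMQR,\TTMQR\mapsto\beta$, and using $\beta=n\alpha\ge\alpha$ exactly where the proof uses $6\ge2$. This yields, for the \emph{TT} tiled algorithm obtained from \emph{any} coarse-grain elimination list,
\[ tiled(i,k)=(\gamma+\beta)\,k+\beta\cdot coarse(i,k),\qquad 1\le i\le p,\ 1\le k\le q-1, \]
where $tiled(i,k)$ is the completion time of $\TTMQR(i,piv(i,k),k,j)$ for $j>k$; the case $\gamma=4,\alpha=2,\beta=6$ recovers $10k+6\,coarse(i,k)$. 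Now take any algorithm \Alg in the class. Its elimination list truncated to columns $1,\dots,q-1$ is a valid coarse-grain scheme for a $p\times(q-1)$ matrix, so by optimality of coarse \Greedy its coarse critical path is at least $coarse_{\Greedy}(p,q-1)$; feeding this into the monotone affine formula above shows that \Alg cannot finish all work in columns $1,\dots,q-1$ and propagate their updates into column $q$ before time $T^{\star}:=(\gamma+\beta)(q-1)+\beta\cdot coarse_{\Greedy}(p,q-1)$, which is precisely when \GA finishes those columns. In particular no row of column $q$ becomes ready for its $\GEQRT$ earlier under \Alg than under \GA.

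It remains to treat column $q$. If $p=q$ the last column is the single diagonal $\GEQRT$, so $CP=T^{\star}+\gamma$, which matches Corollary~\ref{cor.cpsquare} applied to the \Greedy list, hence the lower bound; so assume $p>q$. Here \GA applies \ASAP to column $q$: it eliminates greedily, bottom to top, as soon as two rows are ready, and since no later column must be updated this single-column process is exactly the weighted $optiter$ operator with iterated-column weight $\alpha$ — the hypothesis $\beta=n\alpha$ being what keeps the bottom-to-top order intact through the updates, as observed after Definition~\ref{def.optiter}. By Proposition~\ref{prop.optiter} together with the minimality/monotonicity properties of $optiter$ established just after it, $optiter(a)$ is an iterate of $a$ lying componentwise below every valid iterate of $a$, and $optiter$ is order preserving. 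Applying these two facts inductively over columns $1,\dots,q-1$ — the readiness vector of column $k$ under any algorithm in the class being a valid (weighted, affinely shifted) iterate of that of column $k-1$, and column $1$'s vector being common to all algorithms — shows that running \Greedy on the first $q-1$ columns produces the componentwise-smallest readiness vector for column $q$; applying $optiter$ to that minimal vector then gives the earliest possible completion time for column $q$. Combined with the previous paragraph, \GA's critical path equals the lower bound, so \GA is optimal in the stated class.

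The step I expect to be the main obstacle is the inductive argument just described: showing that \Greedy on the first $q-1$ columns produces a \emph{componentwise} minimal readiness vector for column $q$, rather than merely the minimal critical path of the truncated $(q-1)$-column problem. This is exactly what rules out a competitor deliberately running a sub-optimal phase on the early columns in order to enter a better state for the last column, and it is where one must use the full weighted $iter$/$optiter$ machinery and check that componentwise minimality is compatible with the affine weight shift coming from the generalized Theorem~\ref{thm.lowerbound}.
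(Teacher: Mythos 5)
Your proposal is correct and follows essentially the same route as the paper's proof: generalize the affine, monotone translation of Theorem~\ref{thm.lowerbound} to the abstract weights so that any algorithm in the class has readiness times $(\gamma+\beta)k+\beta\cdot coarse(i,k)$ on the first $q-1$ columns, invoke the (componentwise) optimality of coarse-grain \Greedy to get the minimal readiness vector for column $q$, and then apply the weighted $optiter$ (i.e., \ASAP) operator with $w=\alpha$ to that column, the hypothesis $\beta=n\alpha$ being what preserves the bottom-to-top order. If anything, you are more explicit than the paper about the crux you flag at the end — that \Greedy's coarse-grain optimality must be used componentwise together with the monotonicity of $optiter$, not merely as a critical-path bound (your middle paragraph's ``in particular'' would not suffice on its own, but your final paragraph supplies the correct argument).
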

\begin{proof}
    From Theorem~\ref{thm.lowerbound} we have a direct translation from any
    coarse-grain algorithm in this class to the tiled algorithm for the first
    $q-1$ columns.  Thus we are given the time steps at which rows in column $q$
    are available for elimination.  Now we fix the time steps for the
    elimination of the last column and follow whatever tree the algorithm
    provides for this last column.  We can replace the elimination of the first
    $q-1$ columns and updates from these eliminations onto the remaining columns
    with the tiled \Greedy algorithm.  This is possible since the translation
    function is monotonically increasing and we know that \Greedy is optimal for
    the coarse-grain algorithms and therefore optimal for the first $q-1$
    columns in the tiled algorithms.  In another manner of speaking, we slow
    down the eliminations and updates on the first $q-1$ columns when not using
    the tiled \Greedy algorithm.  (An illustrative example is shown in
    Figure~\ref{fig:grfibonacci15x2}.)

    Let $c$ be the next to last column of the coarse-grain elimination
    table which is of length $p-(q-1)+1$. Now letting 
    \[ a = (\gamma + \beta)(q-1) + \beta\cdot coarse(p,q-1) + \gamma\] 
    provides an iterated column of length $p-(q-1)+1$ for the tiled algorithm.
    With $w=\alpha$ we have that $b = optiter(a)$ is an optimal iterated column
    of length $p-q+1$ with the elimination progressing from bottom to top.  This
    can be applied to any tiled algorithm in this class since we only concern
    ourselves with the time steps at which the last column's elements are
    available for elimination.  In other words, this is a speeding up of the
    elimination of the last column while adhering to any restrictions incurred
    from the previous columns. (An illustrative example is shown in
    Figure~\ref{fig:fibonacciasap15x2}.)

    Combining these two ideas, let \Greedy be performed on the first $q-1$
    columns and then \ASAP on the remaining column $q$.  This provides an
    optimal algorithm in this class of algorithms.
%
%
\end{proof}

\linespread{1.0}
\begin{figure*}[htb]
\centering
\subfloat[\MC]{%
    \hspace{-0mm}\resizebox{.475\textwidth}{!}{\includegraphics{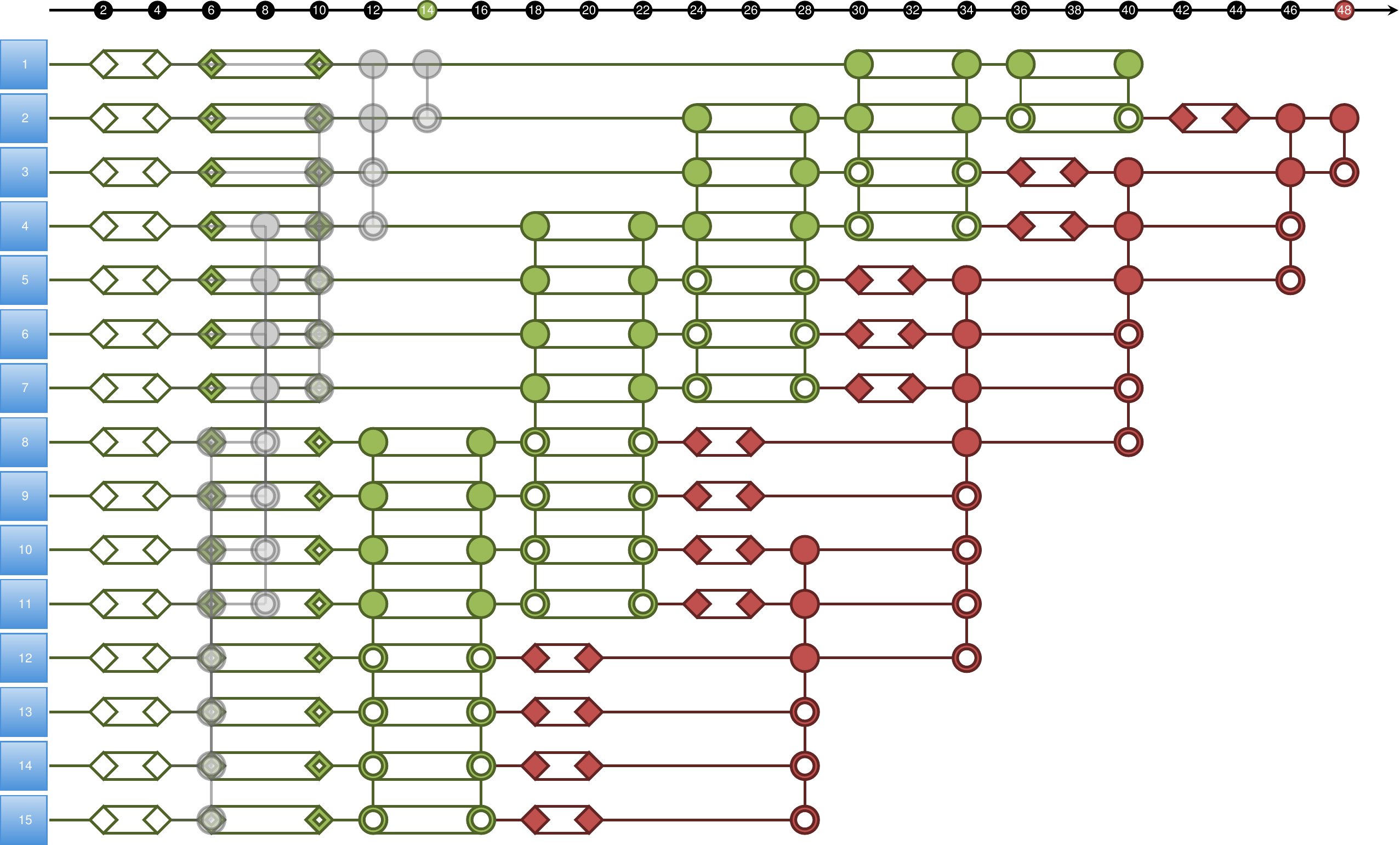}}
}
\subfloat[\Greedy on first $q-1$ columns]{%
    \label{fig:grfibonacci15x2}
    \hspace{0mm}\resizebox{.475\textwidth}{!}{\includegraphics{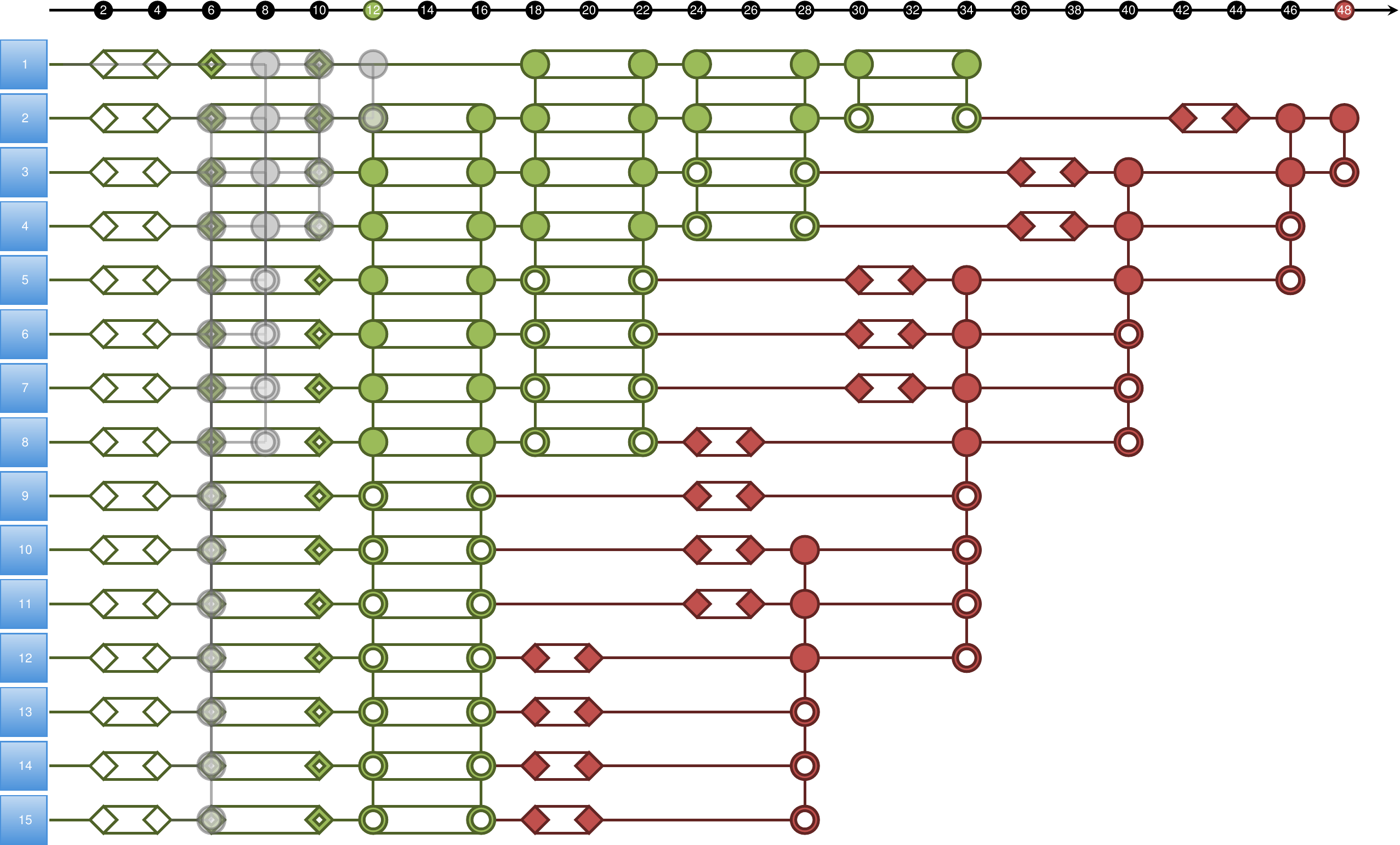}}
}\\
\subfloat[\ASAP on column $q$]{%
    \label{fig:fibonacciasap15x2}
    \hspace{0mm}\resizebox{.475\textwidth}{!}{\includegraphics{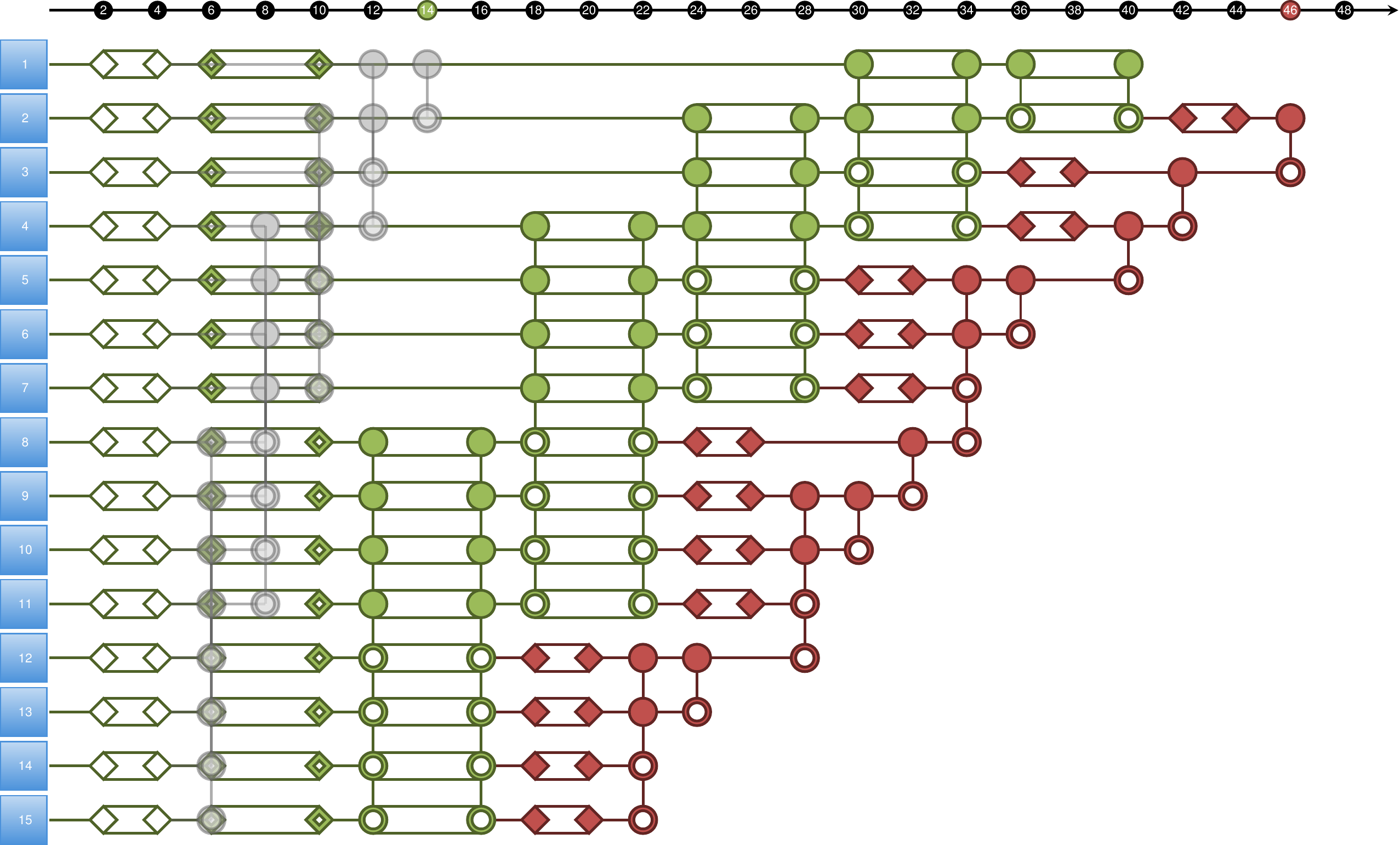}}
}
\caption{
\label{fig:grfibonacci}
Illustration of first and second parts of the proof of Theorem~\ref{thm.grasapopt} using the \MC
algorithm on a matrix of $15\times2$ tiles.}
\end{figure*}
\renewcommand{\baselinestretch}{\normalspace}

\linespread{1.0}
\begin{figure*}[htb]
\centering
\subfloat[\Greedy]{%
    \label{fig:greedy15x2}
    \hspace{-0mm}\resizebox{.475\textwidth}{!}{\includegraphics{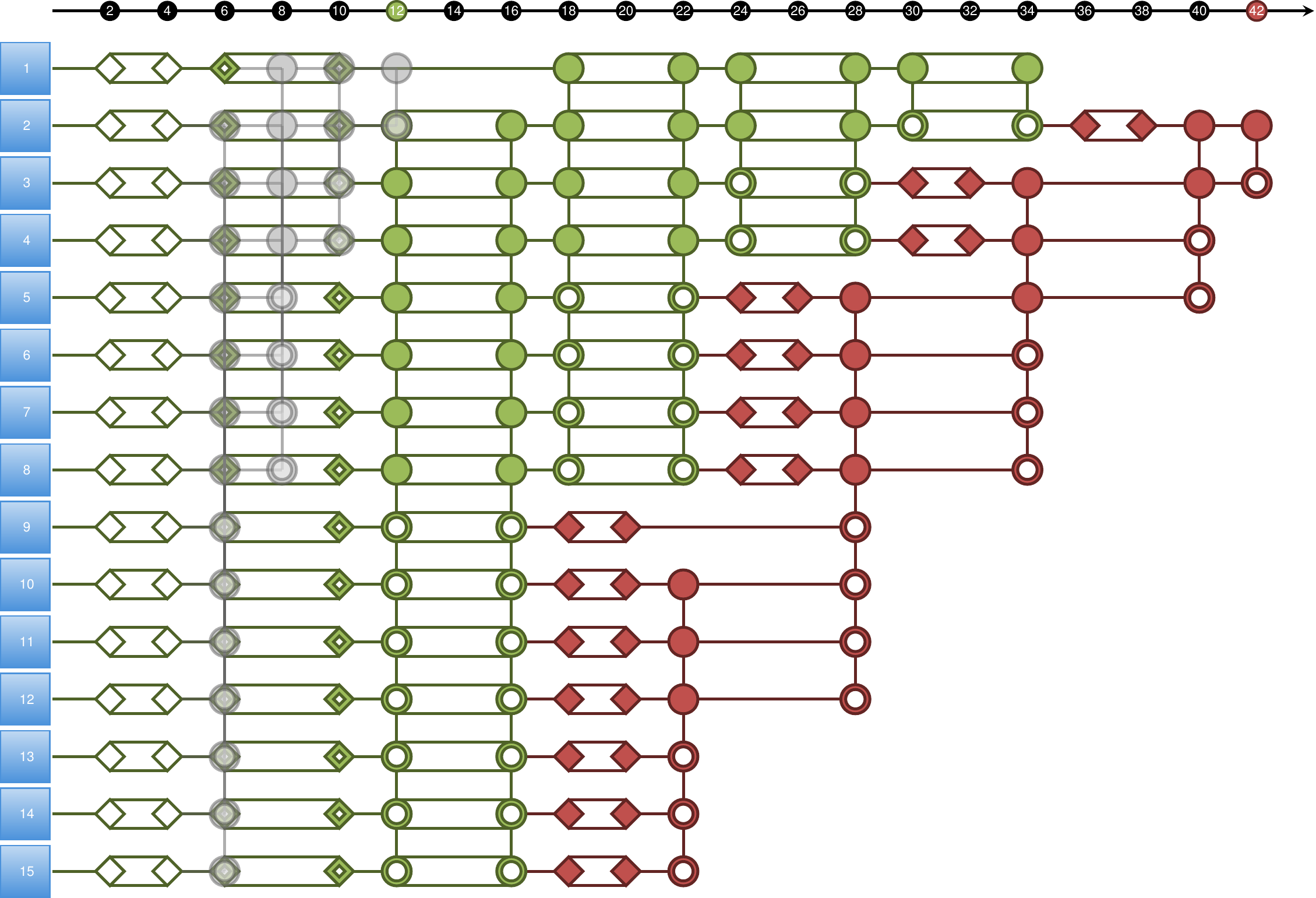}}
}
\subfloat[\GA]{%
    \label{fig:grasap15x2}
    \hspace{0mm}\resizebox{.475\textwidth}{!}{\includegraphics{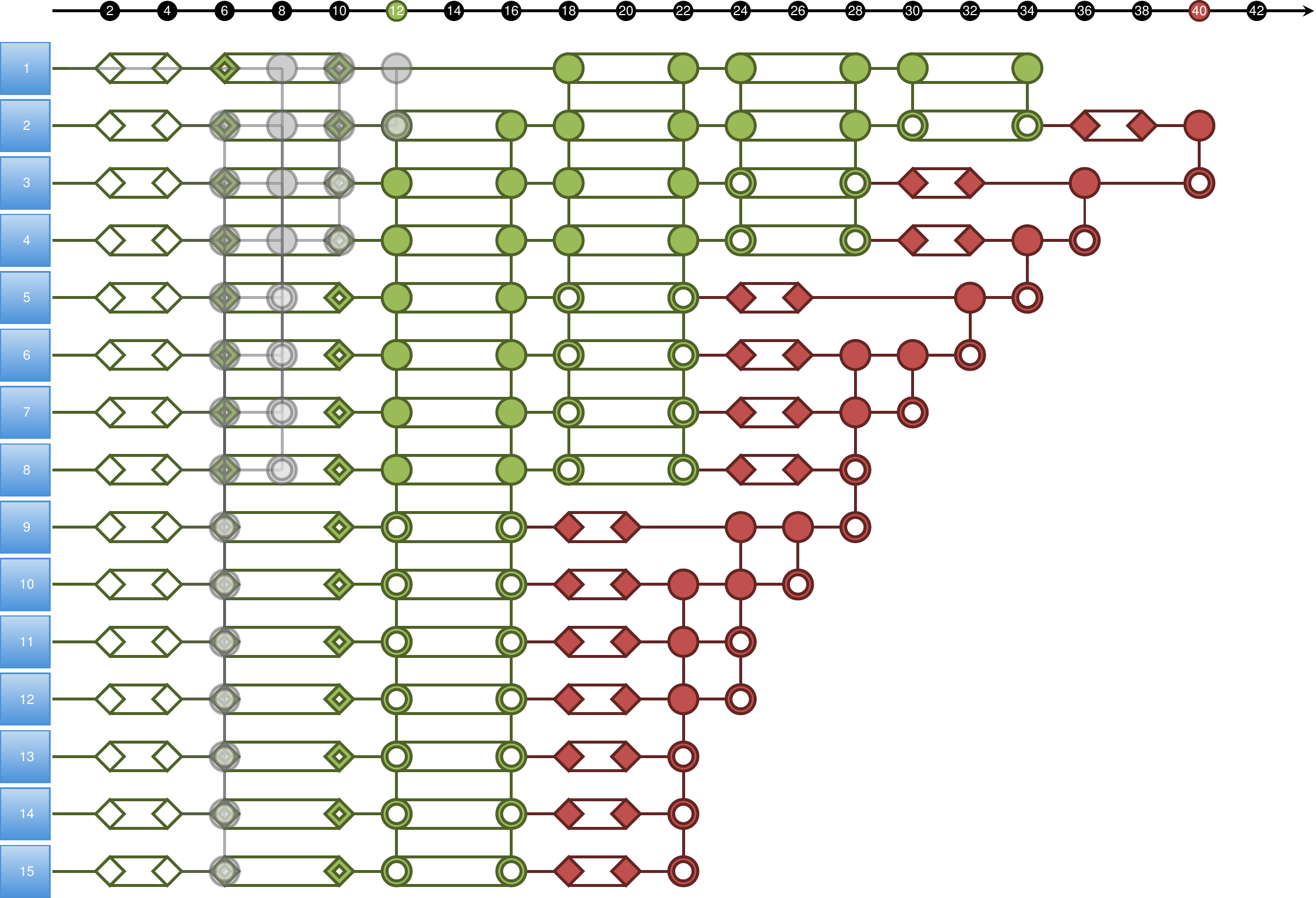}}
}
\caption{
\label{fig:greedyvsgrasap}
\Greedy versus \GA on matrix of $15\times2$ tiles.}
\end{figure*}
\renewcommand{\baselinestretch}{\normalspace}

In Figure~\ref{fig:greedyvsgrasap} we provide an illustrated example of the
\Greedy and \GA algorithms on a matrix of $15 \times 2$ tiles where the
operations are given by Figure~\ref{fig:treekernels}.

It can be seen that \GA finishes before \Greedy since \Greedy must wait and
progress with the same elimination scheme as the coarse-grain algorithm while
\GA can begin eliminating in the last column as soon as a pair of tiles becomes
available.  (The elimination of the first column is shown in light gray.)  

\linespread{1.0}
\begin{figure*}[htb]
\centering
    \resizebox{.45\textwidth}{!}{\includegraphics{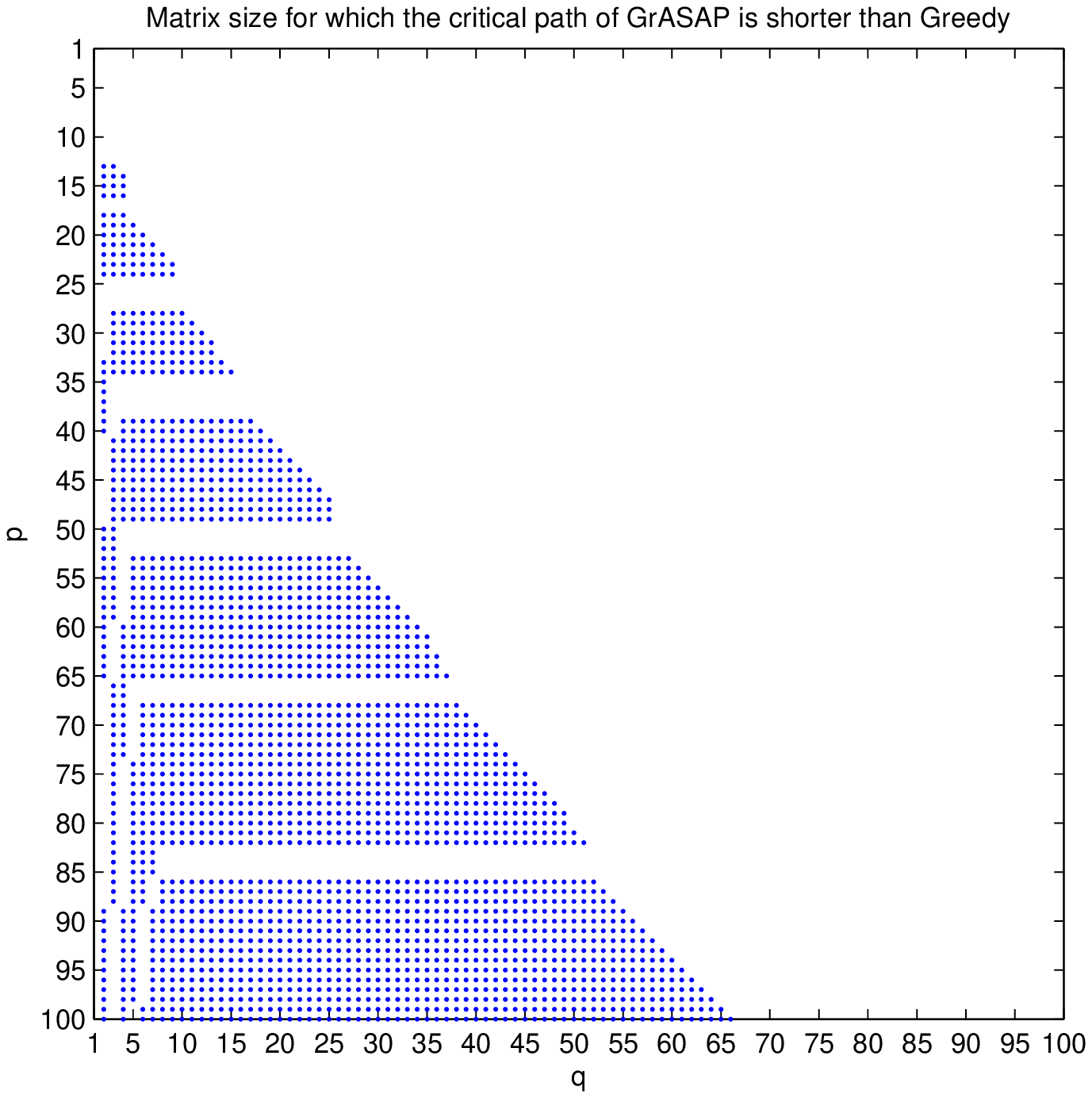}}
\caption{ Tiled matrices of $p \times q$ where the critical path length of \GA
is shorter than that of \Greedy for $1 \leq p \leq 100$ and $1 \leq q \leq p$.}
\label{fig:spyGrASAPvsGreedy}
\end{figure*}
\renewcommand{\baselinestretch}{\normalspace}

We have analyzed the critical path length of \GA versus that of \Greedy for
tiled matrices $p \times q$ where $1 \leq p \leq 100$ and $1 \leq q \leq p$ (see
Figure~\ref{fig:spyGrASAPvsGreedy}).  In all cases where there is a difference
(which is just over 44\% of the cases), the difference is always two time steps.  

We now show that without having the update kernel weight an integer multiple of
the elimination kernel weight, the bottom to top progression is nullified and we
cannot provide optimality of the algorithm.  

\linespread{1.0}
\begin{table}[htbp]
    \centering
    \begin{tabular}{|c|c|c|c|}
        \hline
        $a_{11}$ & (1) & (2) & (3)\\
        \hline
               6 &     &     &    \\
               6 & 13  & 14  & 12 \\
               6 & 11  &  8  & 10 \\
               6 &  9  &  8  & 10 \\
               6 &  9  & 12  &  8 \\
               6 &  9  &  9  &  8 \\
               6 &  7  &  7  &  8 \\
               6 &  7  &  7  &  8 \\
               6 &  5  &  5  &  5 \\
               6 &  5  &  5  &  5 \\
               6 &  5  &  5  &  5 \\
        \hline
    \end{tabular}
    \caption{Three schemes applied to a column whose update kernel weight is not
    an integer multiple of the elimination kernel weight.}
    \label{tab:nonintelim}
\end{table}
\renewcommand{\baselinestretch}{\normalspace}

Assume that the update kernel weight is 3 and the elimination kernel weight is
2.  Let $a_{11} = 3^{7}6^{4}$ be  column from some elimination scheme.  We shall
apply three iterated schemes to this column: (1) an \ASAP scheme that progresses
from bottom to top, (2) an \ASAP scheme that can progress in any manner, and (3)
an \ASAP scheme which may provide a lag.  

In Table~\ref{tab:nonintelim} we clearly see that elimination scheme (3)
provides the best time for the algorithm.  The reason is that enough of a lag
was provided such that a binomial tree could progress without hindrance.
Therefore without integer multiple weights on the update kernel, we cannot know
which scheme will be optimal.

The PLASMA library provides more algorithms, that can be informally described as
trade-offs between \FT and \BT.  (We remind that \FT is the same as algorithm as
\SK.) These algorithms are referred to as \PT in all the following, and differ
by the value of an input parameter called the \emph{domain size} $\BS$.  This
domain size can be any value between $1$ and $p$, inclusive.  Within a domain,
that includes $\BS$ consecutive rows, the algorithm works just as \FT: the first
row of each domain acts as a local panel and is used to zero out the tiles in
all the other rows of the domain. Then the domains are merged: the panel rows
are zeroed out by a binary tree reduction, just as in \BT.  As the algorithm
progresses through the columns, the domain on the very bottom is reduced
accordingly, until such time that there is one less domain.  For the case that
$\BS=1$, \PT follows a binary tree on the entire column, and for $\BS = p$, the
algorithm executes a flat tree on the entire column.  It seems very difficult
for a user to select the domain size $\BS$ leading to best performance, but it
is known that $\BS$ should increase as $q$ increases. Table~\ref{tab.tiled}
shows the time-steps of \PT with a domain size of $\BS =5$.  In the experiments
of \Section\ref{sec.experiments}, we use all possible values of $\BS$ and retain
the one leading to the best value.

\section{Experimental results}
\label{sec.experiments}

All experiments were performed on a 48-core machine composed of eight
hexa-core AMD Opteron 8439 SE (codename Istanbul) processors running at 2.8
GHz. Each core has a theoretical peak of 11.2 Gflop/s with a peak of 537.6
Gflop/s for the whole machine. The Istanbul micro-architecture is a NUMA
architecture where each socket has 6 MB of level-3 cache and each processor has
a 512 KB level-2 cache and a 128 KB level-1 cache.  After having benchmarked
the AMD ACML and Intel MKL BLAS libraries, we selected MKL (10.2) since it
appeared to be slightly faster in our experimental context.  Linux 2.6.32 and
Intel Compilers 11.1 were also used in conjunction with PLASMA 2.3.1.


\linespread{1.0}
\begin{figure*}[htb]
\centering
\subfloat[Upper bound (double complex)]{%
    \label{fig:fig_tt_pic1_p40_z.th}%
    \hspace{-2mm}\resizebox{.475\textwidth}{!}{\includegraphics{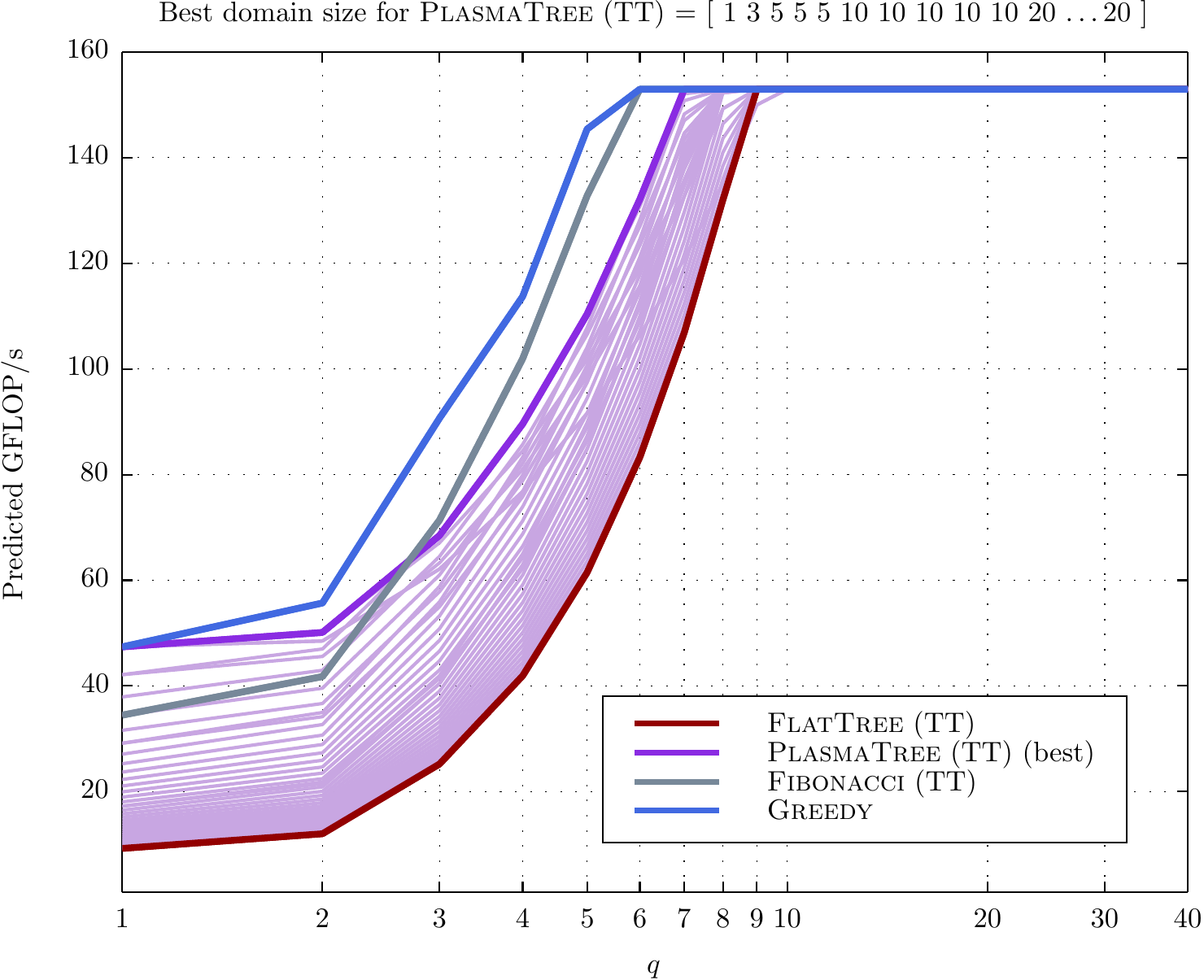}}%
}
\subfloat[Experimental (double complex)]{%
    \label{fig:fig_tt_pic1_p40_z.exp}%
    \hspace{7mm}\resizebox{.475\textwidth}{!}{\includegraphics{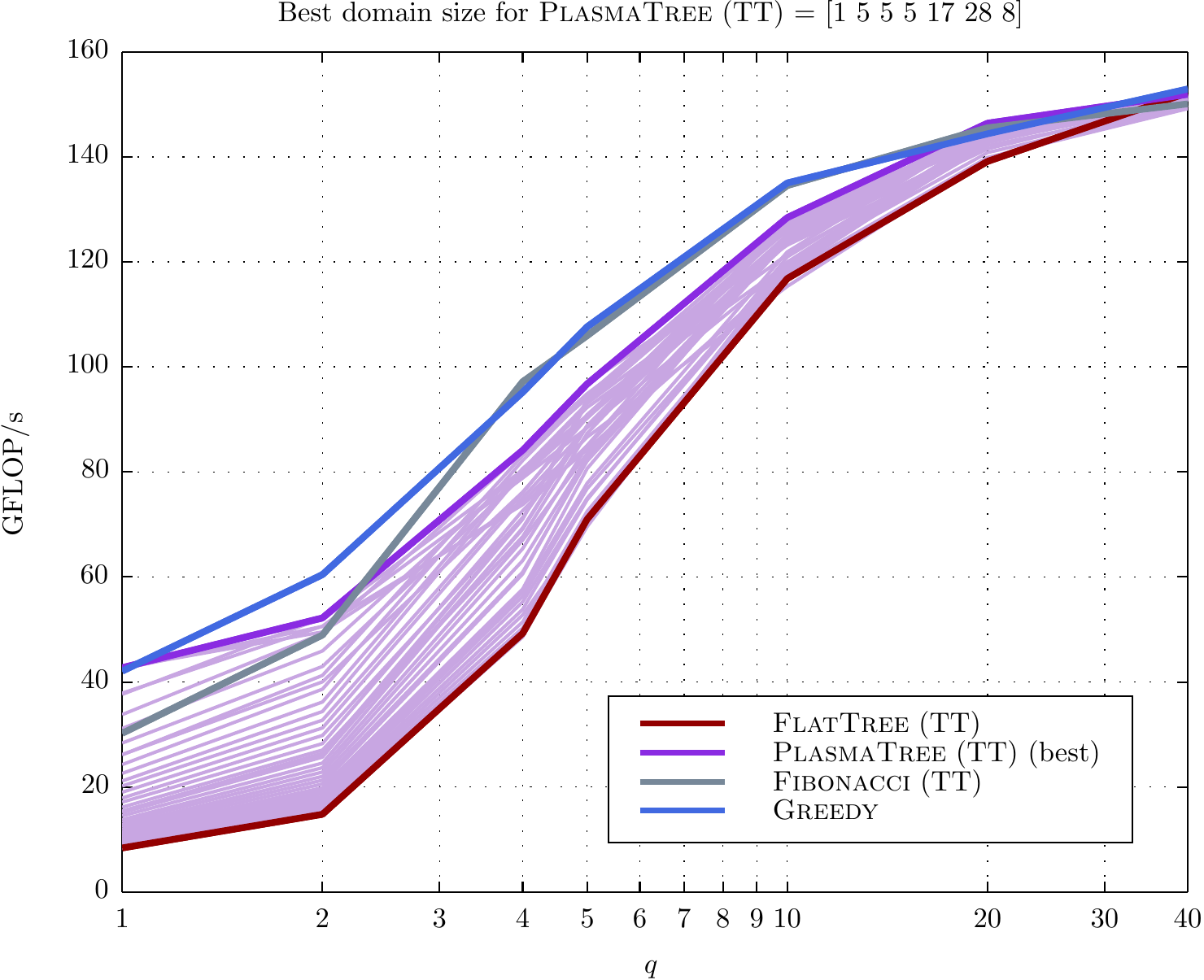}}%
}
\\
\subfloat[Upper bound (double)]{%
    \label{fig:fig_tt_pic1_p40_d.th}%
    \hspace{-2mm}\resizebox{.475\textwidth}{!}{\includegraphics{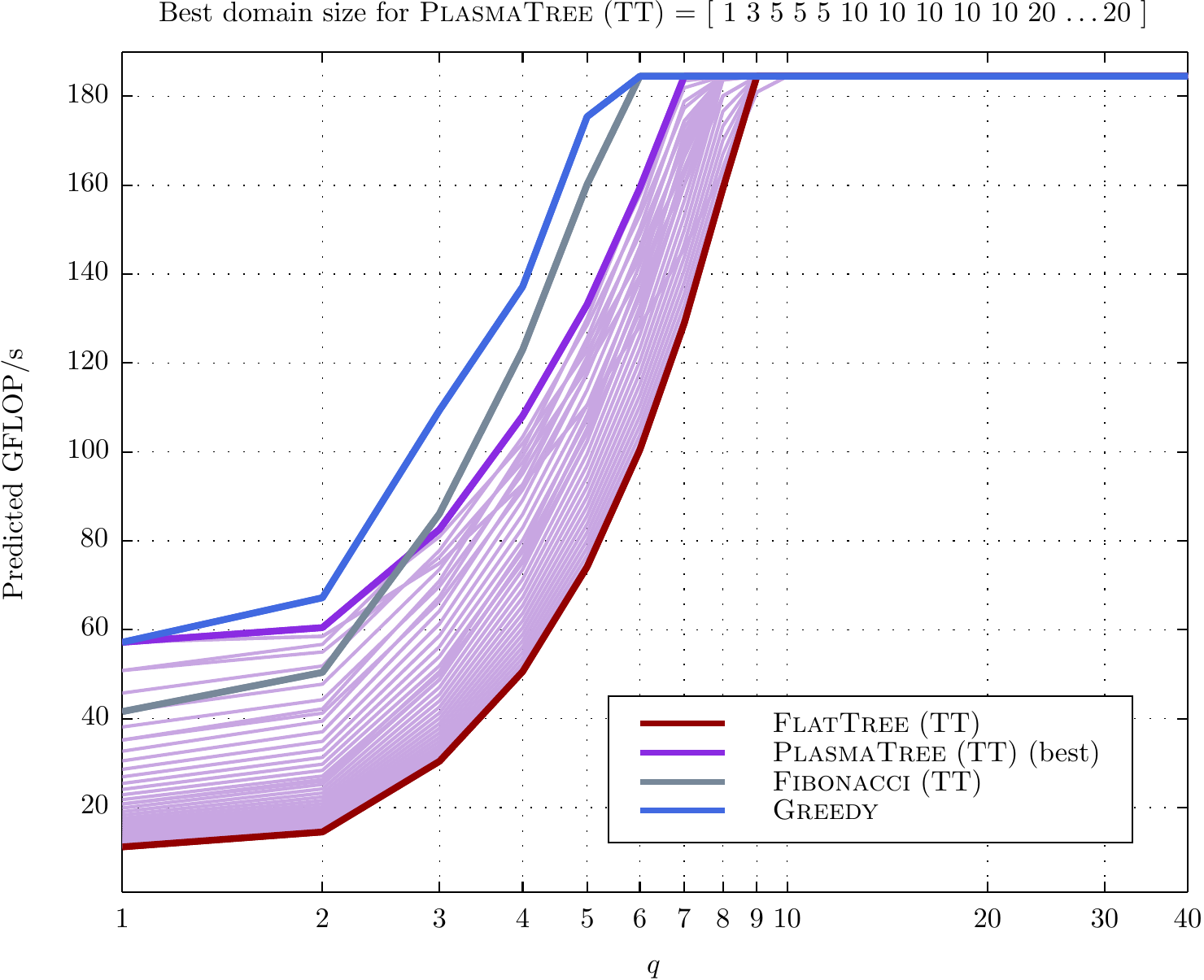}}%
}
\subfloat[Experimental (double)]{%
    \label{fig:fig_tt_pic1_p40_d.exp}%
    \hspace{7mm}\resizebox{.475\textwidth}{!}{\includegraphics{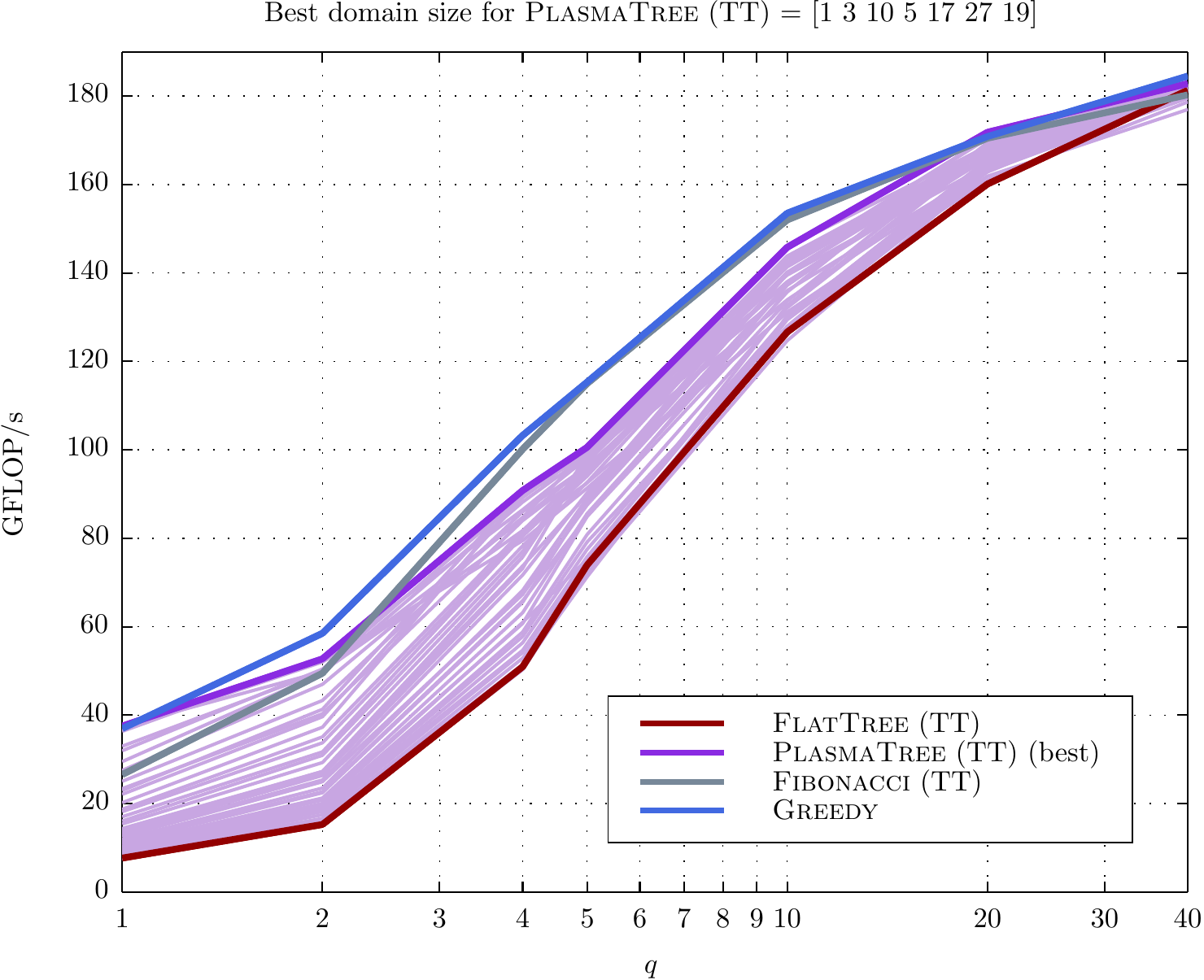}}%
}
\caption{\label{fig:fig_tt_pic1_p40}
Upper bound and experimental performance of QR factorization - \emph{TT} kernels}
\end{figure*}
\renewcommand{\baselinestretch}{\normalspace}

\linespread{1.0}
\begin{figure*}[htb]
\centering
\subfloat[Theoretical CP length]{%
   \hspace{-0mm}\resizebox{.475\textwidth}{!}{\includegraphics{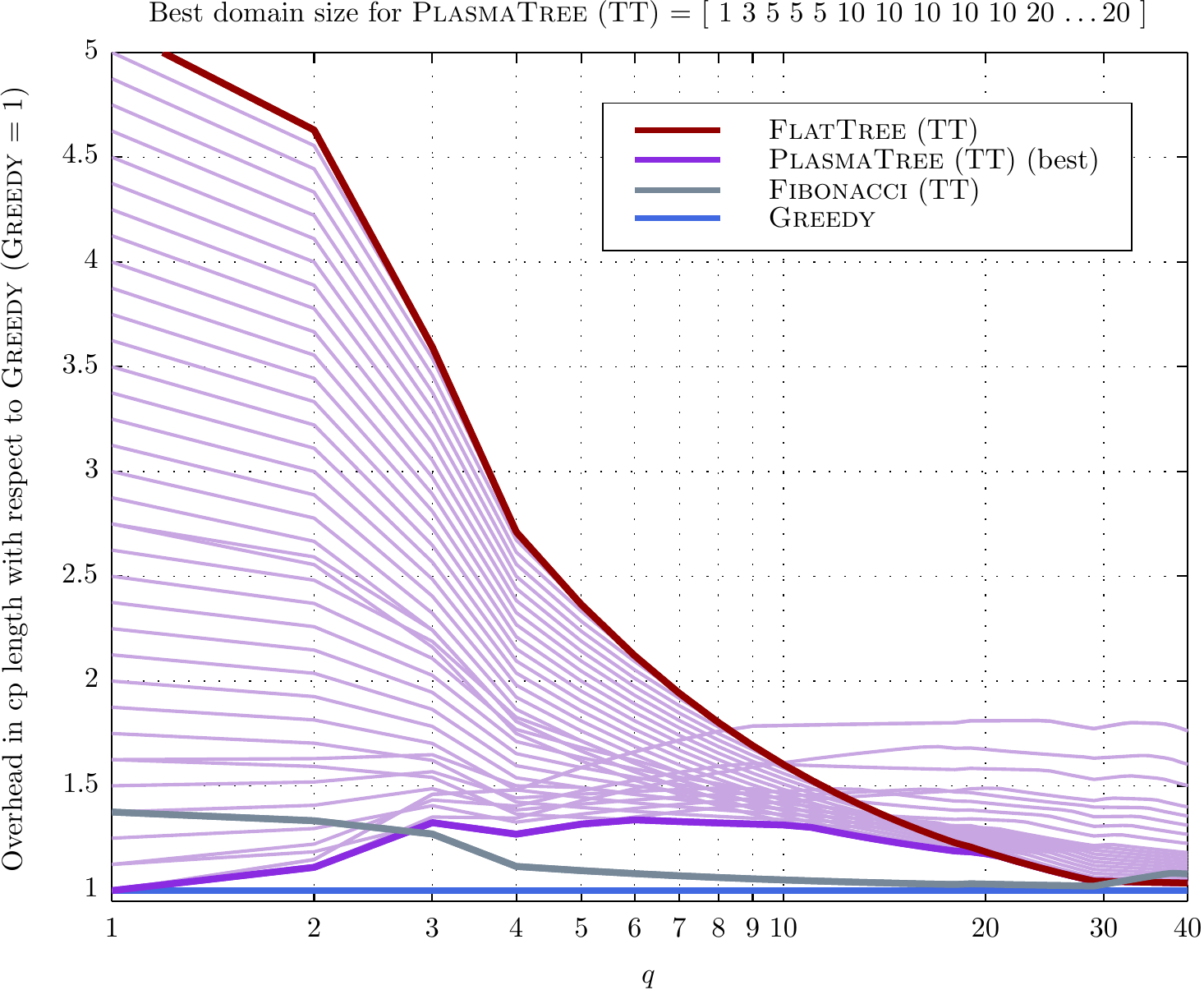}}%
}\\
\subfloat[Experimental (double complex)]{%
    \hspace{-1mm}\resizebox{.475\textwidth}{!}{\includegraphics{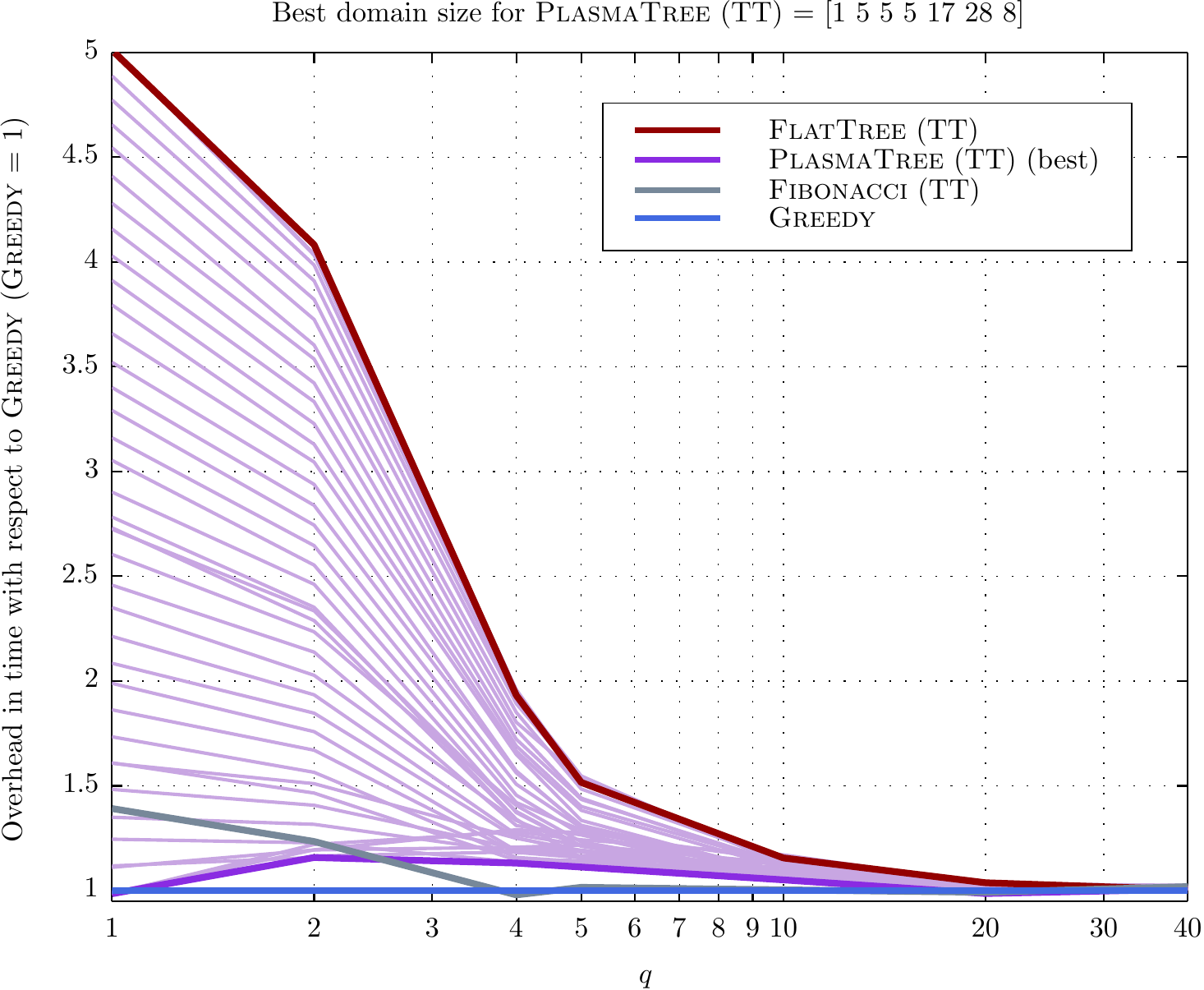}}%
}
\subfloat[Experimental (double)]{%
    \hspace{7mm}\resizebox{.475\textwidth}{!}{\includegraphics{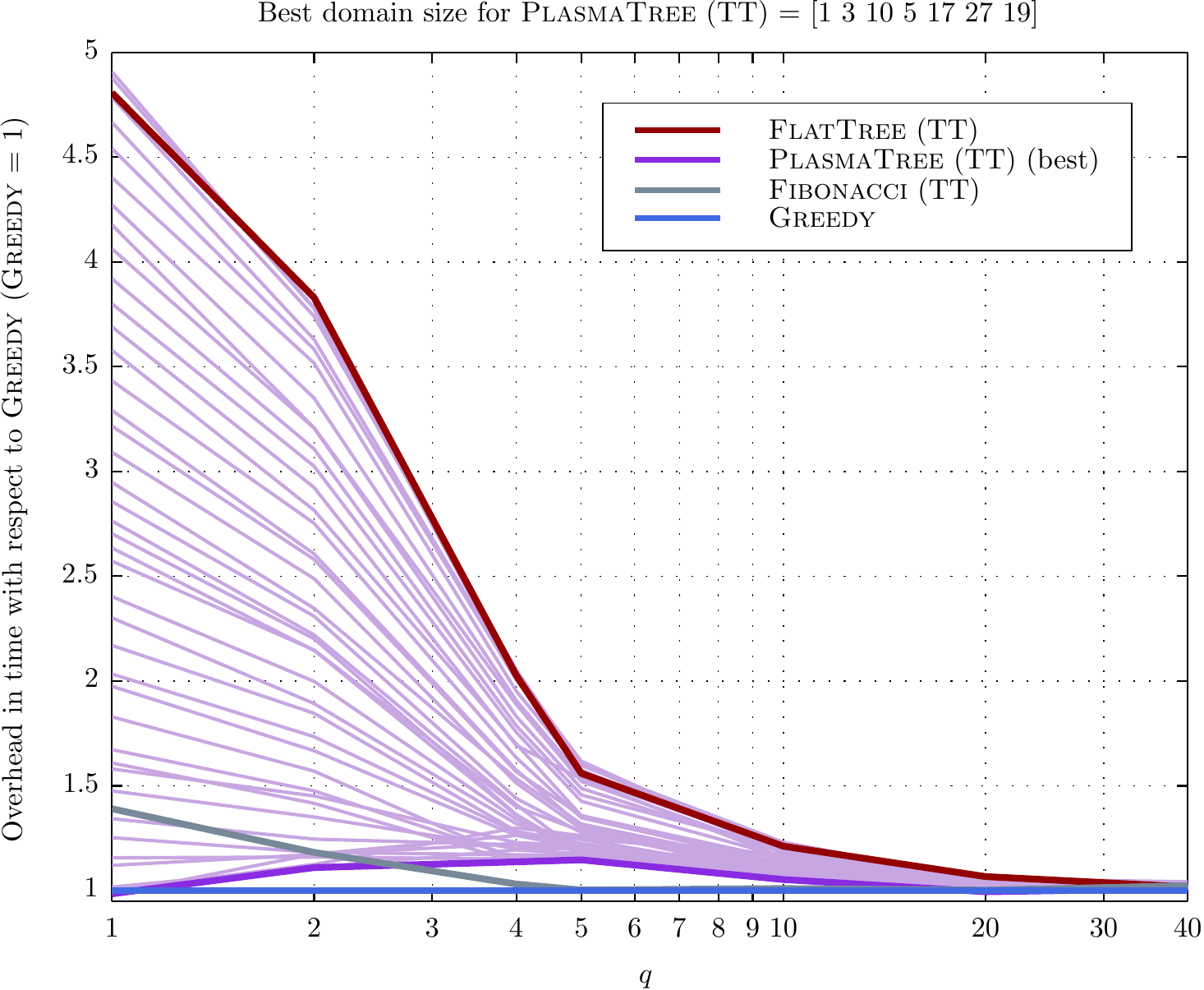}}%
}
\caption{\label{fig:fig_tt_pic2_p40}
Overhead in terms of critical path length and time with respect to \Greedy (\Greedy = 1)
}
\end{figure*}
\renewcommand{\baselinestretch}{\normalspace}

For all results, we show both double and double complex precision, using all
48 cores of the machine.   The matrices are of size $m=8000$ and $200 \leq n \leq
8000$.  The tile size is kept constant at $n_b=200$, so that the
matrices can also be viewed as $p \times q$ tiled matrices where $p=40$ and $1
\leq q \leq 40$. All kernels use an inner blocking parameter of $i_b=32$.

In double precision, an FMA (``{\em fused multiply-add}'', $ y \leftarrow
\alpha x + y$) involves three double precision numbers for two flops, but these
two flops can be combined into one FMA and thus completed in one cycle.  In double
complex precision, the operation $ y \leftarrow \alpha x + y$ involves six
double precision numbers for eight flops; there is no FMA.  The ratio of
computation/communication is therefore, potentially, four times higher in double
complex precision than in double precision. Communication aware algorithms are
much more critical in double precision than in double complex precision.

For each experiment, we provide a comparison of the theoretical performance to
the actual performance.  The theoretical performance is obtained by modeling
the limiting factor of the execution time as either the critical path, or the
sequential time divided by the number of processors.  This is similar in approach to the Roofline
model~\cite{Williams:2009:RIV:1498765.1498785}.  Taking $\gamma_{seq}$ as the
sequential performance, $T$ as the total number of flops, $cp$ as the length of
the critical path, and $P$ as the number of processors, the upper bound on
performance, $\gamma_{ub}$, is
\[ \gamma_{ub} = \frac{\gamma_{seq} \cdot T}{\max \left( \frac{T}{P}, cp \right)} \]
Figures~\ref{fig:fig_tt_pic1_p40_z.th} and~\ref{fig:fig_tt_pic1_p40_d.th} depict
the upper bound on performance of all algorithms which use the \emph{Triangle on top
of triangle} kernels.  Since \PT provides an additional tuning parameter of the domain size,
we show the results for each value of this parameter as well as the composition
of the best of these domain sizes. Again, it is not evident what the domain size
should be for the best performance, hence our exhaustive search.

Part of our comprehensive study also involved comparisons made to the
Semi-Parallel Tile and Fully-Parallel Tile CAQR algorithms found
in~\cite{Hadri_enhancingparallelism} which are much the same as those found in
PLASMA.  As with PLASMA, the tuning parameter $\BS$ controls the domain size
upon which a flat tree is used to zero out tiles below the root tile within the
domain and a binary tree is used to merge these domains.  Unlike PLASMA, it is
not the bottom domain whose size decreases as the algorithm progresses through
the columns, but instead is the top domain.  In this study, we found that the
PLASMA algorithms performed identically or better than these algorithms and
therefore we do not report these comparisons.

\linespread{1.0}
\begin{figure*}[htb]
\centering
\subfloat[Theoretical CP length]{%
    \label{fig:fig_tt_pic3_p40.th}
    \hspace{0mm}\resizebox{.475\textwidth}{!}{\includegraphics{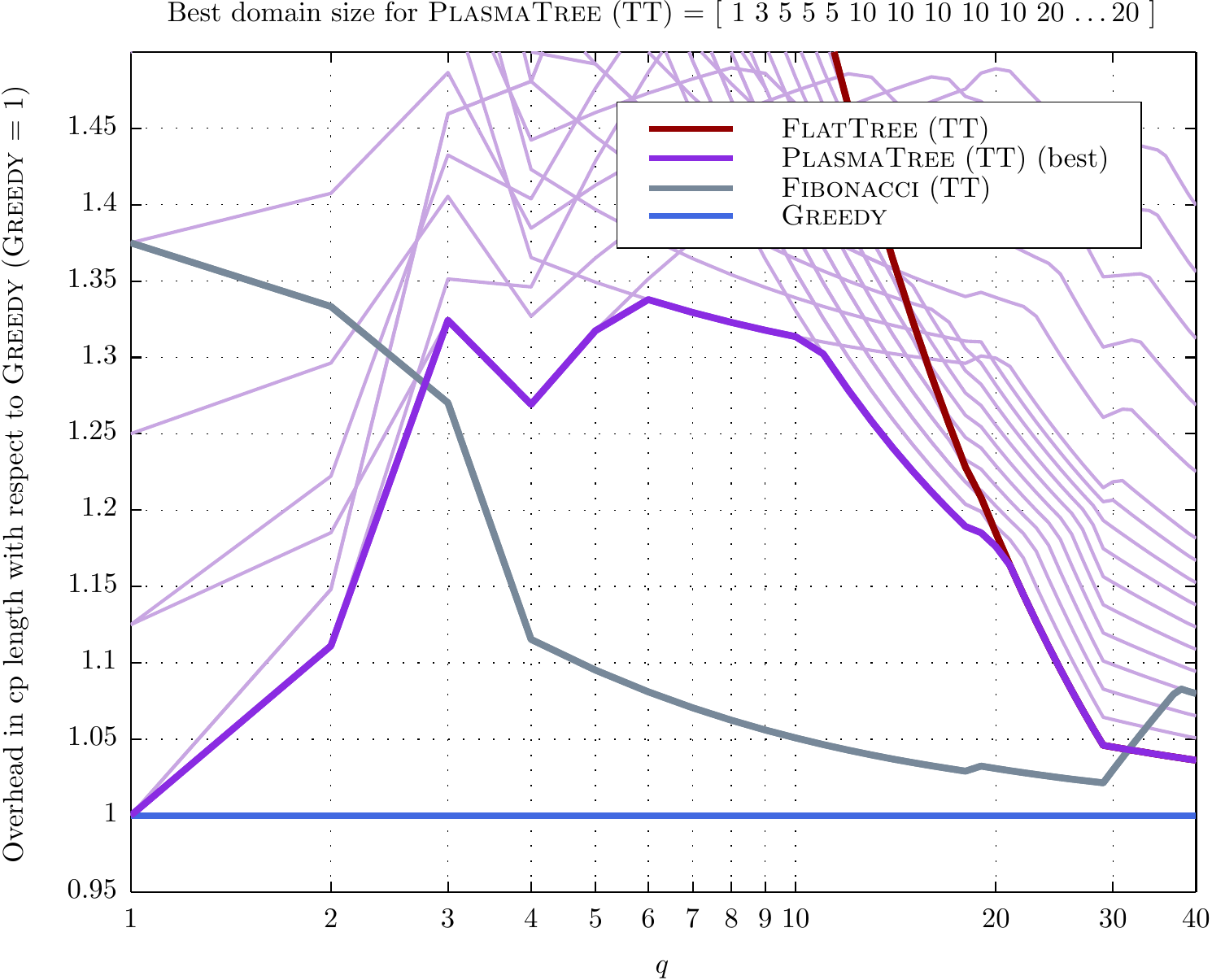}}%
}\\
\subfloat[Experimental (double complex)]{%
    \label{fig:fig_tt_pic3_p40_z.exp}
    \hspace{-2mm}\resizebox{.475\textwidth}{!}{\includegraphics{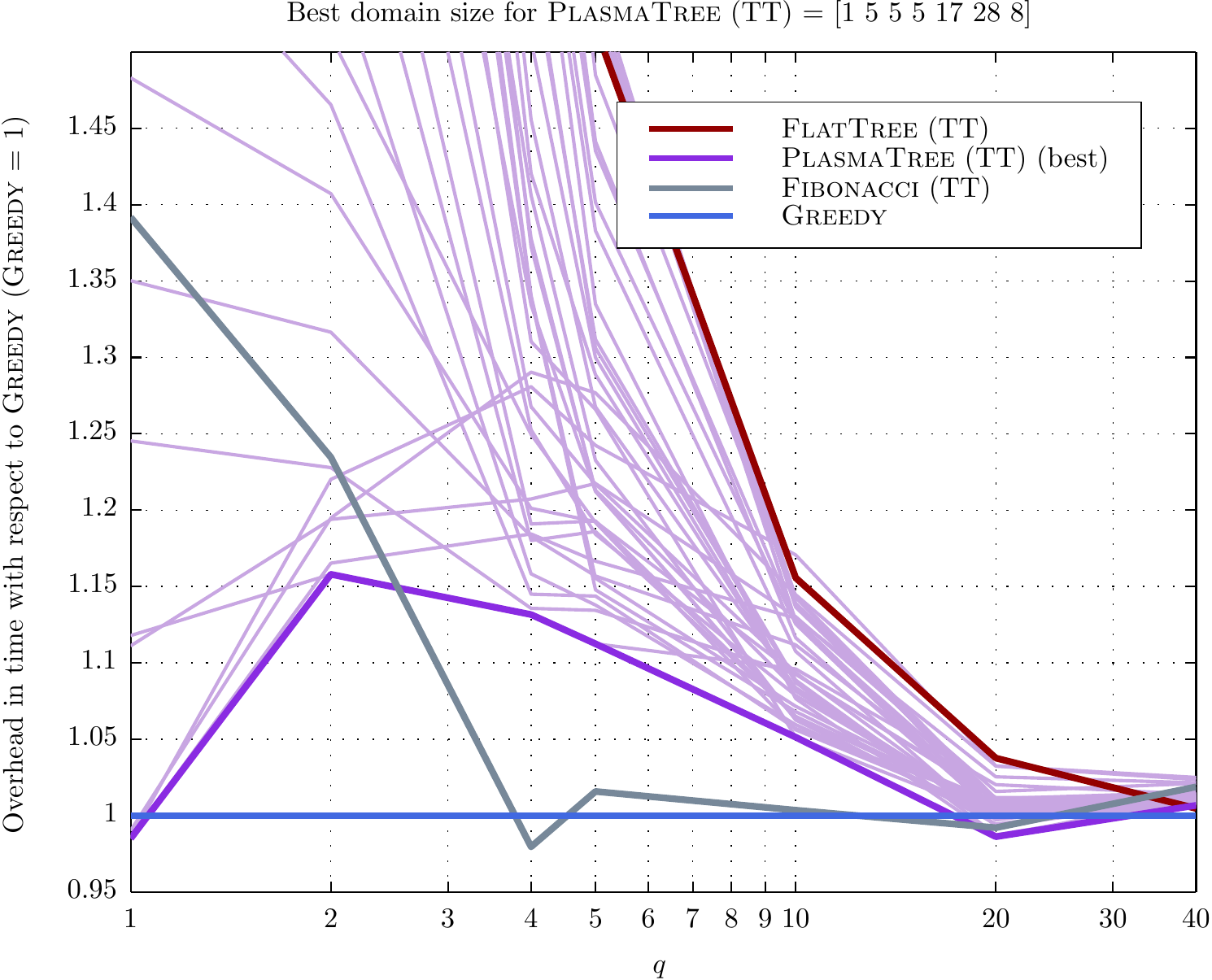}}%
}
\subfloat[Experimental (double)]{%
    \label{fig:fig_tt_pic3_p40_d.exp}
    \hspace{7mm}\resizebox{.475\textwidth}{!}{\includegraphics{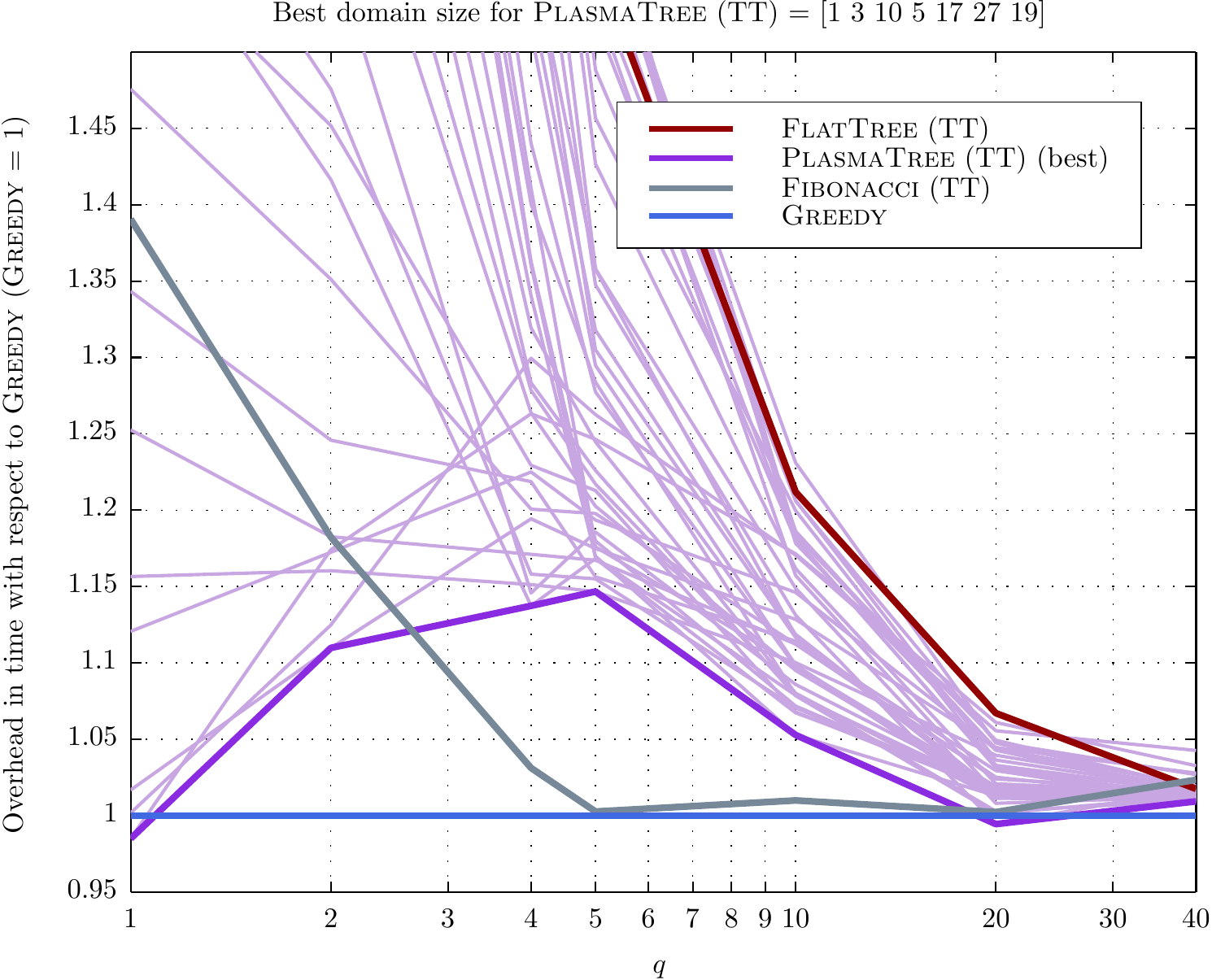}}%
}
\caption{\label{fig:fig_tt_pic3_p40}
Overhead in terms of critical path length and time with respect to \Greedy (\Greedy = 1)
}
\end{figure*}
\renewcommand{\baselinestretch}{\normalspace}

Figure~\ref{fig:fig_tt_pic1_p40_z.exp} and~\ref{fig:fig_tt_pic1_p40_d.exp}
illustrate the experimental performance reached by \Greedy, \MC and \PT
algorithms using the \emph{TT (Triangle on top of triangle)} kernels. In both
cases, double or double complex precision, the performance of \Greedy is better
than \PT even for the best choice of domain size.  Moreover, as expected from
the analysis in \Section\ref{sec:TiledAlgorithms}, \Greedy outperforms \MC the
majority of the time.  Furthermore, we see that, for rectangular matrices, the
experimental performance in double complex precision matches the upper bound on
performance.  This is not the case for double precision because communications
have higher impact on performance.

While it is apparent that \Greedy does achieve higher levels of performance,
the percentage may not be as obvious.  To that end, taking \Greedy as the
baseline, we present in Figure~\ref{fig:fig_tt_pic2_p40} the theoretical,
double, and double complex precision overhead for each algorithm that uses the
\emph{Triangle on top of triangle} kernel as compared to \Greedy. These
overheads are respectively computed in terms of critical path length and time.
At a smaller scale (Figure~\ref{fig:fig_aa_pic3_p40}), it can be seen that
\Greedy can perform up to 13.6\% better than \PT.

For all matrix sizes considered, $p=40$ and $1 \leq q \leq 40$, in the
theoretical model, the critical path length for \Greedy is either the same as
that of \PT ($q=1$) or is up to 25\% shorter than \PT ($q=6$).  Analogously, the
critical path length for \Greedy is at least 2\% to 27\% shorter than that of
\MC.  In the experiments, the matrix sizes considered were $p=40$ and $q \in \{
1, 2, 4, 5, 10, 20, 40\}$.  In double precision, \Greedy has a decrease of at
most 1.5\% than the best \PT ($q=1$) and a gain of at most 12.8\% than the best
\PT ($q=5$).  In double complex precision, \Greedy has a decrease of at most
1.5\% than the best \PT ($q=1$) and a gain of at most 13.6\% than the best \PT
($q=2$).  Similarly, in double precision, \Greedy provides a gain of 2.6\% to
28.1\% over \MC and in double complex precision, \Greedy has a decrease of at
most 2.1\% and a gain of at most 28.2\% over \MC.

Although it is evidenced that \PT does not vary too far from \Greedy or \MC,
one must keep in mind that there is a tuning parameter involved and we choose
the best of these domain sizes for \PT to create the composite result, whereas
with \Greedy, there is no such parameter to consider.  Of particular interest
is the fact that \Greedy always performs better than any other
algorithm\footnote{When $q=1$, \Greedy and \FT exhibit close performance. They
both perform a binary tree reduction, albeit with different row pairings.} for
$ p \gg q$. In the scope of \PT, a domain size $\BS=1$ will force the use of a
binary tree so that both \Greedy and \PT behave the same. However, as the
matrix tends more to a square, i.e., $q$ tends toward $p$, we observe that the
performance of all of the algorithms, including \FT, are on par with \Greedy.
As more columns are added, the parallelism of the algorithm is increased and
the critical path becomes less of a limiting factor, so that the performance of
the kernels is brought to the forefront.  Therefore, all of the algorithms are
performing similarly since they all share the same kernels.

\linespread{1.0}
\begin{figure*}[htb]
\centering
\subfloat[Factorization kernels]{
    \hspace{-2mm}\resizebox{.475\textwidth}{!}{\includegraphics{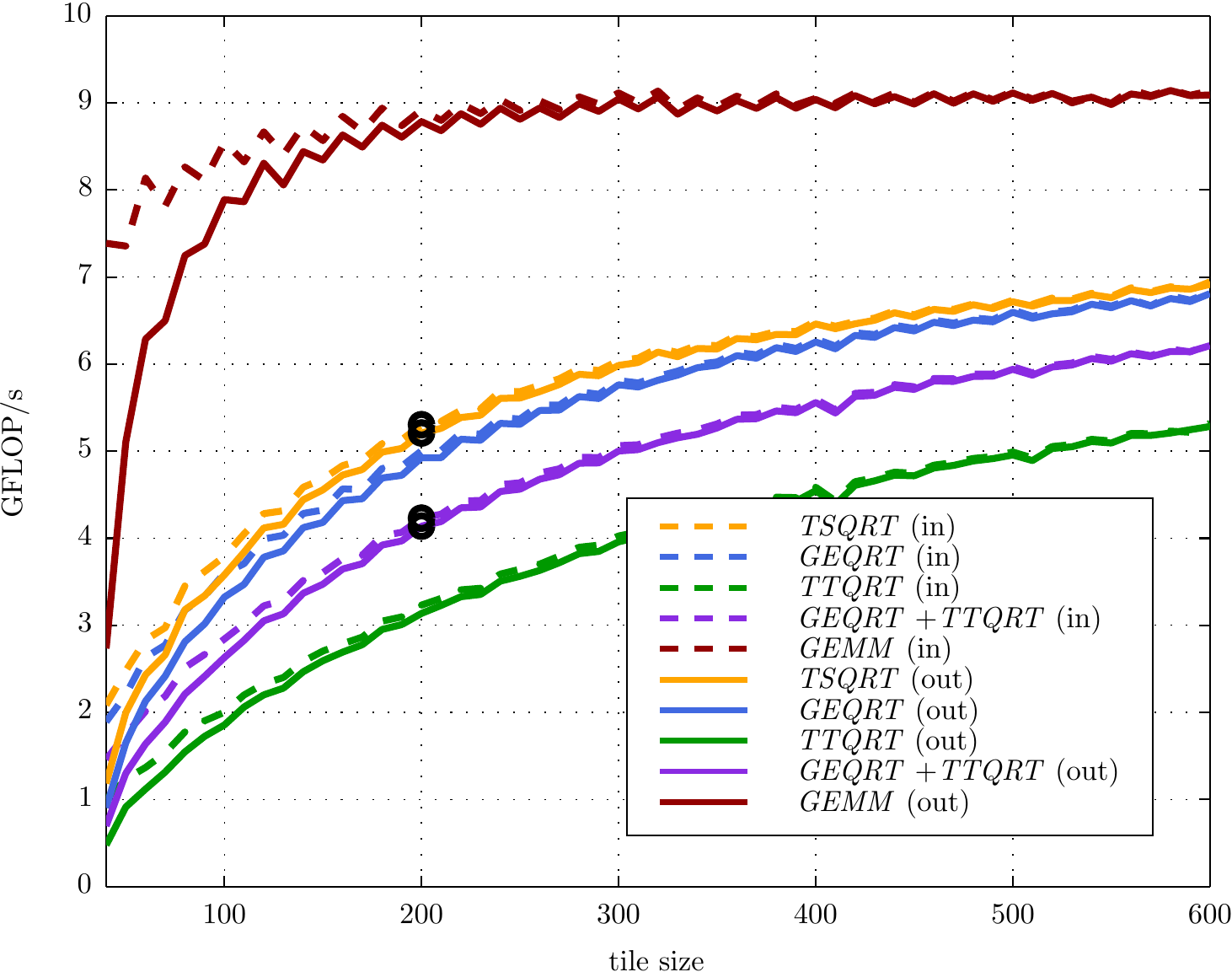}}%
}
\subfloat[Update kernels]{
    \hspace{7mm}\resizebox{.475\textwidth}{!}{\includegraphics{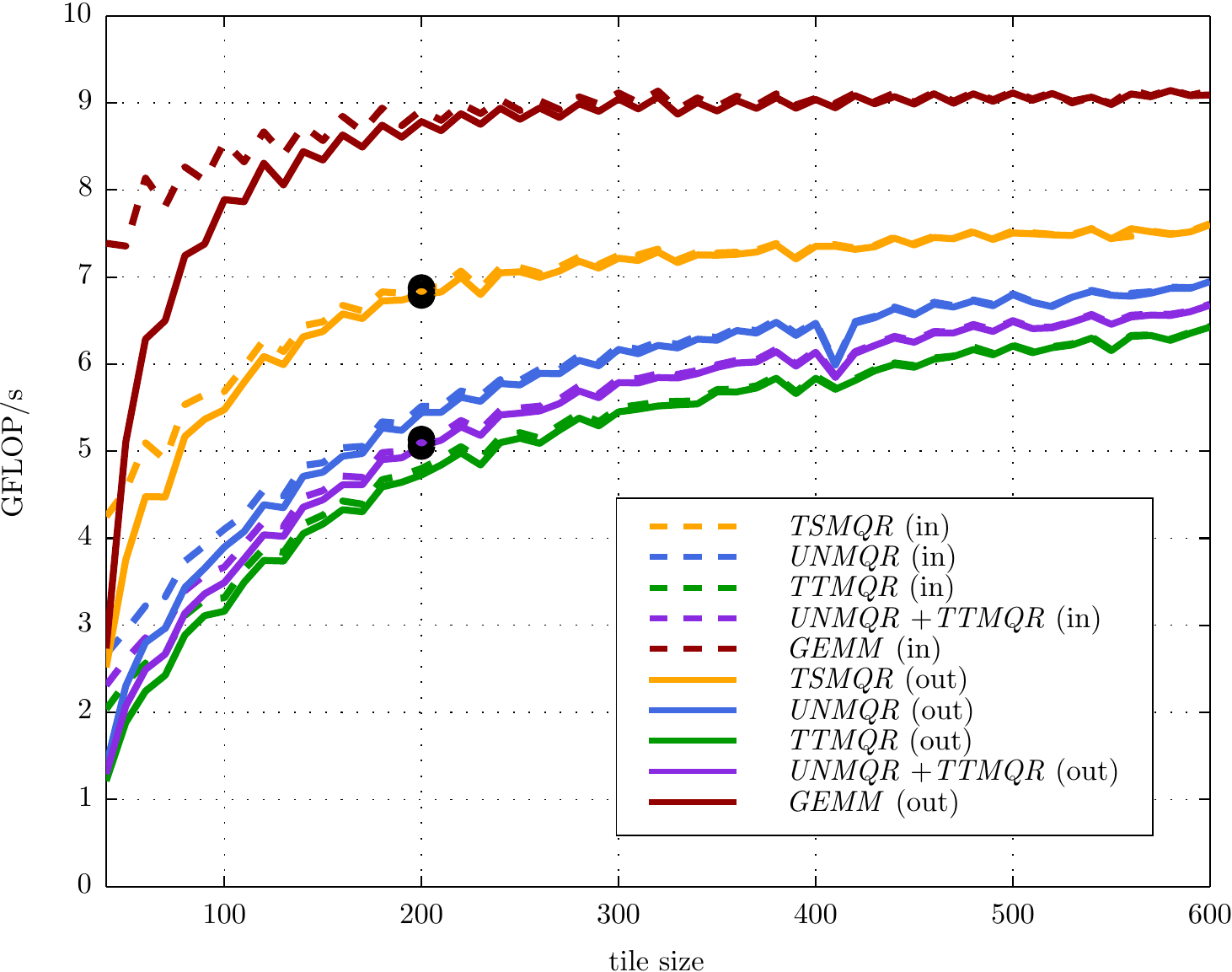}}%
}
    \caption{\label{fig:perf_kernels_z}
    Kernel performance for double complex precision}
\end{figure*}

\begin{figure*}[htb]
\centering
\subfloat[Factorization kernels]{
    \hspace{-2mm}\resizebox{.475\textwidth}{!}{\includegraphics{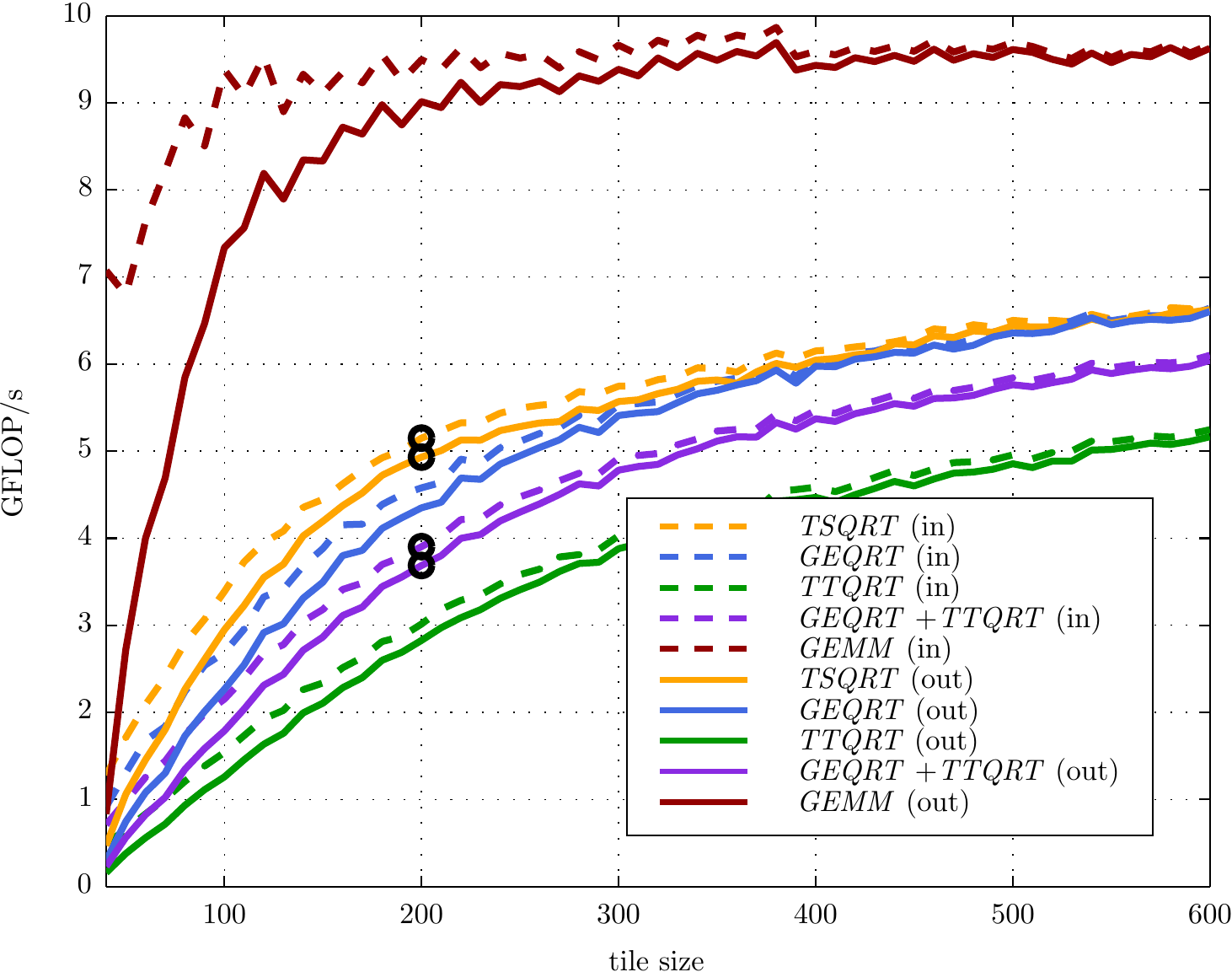}}%
}
\subfloat[Update kernels]{
    \hspace{7mm}\resizebox{.475\textwidth}{!}{\includegraphics{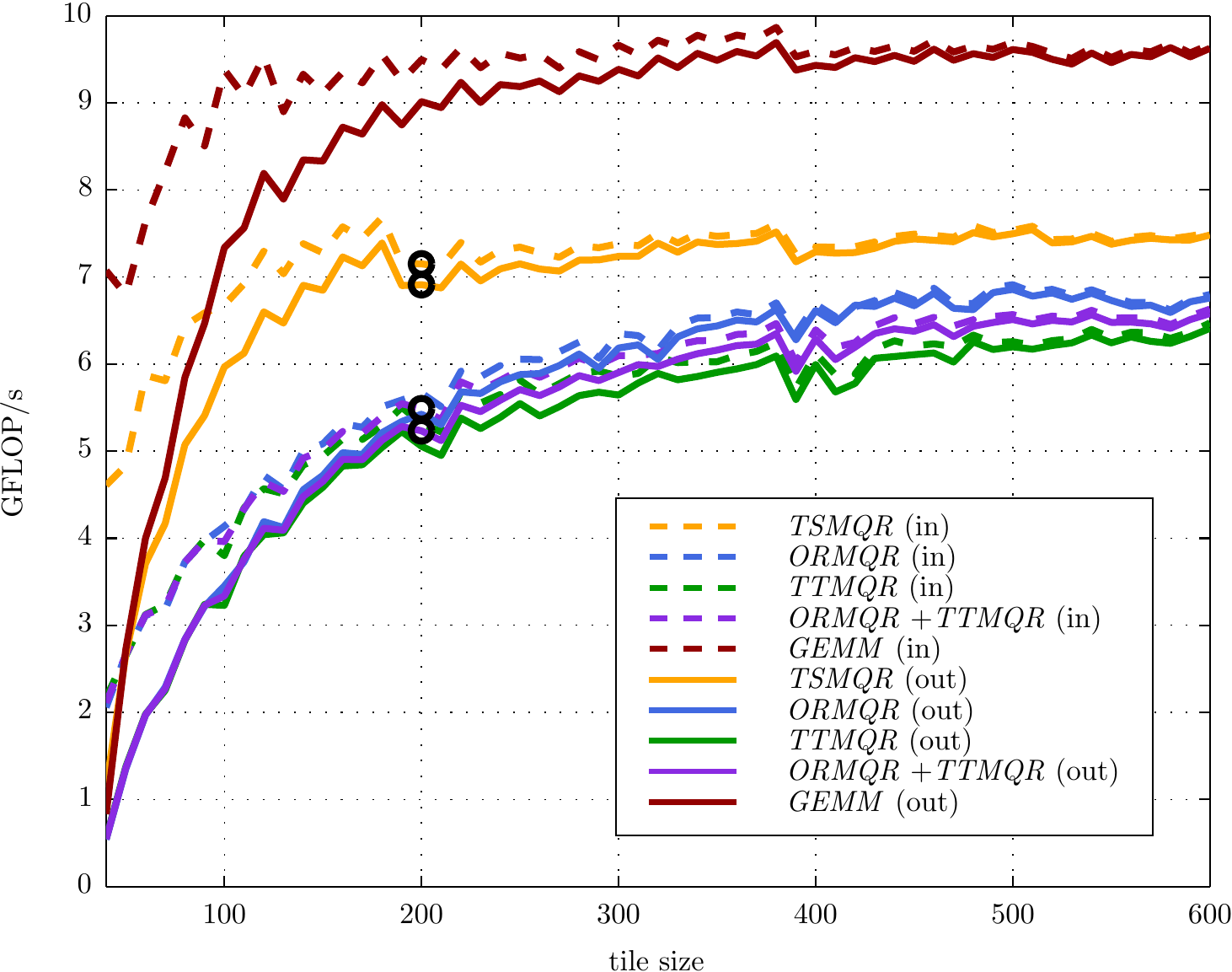}}%
}
    \caption{\label{fig:perf_kernels_d}
    Kernel performance for double precision}
\end{figure*}
\renewcommand{\baselinestretch}{\normalspace}

In order to accurately assess the impact of the kernel selection towards the
performance of the algorithms, Figures~\ref{fig:perf_kernels_z}
and~\ref{fig:perf_kernels_d} show both the in cache and out of cache
performance using the \emph{No Flush} and \emph{MultCallFlushLRU} strategies as
presented in~\cite{lawn242,Whaley:2008:AAC:1462062.1462065}.  Since an
algorithm using \emph{TT} kernels will need to call \GEQRT as well as \TTQRT to
achieve the same as the \emph{TS} kernel \TSQRT, the comparison is made between
\GEQRT + \TTQRT and \TSQRT (and similarly for the updates).  For $n_b=200$, the
observed ratio for in cache kernel speed for \TSQRT to \GEQRT + \TTQRT is
1.3374, and for \TSMQR to \UNMQR + \TTMQR is 1.3207. For out of cache, the
ratio for \TSQRT to \GEQRT + \TTQRT is 1.3193 and for \TSMQR to \UNMQR + \TTMQR
it is 1.3032.  Thus, we can expect about a 30\% difference between the
selection of the kernels, since we will have instances of using in cache and
out of cache throughout the run.  Most of this difference is due to the higher
efficiency and data locality within the \emph{TT} kernels as compared to the
\emph{TS} kernels.


\linespread{1.0}
\begin{figure*}[htb]
\centering
\subfloat[Upper bound (double complex)]{%
    \label{fig:fig_aa_pic1_p40_z.th}%
    \hspace{0mm}\resizebox{.475\textwidth}{!}{\includegraphics{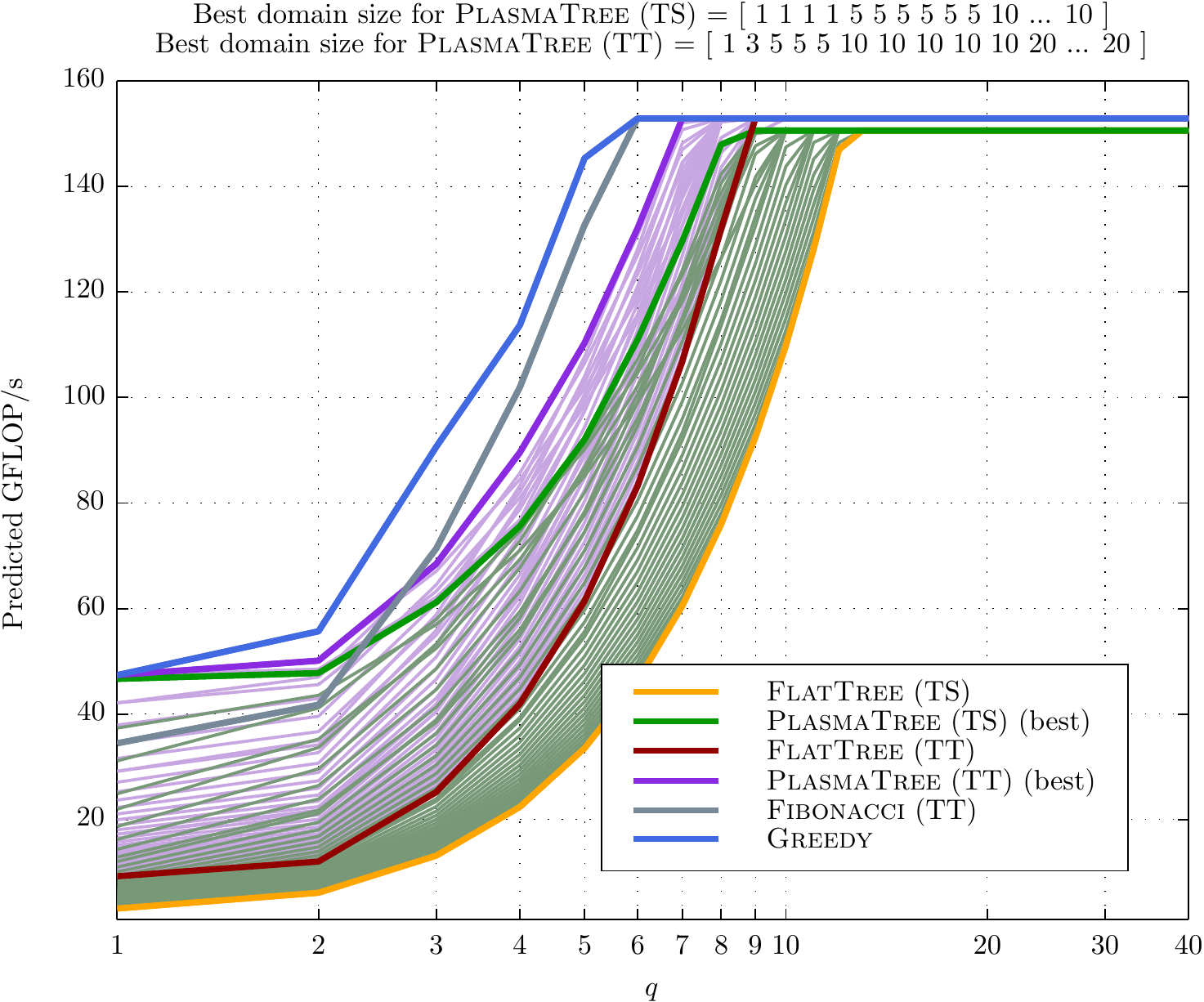}}%
}
\subfloat[Experimental (double complex)]{%
    \label{fig:fig_aa_pic1_p40_z.exp}%
    \hspace{7mm}\resizebox{.475\textwidth}{!}{\includegraphics{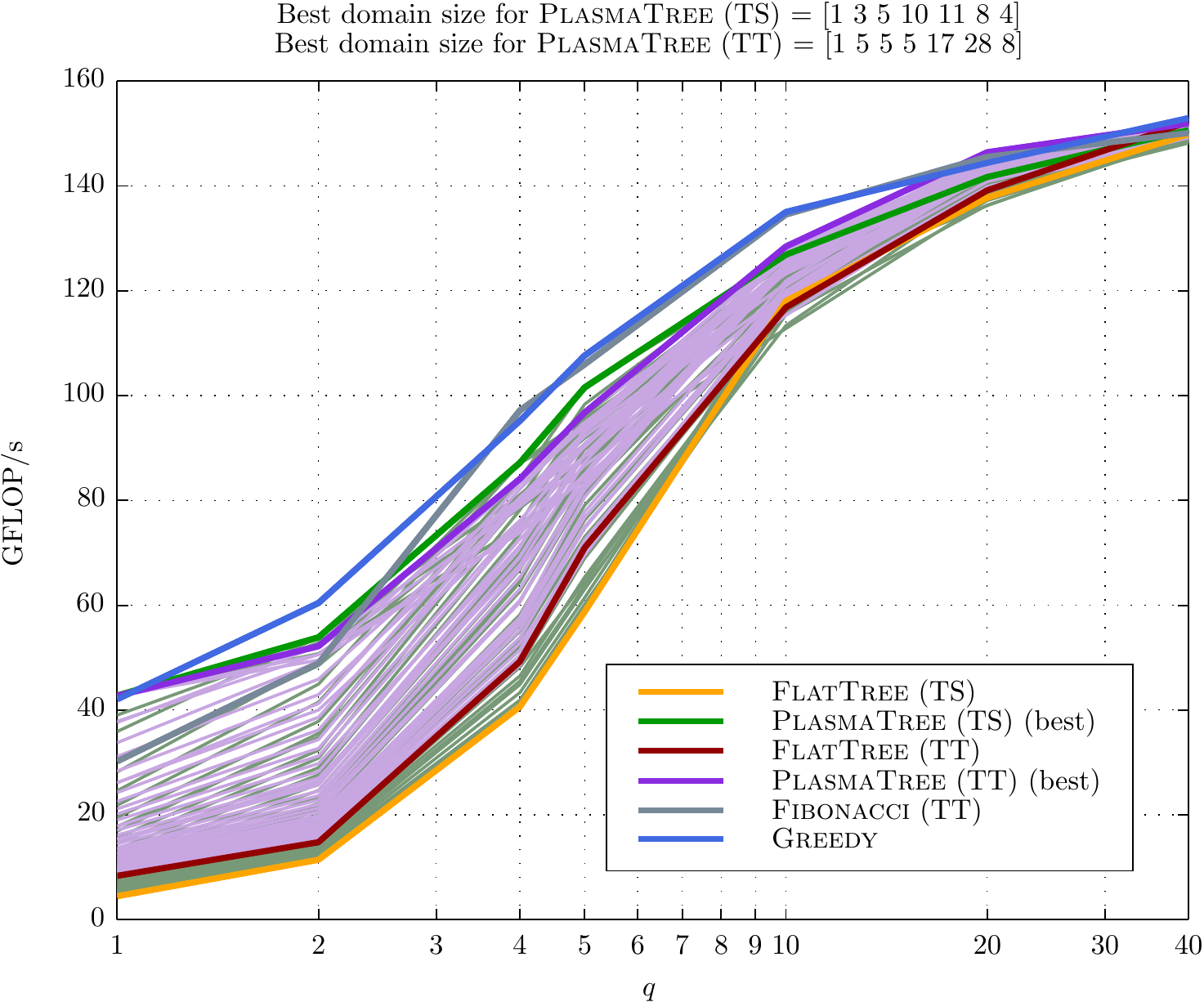}}%
}
\\
\subfloat[Upper bound (double)]{%
    \label{fig:fig_aa_pic1_p40_d.th}%
    \hspace{0mm}\resizebox{.475\textwidth}{!}{\includegraphics{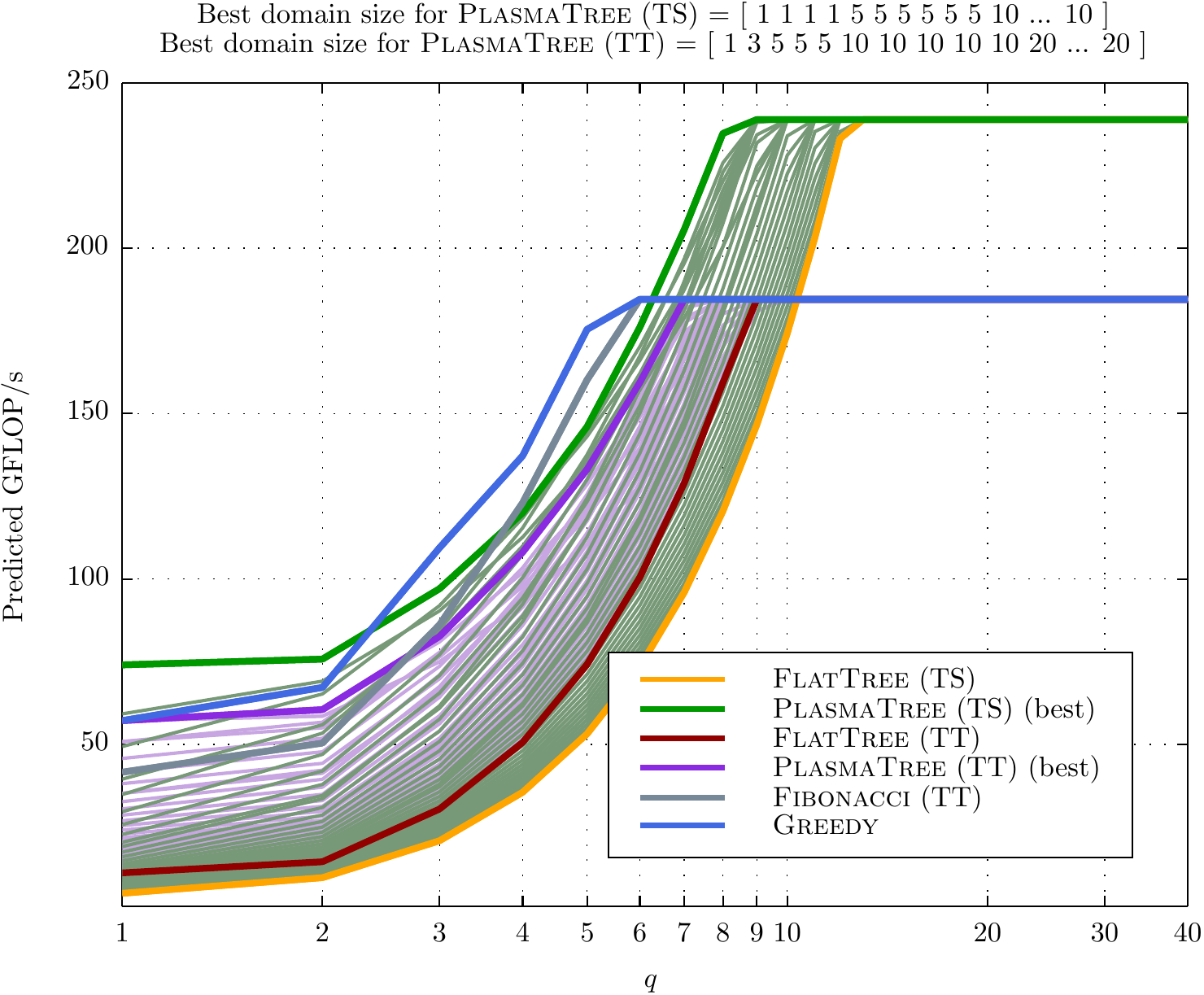}}%
}
\subfloat[Experimental (double)]{%
    \label{fig:fig_aa_pic1_p40_d.exp}%
    \hspace{7mm}\resizebox{.475\textwidth}{!}{\includegraphics{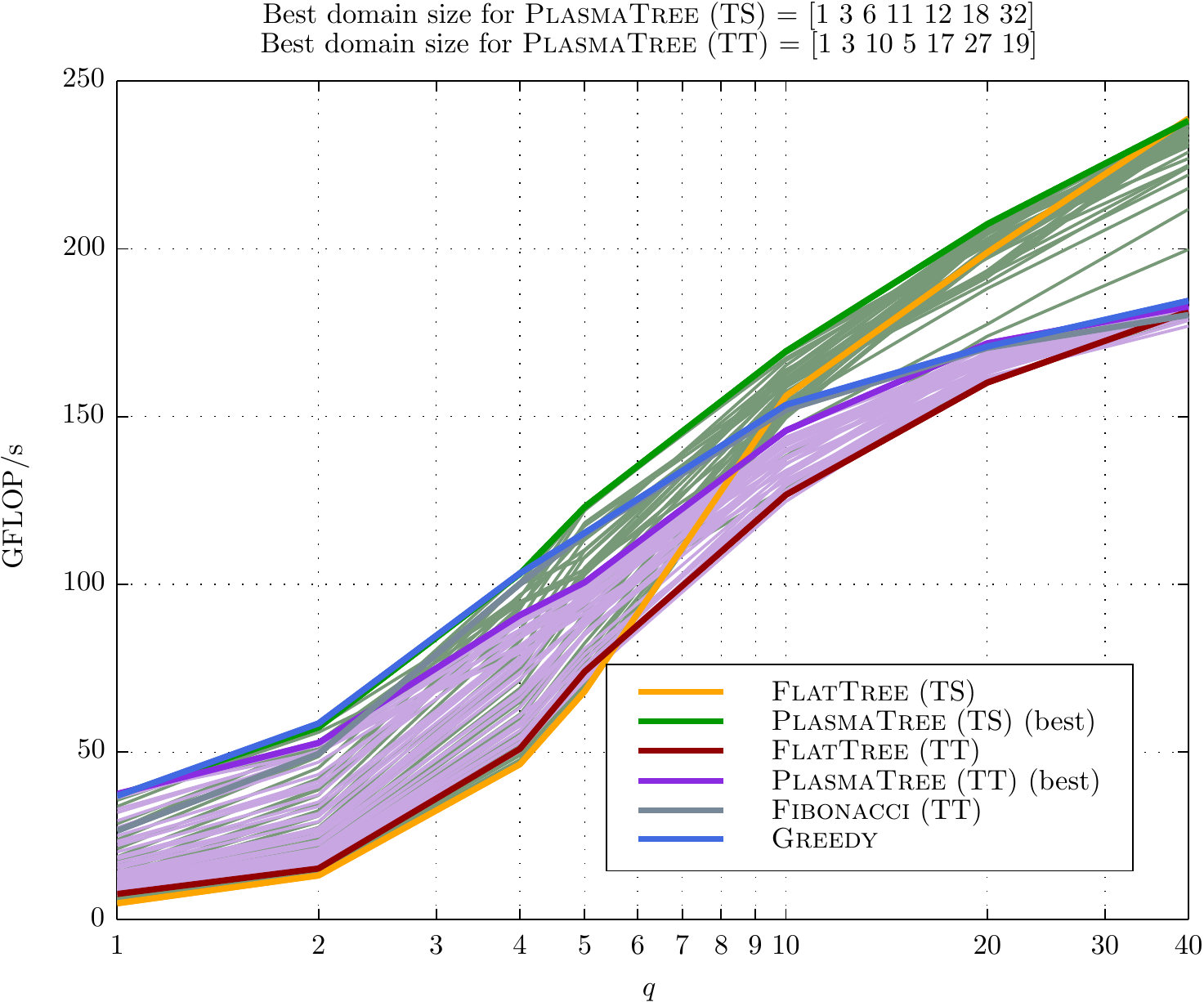}}%
}
\caption{\label{fig:fig_aa_pic1_p40}
Upper bound and experimental performance of QR factorization - All kernels}
\end{figure*}
\renewcommand{\baselinestretch}{\normalspace}

\linespread{1.0}
\begin{figure*}[htb]
\centering
\subfloat[Theoretical CP length]{%
    \hspace{0mm}\resizebox{.475\textwidth}{!}{\includegraphics{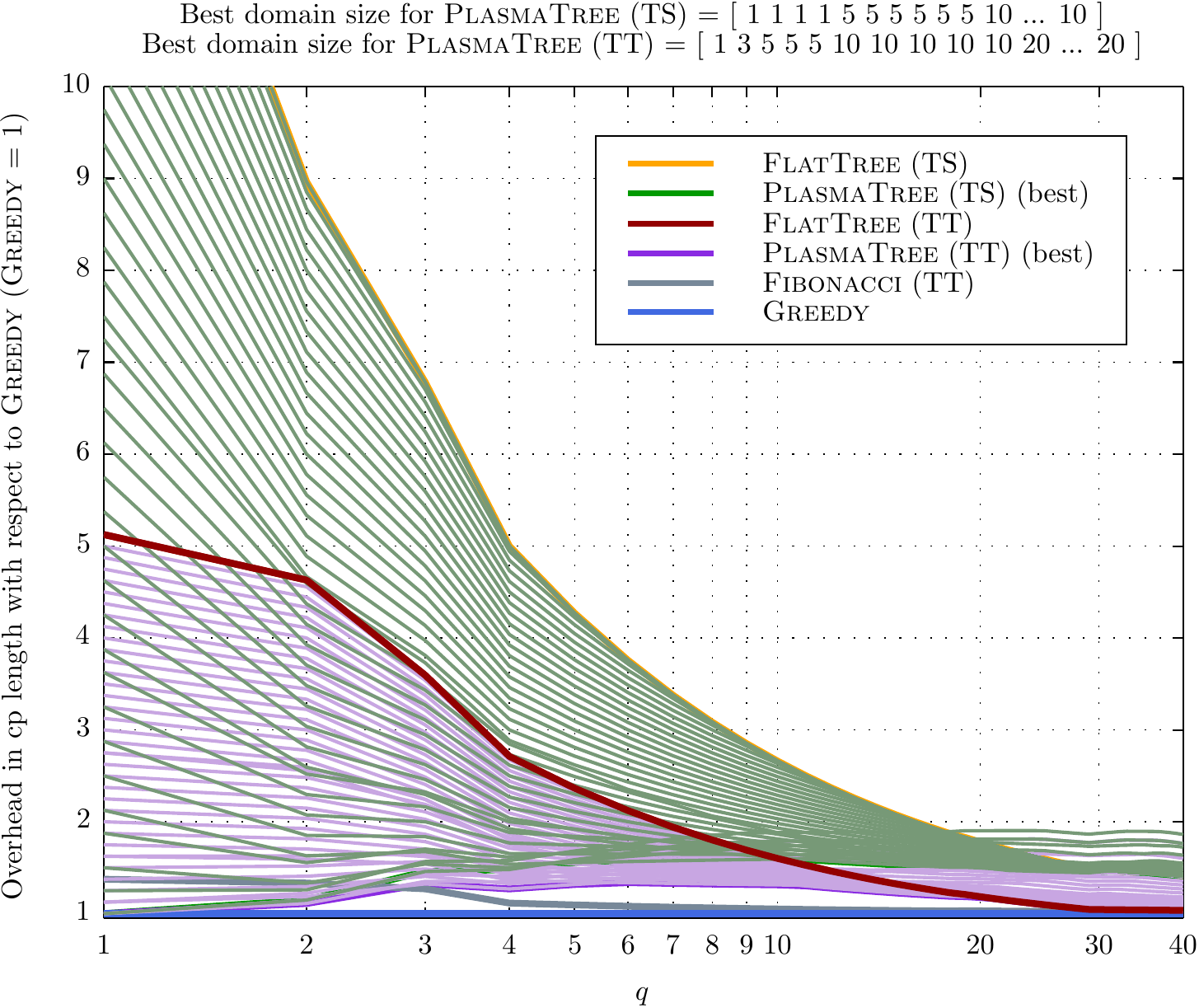}}%
}\\
\subfloat[Experimental (double complex)]{%
    \hspace{0mm}\resizebox{.475\textwidth}{!}{\includegraphics{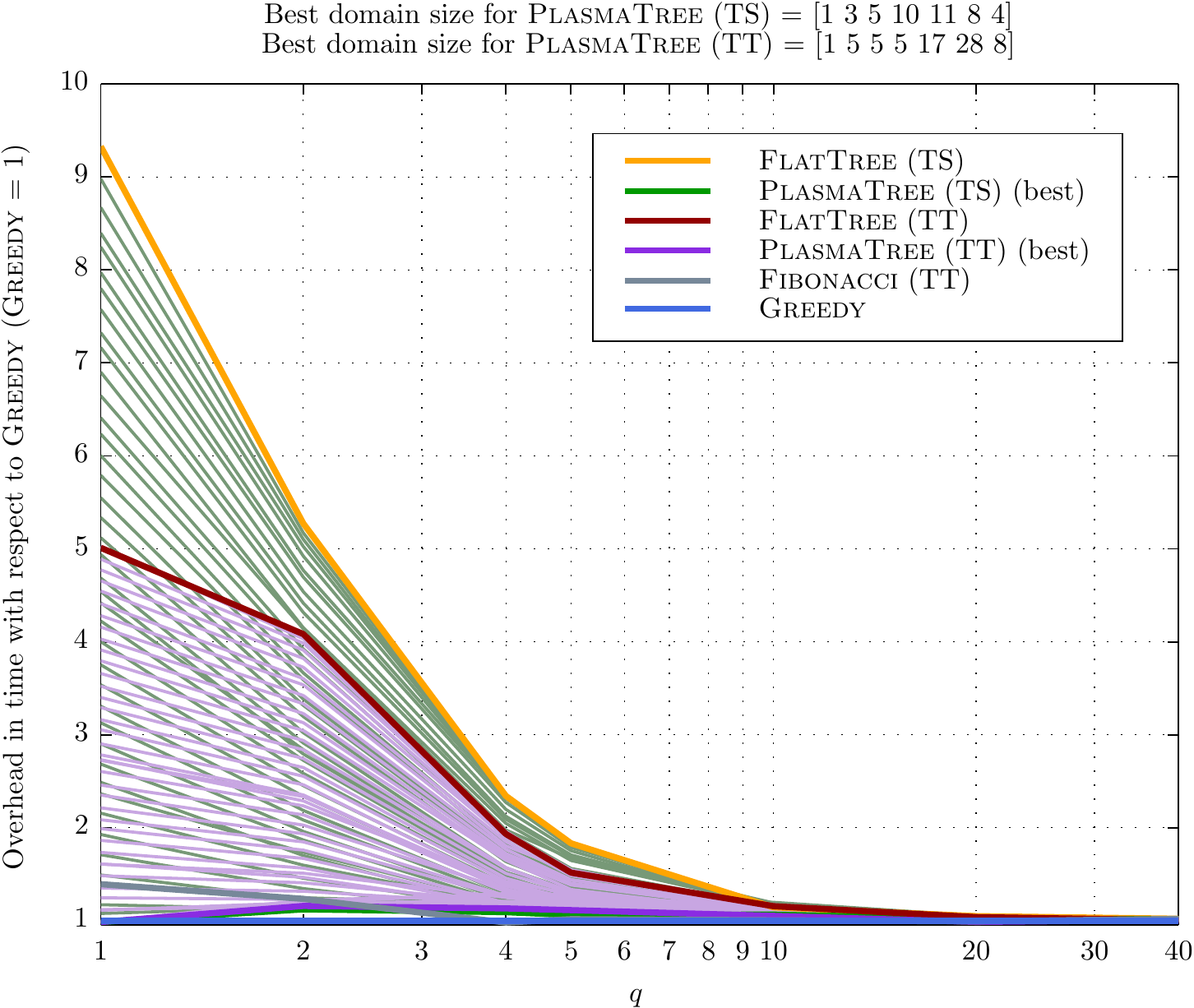}}%
}
\subfloat[Experimental (double)]{%
    \hspace{7mm}\resizebox{.475\textwidth}{!}{\includegraphics{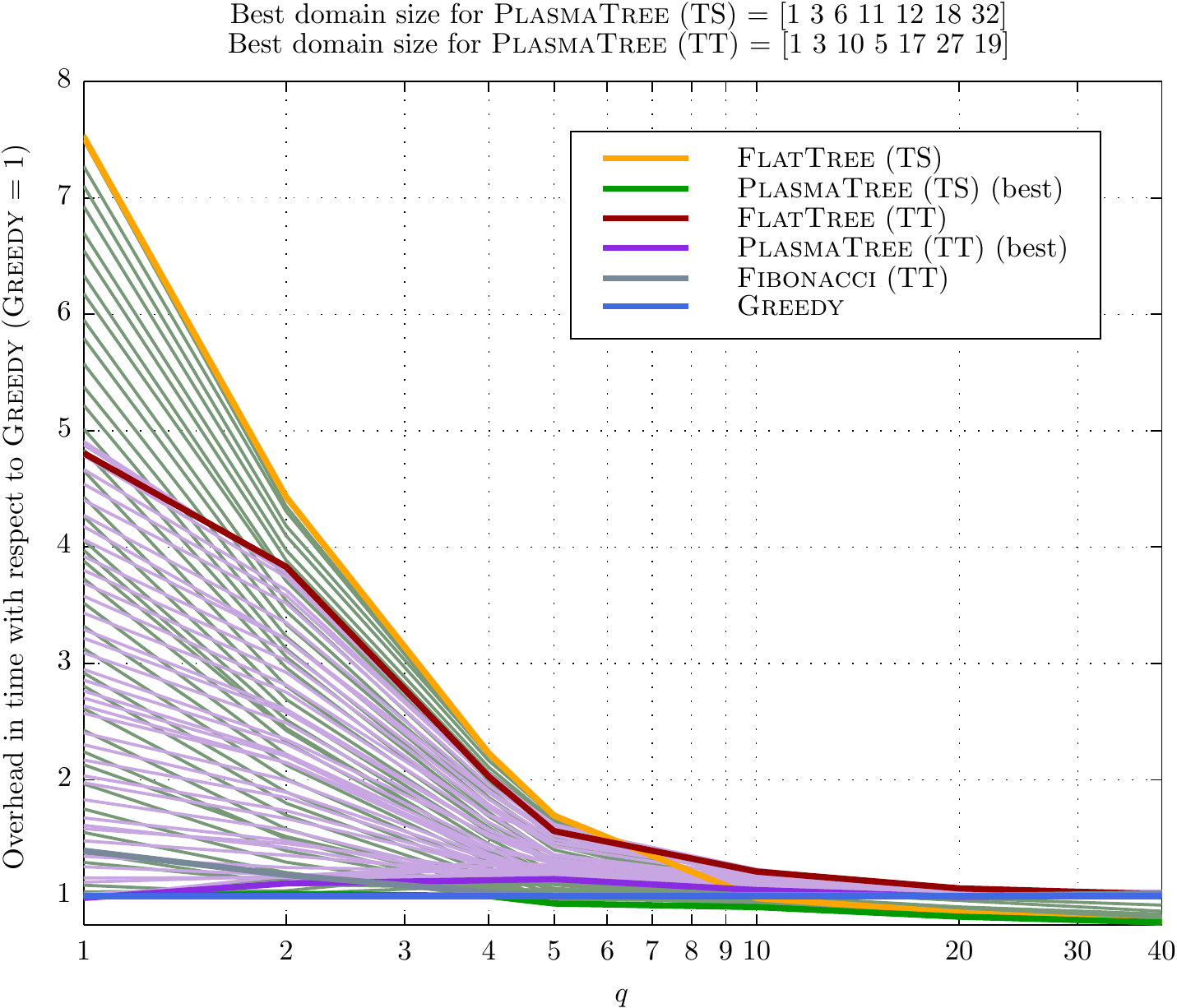}}%
}
\caption{\label{fig:fig_aa_pic2_p40}
Overhead in terms of critical path length and time with respect to \Greedy (\Greedy = 1)
}
\end{figure*}
\renewcommand{\baselinestretch}{\normalspace}

\linespread{1.0}
\begin{figure*}[htb]
\centering
\subfloat[Theoretical CP length]{%
    \hspace{0mm}\resizebox{.475\textwidth}{!}{\includegraphics{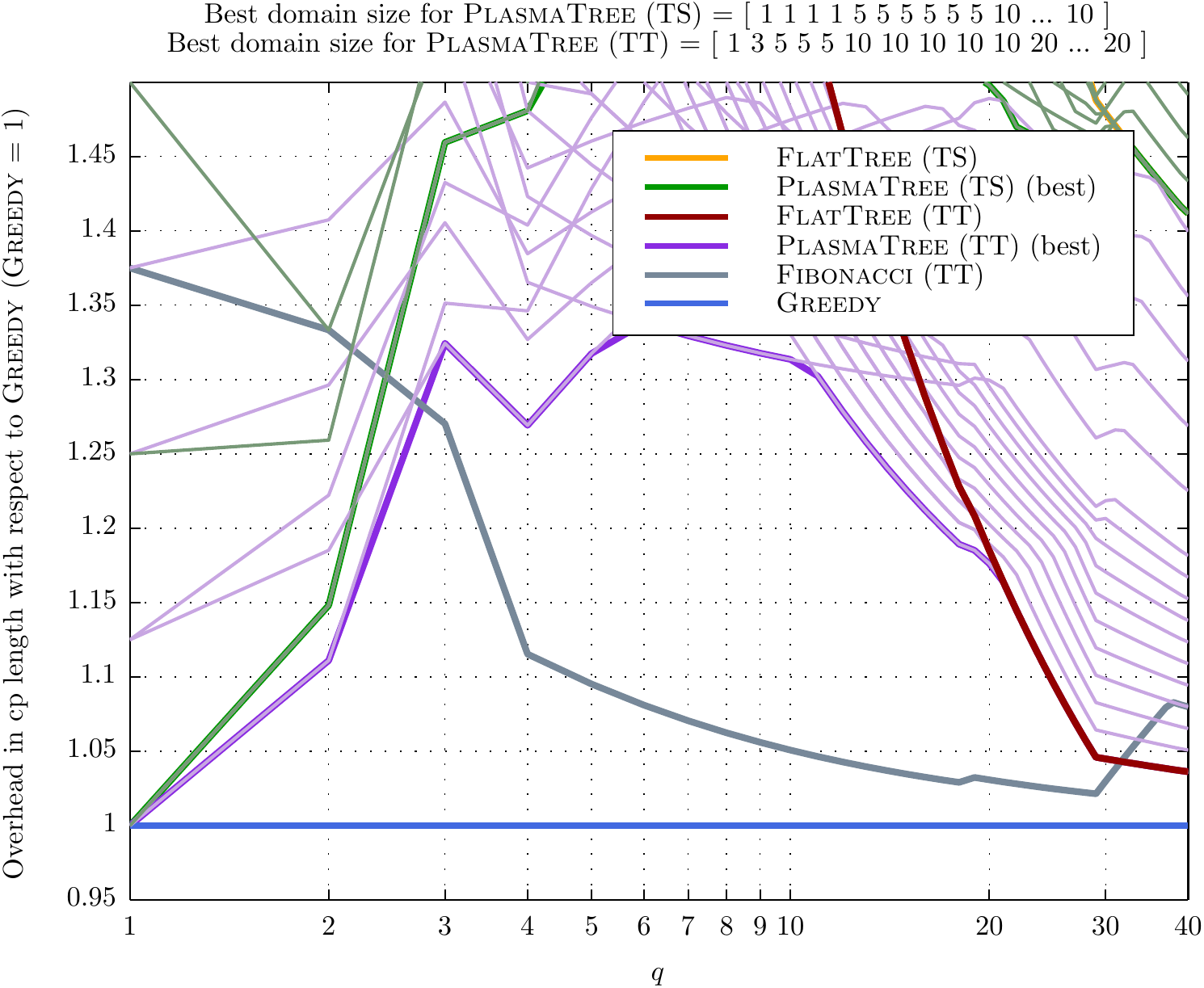}}%
}\\
\subfloat[Experimental (double complex)]{%
    \hspace{0mm}\resizebox{.475\textwidth}{!}{\includegraphics{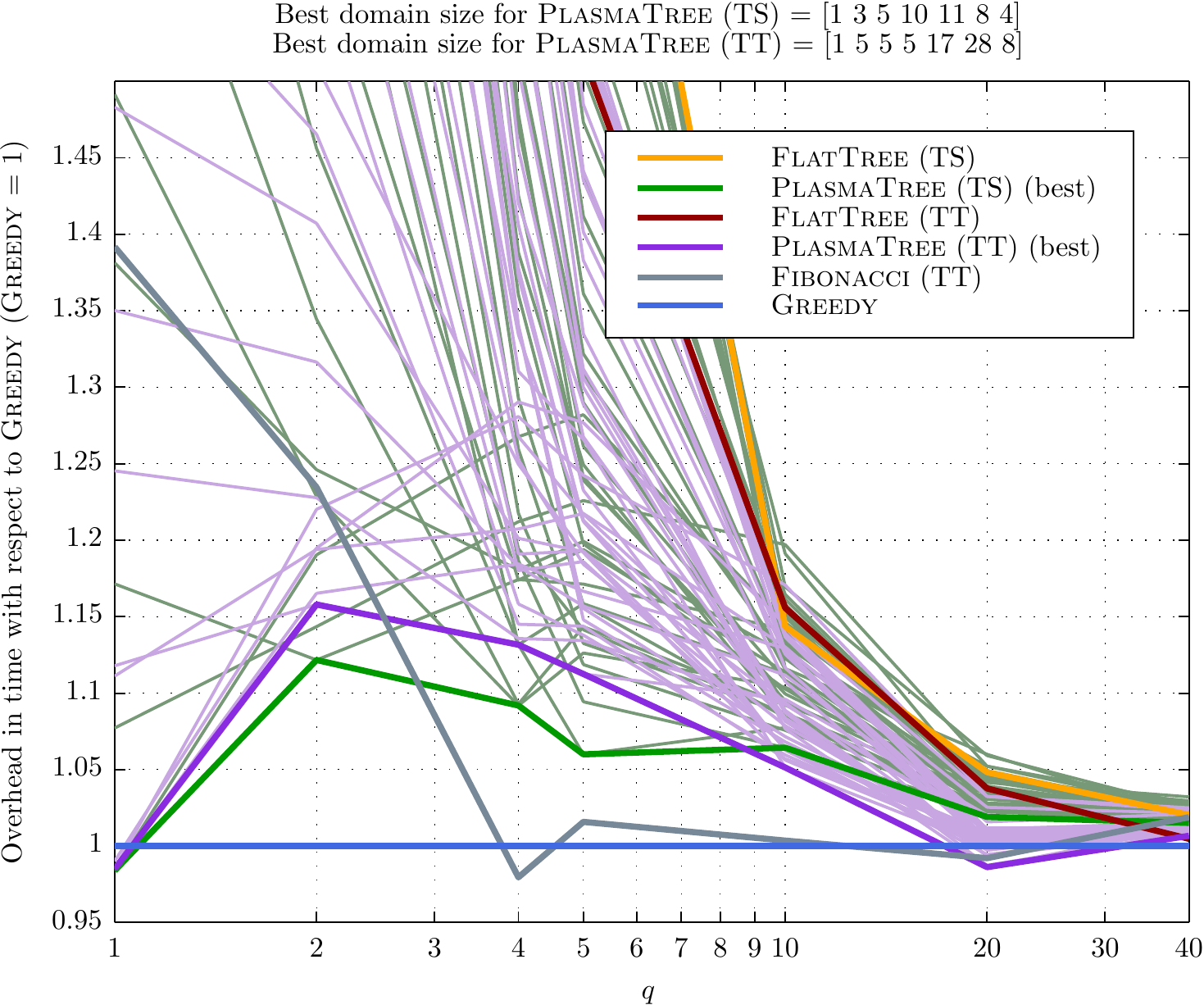}}%
}
\subfloat[Experimental (double)]{%
    \hspace{7mm}\resizebox{.475\textwidth}{!}{\includegraphics{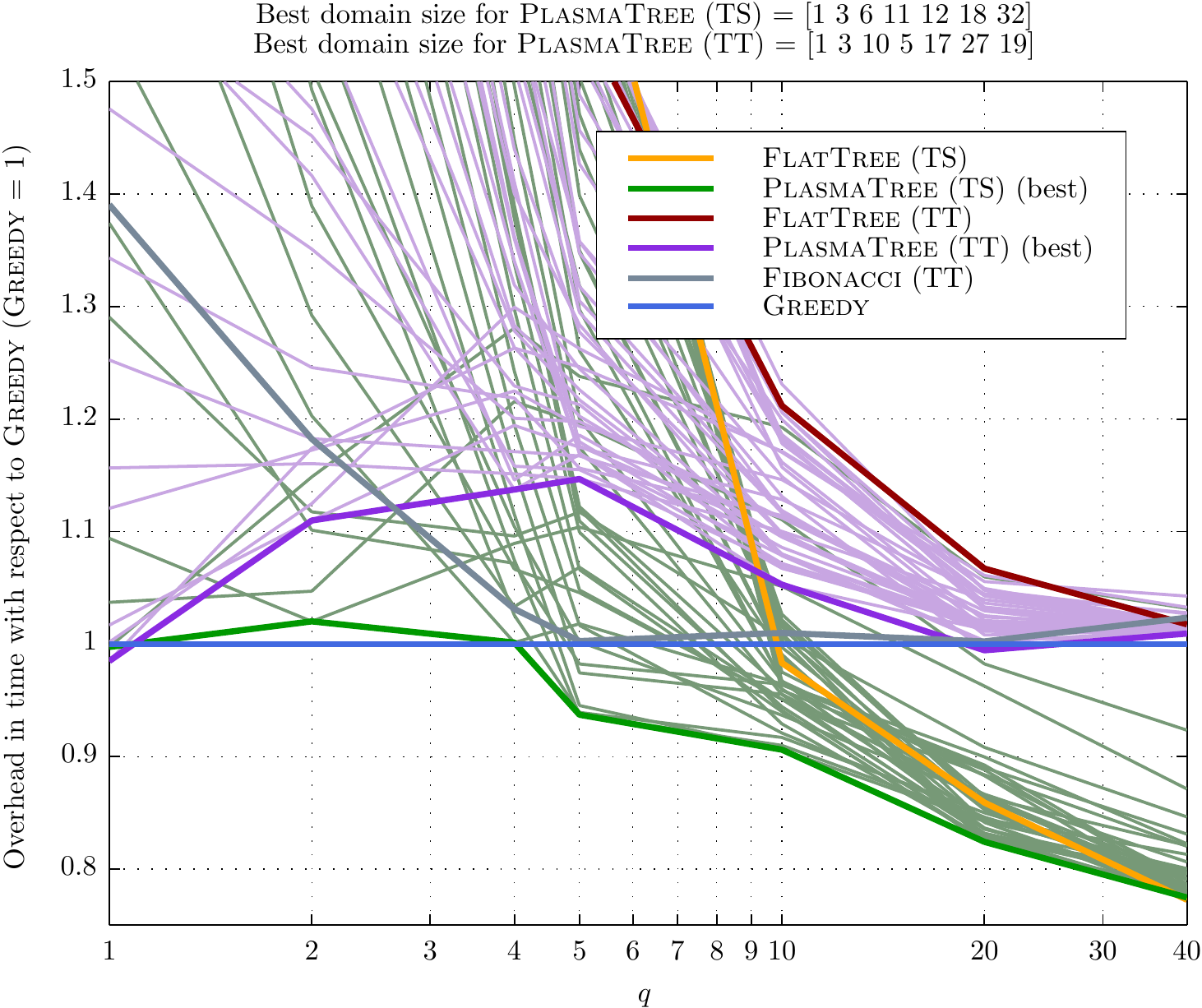}}%
}
\caption{\label{fig:fig_aa_pic3_p40}
Overhead in terms of critical path length and time with respect to \Greedy (\Greedy = 1)
}
\end{figure*}
\renewcommand{\baselinestretch}{\normalspace}

Having seen that kernel performance can have a significant impact,
we also compare the \emph{TT} based
algorithms to those using the \emph{TS} kernels. The goal is to provide
a complete assessment of all currently available
algorithms,  as shown
in Figure~\ref{fig:fig_aa_pic1_p40}.  For double precision, the observed
difference in kernel speed is 4.976 GFLOP/sec for the \emph{TS} kernels
versus 3.844 GFLOP/sec for the \emph{TT} kernels which provides a ratio of 1.2945 and is in accordance with
our previous analysis.  It can be seen that as the number of columns increases,
whereby the amount of parallelism increases, the effect of the kernel
performance outweighs the benefit provided by the extra parallelism afforded
through the \emph{TT} algorithms.  Comparatively, in
double complex precision, \Greedy does perform better, even against the
algorithms using the \emph{TS} kernels.  As before, one
must keep in mind that \Greedy does not require the tuning parameter of the
domain size to achieve this better performance.

From these experiments, we showed that in double complex precision, \Greedy
demonstrated better performance than any of the other algorithms and moreover,
it does so without the need to specify a domain size as opposed to the
algorithms in PLASMA. In addition, in double precision, for matrices where $p
\gg q$, \Greedy continues to excel over any other algorithm using
the \emph{TT} kernels, and continues to do
so as the matrices become more square.

\linespread{1.0}
\begin{table*}
    \centering
    \begin{tabular}{rrrrrrrrrr}
       \toprule
        p & q & GREEDY & PT\_TT &    BS & Overhead & Gain    &  Fib & Overhead &    Gain\\
        \midrule
       40 & 1 &     16 &     16 &     1 &   1.0000 &  0.0000 &   22 &   1.3750 &  0.2727\\
       40 & 2 &     54 &     60 &     3 &   1.1111 &  0.1000 &   72 &   1.3333 &  0.2500\\
       40 & 3 &     74 &     98 &     5 &   1.3243 &  0.2449 &   94 &   1.2703 &  0.2128\\
       40 & 4 &    104 &    132 &     5 &   1.2692 &  0.2121 &  116 &   1.1154 &  0.1034\\
       40 & 5 &    126 &    166 &     5 &   1.3175 &  0.2410 &  138 &   1.0952 &  0.0870\\
       40 & 6 &    148 &    198 &    10 &   1.3378 &  0.2525 &  160 &   1.0811 &  0.0750\\
       40 & 7 &    170 &    226 &    10 &   1.3294 &  0.2478 &  182 &   1.0706 &  0.0659\\
       40 & 8 &    192 &    254 &    10 &   1.3229 &  0.2441 &  204 &   1.0625 &  0.0588\\
       40 & 9 &    214 &    282 &    10 &   1.3178 &  0.2411 &  226 &   1.0561 &  0.0531\\
       40 &10 &    236 &    310 &    10 &   1.3136 &  0.2387 &  248 &   1.0508 &  0.0484\\
       40 &11 &    258 &    336 &    20 &   1.3023 &  0.2321 &  270 &   1.0465 &  0.0444\\
       40 &12 &    280 &    358 &    20 &   1.2786 &  0.2179 &  292 &   1.0429 &  0.0411\\
       40 &13 &    302 &    380 &    20 &   1.2583 &  0.2053 &  314 &   1.0397 &  0.0382\\
       40 &14 &    324 &    402 &    20 &   1.2407 &  0.1940 &  336 &   1.0370 &  0.0357\\
       40 &15 &    346 &    424 &    20 &   1.2254 &  0.1840 &  358 &   1.0347 &  0.0335\\
       40 &16 &    368 &    446 &    20 &   1.2120 &  0.1749 &  380 &   1.0326 &  0.0316\\
       40 &17 &    390 &    468 &    20 &   1.2000 &  0.1667 &  402 &   1.0308 &  0.0299\\
       40 &18 &    412 &    490 &    20 &   1.1893 &  0.1592 &  424 &   1.0291 &  0.0283\\
       40 &19 &    432 &    512 &    20 &   1.1852 &  0.1562 &  446 &   1.0324 &  0.0314\\
       40 &20 &    454 &    534 &    20 &   1.1762 &  0.1498 &  468 &   1.0308 &  0.0299\\
       40 &21 &    476 &    554 &    20 &   1.1639 &  0.1408 &  490 &   1.0294 &  0.0286\\
       40 &22 &    498 &    570 &    20 &   1.1446 &  0.1263 &  512 &   1.0281 &  0.0273\\
       40 &23 &    520 &    586 &    20 &   1.1269 &  0.1126 &  534 &   1.0269 &  0.0262\\
       40 &24 &    542 &    602 &    20 &   1.1107 &  0.0997 &  556 &   1.0258 &  0.0252\\
       40 &25 &    564 &    618 &    20 &   1.0957 &  0.0874 &  578 &   1.0248 &  0.0242\\
       40 &26 &    586 &    634 &    20 &   1.0819 &  0.0757 &  600 &   1.0239 &  0.0233\\
       40 &27 &    608 &    650 &    20 &   1.0691 &  0.0646 &  622 &   1.0230 &  0.0225\\
       40 &28 &    630 &    666 &    20 &   1.0571 &  0.0541 &  644 &   1.0222 &  0.0217\\
       40 &29 &    652 &    682 &    20 &   1.0460 &  0.0440 &  666 &   1.0215 &  0.0210\\
       40 &30 &    668 &    698 &    20 &   1.0449 &  0.0430 &  688 &   1.0299 &  0.0291\\
       40 &31 &    684 &    714 &    20 &   1.0439 &  0.0420 &  710 &   1.0380 &  0.0366\\
       40 &32 &    700 &    730 &    20 &   1.0429 &  0.0411 &  732 &   1.0457 &  0.0437\\
       40 &33 &    716 &    746 &    20 &   1.0419 &  0.0402 &  754 &   1.0531 &  0.0504\\
       40 &34 &    732 &    762 &    20 &   1.0410 &  0.0394 &  776 &   1.0601 &  0.0567\\
       40 &35 &    748 &    778 &    20 &   1.0401 &  0.0386 &  798 &   1.0668 &  0.0627\\
       40 &36 &    764 &    794 &    20 &   1.0393 &  0.0378 &  820 &   1.0733 &  0.0683\\
       40 &37 &    780 &    810 &    20 &   1.0385 &  0.0370 &  842 &   1.0795 &  0.0736\\
       40 &38 &    796 &    826 &    20 &   1.0377 &  0.0363 &  862 &   1.0829 &  0.0766\\
       40 &39 &    812 &    842 &    20 &   1.0369 &  0.0356 &  878 &   1.0813 &  0.0752\\
       40 &40 &    826 &    856 &    20 &   1.0363 &  0.0350 &  892 &   1.0799 &  0.0740\\
       \bottomrule
    \end{tabular}
    \caption{Greedy versus PT\_TT and Fibonacci (Theoretical)}
\end{table*}
\renewcommand{\baselinestretch}{\normalspace}

\linespread{1.0}
\begin{table*}
    \centering
    \begin{tabular}{rrrrrrr}
       \toprule
         p &  q & GREEDY(d) & PT\_TT(d) & BS & Overhead &   Gain\\
         \midrule
        40 &  1 &   36.9360 &   37.5020 &  1 &   1.0153 &-0.0153\\
        40 &  2 &   58.5090 &   52.7180 &  3 &   0.9010 & 0.0990\\
        40 &  4 &  103.2670 &   90.7940 & 10 &   0.8792 & 0.1208\\
        40 &  5 &  115.3060 &  100.5540 &  5 &   0.8721 & 0.1279\\
        40 & 10 &  153.5180 &  145.8200 & 17 &   0.9499 & 0.0501\\
        40 & 20 &  170.8730 &  171.8270 & 27 &   1.0056 &-0.0056\\
        40 & 40 &  184.5220 &  182.8160 & 19 &   0.9908 & 0.0092\\
        \bottomrule
    \end{tabular}
    \caption{Greedy versus PT\_TT (Experimental Double)}
\end{table*}
\renewcommand{\baselinestretch}{\normalspace}

\linespread{1.0}
\begin{table*}
    \centering
    \begin{tabular}{rrrrrrr}
       \toprule
         p &  q & GREEDY(z) & PT\_TT(z) & BS & Overhead &   Gain\\
         \midrule
        40 &  1 &   42.0710 &   42.7120 &  1 &   1.0152 &-0.0152\\
        40 &  2 &   60.4420 &   52.1970 &  5 &   0.8636 & 0.1364\\
        40 &  4 &   95.1820 &   84.1120 &  5 &   0.8837 & 0.1163\\
        40 &  5 &  107.6370 &   96.7530 &  5 &   0.8989 & 0.1011\\
        40 & 10 &  135.0270 &  128.4320 & 17 &   0.9512 & 0.0488\\
        40 & 20 &  144.4010 &  146.4220 & 28 &   1.0140 &-0.0140\\
        40 & 40 &  152.9280 &  151.9090 &  8 &   0.9933 & 0.0067\\
        \bottomrule
    \end{tabular}
    \caption{Greedy versus PT\_TT (Experimental Double Complex)}
\end{table*}
\renewcommand{\baselinestretch}{\normalspace}

\linespread{1.0}
\begin{table*}
    \centering
    \begin{tabular}{rrrrrr}
       \toprule
         p &  q & GREEDY(d) &    FIB(d) & Overhead &   Gain\\
         \midrule
        40 &  1 &   36.9360 &  26.5610  &   0.7191 & 0.2809\\
        40 &  2 &   58.5090 &  49.4870  &   0.8458 & 0.1542\\
        40 &  4 &  103.2670 & 100.1440  &   0.9698 & 0.0302\\
        40 &  5 &  115.3060 & 115.0020  &   0.9974 & 0.0026\\
        40 & 10 &  153.5180 & 152.0090  &   0.9902 & 0.0098\\
        40 & 20 &  170.8730 & 170.4780  &   0.9977 & 0.0023\\
        40 & 40 &  184.5220 & 180.2990  &   0.9771 & 0.0229\\
        \bottomrule
    \end{tabular}
    \caption{Greedy versus Fibonacci (Experimental Double)}
\end{table*}
\renewcommand{\baselinestretch}{\normalspace}

\linespread{1.0}
\begin{table*}
    \centering
    \begin{tabular}{rrrrrr}
         \toprule
         p &  q & GREEDY(z) &    FIB(z) & Overhead &   Gain\\
         \midrule
        40 &  1 &   42.0710 &  30.2280  &   0.7185 & 0.2815\\
        40 &  2 &   60.4420 &  48.9570  &   0.8100 & 0.1900\\
        40 &  4 &   95.1820 &  97.1650  &   1.0208 &-0.0208\\
        40 &  5 &  107.6370 & 105.9610  &   0.9844 & 0.0156\\
        40 & 10 &  135.0270 & 134.5500  &   0.9965 & 0.0035\\
        40 & 20 &  144.4010 & 145.5530  &   1.0080 &-0.0080\\
        40 & 40 &  152.9280 & 150.0980  &   0.9815 & 0.0185\\
        \bottomrule
    \end{tabular}
    \caption{Greedy versus Fibonacci (Experimental Double Complex)}
\end{table*}
\renewcommand{\baselinestretch}{\normalspace}

\section{Conclusion}
\label{sec.conclusion}

In this chapter, we have presented \MC, and \Greedy, two new algorithms for tiled
QR factorization.  These algorithms exhibit more parallelism than
state-of-the-art implementations based on reduction trees. We have provided
accurate estimations for the length of their critical path.

Comprehensive experiments on multicore platforms confirm the superiority of the
new algorithms for $p \times q$ matrices, as soon as, say, $p \geq 2q$.  This
holds true when comparing not only with previous algorithms using TT
(\emph{Triangle on top of triangle}) kernels, but also with all known
algorithms based on TS (\emph{Triangle on top of square}) kernels. Given that
TS kernels offer more locality, and benefit from better elementary arithmetic
performance, than TT kernels, the better performance of the new algorithms is
even more striking, and further demonstrates that a large degree of a
parallelism was not exploited in previously published solutions.

Future work will investigate several promising directions. First, using
rectangular tiles instead of square tiles could lead to efficient algorithms,
with more locality and still the same potential for parallelism. Second,
refining the model to account for communications, and extending it to fully
distributed architectures, would lay the ground work for the design of MPI
implementations of the new algorithms, unleashing their high level of
performance on larger platforms.  Finally, the design of robust algorithms,
capable of achieving efficient performance despite variations in processor
speeds, or even resource failures, is a challenging but crucial task to fully
benefit from future platforms with a huge number of cores.

\chapter{Scheduling of Cholesky Factorization}\label{chp:cholfact}

In Chapter~\ref{chp:cholinv} we studied the Cholesky Inversion algorithm which
consists of the three steps: Cholesky factorization, inversion of the factor,
and the multiplication of two triangular matrices.  In this chapter, we will
focus on the Cholesky factorization but unlike the previous work where the
number of processors was unbounded, we will consider the factorization in the
context of a finite number of processors.  By limiting the number of processors,
the scheduling of the tasks becomes an issue.  Moreover, the weight (processing
time) of each task must be taken into consideration when creating the schedule.  

As before, we will be considering the critical path length for the algorithm but
not as a function of the number of tiles rather as a function of the weights of
the tasks.  The weights are based upon the total computational cost for each
kernel and are provided in Table~\ref{tab:taskweights}.  A more in-depth
analysis of the length of the critical path with weighted tasks for the Cholesky
Inversion algorithm can be found in~\cite{DBLP:journals/corr/abs-1010-2000}
which also provides $9t-10$ as the weighted critical path length for the
Cholesky factorization of a matrix of $t \times t$ tiles.

\linespread{1.0}
\begin{table}[htbp]
        \centering
        \begin{tabular}[ht]{ccc}
            \toprule
                  &           & Weights\\ 
                  & \# flops  & (in $\frac{1}{3}n_b^3$ flops)\\
            \cmidrule(lr){2-2}
            \cmidrule( r){3-3}
            POTRF & $\frac{1}{3}n_b^3 + O(n_b^2)$   & 1 \\
            TRSM  & $n_b^3$                         & 3 \\
            SYRK  & $n_b^3 + O(n_b^2)$              & 3 \\
            GEMM  & $2n_b^3 + O(n_b^2)$             & 6 \\
            \bottomrule
        \end{tabular}
    \caption{Task Weights}
    \label{tab:taskweights}
\end{table}
\renewcommand{\baselinestretch}{\normalspace}

The upper bound on performance of perfect speedup and critical path introduced
in Chapter~\ref{chp:cholinv} remains too optimistic and does not take into
account any information which can be garnered from the DAG of the algorithm.
This work makes progress towards providing a more representative bound on the
performance of the Cholesky factorization in the tiled setting. 

We also provide gains toward a bound on the minimum number of processors
required to obtain the shortest possible weighted critical path (minimum
make span) for the Cholesky factorization for a matrix of $t \times t$ tiles.

\section{ALAP Derived Performance Bound}
\label{sec:perfbound}

To obtain our bounds, we calculate the latest possible start time for each
task (ALAP) and consider an unbounded number of processors without any costs
for communication.  If we did account for communication, we might see the
critical path length increase which would in turn decrease our upper bound.
We start at the final tasks and consider how many processors are needed to
execute these tasks without increasing the length of the critical path.  We
step backwards in time until such a point that there are more processors
needed to keep the critical path length constant.  Thus we must add enough
processors to execute the tasks and in turn create more idle time for the
execution of tasks which are successors.  At a certain point, there is no more
need to add processors and this is then the number of processors needed to
obtain the constant length critical path.  

By forcing as late as possible (ALAP) start times, any schedule will keep as
many or fewer processors active as the ALAP execution on an unbounded number of
processors.  Thus by evaluating the Lost Area ($LA$), or idle time, for a given
number of processors, $p$, at the end of the ALAP execution on an unbounded
number of processors, we can increase the sequential time by the amount of $LA$
and divide this result by the $p$ to obtain the best possible execution time,
i.e.,
\begin{equation}
    T_p = \frac{T_{seq} + LA_p}{p}
    \label{eqn:alapbound}
\end{equation}
and we define this to be the \emph{ALAP Derived Performance Bound}.  Hence the maximum
speedup that we can expect is given by

    \[ T_{seq} \cdot \frac{p}{T_{seq} + LA_p}.\]

An example will help to further illustrate this technique.  In
Figure~\ref{fig:ALAP5x5} we are given the ALAP execution of a $5 \times 5$ tiled
matrix which has $T_{seq} = 125$.  The ordered pairs indicated provide the
number of processors and idle time, respectively, and in
Table~\ref{tab:bound5x5} are given the values for $T_p$, speedup, and
efficiency.  For more than four processors, there are enough processors to
obtain the critical path length which becomes our limiting factor.

\linespread{1.0}
\begin{figure}[htbp]
    \centering
    \pgfmathsetlength{\imagewidth}{10cm} 
    \pgfmathsetlength{\imagescale}{\imagewidth/600} 
    \resizebox{0.85\linewidth}{!}{%
        \begin{tikzpicture}[x=\imagescale,y=-\imagescale]
            \node[anchor=north west,inner sep=0pt,outer sep=0pt] at (-0.5,-0.5)
            {\includegraphics[trim = 2mm 1mm 5mm 1mm, clip = true, width=\imagewidth]{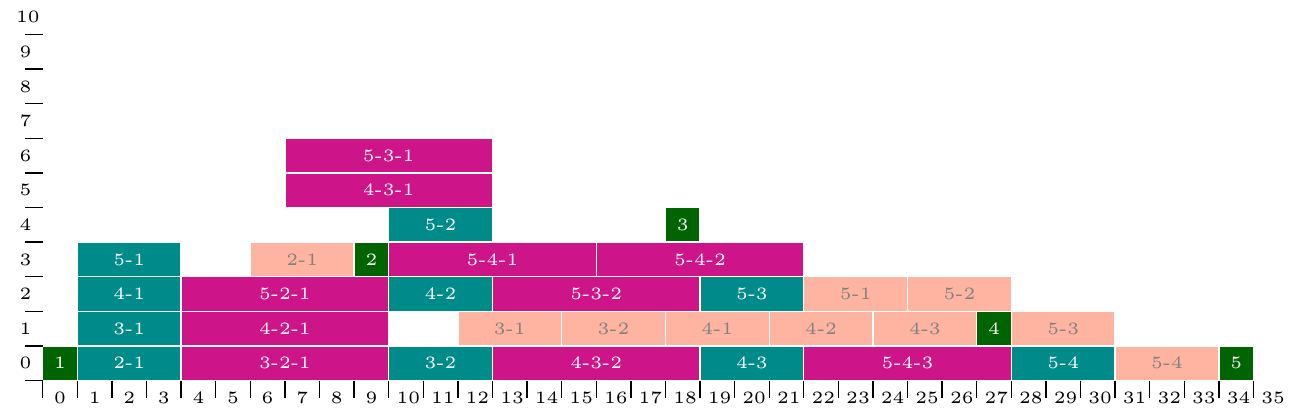}};
            \draw[<-, line width = 0.5, black, >=stealth'] (533,164) -- (580,115)
            node[above=0.125cm, right=-0.125cm] {$\mathsmaller{(1,0)}$};
            \draw[<-, line width = 0.5, black, >=stealth'] (483,146) -- (530,97)
            node[above=0.125cm, right=-0.125cm] {$\mathsmaller{(2,4)}$};
            \draw[<-, line width = 0.5, black, >=stealth'] (382,129) -- (429,80)
            node[above=0.125cm, right=-0.125cm] {$\mathsmaller{(3,11)}$};
            \draw[<-, line width = 0.5, black, >=stealth'] (332,111) -- (379,62)
            node[above=0.125cm, right=-0.125cm] {$\mathsmaller{(4,24)}$};
            \draw[<-, line width = 0.5, black, >=stealth'] (230,96) -- (277,47)
            node[above=0.125cm, right=-0.125cm] {$\mathsmaller{(5,45)}$};
            \node at (250,200) {\tiny{time}};
            \node[rotate=90] at (-10,85) {\tiny{processors}};
            \definecolor{tempcolor}{RGB}{0,100,0}
            \draw[color=white,fill=tempcolor] (540,-15) -- (550,-15) -- (550,-5)
            -- (540,-5) -- cycle node[color=black,above=0.1cm, right=0.125cm] {\tiny{POTRF}};
            \definecolor{tempcolor}{RGB}{0,139,139}
            \draw[color=white,fill=tempcolor] (540,0) -- (550,0) -- (550,10) --
            (540,10) -- cycle node[color=black,above=0.1cm, right=0.125cm] {\tiny{TRSM}};
            \definecolor{tempcolor}{RGB}{255,180,162}
            \draw[color=white,fill=tempcolor] (540,15) -- (550,15) -- (550,25)
            -- (540,25) -- cycle node[color=black,above=0.1cm, right=0.125cm] {\tiny{SYRK}};
            \definecolor{tempcolor}{RGB}{205,20,137}
            \draw[color=white,fill=tempcolor] (540,30) -- (550,30) -- (550,40)
            -- (540,40) -- cycle node[color=black,above=0.1cm, right=0.125cm] {\tiny{GEMM}};
        \end{tikzpicture}
    }
    \caption{ALAP execution for $5 \times 5$ tiles}
    \label{fig:ALAP5x5}
\end{figure}
\renewcommand{\baselinestretch}{\normalspace}
\linespread{1.2}
\begin{table}[htbp]
    \centering
    \begin{tabular}{rrrr}
        \toprule
        $p$ & $T_p$ & $S_p$ & $E_p$ \\
        \cmidrule(l ){1-1}
        \cmidrule(lr){2-2}
        \cmidrule(lr){3-3}
        \cmidrule( r){4-4}
         1 &  125.00 &  1.00 &  1.00\\
         2 &   64.50 &  1.94 &  0.97\\
         3 &   45.33 &  2.76 &  0.92\\
         4 &   37.25 &  3.36 &  0.84\\
         5 &   35.00 &  3.57 &  0.71\\
         6 &   35.00 &  3.57 &  0.60\\
         7 &   35.00 &  3.57 &  0.51\\
         8 &   35.00 &  3.57 &  0.45\\
         9 &   35.00 &  3.57 &  0.40\\
        10 &   35.00 &  3.57 &  0.36\\
        \bottomrule
    \end{tabular}
    \caption{Upper bound on speedup and efficiency for $5 \times 5$ tiles}
    \label{tab:bound5x5}
\end{table}
\renewcommand{\baselinestretch}{\normalspace}

\section{Critical Path Scheduling}
\label{sec:backflow}

In order to provide a critical path schedule, we use the Backflow algorithm to
assign priorities to tasks of a DAG such that each task's priority adheres to
its dependencies.  The algorithm is described in four steps: 

\begin{center}
    \begin{tabular}{@{}rp{10cm}@{}}
        STEP 1 : & Beginning at the final task in the DAG, set its priority to
        its processing time.\\
        STEP 2 : & Moving in the reverse direction, set each incidental task's
        priority to the sum of its the processing time and the final task's priority.\\
        STEP 3 : & For each task in STEP 2, moving in the reverse direction, set
        each incidental task's priority to the sum of its processing time and
        the maximum priority of any incidental successor task.\\
        STEP 4 : & Repeat the procedure until all tasks have been assigned a
        priority.
    \end{tabular}
\end{center}

An example is given in Figure~\ref{fig:exbackflow}.  The processing times are
given in parenthesis and the assigned priorities (cp) are designated in square
brackets. Tasks $A$ and $B$ will be assigned a priority of $16$ since
$\mathop{\mathrm{cp}}(A) = 3 + \max \left( \mathop{\mathrm{cp}}(C),
\mathop{\mathrm{cp}}(D) \right)$ and $\mathop{\mathrm{cp}}(B) = 3 + \max \left(
\mathop{\mathrm{cp}}(D), \mathop{\mathrm{cp}}(E) \right)$.
\linespread{1.0}
\begin{figure}[htb]
\centering
\subfloat[State at start]{%
    \label{fig:backflow_start}%
    \hspace{-2mm}\resizebox{.475\textwidth}{100mm}{\includegraphics{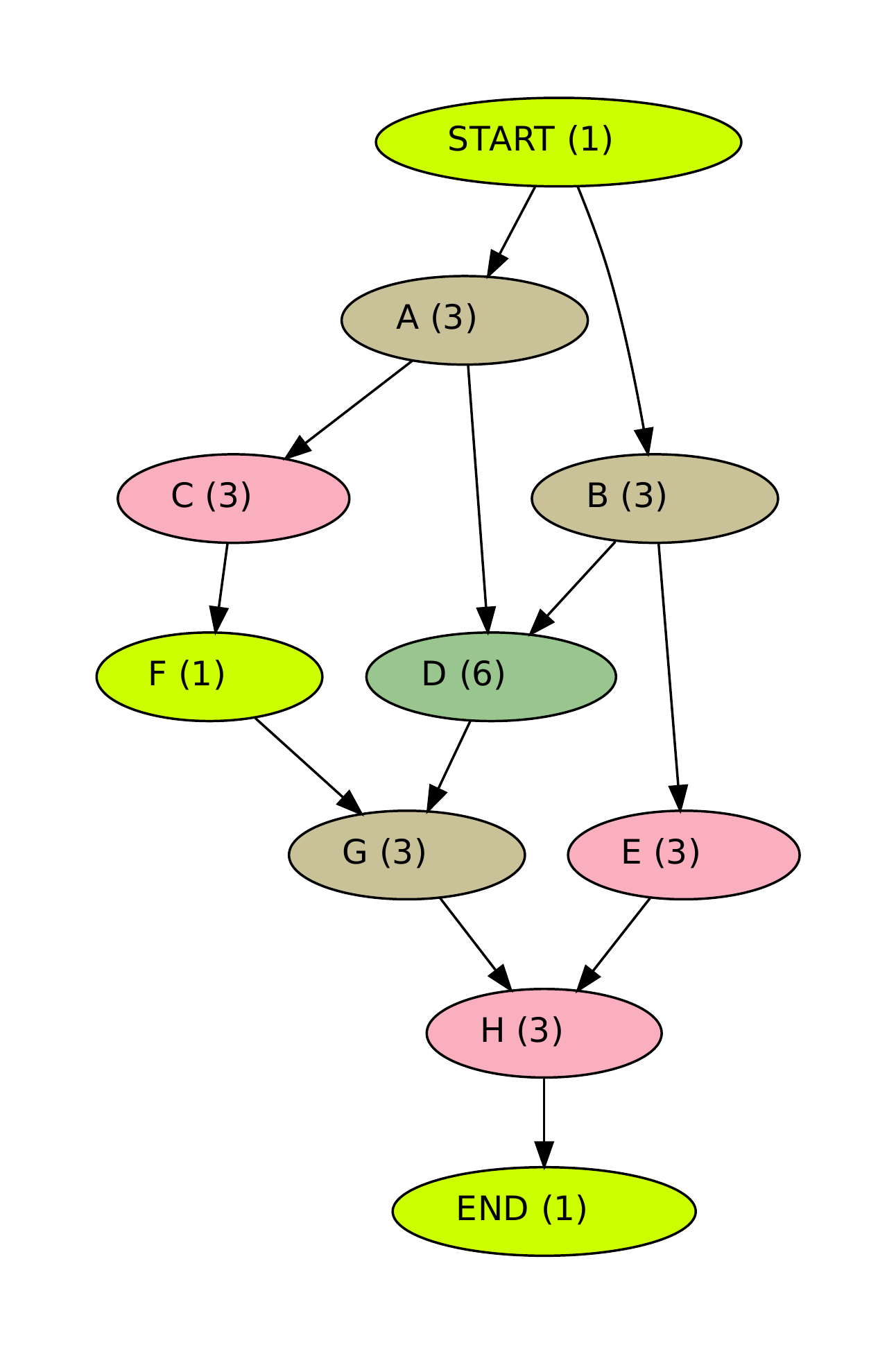}}
}
\subfloat[State at finish]{%
    \label{fig:backflow_finish}%
    \hspace{7mm}\resizebox{.475\textwidth}{100mm}{\includegraphics{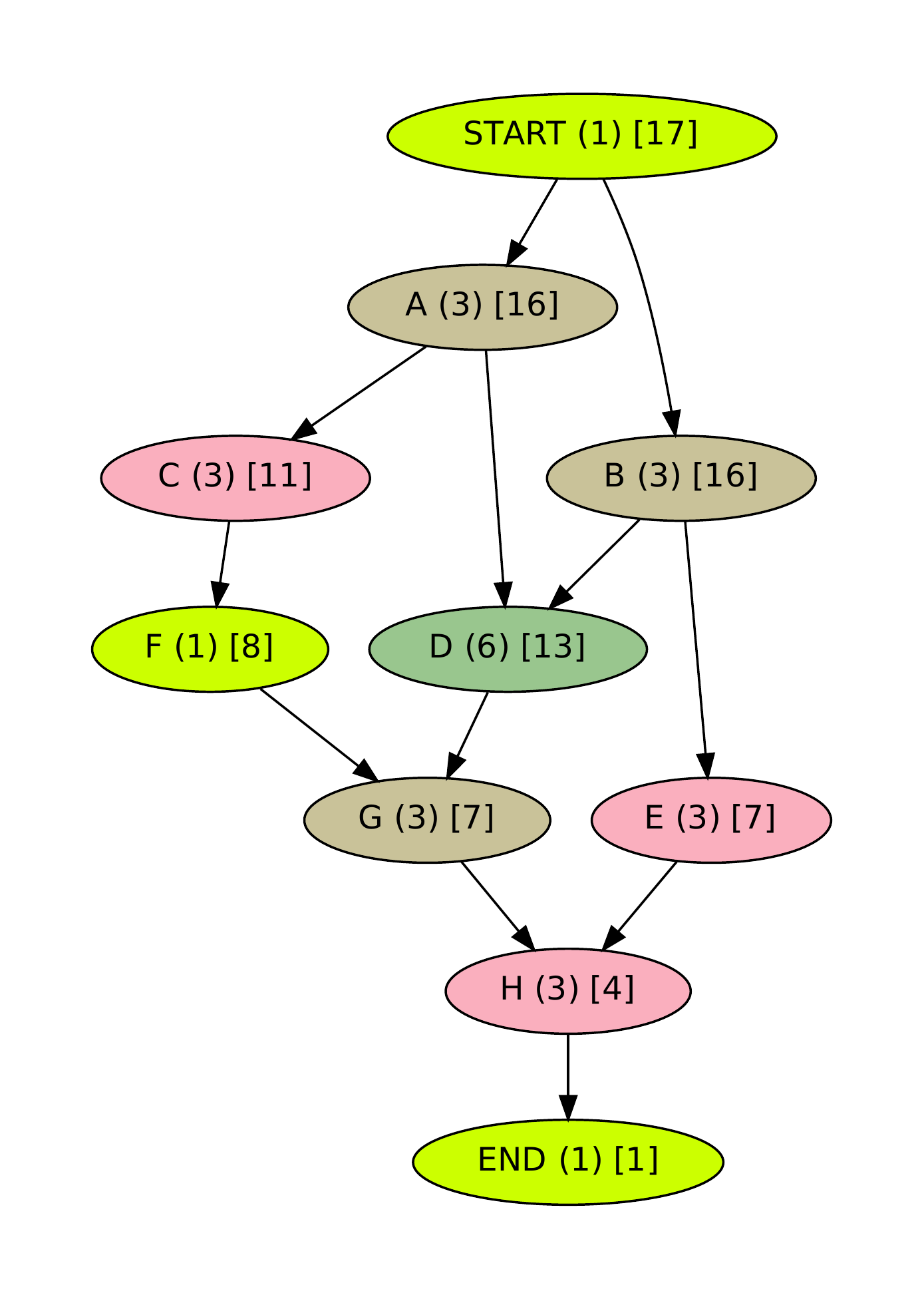}}
}
\caption{Example derivation of task priorities via the Backflow algorithm}
\label{fig:exbackflow}
\end{figure}
\renewcommand{\baselinestretch}{\normalspace}

By following the path with the highest priorities, a critical path can be
discerned from the weighted DAG.  Thus any schedule which then chooses from the
available tasks those with the highest priorities to execute first inherently
follows the critical path.  It is well known that a critical path scheduling is
not always optimal.  As an example~\cite{robert1991impact}, take two processors and four tasks.  Let
tasks A, B, C, and D have weights of 3, 3, 1, and 1, respectively and let the
only relationship between tasks be that C is a predecessor of D.  Then
$\mathop{\mathrm{cp}}(A)=\mathop{\mathrm{cp}}(B)=3$,
$\mathop{\mathrm{cp}}(C)=2$, and $\mathop{\mathrm{cp}}(D)=1$.  A critical path
schedule would choose to schedule tasks A and B simultaneously and follow up
with C and then D, resulting in a schedule of length five.  However, if A and C
are scheduled simultaneously and then D follows A on the same processor and B
follows C on the other processor, the length of the schedule is four. 

We will use this critical path information to analyze three schedules by
choosing available tasks via $\max (cp)$, $\mathop{\mathrm{rand}}(cp)$, or $\min
(cp)$.  The $\max(cp)$ will naturally follow the critical path by scheduling
tasks with the highest $cp$ first and, vice versa, the $\min(cp)$ will schedule
from the available tasks those with the minimum $cp$.  Between
these two, we also choose randomly amongst the available tasks with
$\mathop{\mathrm{rand}}(cp)$.

\section{Scheduling with synchronizations}
\label{sec:schedsync}

The right-looking version of the LAPACK Cholesky factorization as depicted in
Figure~\ref{fig:threechol_r} provides an alternative schedule which can be
easier to analyze and understand.  We will apply the three steps of the
algorithm to a matrix of $t\times t$ tiles. In the tiled setting, we can provide
synchronization points between the varying tasks of each step and simply
schedule any of the available tasks since there are no dependencies between the
tasks in each grouping.  By adding these synchronizations, this schedule is not
able to obtain the critical path no matter how many processors are available.
Algorithm~\ref{alg:cholceil1} is the right-looking variant of the Cholesky
factorization with added synchronization points.
\linespread{1.2}
\begin{algorithm}
  \emph{Tile Cholesky Factorization (compute L such that $A=LL^T$)}\;
  \For{$j=0$ \KwTo $t-1$}{
      schedule POTRF(i) \;
      synchronize\;
      \For{$j=i+1$ \KwTo $t-1$}{
          schedule TRSM(j,i) \;
      }
      synchronize\;
      \For{$j=i+1$ \KwTo $t-1$}{
          \For{$k=i+1$ \KwTo $j-1$}{
              schedule GEMM(j,i,k) \;
          }
      }
      synchronize\;
      \For{$j=i+1$ \KwTo $t-1$}{
          schedule SYRK(j,i) \;
      }
      synchronize\;
  }
  \caption{Schedule for tiled right-looking Cholesky factorization with added
  synchronizations to allow for grouping.}
  \label{alg:cholceil1}
\end{algorithm}
\renewcommand{\baselinestretch}{\normalspace}

Naturally, we can improve upon the above schedule by removing the
synchronization between some of the groupings (Algorithm~\ref{alg:cholceil2}).
The update of the trailing matrix is composed of two groupings, namely the GEMMs
and the SYRKs, which can be executed in parallel if enough processors are
available.  Moreover, the added synchronization point between the update of the
trailing matrix and the factorization of the next diagonal tile may also be
removed.  This schedule does become more complex, but given enough processors,
the schedule is able to obtain the critical path as the limiting factor to
performance.  The minimum number of processors, $p$, needed to obtain the
critical path is $p = \left\lceil\frac{1}{2}(t-1)^2 \right\rceil$ for a matrix
of $t\times t$ tiles since the highest degree of parallelism is realized for the
update of the first trailing matrix which is of size $(t-1) \times (t-1)$.

Both of these schedules differ from the critical path scheduling due to the
added synchronization points and will show lower theoretical performance.  In
the theoretical results, we only show Algorithm~\ref{alg:cholceil2}.
\linespread{1.2}
\begin{algorithm}
  \emph{Tile Cholesky Factorization (compute L such that $A=LL^T$)}\;
  \For{$j=0$ \KwTo $t-1$}{
      schedule POTRF(i) \;
      synchronize\;
      \For{$j=i+1$ \KwTo $t-1$}{
          schedule TRSM(j,i) \;
      }
      synchronize\;
      \For{$j=i+1$ \KwTo $t-1$}{
          \For{$k=i+1$ \KwTo $j-1$}{
              schedule GEMM(j,i,k) \;
          }
          schedule SYRK(j,i) \;
      }
  }
  \caption{Improvement upon Algorithm~\ref{alg:cholceil1}}
  \label{alg:cholceil2}
\end{algorithm}
\renewcommand{\baselinestretch}{\normalspace}

\section{Theoretical Results}
\label{sec:theoryres}


In the following figures, our \emph{Rooftop} bound will be that which uses the
perfect speedup until the weighted critical path is the limiting factor, i.e.,
\begin{equation}
    \mbox{\emph{Rooftop Bound}} = \max \left( \mbox{Critical Path},
    \frac{T_{seq}}{p} \right)
    \label{eqn:rooftop}
\end{equation}
We will compare this to our \emph{ALAP Derived} bound which was derived using
the ALAP execution, our various scheduling strategies, and the following lower bound.
From~\cite[p.221,\Section7.4.2]{casanova2009parallel}, given our DAG, we know
that the make span, $MS$, of any list schedule, $\sigma$, for a given number of
processors, $p$, is 
\[ MS(\sigma,p) \leq \left( 2 - \frac{1}{p} \right) MS_{opt}(p) \] 
where $MS_{opt}$ is the make span of the optimal list schedule without
communication costs.  However, we do not know $MS_{opt}$ and must therefore
substitute the make span of the Critical Path Scheduling using the maximum
strategy to compute our lower bound.  

The ALAP Derived bound improves upon the Rooftop bound precisely in the area that is of
most concern, namely where there is enough parallelism but not enough processors
to fully exploit that parallelism.  
\linespread{1.0}
\begin{figure}[htb]
\centering
\subfloat[Speedup]{%
    \hspace{0mm}\resizebox{.475\textwidth}{!}{\includegraphics{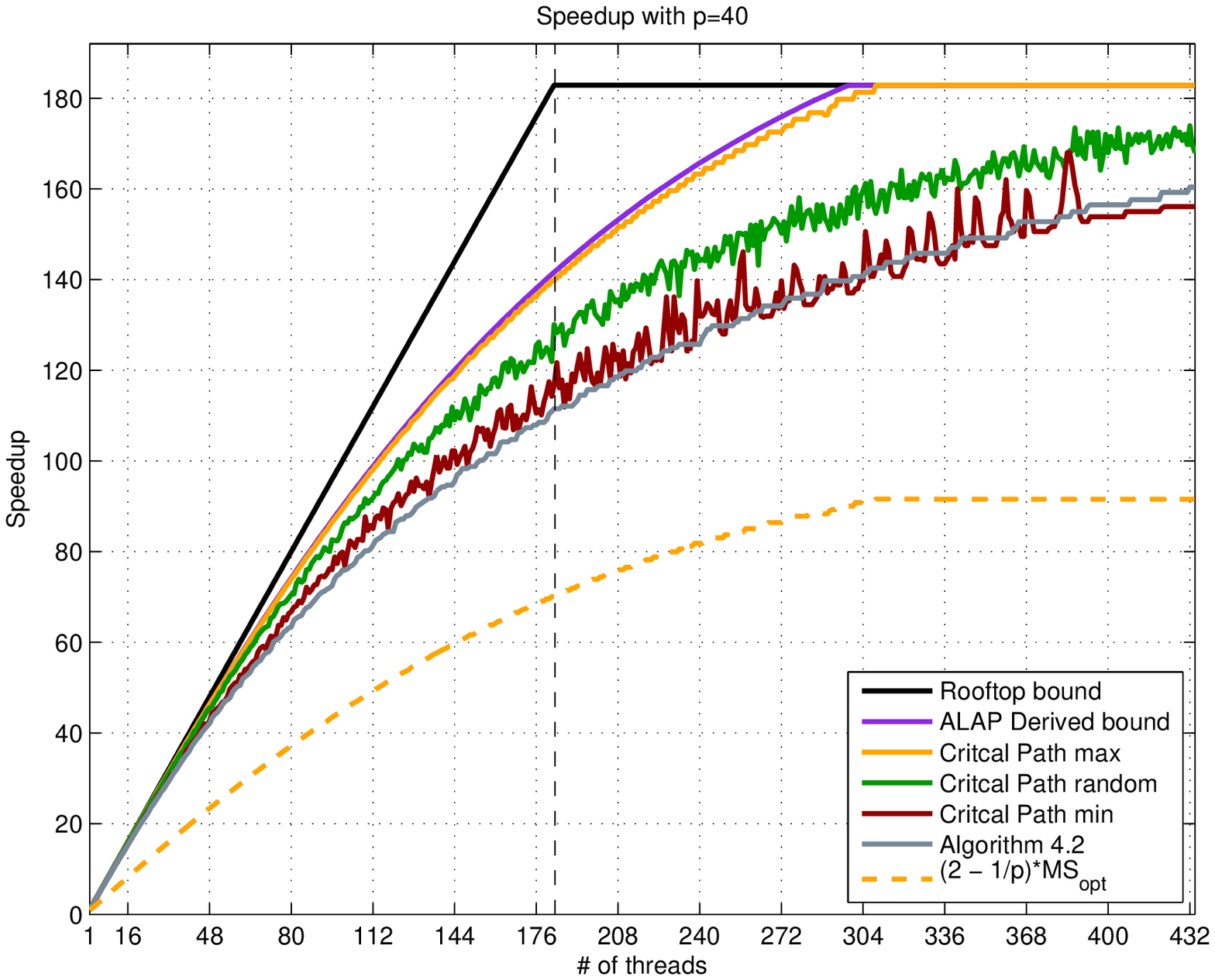}}
}
\subfloat[Efficiency]{%
    \hspace{2mm}\resizebox{.475\textwidth}{!}{\includegraphics{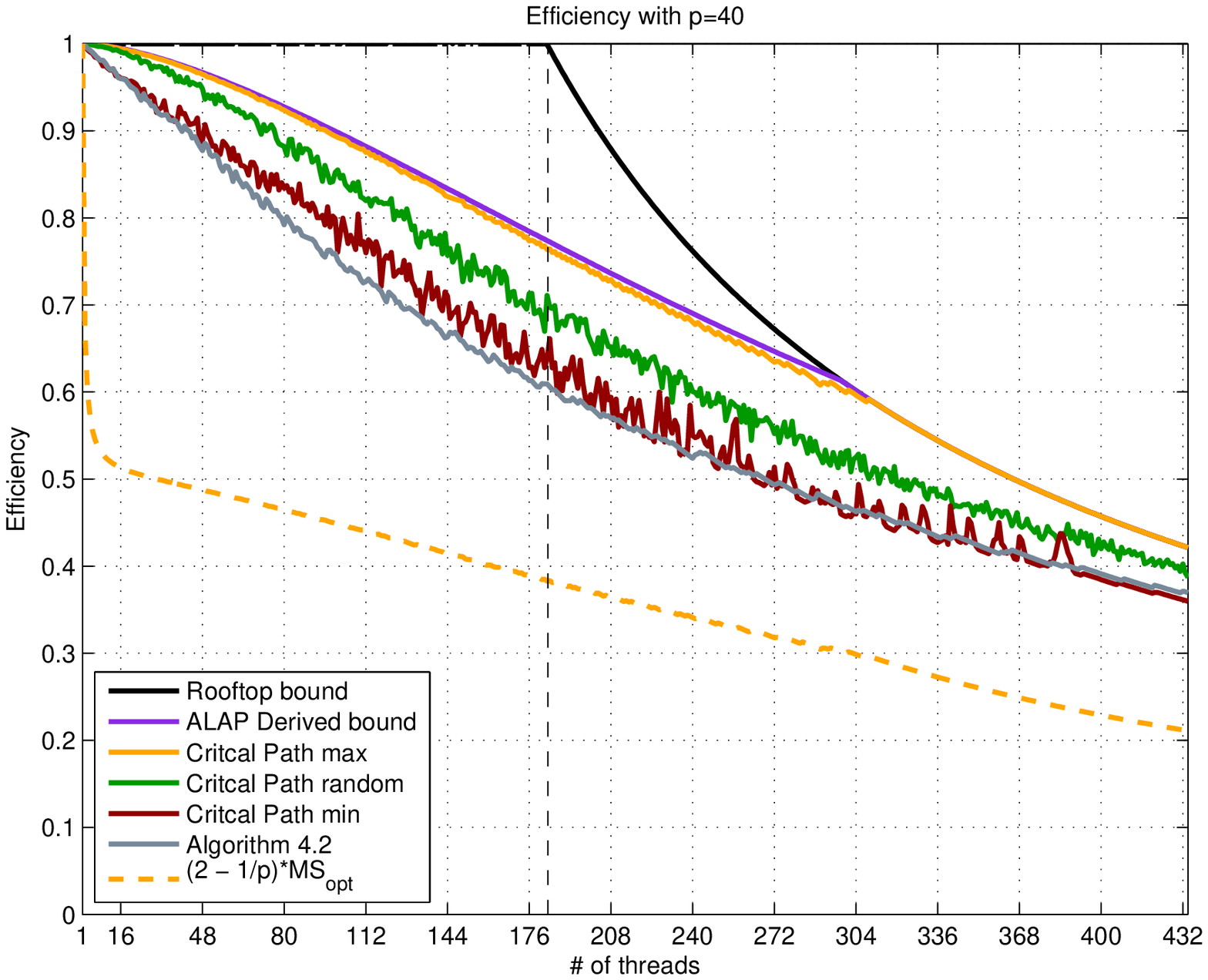}}
}\\
\subfloat[Comparison to new bound]{%
    \hspace{0mm}\resizebox{.475\textwidth}{!}{\includegraphics{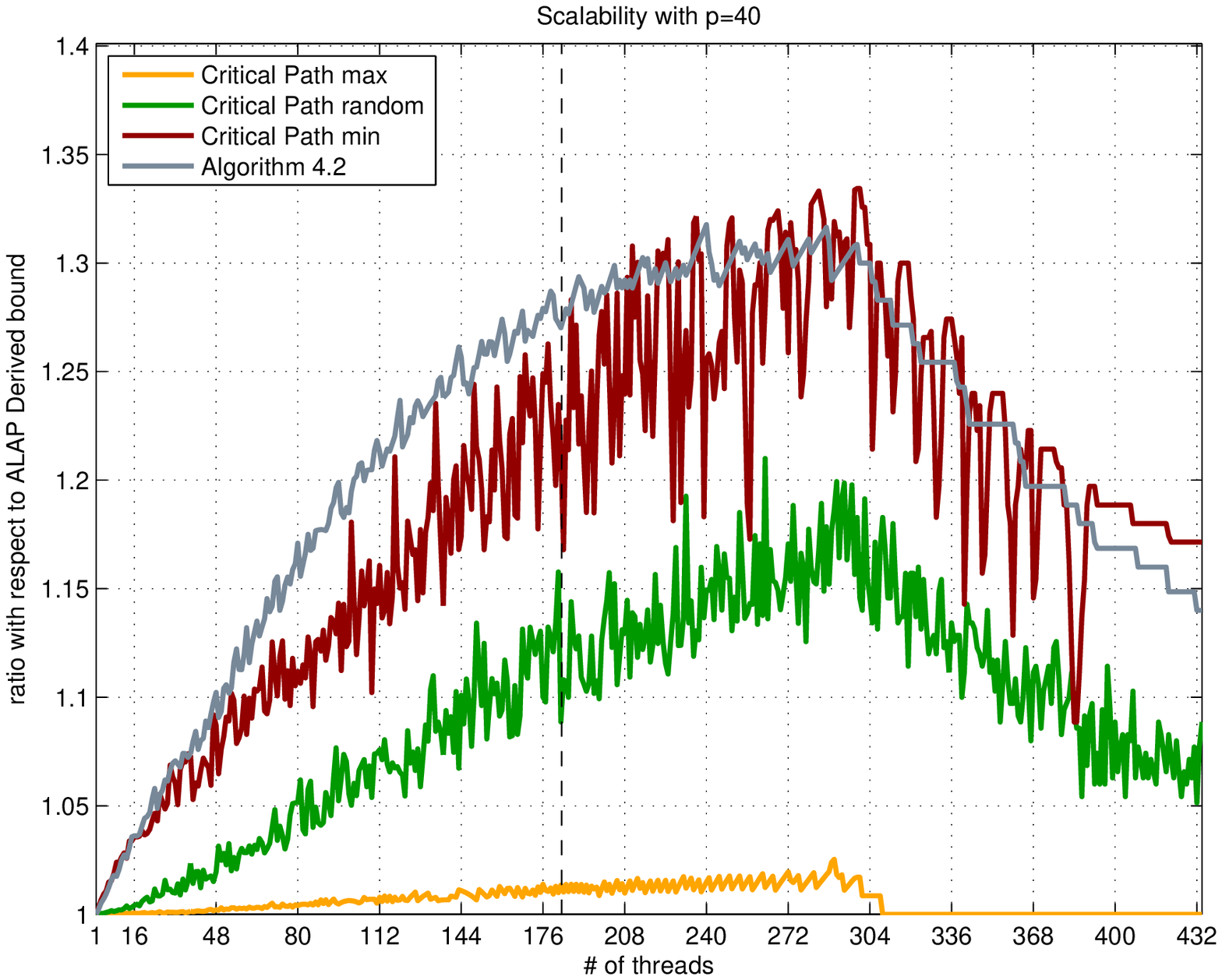}}
}
\caption{Theoretical results for matrix of $40
\times 40$ tiles.
}
\label{fig:theoryresults40} 
\end{figure}
\renewcommand{\baselinestretch}{\normalspace}

Figure~\ref{fig:theoryresults40}(a) shows that the Critical Path schedule is
actually quite descent and comes to within two percent of the ALAP Derived
bound.  Moreover, the ALAP Derived bound has significantly reduced the gap
between the Rooftop bound and any of our list schedules.  

\section{Toward an $\alpha_{opt}$}
\label{sec:alpha0}

It is interesting to know how many processors one needs to be able to schedule
all of the tasks and maintain the weighted critical path.  We will view this
problem in terms of tiles and state the problem as follows:

\begin{quote}
    \emph{Given a matrix of $t \times t$ tiles, determine the minimum number of
    processors, $p_{opt}$, needed to schedule all tasks and achieve an execution time
    equal to the weighted critical path.}
\end{quote}

Toward that end, we will let $p = \alpha t^2$ where $0 < \alpha \leq 1$.  Our
analysis will be asymptotic in which we let $t$ tend to infinity.  From the
analysis of Algorithm~\ref{alg:cholceil2}, we already know that $\alpha_{opt}
\leq \frac{1}{2}$.  Using MATLAB, we have calculated the $\alpha$ value for
matrices of $t \times t$ tiles as shown in Figure~\ref{fig:alpha3to40}.  It is
our conjecture that $\alpha_{opt} \approx \frac{1}{5}$.

\linespread{1.0}
\begin{figure}[htbp]
    \centering
    \resizebox{0.475\linewidth}{!}{%
        \includegraphics[trim = 0mm 0mm 0mm 0mm, clip=true]{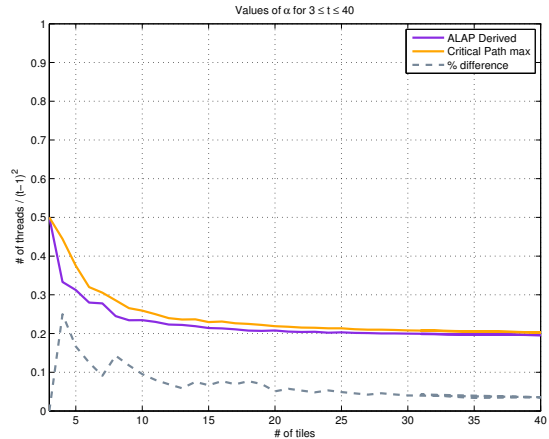}}
    \caption{Values of $\alpha$ for matrices of $t \times t$ tiles where $3
    \leq t \leq 40$}
    \label{fig:alpha3to40}
\end{figure}
\renewcommand{\baselinestretch}{\normalspace}

\section{Related Work}
\label{sec:cholrelated}

For the LU decomposition with partial pivoting, much work has been accomplished
to discern asymptotically optimal algorithms for all values of
$\alpha$~\cite{Marrakchi1989183,Lord:1983:SLA:322358.322366,robert1991impact}.  They
consider a problem of size $n$ and assume $p = \alpha n$ processors on which to
schedule the LU decomposition.  The critical path length of the optimal schedule
in this case is $n^2$ and $\alpha_{opt} \approx 0.347$ where $\alpha_{opt}$ is
a solution to the equation $3\alpha - \alpha^3 = 1$.   

\linespread{1.0}
\begin{figure}[htb]
\centering
\subfloat[LU Decomposition]{%
    \hspace{0mm}\resizebox{.475\textwidth}{!}{\includegraphics{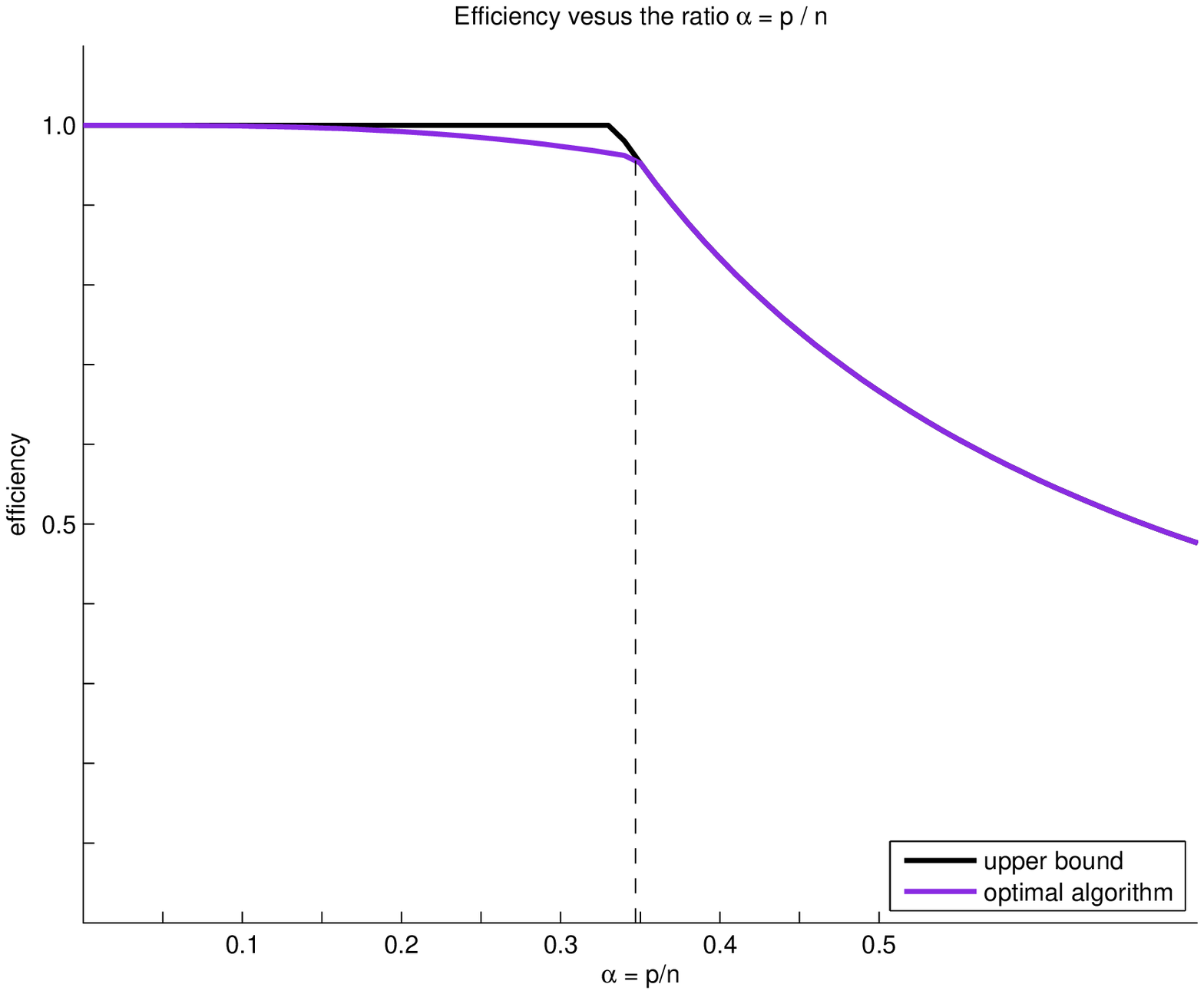}}
}
\subfloat[Tiled Choelsky factorization]{%
    \hspace{2mm}\resizebox{.475\textwidth}{!}{\includegraphics{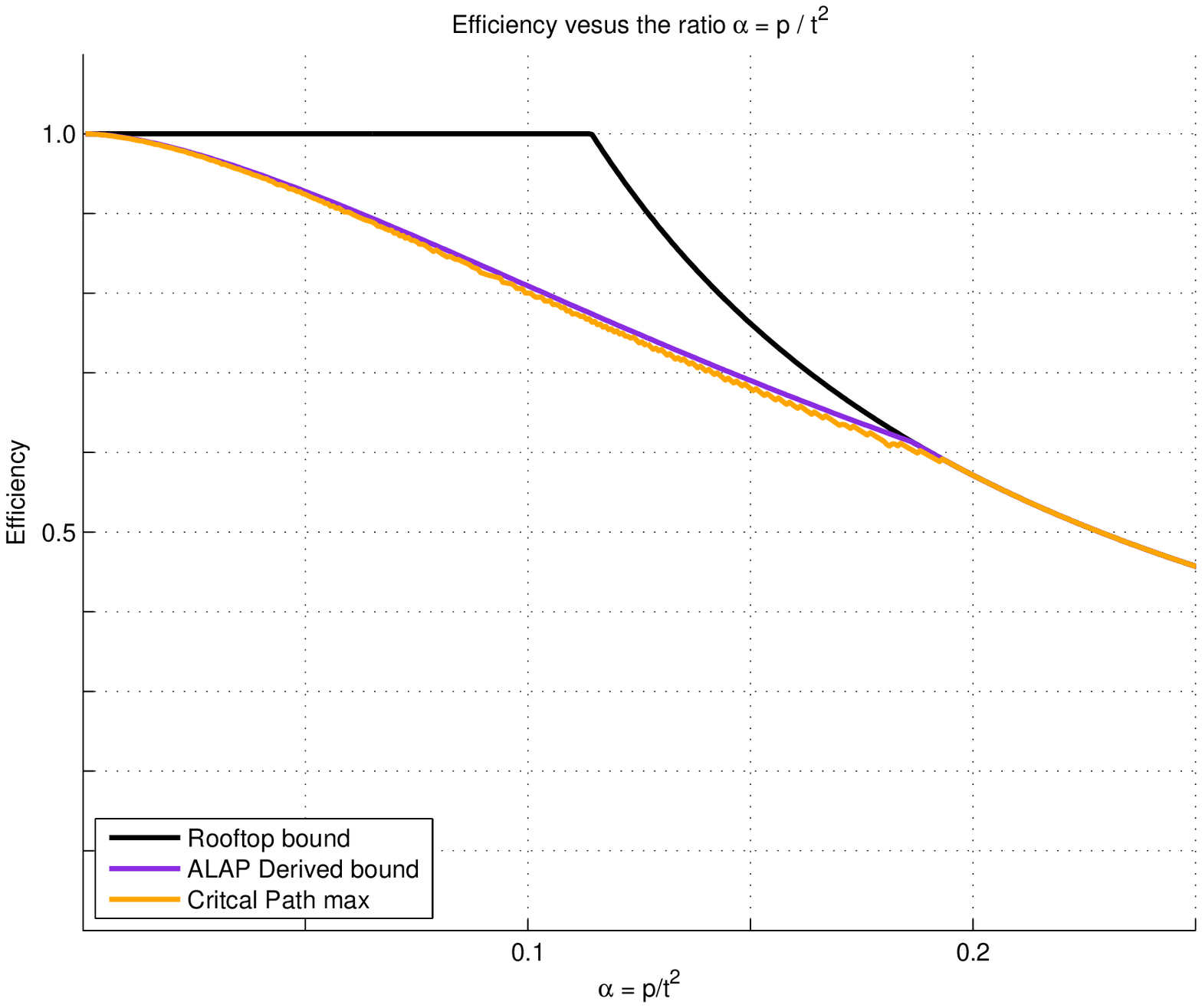}}
}
\caption{Asymptotic efficiency versus $\alpha = p / n$ for LU decomposition and
    versus $\alpha = p / t^2$ for Tiled Cholesky factorization.}
\label{fig:alpharesults40} 
\end{figure}
\renewcommand{\baselinestretch}{\normalspace}

In Figure~\ref{fig:alpharesults40}, we make a comparison between the algorithmic
efficiency of the LU decomposition and the tiled Cholesky factorization.  In the
case of the LU decomposition, the attainable upper bound on efficiency closely
resembles our previous bound of perfect speedup which is then limited by the
critical path.  On the other hand, the tiled Cholesky factorization does not
exhibit this type of efficiency which can be seen from the gap between our
Rooftop bound and the ALAP Derived bound.  Unlike the work
in~\cite{Marrakchi1989183}, we do not provide an algorithm which attains the
ALAP Derived bound.  


\section{Conclusion and future work}
\label{sec:cholschedconclusion}

In many research papers, the performance of an algorithm is usually compared to
either the performance of the GEMM kernel or against perfect scalability
resulting in large discrepancies between the peak performance of the algorithm
and these upper bounds.  If an algorithm displays a DAG such as that of the
tiled Cholesky factorization, it is unrealistic to expect perfect scalability or
even make comparisons to the performance of the GEMM kernel.  Thus one needs to
consider a new bound which is more representative of the algorithm and accounts
for the structure of the DAG.  Without such a bound it is difficult to access
whether there are any performance gains to be achieved.  Although we do not have
a closed-form expression for this new bound, we have shown that such a bound
exists.  Moreover, we have also shown that any algorithm which schedules the
tiled Cholesky factorization while maintaining the weighted critical path will
require $O(t^2)$ processors for a matrix of $t \times t$ tiles and the
coefficient is somewhere around $0.2$.

In this chapter, we have applied a combination of existing techniques to a tiled
Cholesky factorization algorithm to discover a more realistic bound on the
performance and efficiency.  We did so by considering an ALAP execution on an
unbounded number of processors and used this information to calculate the idle
time for any list schedule on a bounded number of processors.  This is then used
to calculate the maximum speedup and efficiency that may be observed.

Further work is necessary to provide a closed-form expression of the new bound
dependent upon the number of processors. In addition, we need to include
communication costs in the bound to make it more reflective of the actual
scheduling problem on parallel machines.  As can be seen in
Figure~\ref{fig:theoryresults40}(c), the Critical Path Schedule is within 2\% of
our ALAP Derived bound.  Although scheduling a DAG on a bounded number of processors is
an NP-complete problem, it may be not be the case for the DAG of the tiled
Cholesky factorization.  Pursuant investigation might show that the Critical
Path Scheduling is the optimal schedule.

\chapter{Scheduling of QR Factorization}\label{chp:qrfact}
%
%

%
%
%

In this chapter, we present collaborative work with Jeffrey Larson.  We revisit
the tiled QR factorization as presented in Chapter~\ref{chp:tiledqr} but do so
in the context of a bounded number of resources. (Chapter~\ref{chp:tiledqr} was
concerned with finding the optimal elimination tree on an unlimited number of
processors.)  We will be using the same analytical tools as in
Chapter~\ref{chp:cholfact} to derive good schedules and to improve upon the
Rooftop bound.  The Cholesky factorization has just one DAG, therefore
Chapter~\ref{chp:cholfact} is a standard scheduling problem, i.e., how to
schedule a DAG on a finite number of processor.  Unlike the previous chapter, we
will need to consider all of the various algorithms (i.e., elimination trees)
and cannot distill the analysis down to a single DAG. 

\section{Performance Bounds}
\label{sec:QRperfbounds}

Each of the algorithms studied in Chapter~\ref{chp:tiledqr}, namely \FT, \MC,
\Greedy, and \GA, produces a unique DAG for a matrix of $p \times q$ tiles.  In
turn, the ALAP Derived bounds~\eqref{eqn:alapbound} for each elimination tree
will also be unique.  In Figure~\ref{fig:newbounds}, we give the computed upper
bounds and make comparisons to the scheduling strategies of maximum, random, and
minimum via the Critical Path Method for a matrix of $20 \times 6$ tiles.  The
matrix size was chosen such that the critical path length of \Greedy is 136 and
the critical path length of \GA is 134 (see Figure~\ref{fig:spyGrASAPvsGreedy} in \Section~\ref{sec:TiledAlgorithms}).

\linespread{1.0}
\begin{figure*}[htb]
\centering
\subfloat[\FT]{%
    \label{fig:newboundFT}%
    \hspace{0mm}\resizebox{.475\textwidth}{!}{\includegraphics{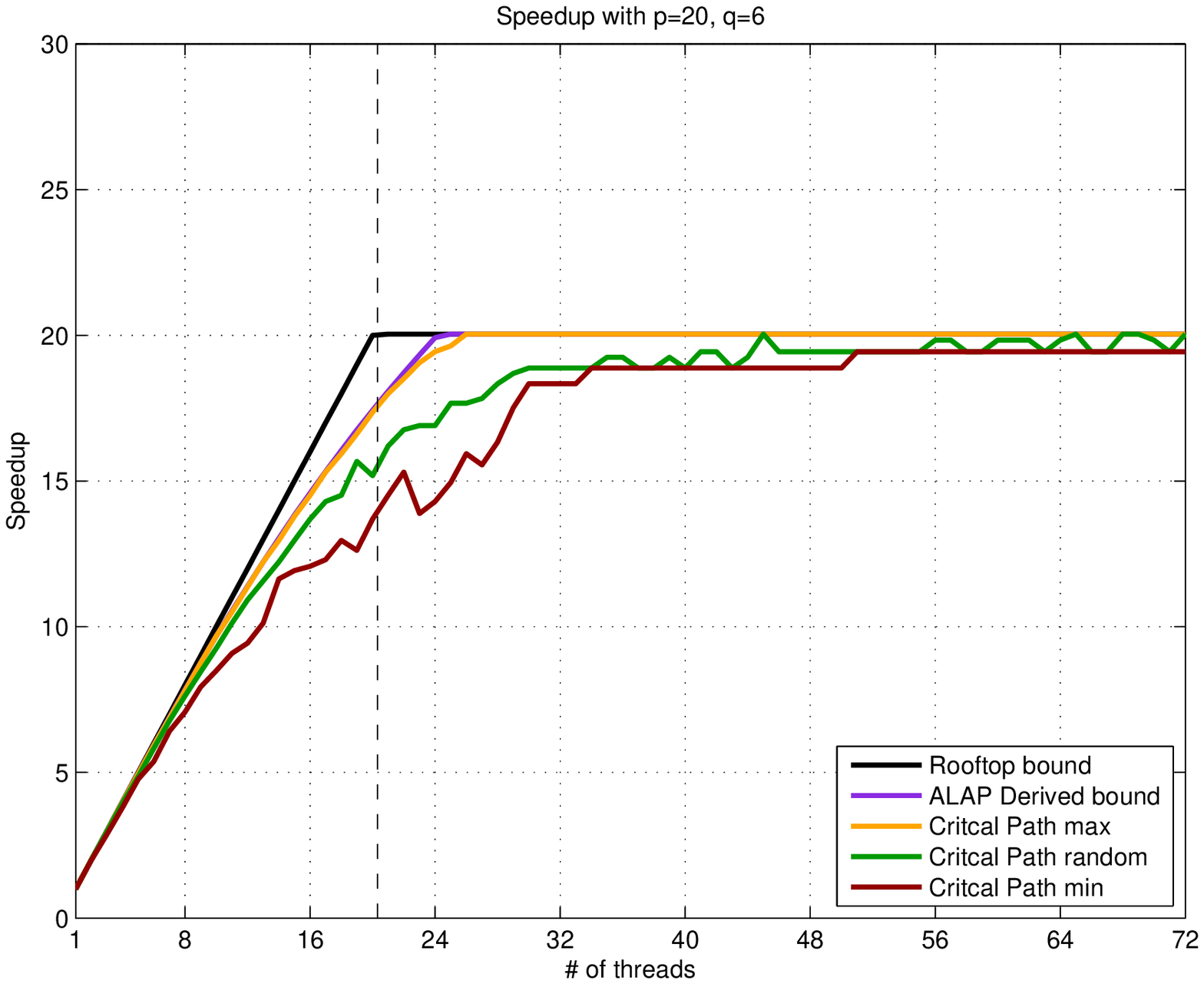}}%
}
\subfloat[\MC]{%
    \label{fig:newboundMC}%
    \hspace{7mm}\resizebox{.475\textwidth}{!}{\includegraphics{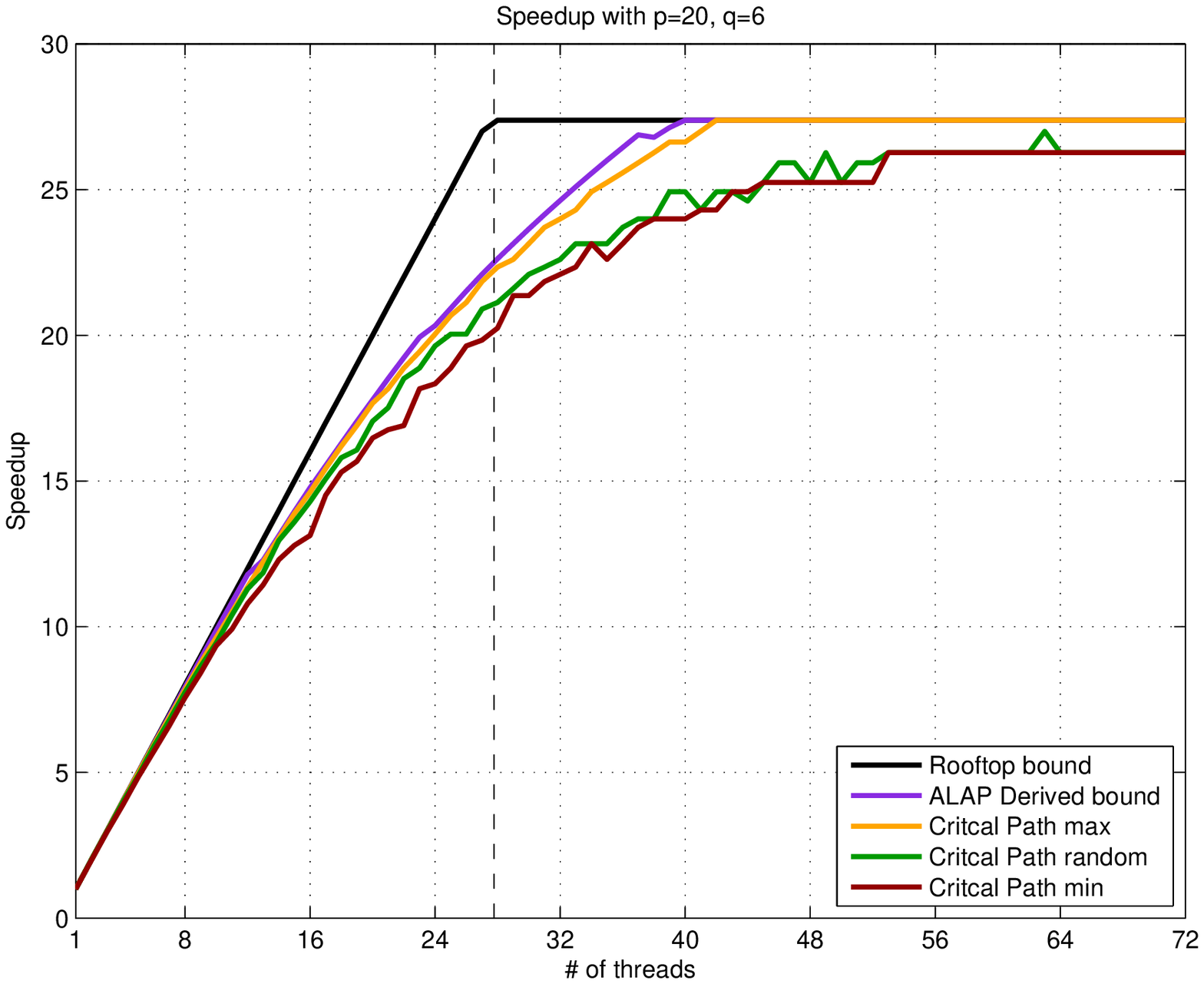}}%
}
\\
\subfloat[\Greedy]{%
    \label{fig:newboundGREEDY}%
    \hspace{0mm}\resizebox{.475\textwidth}{!}{\includegraphics{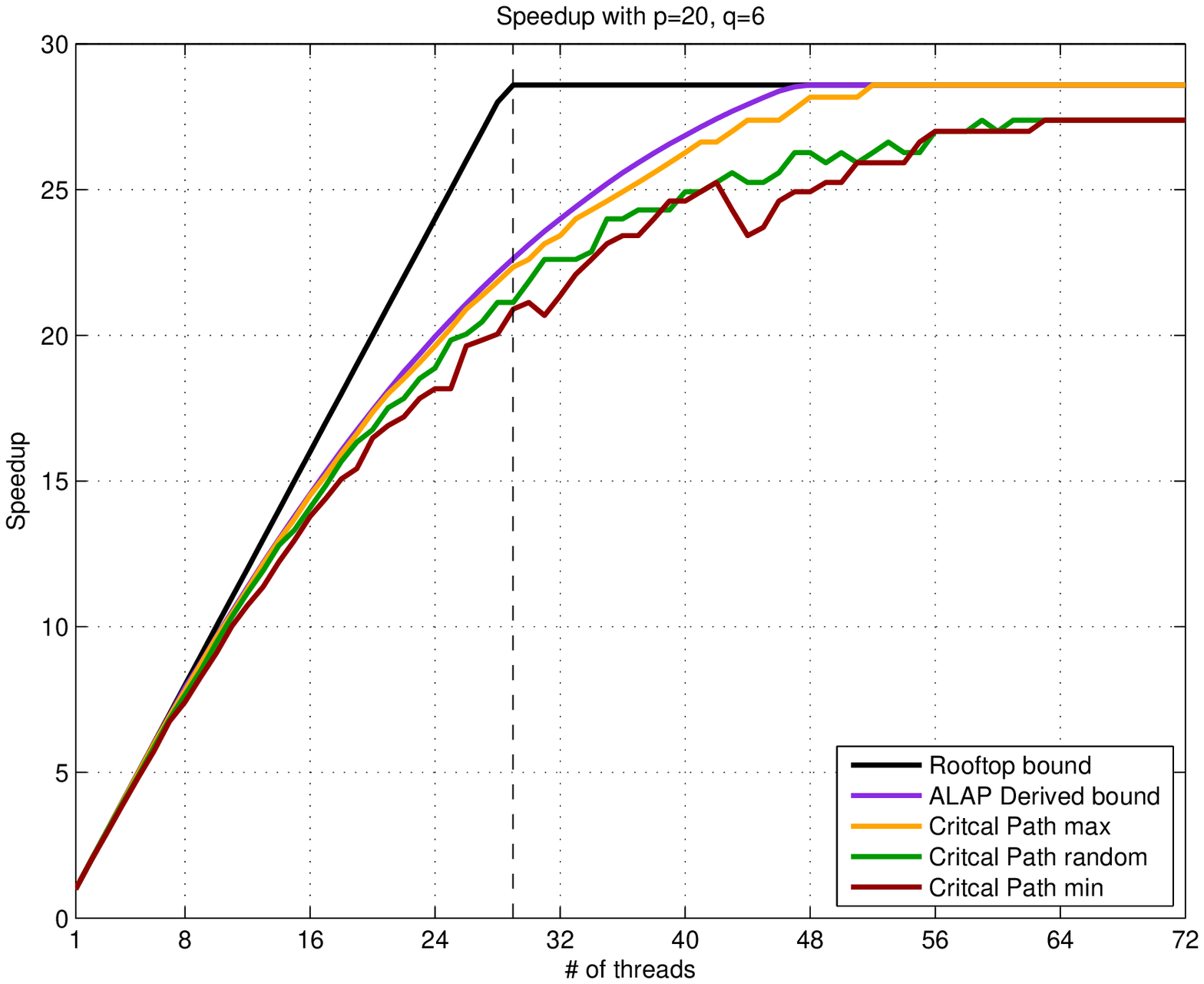}}%
}
\subfloat[\GA]{%
    \label{fig:newboundGrASAP}%
    \hspace{7mm}\resizebox{.475\textwidth}{!}{\includegraphics{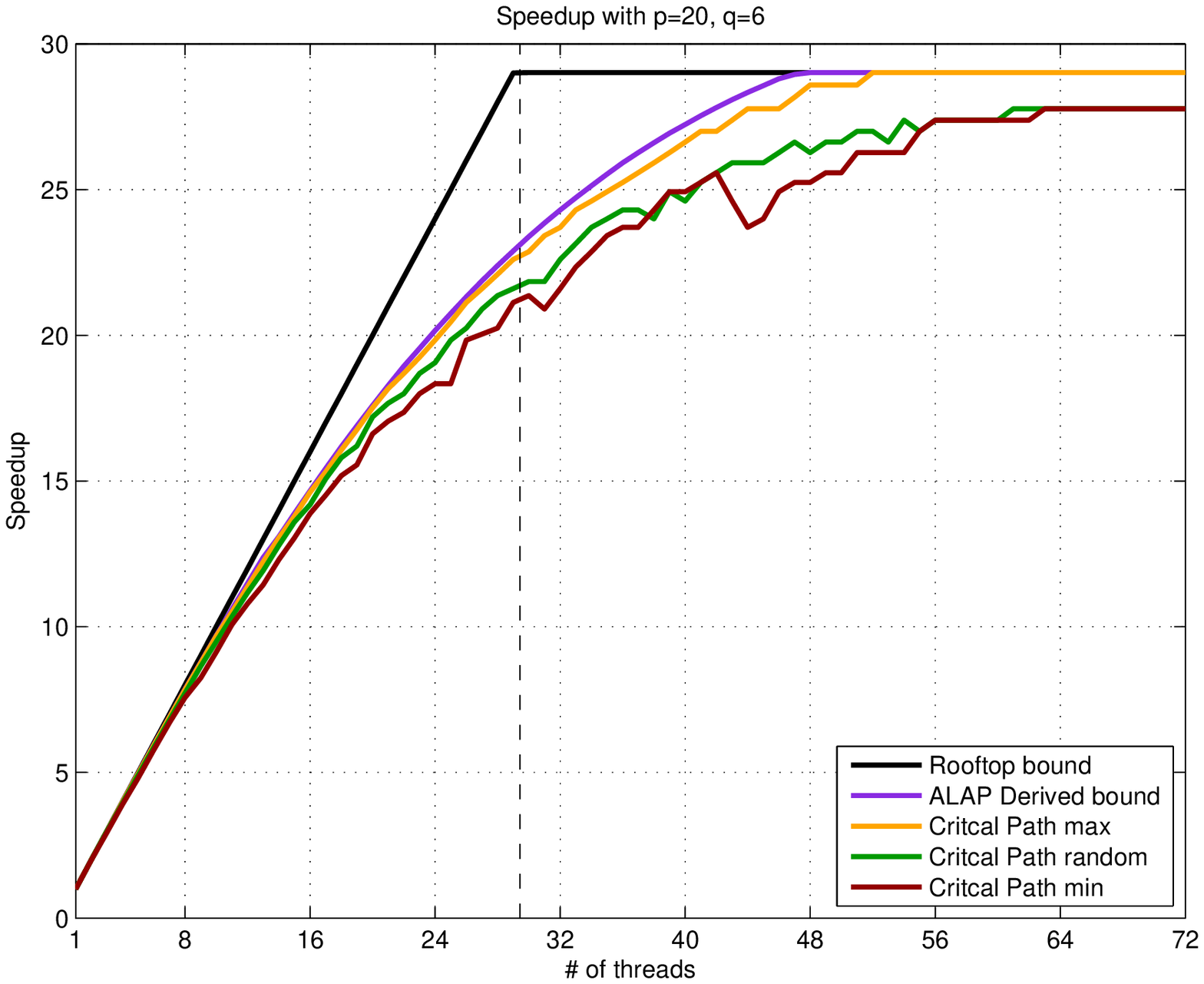}}%
}
\caption{\label{fig:newbounds} Scheduling comparisons for each of the
algorithms versus the ALAP Derived bounds on a matrix of $20 \times 6$ tiles.}
\end{figure*}
\renewcommand{\baselinestretch}{\normalspace}

The \GA algorithm for a tiled matrix is optimal in the scope of unbounded
resources. However by the manner in which the ALAP Derived bound is computed,
the bound created by using \GA cannot hold for all of the other algorithms.
Consider the ALAP execution of the \MC and \GA algorithms on a matrix of
$15\times 4$ tiles.  In Figure~\ref{fig:tail}, we show the execution of the last
tasks for \GA on the left and \MC on the right.  More of the tasks in the ALAP
execution for \MC are pushed towards the end of the execution which means the
ALAP Derived bound will be higher than that of \GA for a schedule that uses
fewer than 10 processors.  In other words, as we add more processors, the Lost
Area (LA) increases much faster for \GA than it does for \MC.  
\linespread{1.0}
\begin{figure*}[htb]
\centering
    \pgfmathsetlength{\imagewidth}{10cm}
    \pgfmathsetlength{\imagescale}{\imagewidth/600}
    \resizebox{0.95\linewidth}{!}{%
        \begin{tikzpicture}[x=\imagescale,y=-\imagescale]
            \node[anchor=south east,inner sep=0pt,outer sep=0pt] at (-100.5,-0.5)
            {\includegraphics[trim = 200mm 0mm 0mm 0mm, clip = true, width=\imagewidth]{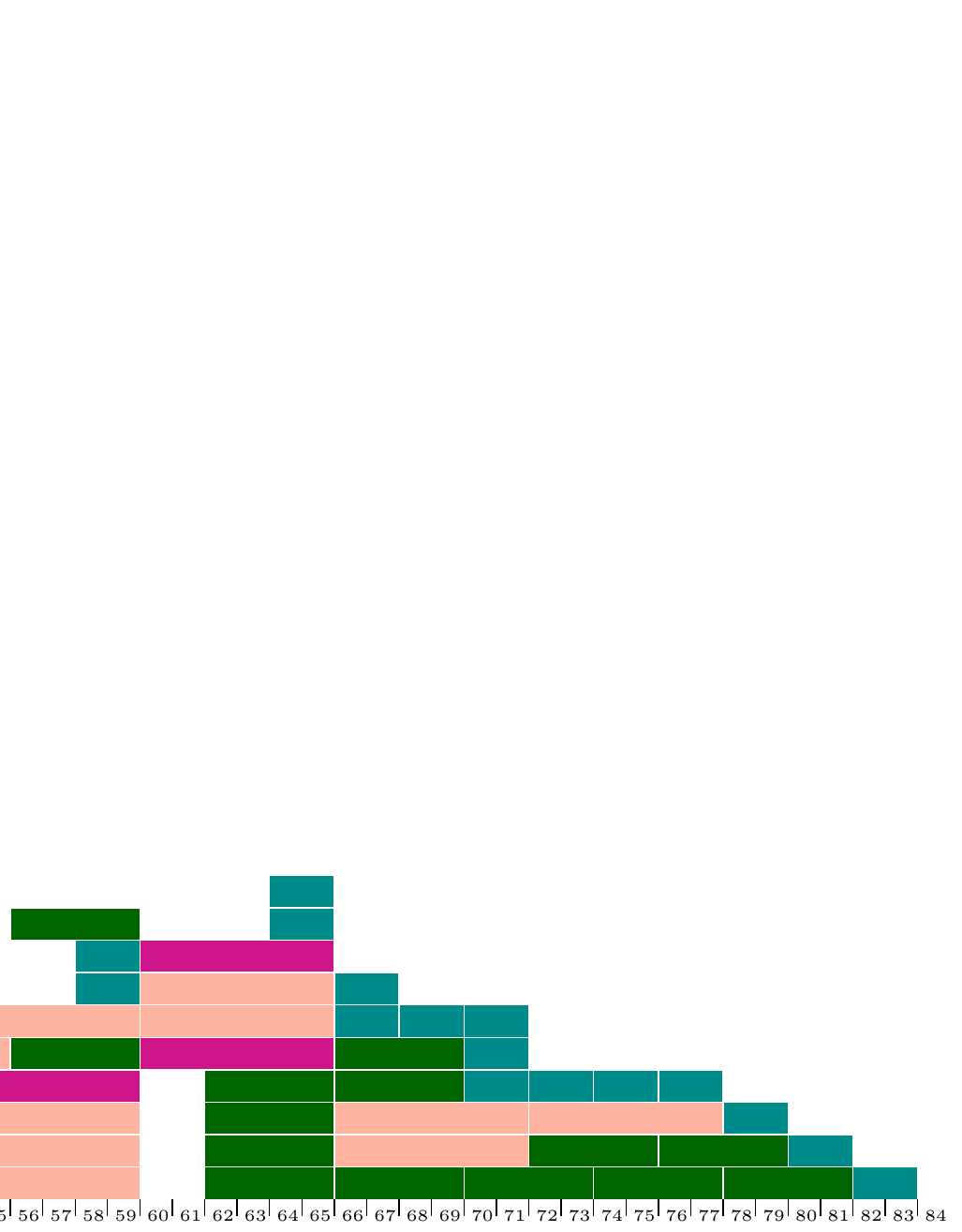}};
            \draw [fill=white] (-710,0) -- (-670,0) -- (-670,-130) --
            (-655,-140) -- (-685,-150) -- (-670,-160) -- (-670,-300) -- (-710,-300);
            \draw [fill=white,white] (-715,-295) rectangle (-665,-305);
            \draw [fill=white,white] (-715,2) rectangle (-665,-2);
            \node at (-400,25) {\GA};
            \node[anchor=south west,inner sep=0pt,outer sep=0pt] at (-10.5,-0.5)
            {\includegraphics[trim = 230mm 0mm 0mm 0mm, clip = true, width=\imagewidth]{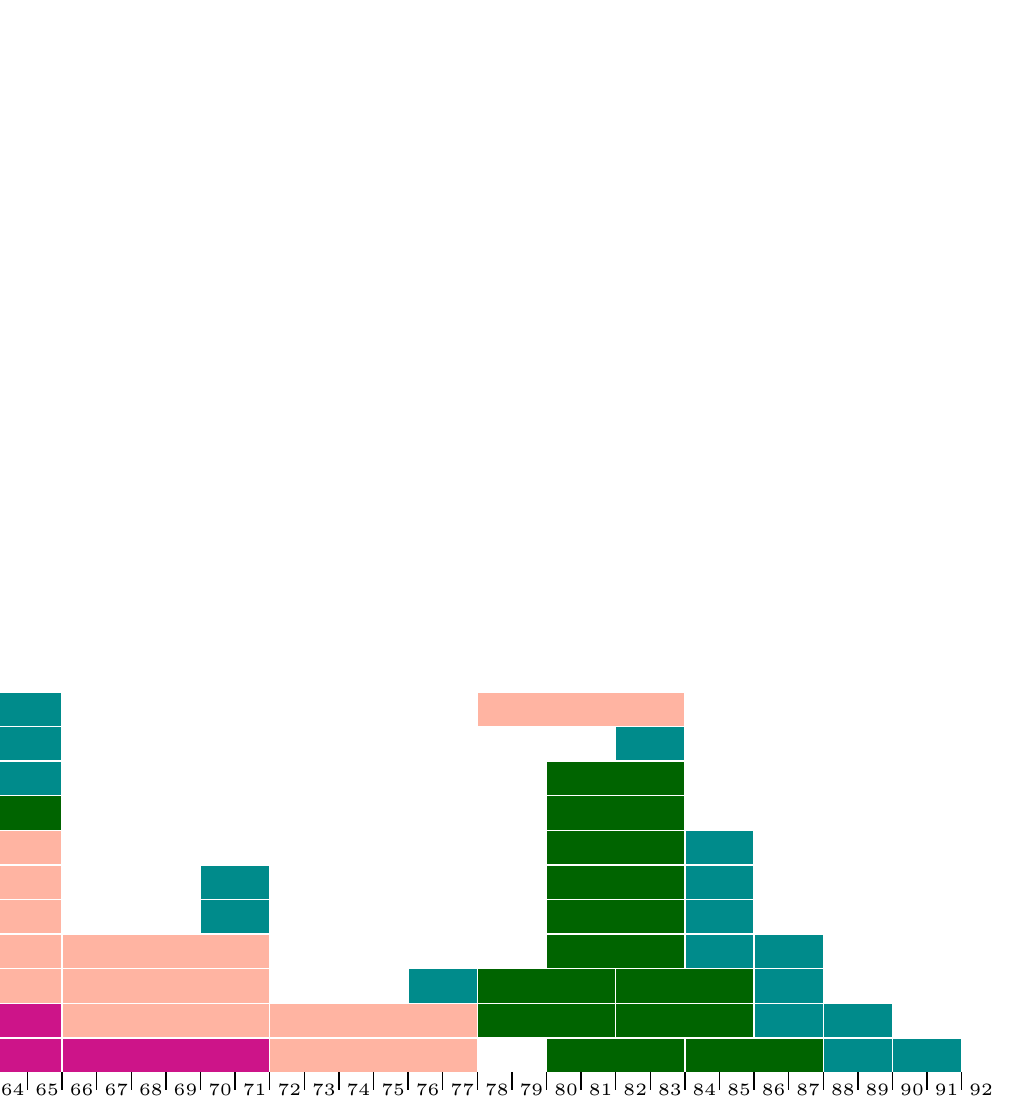}};
            \draw [fill=white] (-15,0) -- (5,0) -- (5,-130) --
            (20,-140) -- (-10,-150) -- (5,-160) -- (5,-300) -- (-15,-300);
            \draw [fill=white,white] (-21,-295) rectangle (10,-305);
            \draw [fill=white,white] (-21,2) rectangle (10,-2);
            \node at (275,25) {\MC};
            \definecolor{tempcolor}{RGB}{0,100,0}
            \draw[color=white,fill=tempcolor] (490,-415) -- (500,-415) -- (500,-405)
            -- (490,-405) -- cycle node[color=black,above=0.1cm, right=0.125cm]
            {\tiny{GEQRT}};
            \definecolor{tempcolor}{RGB}{0,139,139}
            \draw[color=white,fill=tempcolor] (490,-400) -- (500,-400) -- (500,-390) --
            (490,-390) -- cycle node[color=black,above=0.1cm, right=0.125cm]
            {\tiny{TTQRT}};
            \definecolor{tempcolor}{RGB}{205,20,137}
            \draw[color=white,fill=tempcolor] (490,-385) -- (500,-385) -- (500,-375)
            -- (490,-375) -- cycle node[color=black,above=0.1cm, right=0.125cm]
            {\tiny{UNMQR}};
            \definecolor{tempcolor}{RGB}{255,180,162}
            \draw[color=white,fill=tempcolor] (490,-370) -- (500,-370) -- (500,-360)
            -- (490,-360) -- cycle node[color=black,above=0.1cm, right=0.125cm]
            {\tiny{TTMQR}};
        \end{tikzpicture}
    }
    \caption{\label{fig:tail} Tail-end execution using ALAP on unbounded resources
for \GA and \MC on a matrix of $15\times 4$ tiles.}
\end{figure*}
\renewcommand{\baselinestretch}{\normalspace}
Since the critical path length for \MC is greater than that of \GA, after a
certain number of processors, the ALAP Derived bound for \MC falls below that of
\GA.  These observations are evident in Figure~\ref{fig:boundcomps} where we
show a comparison of the bound for each algorithm.  Recall that the Rooftop
bound~\eqref{eqn:rooftop} only takes into account the critical path length of an
algorithm such that for \GA it can be considered a bound for all the algorithms
since \GA is optimal for unlimited resources and thus has the shortest critical
path length.

\linespread{1.0}
\begin{figure}[htb]
\centering
\subfloat[Speedup]{%
    \hspace{0mm}\resizebox{.475\textwidth}{!}{\includegraphics{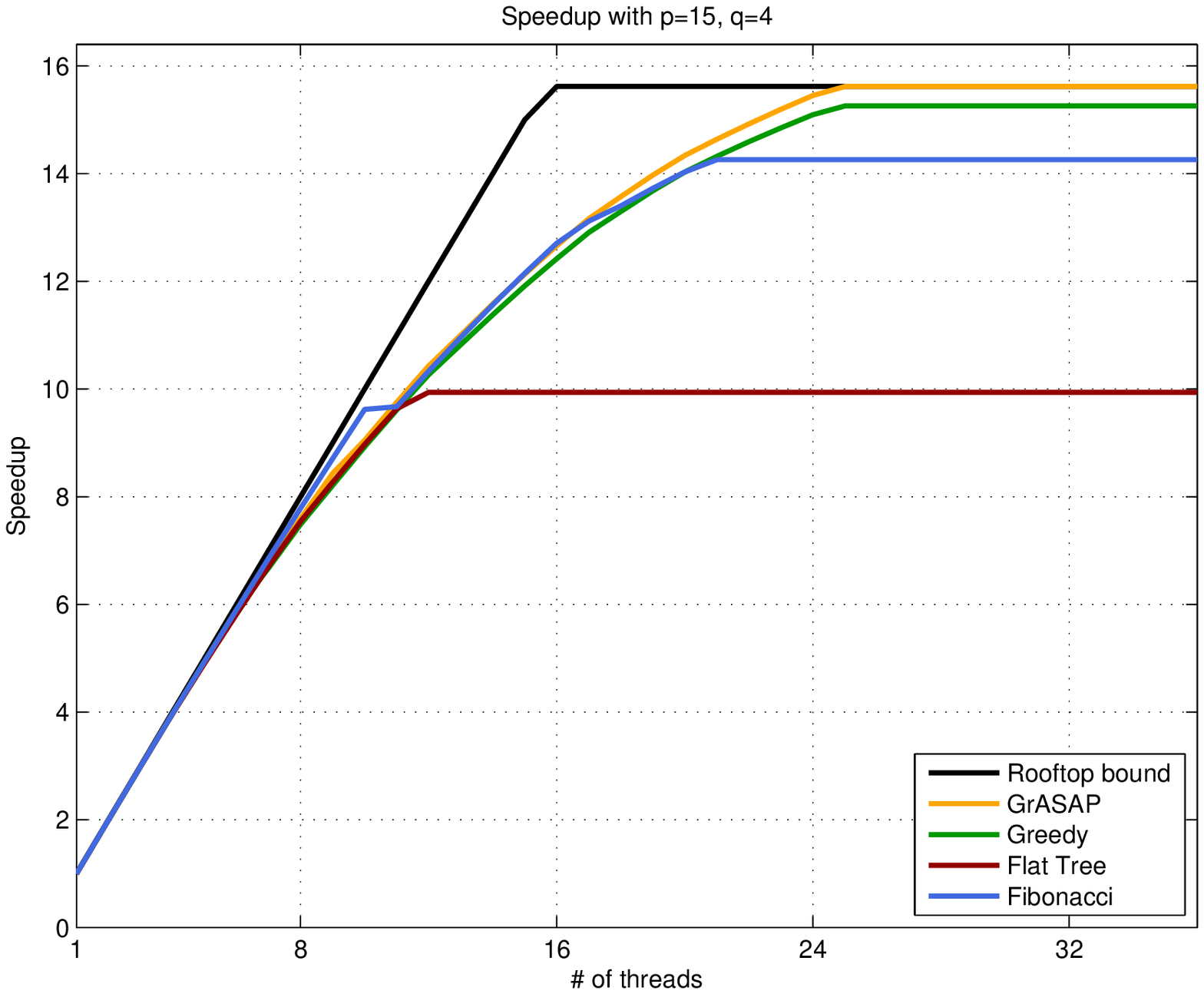}}
}
\subfloat[Efficiency]{%
    \hspace{2mm}\resizebox{.475\textwidth}{!}{\includegraphics{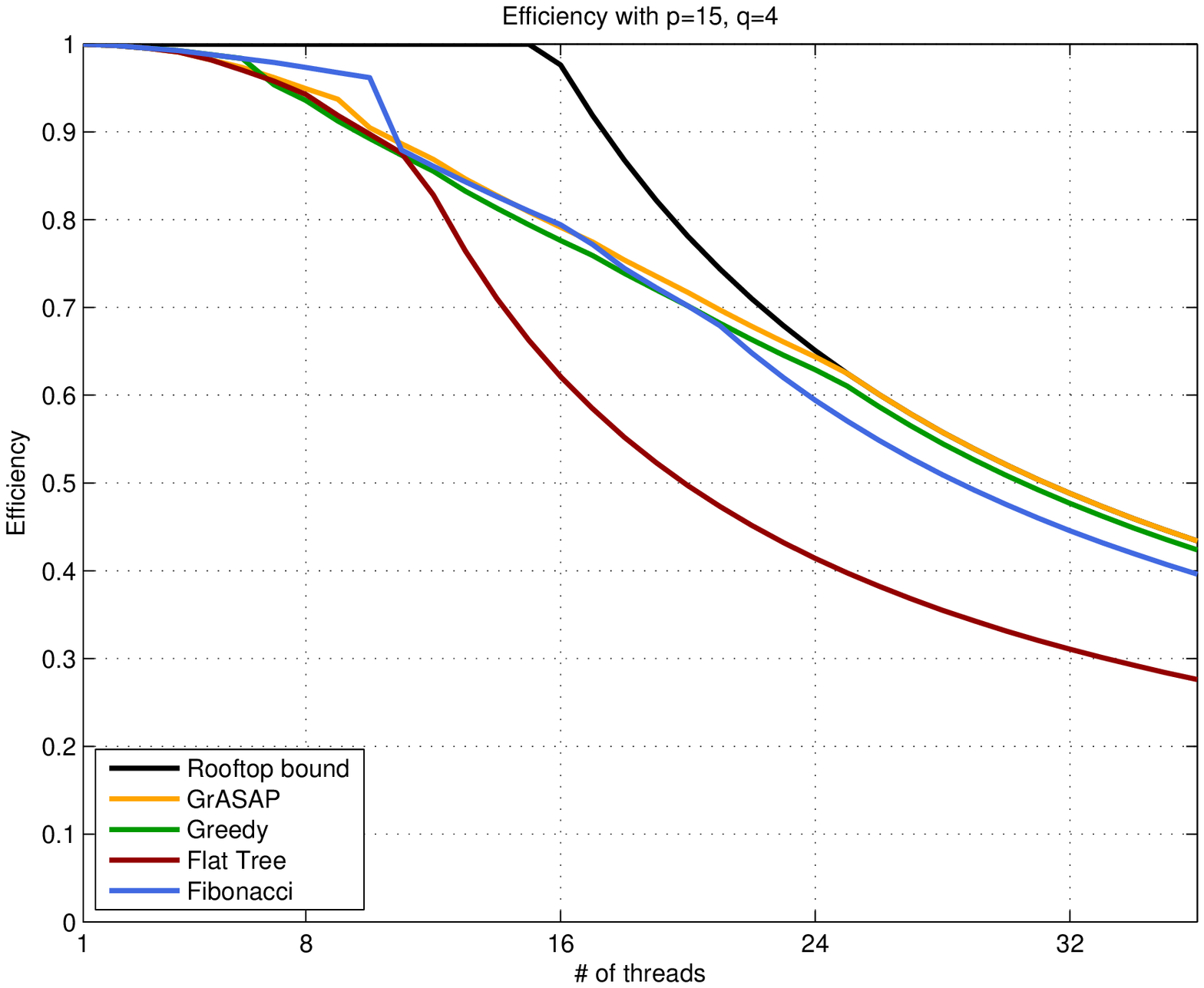}}
}
\caption{ALAP Derived bound comparison for all algorithms for a matrix of
$15\times 4$ tiles.}
\label{fig:boundcomps} 
\end{figure}
\renewcommand{\baselinestretch}{\normalspace}

\section{Optimality}
\label{sec:QRoptimal}
There is no reason for the tree found optimal in Chapter~\ref{chp:tiledqr} on an
unbounded number of resources to be optimal on a bounded number resources.  We
cast the problem of finding the optimal tree with the optimal schedule as an
integer programming problem.  (A complete description of the formulation can be
found in Appendix~\ref{appendix1}.)  Similarly, in~\cite{6012902} a
Mixed-Integer Linear Programming approach was used to provide an upper bound on
performance.  However, the integer programming problem size grows exponentially
as the matrix size increases, thus the largest feasible problem size was a
matrix of $5 \times 5$ tiles.  In Figure~\ref{fig:5x5_optimal} we show the
speedup of the \GA algorithm with its bound and make comparisons to an optimal
tree with an optimal schedule and Table~\ref{tab:5x5_optimal} provides the actual
schedule lengths for all of the algorithms using the CP Method for the matrix of
$5\times 5$ tiles.. 

\linespread{1.0}
\begin{figure}[htbp]
    \centering%
    \resizebox{.45\linewidth}{!}{ \includegraphics{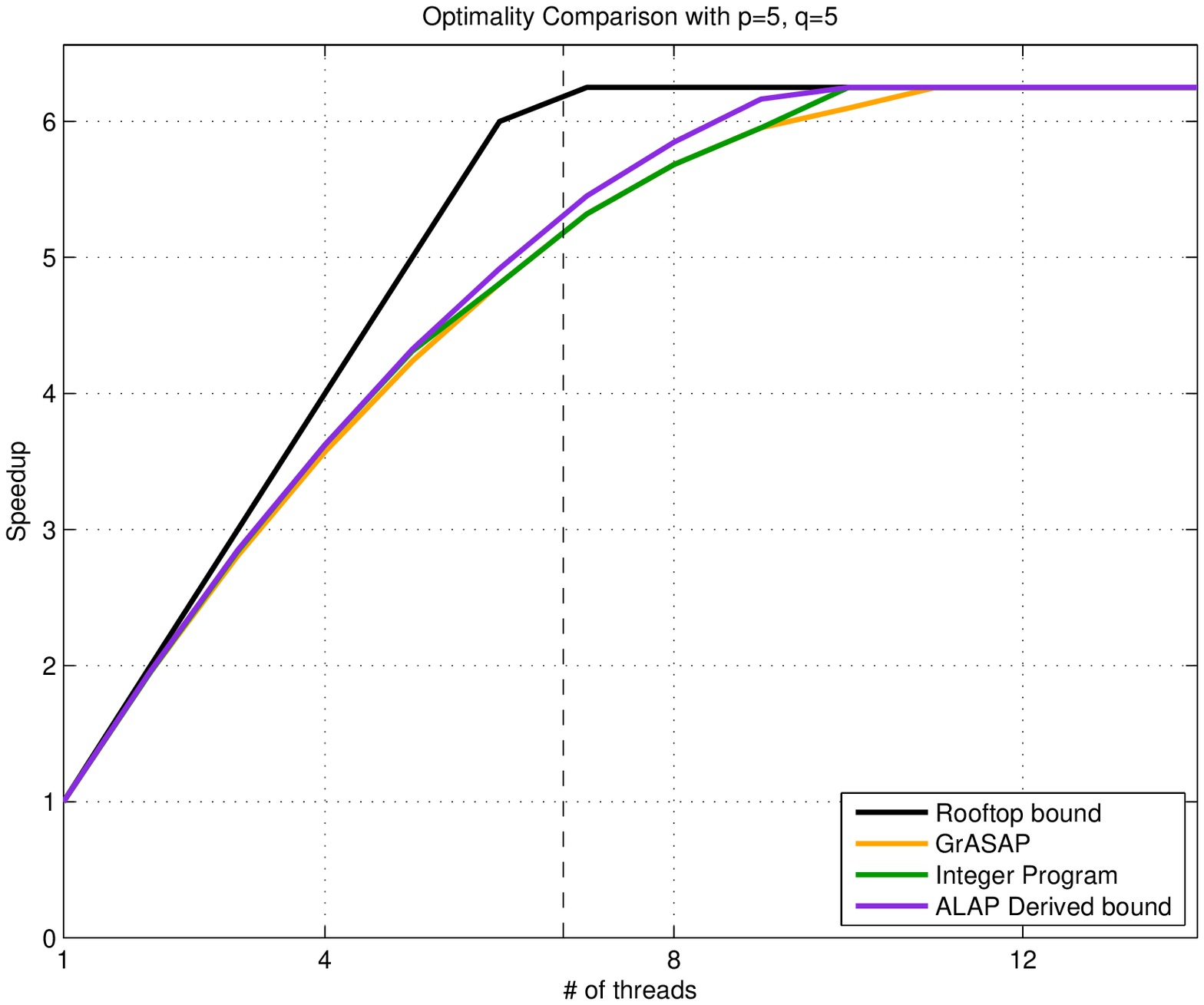}}
    \caption{Comparison of speedup for CP Method on \GA, ALAP Derived bound from \GA, and
    optimal schedules for a matrix of $5\times 5$ tiles on 1 to 14 processors}
    \label{fig:5x5_optimal}
\end{figure}
\begin{table}
            \resizebox{\linewidth}{!}{%
                \begin{tabular}{ccccccc}
                    \toprule
                           & ALAP Derived  & Optimal       & \multicolumn{4}{c}{CP Method}\\
                    Procs  & Bound(\GA)    & Tree/Schedule & \GA & \Greedy & \MC & \FT\\
                    \cmidrule(l ){1-1}
                    \cmidrule(lr){2-2}
                    \cmidrule(lr){3-3}
                    \cmidrule( r){4-7}
                        1  &   500 & 500 & 500 & 500 & 500 & 500\\ 
                        2  &   255 & 256 & 256 & 256 & 256 & 256\\ 
                        3  &   176 & 176 & 178 & 178 & 178 & 176\\ 
                        4  &   138 & 138 & 140 & 140 & 140 & 140\\ 
                        5  &   116 & 116 & 118 & 118 & 118 & 116\\ 
                        6  &   102 & 104 & 104 & 104 & 104 & 104\\ 
                        7  &    92 &  94 &  94 &  94 &  94 &  94\\ 
                        8  &    86 &  88 &  88 &  88 &  88 &  88\\ 
                        9  &    82 &  84 &  84 &  84 &  86 &  86\\ 
                       10  &    80 &  80 &  82 &  82 &  86 &  86\\ 
                       11  &    80 &  80 &  80 &  80 &  86 &  86\\ 
                       12  &    80 &  80 &  80 &  80 &  86 &  86\\ 
                       13  &    80 &  80 &  80 &  80 &  86 &  86\\ 
                       14  &    80 &  80 &  80 &  80 &  80 &  86\\ 
                    \bottomrule
                \end{tabular}
            }
            \caption{Schedule lengths for matrix of $5\times 5$ tiles} 
            \label{tab:5x5_optimal}
\end{table}
\renewcommand{\baselinestretch}{\normalspace}

\section{Elimination Tree Scheduling}
\label{sec:elimsched}
We pair up the choice of the elimination tree with a type of scheduling strategy
to obtain the following bounds:
\[ \resizebox{0.90\textwidth}{!}{ 
    $\left( \begin{array}{c}
        \mbox{\GA}\\ 
        \mbox{Rooftop bound}
    \end{array}\right) 
    \leq 
    \left( \begin{array}{c}
        \mbox{optimal tree}\\ 
        \mbox{optimal schedule}
    \end{array}\right) 
    \leq
    \left( \begin{array}{c}
        \mbox{\GA}\\ 
        \mbox{optimal schedule}
    \end{array}\right) 
    \leq
    \left( \begin{array}{c}
        \mbox{\GA}\\ 
        \mbox{CP schedule}
    \end{array}\right)$}
    \]
Moreover, we also have
\[ \resizebox{0.95\textwidth}{!}{ 
    $\left( \begin{array}{c}
        \mbox{\GA}\\ 
        \mbox{Rooftop bound}
    \end{array}\right) 
    \leq 
    \left( \begin{array}{c}
        \mbox{\GA}\\ 
        \mbox{ALAP Derived Bound}
    \end{array}\right) 
    \leq
    \left( \begin{array}{c}
        \mbox{\GA}\\ 
        \mbox{optimal schedule}
    \end{array}\right) 
    \leq
    \left( \begin{array}{c}
        \mbox{\GA}\\ 
        \mbox{CP schedule}
    \end{array}\right)$}
    \]
Combining these inequalities with Table~\ref{tab:5x5_optimal} gives rise to the following questions:
\begin{quote}
    \emph{Given an optimal elimination tree for the tiled QR factorization on an
    unbounded number of resources}
    \begin{enumerate}[(Q1)]
        \item \emph{does there always exist a scheduling strategy such that the
            schedule on limited resources is optimal?}
        \item \emph{ does the ALAP Derived bound for this elimination tree hold true
            for any scheduling strategy on any other elimination tree?}
    \end{enumerate}
\end{quote}
From Chapter~\ref{chp:tiledqr} we have that \GA is an optimal elimination tree
for the tiled QR factorization.  We know that the length of an optimal schedule
for \GA on $p$ processors will necessarily be greater or equal to the ALAP
Derived bound for \GA on $p$ processors by way of construction of the ALAP
Derived bound. Thus (Q1) implies (Q2).  We cannot address the first question
directly since the size of the matrix needed to produce a counter example is too
large for verification with the integer programming formulation.


Therefore we need to find a matrix size for which a schedule exists whose
execution length is smaller than the ALAP Derived bound from \GA on the same
matrix.  As we have seen in Figure~\ref{fig:boundcomps}, the \MC elimination
tree on a tall and skinny tiled matrix provides the best hope. 

Consider a matrix of $34 \times 4$ tiles on 10 processors.  The ALAP Derived
bound from \GA is 188.  Using the Critical Path method to schedule the \MC
elimination tree we obtain a schedule length of 184.  Therefore the ALAP Derived
bound from \GA does not hold for this schedule.  By implication, we have that
(Q1) is false.  However, the Rooftop bound from \GA is still a valid bound for
all of the schedules.

\section{Conclusion}
\label{sec:qrschedconclusion}
In this chapter we have applied the same tools used in
Chapter~\ref{chp:cholfact} to provide performance bounds for the tiled QR
factorization.  Further, we have shown that the ALAP Derived bound is algorithm dependent.
This leaves that only bound we have for all algorithms is the Rooftop bound as
computed using the \GA algorithm.

The analysis in this chapter has also shown that an optimal algorithm for an
unbounded number of resources does not imply that a scheduling strategy exists
such that it can be scheduled optimally.


\chapter{Strassen Matrix-Matrix Multiplication}\label{chp:strassen}

Matrix multiplication is the underlying operation in many if not most of the
applications in numerical linear algebra and as such, it has garnered much
attention. Algorithms such as the Cholesky factorization, LU decomposition and
more recently the QR-based Dynamically Weighted Halley iteration for polar
decomposition~\cite{NakatsukasaHigham2012}, spend a majority of their
computational cost in matrix-matrix multiplication. The conventional BLAS Level
3 subprogram for matrix-matrix multiplication is of $O(n^{\alpha})$, where
$\alpha = log_2{8} = 3$, computational cost but there exist subcubic
computational cost algorithms.   In 1969, Volker Strassen~\cite{Strassen1969}
presented an algorithm that computes the product of two square matrices of size
$n \times n$, where $n$ is even, using only 7 matrix multiplications at the cost
of needing 18 matrix additions/subtractions which then can be called recursively
for each of the 7 multiplications.  This compares to the standard cubic
algorithm which requires 8 matrix multiplications and only 4 matrix additions.
When Strassen's algorithm is applied recursively down to a constant size, the
computational cost is $O(n^{\alpha})$ where $\alpha = \log_2{7} \approx 2.807$.
Two years later, Shmuel Winograd proved that a minimal of 7 matrix
multiplications and 15 matrix additions/subtractions, which is less than the 18
of Strassen's, are required for the product of two $2 \times 2$ matrices, see
\cite{Winograd1971}.  These discoveries have spawned a flurry of research over
the years.  In 1978, Pan~\cite{4567976} showed that $\alpha < 2.796$.  In the
late 1970's and early 1980's, Bini~\cite{Bini1979} provided $\alpha < 2.78$ with
Sch\"{o}nage~\cite{doi:10.1137/0210032} following up by showing $\alpha < 2.522$
but was usurped the following year by Romani~\cite{doi:10.1137/0211020} who
discerned $\alpha < 2.517$.  In 1986, Strassen brought forth a new approach
which lead to $\alpha < 2.497$.  In 1990, Coppersmith and
Winograd~\cite{Coppersmith:1987:MMV:28395.28396} improved upon Strassen's result
providing the asymptotic exponent $\alpha < 2.376$.  This final result still
stands, but it is conjectured that $\alpha = 2+\varepsilon$ for any $\varepsilon
> 0$ where $\varepsilon$ can me made as small as possible.  Although the
Coppersmith-Winograd algorithm may be reasonable to implement, since the
constant of the algorithm is huge and will not provide an advantage except for
very large matrices, we will not consider it and instead focus on the
Strassen-Winograd Algorithm.

\section{Strassen-Winograd Algorithm}
\label{sec:SWalg}

Here we discuss the algorithm as it would be implemented to compute the product
of two matrices $A$ and $B$ where the result is stored into matrix $C$.  The
algorithm is recursive, thus we describe one step.  Given the input matrices A,
B, and C, divide them into four submatrices,
\[ A = \left[ \begin{array}{cc} A_{11} & A_{12} \\ A_{21} & A_{22} \end{array} \right], \qquad 
   B = \left[ \begin{array}{cc} B_{11} & B_{12} \\ B_{21} & B_{22} \end{array} \right], \qquad 
   C = \left[ \begin{array}{cc} C_{11} & C_{12} \\ C_{21} & C_{22} \end{array} \right] \]
then the 7 matrix multiplications and 15 matrix additions/subtractions are
computed as depicted in Table~\ref{tab:SWAlg} and 
Figure~\ref{fig:strassenwinogradDAG} shows the task graph of the
Strassen-Winograd algorithm for one level of recursion.
\linespread{1.2}
\begin{table}[htb]
   \centering
   \caption{Strassen-Winograd Algorithm}
   \begin{tabular}{l||ll}
      \toprule
      {\bf Phase 1} & $T_1 = A_{21} + A_{22}$ & $T_5 = B_{12} - B_{11}$\\
                    & $T_2 = T_1    - A_{11}$ & $T_6 = B_{22} - T_5$\\   
                    & $T_3 = A_{11} - A_{21}$ & $T_7 = B_{22} - B_{12}$\\
                    & $T_4 = A_{12} - T_2$    & $T_8 = T_6    - B_{21}$\\   
      \midrule
      {\bf Phase 2} & $Q_1 = T_2 \times T_6$        & $Q_5 = T_1 \times T_5$\\
                    & $Q_2 = A_{11} \times B_{11}$  & $Q_6 = T_4 \times B_{22}$\\
                    & $Q_3 = A_{12} \times B_{21}$  & $Q_7 = A_{22} \times T_8$\\
                    & $Q_4 = T_3 \times T_7$        &\\
      \midrule
      {\bf Phase 3} & $U_1 = Q_1 + Q_2$ & $C_{11} = Q_2 + Q_3$\\
                    & $U_2 = U_1 + Q_4$ & $C_{12} = U_1 + U_3$\\
                    & $U_3 = Q_5 + Q_6$ & $C_{21} = U_2 - Q_7$\\
                    &                   & $C_{22} = U_2 + Q_5$\\
      \bottomrule
   \end{tabular}
   \label{tab:SWAlg}
\end{table}
\renewcommand{\baselinestretch}{\normalspace}
\begin{figure}[htbp]
   \begin{center}
      \resizebox{0.75\linewidth}{!}{\includegraphics[trim = 0mm 12mm 0mm 12mm, clip=true]{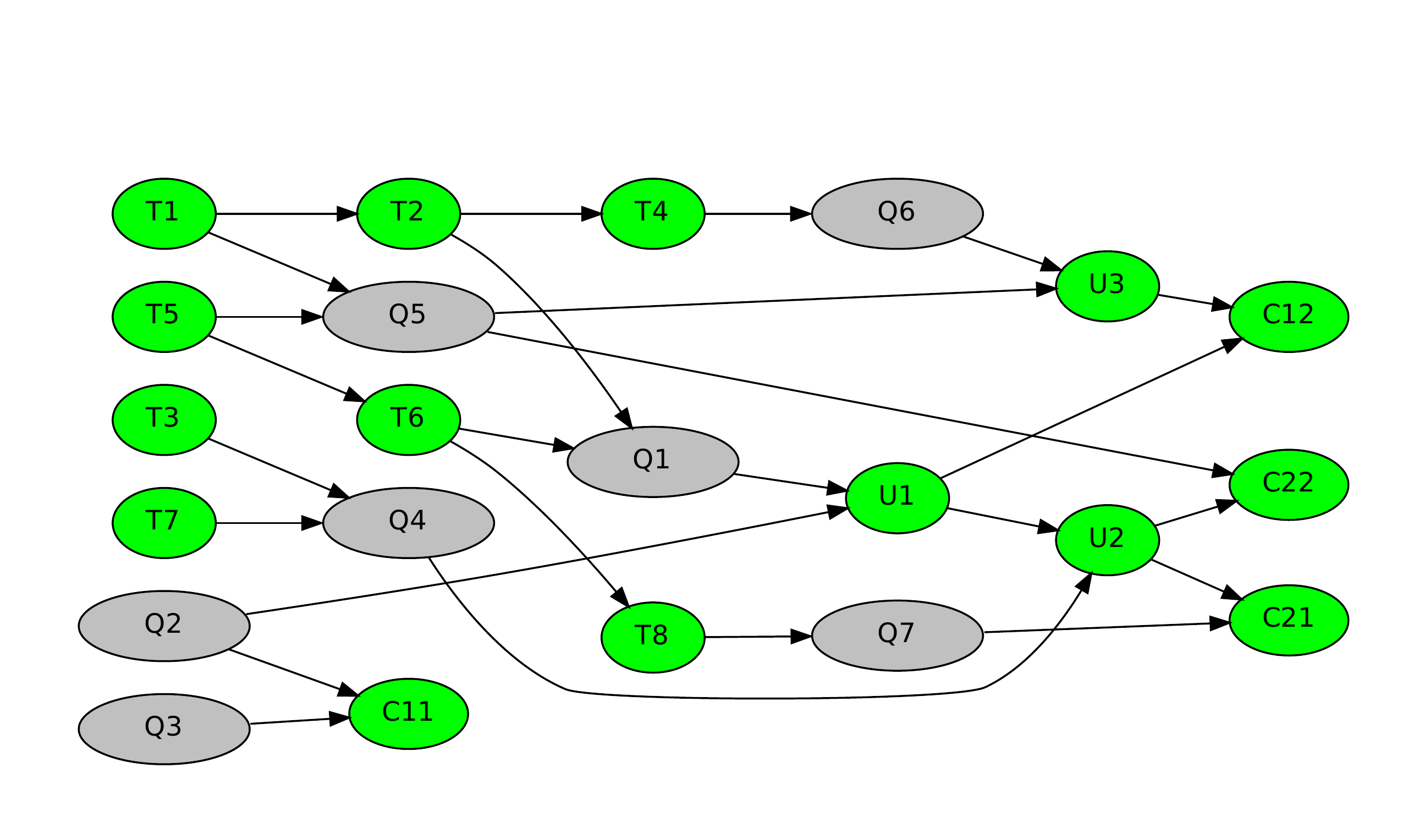}}
   \end{center}
   \caption{Task graph for the Strassen-Winograd Algorithm.  Execution time
   progresses from left to right. Large ovals depict multiplication and small
   ovals addition/subtraction.}
   \label{fig:strassenwinogradDAG}
\end{figure}

In essence, Strassen's approach is very similar to the observation that Gauss
made concerning the multiplication of two complex numbers.  The product of
$(a+bi)(c+di) = ac - bd + (bc + ad)i$ would naively take four multiplications,
but can actually be accomplished via three multiplications by discerning that
$bc + ad = (a + b)(c + d) - bc - ad$.

\section{Tiled Strassen-Winograd Algorithm}
\label{sec:tiledSW}

In our tiled version, the matrices are subdivided such that each submatrix is
of the form
\[ M_{ij} = \left[ \begin{array}{cccc} M_{ij11} & M_{ij12} & \dots  & M_{ij1n}\\
                                       M_{ij21} & M_{ij22} & \dots  & M_{ij2n}\\
                                       \vdots      & \vdots      & \ddots & \vdots\\
                                       M_{ijn1} & M_{ijn2} & \dots  & M_{ijnn}
                                    \end{array} \right] \]
where the matrices $M_{ijkl}$ are tiles of size $n_b \times n_b$.  As before
one can proceed with full recursion, unlike before this would not terminate
at the scalar level, but rather it would terminate with the multiplication of two
tiles using a sequential BLAS Level 3 matrix-matrix multiplication.  The recursion can
also be cutoff at a higher level at which point the tiled matrix multiplication
of Algorithm~\ref{alg:tiledMM} computes the resulting multiplication.  For the
addition/subtraction of the submatrices in Phase 1 and Phase 3 of the
Strassen-Winograd algorithm, a similar approach is used which is also executed
in parallel.

\linespread{1.2}
\begin{algorithm}[htbp]
  \DontPrintSemicolon
  \tcc{Input: $n \times n$ tiled matrices $A$ and $B$, Output: $n \times n$
  tiled matrix $C$ such that $C = A \times B$ }
  \For{$\textnormal{i} = 1$ to $n$}{
     \For{$\textnormal{j} = 1$ to $n$}{
        \For{$\textnormal{k} = 1$ to $n$}{
          $C_{ij} \leftarrow A_{ik} \times B_{kj} + C_{ij}$
       }
     }
  }
  \caption{Tiled Matrix Multiplication (tiled\_gemm)}
\label{alg:tiledMM}
\end{algorithm}
\renewcommand{\baselinestretch}{\normalspace}

If the cutoff for the recursion occurs before the tile level, the computation
for each $C_{ij}$ can be executed in parallel.  Therefore our tiled
implementation of the Strassen-Winograd algorithm exploits two levels of
parallelism.  Moreover, this allows some parts of the matrix multiplications to
occur early on as can be seen in Figure~\ref{fig:winbufp4r1} which shows the
DAG for a matrix of $4 \times 4$ tiles with one level of recursion.  Both
Figure~\ref{fig:winbufp4r1} and Figure~\ref{fig:strassenwinogradDAG} illustrate
one level of recursion but the tiled task graph of a $4 \times 4$ tiled matrix
clearly portrays the high degree of parallelism.

\linespread{1.2}
\begin{figure}[htpb]
   \begin{center}
      \includegraphics[trim = 76mm 51mm 69mm 30mm, clip=true]{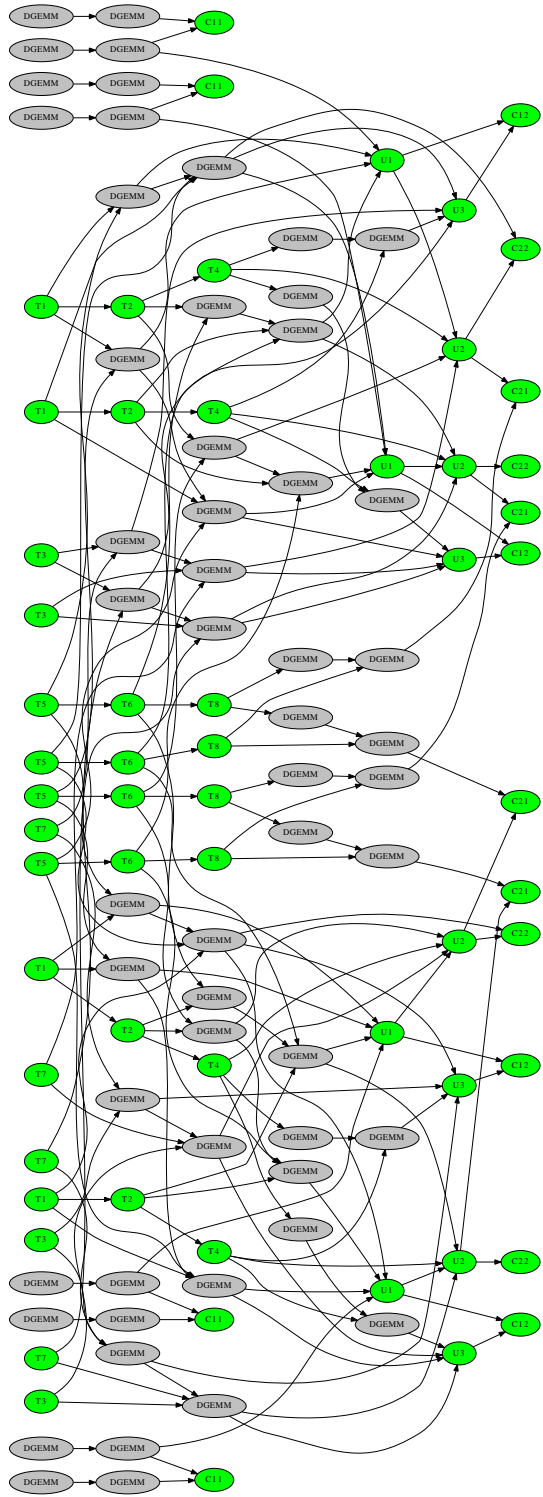}
   \end{center}
   \caption{Strassen-Winograd DAG for matrix of $4 \times 4$ tiles with one
   recursion. Execution time progresses from left to right. Large ovals depict
   multiplication and small ovals addition/subtraction.}
   \label{fig:winbufp4r1}
\end{figure}
\renewcommand{\baselinestretch}{\normalspace}

The conventional matrix-matrix multiplication algorithm requires 8
multiplications and 4 additions whereas the Strassen-Winograd algorithm requires
7 multiplications and 15 additions/subtractions for each level of recursion.
Therefore, there are more tasks for the Strassen-Winograd algorithm as compared
to the conventional matrix-matrix multiplication and it would
behoove us to reduce the number of tasks which would also reduce the algorithmic
complexity.  On the other hand, since we are reducing the number of
multiplications, the computational cost is also reduced since this requires a
cubic operation versus the quadratic operation of the addition/subtractions.  

The total number of tasks, $T$, of the Strassen-Winograd algorithm is given by
\[ T = 7^r\left( \frac{p}{2^r} \right)^3 + 15 \sum_{i=0}^{r-1} 7^{r-i-1}\left( \frac{p}{2^{r-i}} \right)^2 
   = p^3 \left( \frac{7}{8} \right)^r + 5p^2 \left( \left( \frac{7}{4} \right)^r - 1 \right)\]
which is minimized at recursion level $r_{min}$ when
\[ r_{min} = \left\lceil \left( \frac{\ln \left( \frac{p \ln \left( \frac{8}{7}
\right)}{5 \ln \left( \frac{7}{4} \right)} \right)}{\ln 2} \right) \right\rceil, \]
and the total number of flops, $F$, is given by
\[ F = m 7^r\left( \frac{p}{2^r} \right)^3 + 15a \sum_{i=0}^{r-1} 7^{r-i-1}\left( \frac{p}{2^{r-i}} \right)^2 
   = m p^3 \left( \frac{7}{8} \right)^r + 5 a p^2 \left( \left( \frac{7}{4}
   \right)^r - 1 \right),\]
where $m = 2 n_b^3 - n_b^2$ for the multiplications and $a = n_b^2$ for the
additions/subtractions.

As we increase the recursion, the number of tasks will decrease up to a certain
point, $r_{min}$.  The reason for this being at each recursion we reduce the
number of $p^3$ tasks by $\frac{1}{8}$ while increasing the $p^2$ tasks by 15.

\linespread{1.2}
\begin{table}
   \centering
   \begin{tabular}{@{}rccc@{}}
      \toprule
        $p$ & $r_{min}$ & Gflop(SW) & Gflop(GEMM)\\
      \cmidrule( r){1-1}
      \cmidrule(lr){2-2}
      \cmidrule(lr){3-3}
      \cmidrule(l ){4-4}
          4 &     1    & 8.96e-01 &  1.02e+00\\
          8 &     1    & 7.15e+00 &  8.18e+00\\
         16 &     1    & 5.72e+01 &  6.55e+01\\
         32 &     1    & 4.57e+02 &  5.24e+02\\
         64 &     2    & 3.20e+03 &  4.19e+03\\
        128 &     3    & 2.24e+04 &  3.35e+04\\
        256 &     4    & 1.57e+05 &  2.68e+05\\
        512 &     5    & 1.09e+06 &  2.14e+06\\
       1024 &     6    & 7.69e+06 &  1.71e+07\\
       \bottomrule
   \end{tabular}
   \caption{Recursion levels which minimize the number of tasks for a tiled matrix of size $p \times p$}
   \label{tab:rmin}
\end{table}
\renewcommand{\baselinestretch}{\normalspace}

\linespread{1.2}
\begin{table}
   \centering
   \begin{tabular}{@{}cccc@{}}
      \toprule
      algorithm & recursion & tasks & Gflop\\
      \cmidrule( r){1-1}
      \cmidrule(lr){2-2}
      \cmidrule(lr){3-3}
      \cmidrule(l ){4-4}
      \multirow{7}{*}{strassen\_winograd} &  1  &    1,896,448  &  2.92e+04\\
            & 2  & 1,774,592 & 2.56e+04\\
            & 3  & 1,762,048 & 2.24e+04\\
            & 4  & 1,915,712 & 1.96e+04\\
            & 5  & 2,338,288 & 1.72e+04\\
            & 6  & 3,212,252 & 1.51e+04\\
            & 7  & 4,859,338 & 1.33e+04\\
      \cmidrule( r){1-1}
      \cmidrule(lr){3-3}
      \cmidrule(l ){4-4}
      tiled\_DGEMM & & 4,177,920 &  3.36e+04\\
      \bottomrule
   \end{tabular}
   \caption{$128\times 128$ tiles of size $n_b = 200$}
   \label{tab:comp128sw}
\end{table}
\renewcommand{\baselinestretch}{\normalspace}

In our experiments, letting the tile size $n_b = 200$ provided the best
performance.  Thus Table~\ref{tab:rmin} shows the corresponding values for the
minimizing recursion level for various number of tiles. However, as can be seen
in Table~\ref{tab:comp128sw}, the $r_{min}$ which provides the minimum number of
tasks does not provide the least amount of computational cost.  The
computational costs will be minimized at the full recursion.

\linespread{1.2}
\begin{table}
   \centering
   \begin{tabular}{@{}rrrrr@{}}
      \toprule
      $p$ & $r$ & \# tasks & CP & Ratio\\
      \cmidrule( r){1-1}
      \cmidrule(lr){2-2}
      \cmidrule(lr){3-3}
      \cmidrule(lr){4-4}
      \cmidrule(l ){5-5}
      \multirow{2}{*}{4}   & 0 &      64 &     2 &     32.0\\
                           & 1 &     116 &     7 &     16.6\\
      \midrule                                     
      \multirow{3}{*}{8}   & 0 &     512 &     3 &    170.7\\
                           & 1 &     688 &     9 &     76.4\\
                           & 2 &   1,052 &    52 &     20.2\\
      \midrule                                     
      \multirow{4}{*}{16}  & 0 &   4,096 &     4 &  1,023.5\\
                           & 1 &   4,544 &    13 &    349.5\\
                           & 2 &   5,776 &    66 &     87.5\\
                           & 3 &   8,324 &   361 &     23.1\\
      \midrule                                     
      \multirow{5}{*}{32}  & 0 &  32,768 &     5 &  6,553.6\\
                           & 1 &  32,512 &    21 &  1,548.2\\
                           & 2 &  35,648 &    94 &    379.2\\
                           & 3 &  44,272 &   459 &     96.5\\
                           & 4 &  62,108 & 2,524 &     24.6\\
      \midrule                                     
      \multirow{5}{*}{64}  & 0 & 262,144 &     6 & 43,690.7\\
                           & 1 & 244,736 &    37 &  6,614.5\\
                           & 2 & 242,944 &   150 &  1,619.6\\
                           & 3 & 264,896 &   655 &    404.4\\
                           & 4 & 325,264 & 3,210 &    101.3\\
      \bottomrule
   \end{tabular}
   \caption{Comparison of the total number of tasks and critical path length
   for matrix of $p \times p$ tiles.}
   \label{tab:SWcptasks}
\end{table}
\renewcommand{\baselinestretch}{\normalspace}

Even though the number of tasks and thereby the computational complexity are
minimized for $r_{min}$, Table~\ref{tab:SWcptasks} shows that the critical path
length increases exponentially with the recursion levels.  Thus the amount of
parallelism is likewise reduced for each recursion level.

\linespread{1.2}
\begin{algorithm}[htbp]
  \DontPrintSemicolon
  \tcc{p is equal to the recursion cutoff, do the multiplication}
  \If{ $p = r$ }{
      tiled\_gemm( $p, A, B, C$ )\;
  }
  \Else{
      \tcc{p is greater than the recursion cutoff, so we split the problem in half}
      $p = p / 2$\;
      \;
      \tcc{Phase 1}
      tiled\_geadd( $p, A_{21}, A_{22}, T_1$ )\;
      tiled\_geadd( $p, T_1,    A_{11}, T_2$ )\;
      tiled\_geadd( $p, A_{11}, A_{21}, T_3$ )\;
      tiled\_geadd( $p, A_{12}, T_2   , T_4$ )\;
      tiled\_geadd( $p, B_{12}, B_{11}, T_5$ )\;
      tiled\_geadd( $p, B_{22}, T_5   , T_6$ )\;
      tiled\_geadd( $p, B_{22}, B_{12}, T_7$ )\;
      tiled\_geadd( $p, T_6   , B_{21}, T_8$ )\;
      \;
      \tcc{Phase 2}
      tiled\_gesw( $p, T_2   , T_6   , Q_1$ )\;
      tiled\_gesw( $p, A_{11}, B_{11}, Q_2$ )\;
      tiled\_gesw( $p, A_{12}, B_{21}, Q_3$ )\;
      tiled\_gesw( $p, T_3   , T_7   , Q_4$ )\;
      tiled\_gesw( $p, T_1   , T_5   , Q_5$ )\;
      tiled\_gesw( $p, T_4   , B_{22}, Q_6$ )\;
      tiled\_gesw( $p, A_{22}, T_8   , Q_7$ )\;
      \;
      \tcc{Phase 3}
      tiled\_geadd( $p, Q_1, Q_2, U_1$ )\;
      tiled\_geadd( $p, U_1, Q_4, U_1$ )\;
      tiled\_geadd( $p, Q_5, Q_6, U_1$ )\;
      tiled\_geadd( $p, Q_2, Q_3, C_{11}$ )\;
      tiled\_geadd( $p, U_1, U_3, C_{12}$ )\;
      tiled\_geadd( $p, U_2, Q_7, C_{21}$ )\;
      tiled\_geadd( $p, U_2, Q_5, C_{22}$ )\;
  }
  \caption{Tiled Strassen-Winograd (tiled\_gesw)}
\label{alg:tiledSW}
\end{algorithm}
\renewcommand{\baselinestretch}{\normalspace}

\section{Related Work}
\label{sec:SWRelated}

In practice the Strassen-Winograd algorithm imparts a large amount of overhead
for small matrices, thus it is customary to overcome this issue by using it in
conjunction with a conventional matrix multiplication operation.  The key idea
is to provide a recursion cutoff point such that once this is reached, the
algorithm switches from calling itself recursively to calling, e.g., the BLAS
Level 3 matrix multiplication.  The recursion cutoff point is a tuning
parameter which depends upon the machine architecture and can be either dynamic
or static.  In~\cite{Suter:2001:MPI:846235.849401}, one level of recursion is
used while Chou~\cite{Chou94parallelizingstrassen's} provides two levels of
recursion by explicitly coding the 49 matrix multiplications which then is
processed by either 7 or 49 processors.  Our approach is to provide the
recursion cutoff point as a parameter which can be set by the user.

There are various methods which can be used to deal with non square matrices
and/or matrices of odd order.  Methods such as static padding, dynamic padding,
and dynamic peeling all provide these mechanisms.  In this chapter, we only
study matrices of tile order $p = 2^k$, i.e., $n = 2^k \cdot n_b$. 

In \cite{Boyer:2009:MES:1576702.1576713}, Boyer et al.~propose schedules for
both in-place and out-of-place implementations without the need for extra computations.  Discovery of
these algorithms was accomplished by using an exhaustive search algorithm.
Their out-of-place algorithm makes use of the resultant matrix for temporary
storage for the intermediate computations.  This introduces unwanted data
dependencies and more data movement leading to a loss of parallelism.  Hence, we
do not consider their out-of-place algorithm and focus on the classical
Strassen-Winograd algorithm by using temporary storage allocations for all of
the intermediate computations. As an example, given a tiled matrix of $128
\times 128$ tiles of size $200 \times 200$, the input matrices and resultant
matrix require $1.96608$ GB of storage and allowing full recursion for our
algorithm would require an additional $3.2766$ GB of temporary storage (see
Figure~\ref{fig:memalloc}).
\linespread{1.2}
\begin{figure}[htpb]
   \begin{center}
       \resizebox{0.475\linewidth}{!}{\includegraphics[trim = 0mm 0mm 0mm 0mm, clip=true]{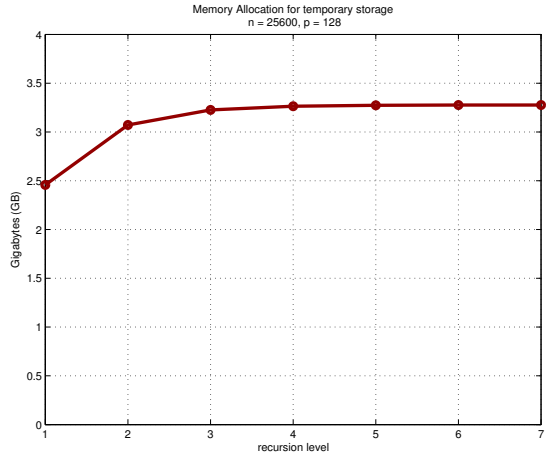}}
   \end{center}
   \caption{Required extra memory allocation for temporary storage for varying
   recursion levels.}
   \label{fig:memalloc}
\end{figure}
\renewcommand{\baselinestretch}{\normalspace}

In~\cite{Douglas94gemmw:a} a sequential implementation of Strassen-Winograd is studied by
Douglas et al.~which also provides means for a hybrid parallel implementation
where the lower level is the sequential Strassen-Winograd and the upper level
algorithm is the standard subdivided matrix multiplication.  Comparisons are
made to sequential implementations available in the IBM's ESSL and Cray's
Scientific and Math library.  Although this would have been an interesting
project in and of itself within a tiled framework, the emphasis in this chapter
was to provide the Strassen-Winograd at the upper level.  

After this work was completed, Ballard et al.~published work which places
Strassen-Winograd in a parallel distributed
environment~\cite{Ballard:2012:CPA:2312005.2312044}.  The paper is
well written and they do show improvement over the standard algorithm for
matrices of order over 94,000.  Their algorithm is communication optimal and
applies better to the distributed environment than the shared memory environment
seeing that we do not have as much control over the memory distribution.

\section{Experimental results}
\label{sec:SWexperiments}

All experiments were performed on a 48-core machine composed of eight hexa-core
AMD Opteron 8439 SE (codename Istanbul) processors running at 2.8 GHz. Each
core has a theoretical peak of 11.2 Gflop/s with a peak of 537.6 Gflop/s for
the whole machine. The Istanbul micro-architecture is a NUMA architecture where
each socket has 6 MB of level-3 cache and each processor has a 512 KB level-2
cache and a 128 KB level-1 cache.  After having benchmarked the AMD ACML and
Intel MKL BLAS libraries, we selected MKL (10.2) since it appeared to be
slightly faster in our experimental context.  Using MKL, for DGEMM each core
has a peak of 9.7 Gflop/s with a peak of 465.6 Gflops/s for the whole machine.
Linux 2.6.32 and Intel Compilers 11.1 were also used in conjunction with PLASMA
2.3.1.

The parameter for tile size has a direct effect on the amount of data movement
and the efficiency of the kernels.  Figure~\ref{fig:tilesize} presents the
performance comparison for varying tile sizes indicating $n_b = 200$ as the
most efficient.  As expected, it is also evidenced that the efficiency of the
Strassen-Winograd algorithm is dependent upon the efficiency of the tiled DGEMM. 
When running on 48 threads, increasing the recursion level decreases the
performance of the algorithm as depicted in Figure~\ref{fig:rectilecomps} ($r=0$
is the tiled matrix-matrix multiplication).

\begin{figure*}[htbp]
   \centering
   \subfloat[Tile size comparison]{
      \label{fig:tilesize}
      \resizebox{0.475\linewidth}{!}{\includegraphics[trim = 0mm 0mm 0mm 0mm, clip=true]{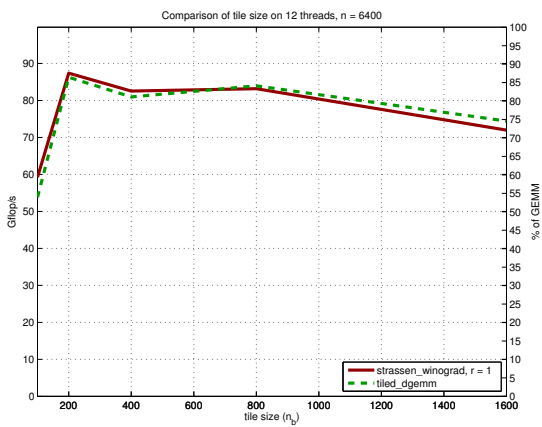}}
   }
   \subfloat[Recursion level comparison]{
      \label{fig:reclevel}
      \resizebox{0.475\linewidth}{!}{\includegraphics[trim = 0mm 0mm 0mm 0mm, clip=true]{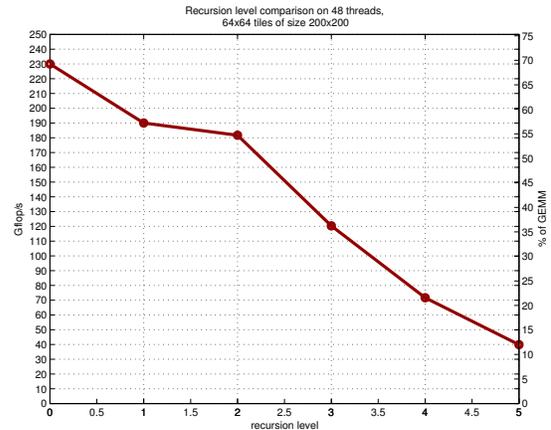}}
   }
   \caption{\label{fig:rectilecomps} Comparison of tuning parameters $n_b$ and $r$.}
\end{figure*}

Our implementation allows for the tuning of the recursion level and can range
from one recursion up to full recursion. In Figure~\ref{fig:reclevel}, matrices
of $64 \times 64$ tiles are used and the recursion level ranges from one to
five.  Although $r_{min} = 2$ for $64 \times 64$ tiles, the best performance
using 48 threads is seen at $r=1$.  This is due to the amount of parallelism
lost by having the critical path length increase as the recursion level
increases which offsets any gains of the reduction in tasks, and computational
cost and complexity.
\linespread{1.0}
\begin{figure*}[htbp]
   \centering
   \subfloat[Scalability comparison]{
      \label{fig:scal12800}
      \resizebox{0.5\linewidth}{!}{\includegraphics[trim = 1mm 0mm 1mm 0mm, clip=true]{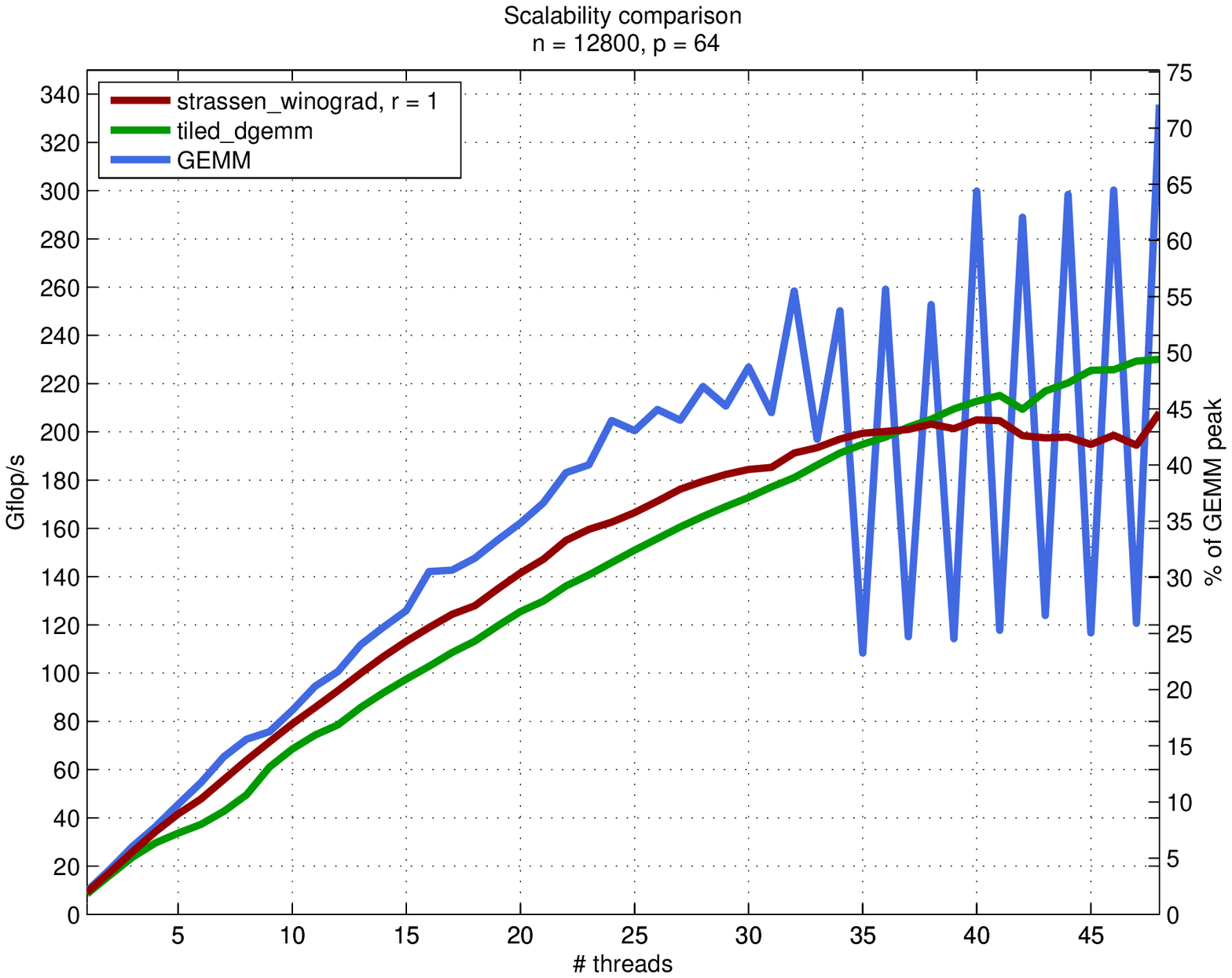}}
   }
   \subfloat[Efficiency comparison]{
      \label{fig:eff12800}
      \hspace{-2mm}\resizebox{0.475\linewidth}{!}{\includegraphics[trim = 0mm 0mm 1mm 0mm, clip=true]{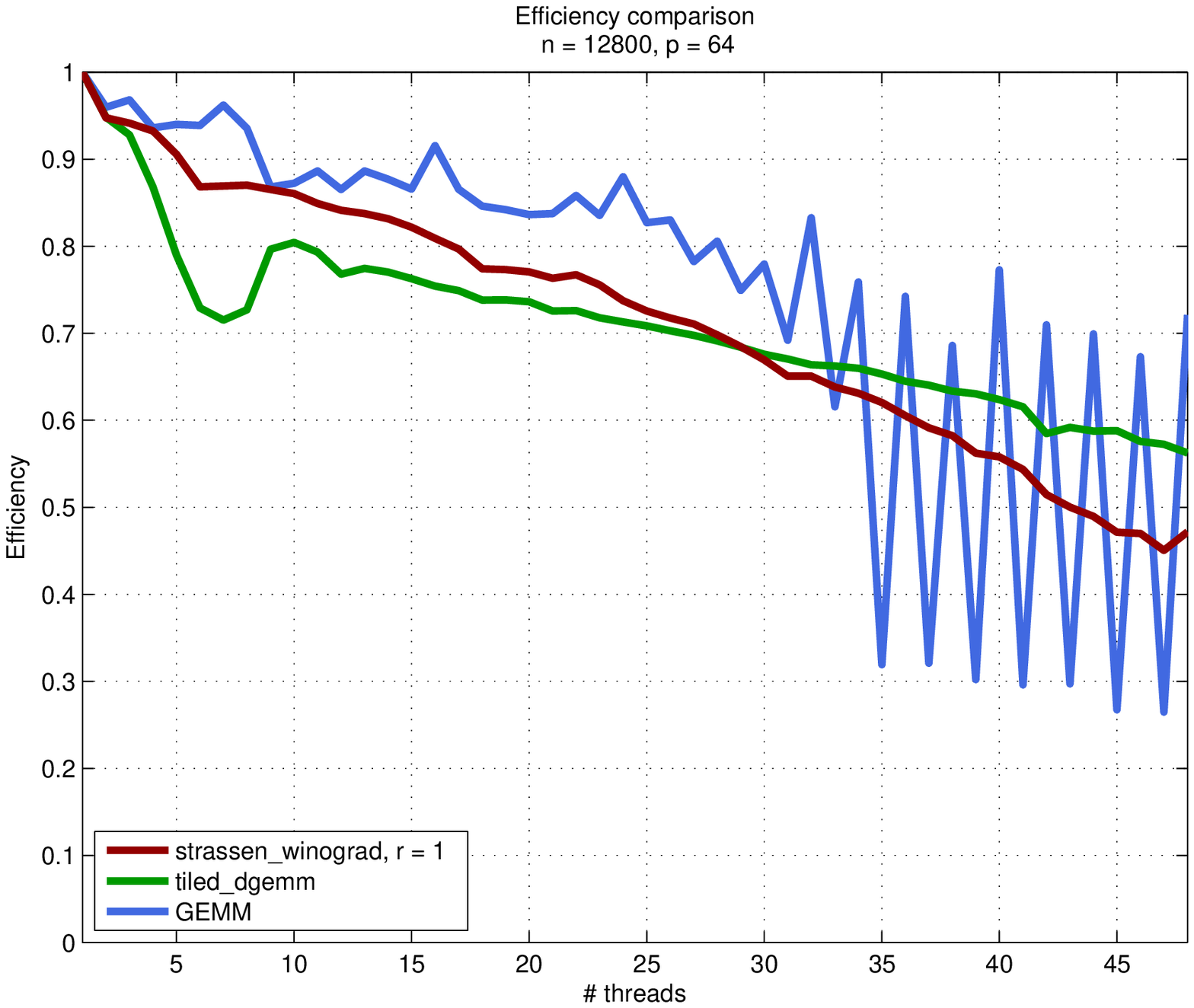}}
   }
   \caption{\label{fig:scaleff12800} Scalability and Efficiency comparisons on 48 threads with matrices of $64 \times 64$ tiles and $n_b=200$.}
\end{figure*}
\renewcommand{\baselinestretch}{\normalspace}

Figures~\ref{fig:scal12800} and \ref{fig:eff12800} illustrate the performance
and efficiency reached by the Strassen-Winograd implementation, with $r=1$, as
compared to the multithreaded MKL DGEMM and the tiled DGEMM implementation.
The Strassen-Winograd implementation outperforms the tiled DGEMM up to the
point where we loose parallelism.  However, both show sub-par performance when
compared to the multithreaded MKL implementation.

\linespread{1.0}
\begin{figure*}[htbp]
   \centering
   \subfloat[Scalabilty comparison]{
      \label{fig:12scal12800}
      \resizebox{0.5\linewidth}{!}{\includegraphics[trim = 1mm 0mm 1mm 0mm, clip=true]{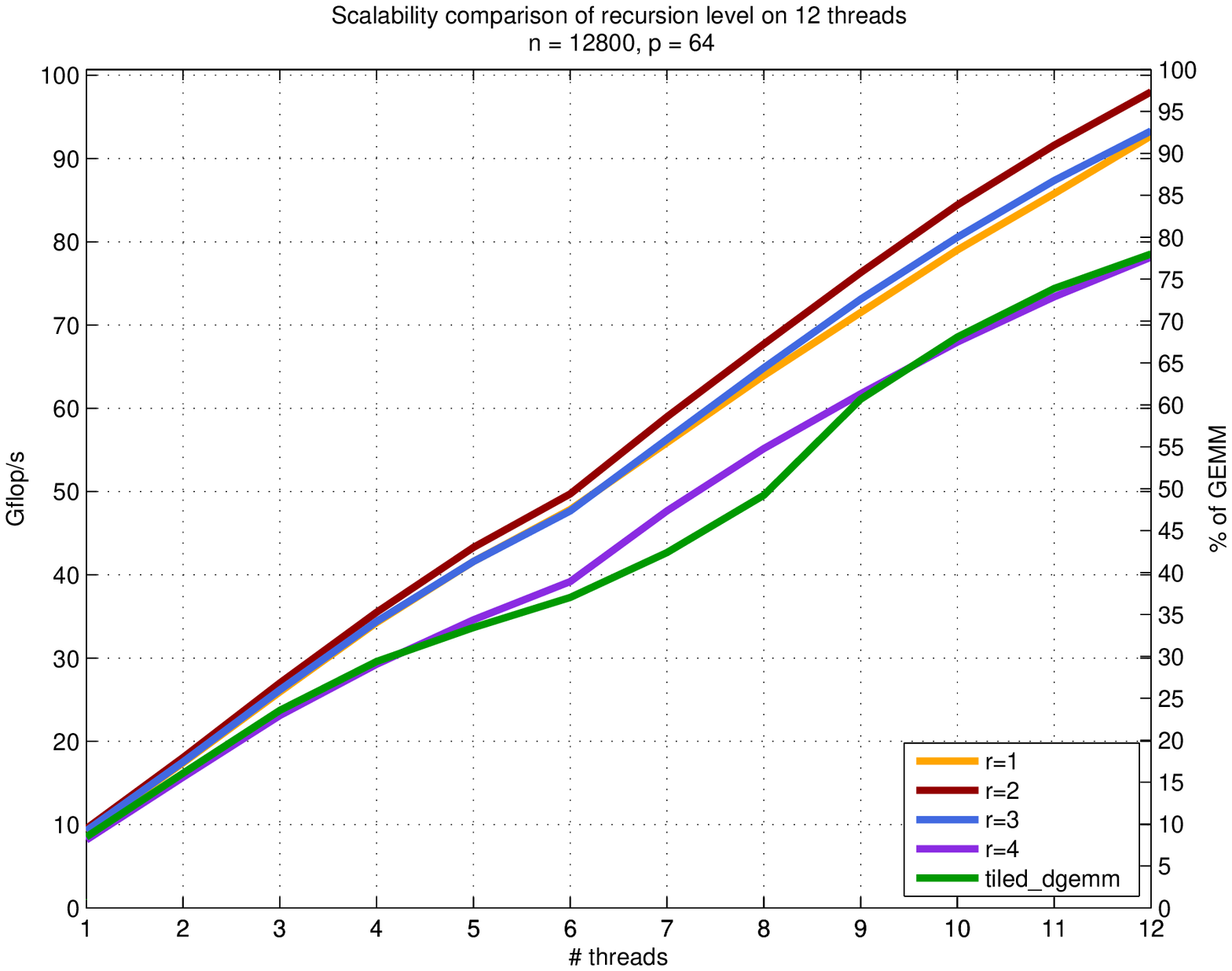}}
   }
   \subfloat[Efficiency comparison]{
      \label{fig:12eff12800}
      \hspace{-2mm}\resizebox{0.475\linewidth}{!}{\includegraphics[trim = 0mm 0mm 1mm 0mm, clip=true]{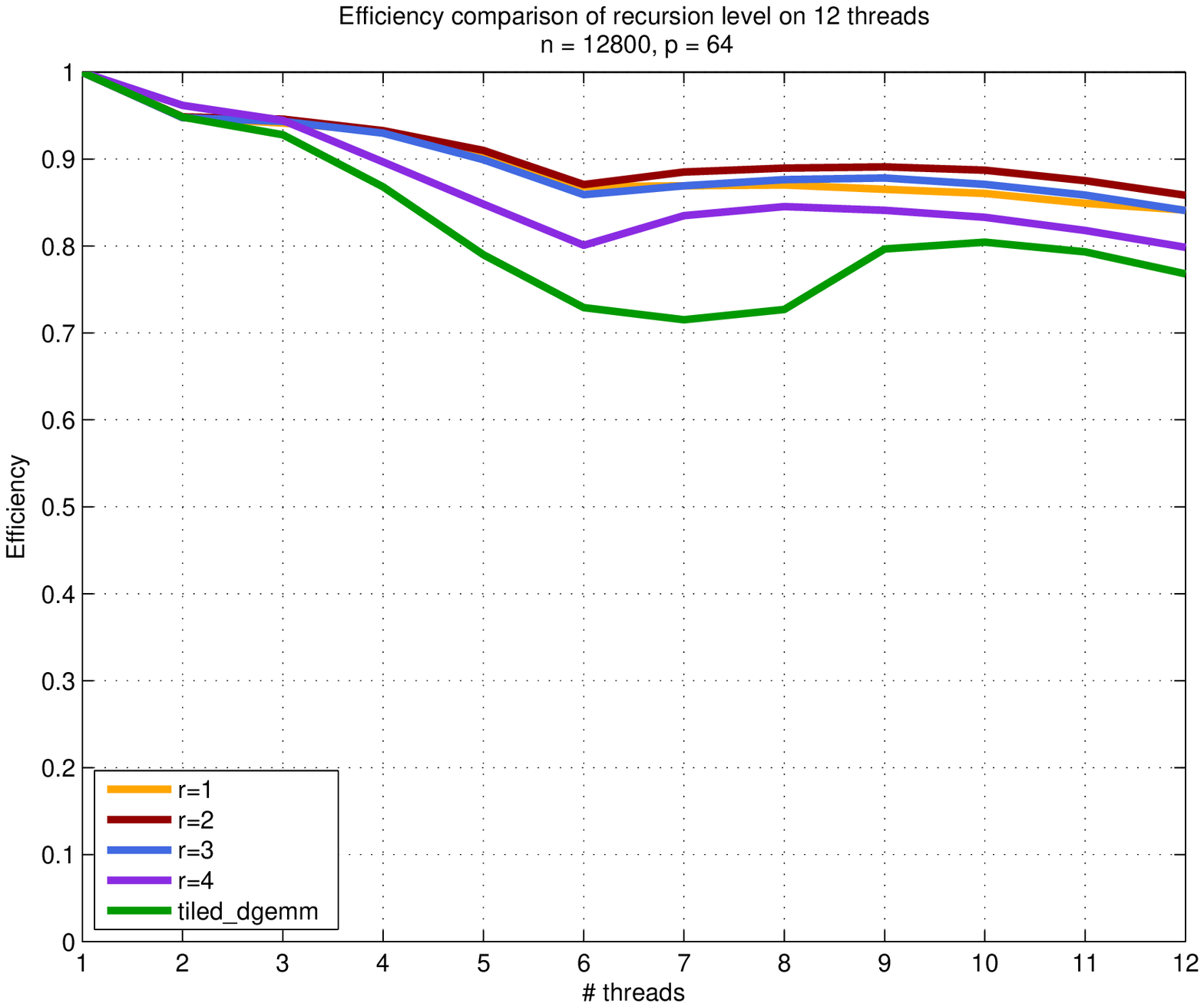}}
   }
   \caption{\label{fig:12scaleff12800} Scalabilty and efficiency comparisons executed on 12 threads with matrices of $64 \times 64$ tiles and $n_b=200$.}
\end{figure*}
\renewcommand{\baselinestretch}{\normalspace}

If we run on 12 cores (typical current architecture for a node) then we do
outperform tiled DGEMM (Algorithm~\ref{alg:tiledMM}) if a loss of parallelism is
not a factor and there is not too much data movement, i.e., keep $n_b$ small
enough so that more tiles fit into the cache but large enough to retain the
efficiency of the GEMM kernel.  Depicted in Figure~\ref{fig:12scaleff12800}, the
performance of the Strassen-Winograd algorithm is best when $r = r_{min}$ since
the number of tasks and computational complexity is minimized which reflects the
analysis of \Section~\ref{sec:tiledSW}.

\section{Conclusion}
\label{sec:SWconclusion}

In this chapter we have shown and analyzed an implementation of the
Strassen-Winograd algorithm in a tiled framework and provided comparisons to
the multithreaded MKL standard library as well as a tiled matrix
multiplication.  The interest in this implementation is that it can support any
level of recursion and any level of parallelism through the use of the recursion and tile
size parameters.

Albeit that our implementation did not perform as well as the highly tuned and
optimized multithreaded MKL library, on 12 cores with 2 levels of recursion,
its performance was only lower by about 2\%. Ultimately, it is a formidable
task to surpass the MKL implementation considering the computational complexity
of a recursive tiled algorithm.  

\chapter{Conclusion}\label{chp:conclusion}
%
%

In this thesis, we have studied the tiled algorithms both theoretically and
experimentally.  Our aim was to alleviate the constraints of memory boundedness,
task granularity, and synchronicity imposed by the LAPACK library as brought to
light in the Cholesky Decomposition example in the introduction.  Moreover, we
have also detailed that one may translate the LAPACK algorithms directly to tiled
algorithms while in other cases a new approach may provide better performance
gains.

In the study of the Cholesky Inversion, the tiled algorithms provided a unique
insight into the interaction of the three distinct steps involved in the
algorithm and how these may be intertwined.  We had observed that the choice of
the variant in the inversion of the triangular factor (Step 2) has a great
impact on the performance of the algorithm.  In the scope of unlimited number of
processors, this choice can lead to an algorithm which performs the inversion of
the matrix in almost the same amount of time it takes to do the Cholesky
factorization.  Moreover, we note that the combination of the variants with the
shortest critical path length does not translate into the best performing
pipelined algorithm.  Even though variant 3 of Step 2 (the triangular inversion)
did not provide us with the shortest critical path length within itself,
combined with the other two steps, it does provide a Cholesky Inversion
algorithm that performed the best overall.

We have also observed that a simple translation from the LAPACK routines may not
provide a tiled algorithm with the best performance.  We showed that
loop-reversals are needed to alleviate anti-dependencies which negatively affect
the performance of the tiled algorithms.  

In the case where a tiled algorithm already existed, namely the QR
Factorization, we made use of a new tiled algorithm to improve upon performance.
These algorithms exhibit more parallelism than state-of-the-art implementations
based on elimination trees.  Using ideas from the 1970/80's, we have
theoretically shown that the new algorithm, {\sc GrASAP}, is optimal in the
scope of unbounded number of processors.  We have provided accurate estimations
for the length of the critical paths for all of these new tiled algorithms and
have provided explicit formulas for some of the algorithms.

In the framework of bounded number of processors, our theoretical work has
afforded a new bound which more accurately reflects the performance expectations
of the tiled algorithms.  In the case of the Cholesky Factorization, the
schedule produced using the Critical Path Method proved to be within seven
percent of the ALAP Derived bound indicating that this scheduling strategy is
well suited for this application.  The ALAP Derived bound has also been used as
a tool to show that the optimality in the scope unbounded number of processors
does not translate to optimality in the scheduling on a bounded number of
processors.  

Overall, the theoretical and experimental portions of this thesis give credence
to the impact of tiled algorithms for multi/many-core architectures.

\begin{appendix}
\chapter{Integer Programming Formulation of Tiled QR} \label{appendix1}
%
%

\section{IP Formulation}
We formulate the problem in question using integer programming. An equivalent binary programming formulation was also constructed (with time as an additional index), but we found the integer formulations were more quickly solved.

Let $i$, $j$, and $h$ denote rows (ranging $1, \dots, p$); $k$, $l$, $l_1$ denote columns (ranging $1, \dots, q$); $r$, $s$, and $t$ denote time steps (ranging $1, \dots, T$). The upper bound on the number of time steps, ($T$), may come from any existing algorithm (greedy, ASAP, GRASP, etc.). Let the decision variables be defined as follows:
\subsection{Variables}
Let all $t$ be an integer ranging from zero to $T$ denoting the time we complete the following tasks (and $t = 0$ means the task is never performed).
\begin{itemize}
  \item \textbf{UNMQR:} Complete applying the reflectors from GEQRT across the row. (Requires 6 units of time.)
    \begin{multline}
        \begin{tabular}{r p{.55\textwidth}}
          $w_{ikl} = t \in [0,T]:$ & if we finish the update of tile $(i,k)$ at
          time  $t$. 
      \end{tabular}
    \end{multline}
\[(\mbox{This update was necessitated by} x_{il} = s: \; l < k, s < t).\]
  \item \textbf{GEQRT:} Factor a square tile into a triangle. (Requires 4 units of time.)
    \begin{multline}
        \begin{tabular}{r p{.55\textwidth}}
            $x_{ik} = t \in [0,T]:$ & if we complete triangularization of tile $(i,k)$
            at the end of time unit $t$. 
  \end{tabular}
    \end{multline}
  \item \textbf{TTMQR:} Update the entries in two rows after TTQRT. (Requires 6 units of time.)
    \begin{multline}
        \begin{tabular}{r p{.55\textwidth}}
        $y_{ijkl} = t \in [0,T]:$ & if we finish the update of tile $(i,k)$ and $(j,k)$ at the end of time unit $t$.
      \end{tabular}
    \end{multline}
    \[(\mbox{This update was necessitated by } z_{ijl} = s: l<k, s<t)\]
  \item \textbf{TTQRT:} Cancel one triangular tile using another triangle tile. (Requires 2 units of time.)
    \begin{multline}
        \begin{tabular}{r p{.55\textwidth}}
            $z_{ijk} = t \in [0,T]:$ & if we complete zeroing tile $(i,k)$ using
            tile $(j,k)$ at the end of time unit $t$.
      \end{tabular}
    \end{multline}
\end{itemize}
For the $y$ and $z$ actions, it is useful to have a binary variable which is $1$ if the action occurs. Explicitly,
\begin{equation}
  \hat{y}_{ijkl} = 
  \left\{
  \begin{array}{c l}
    1:& \mbox{ if } y_{ijkl} > 0 \\
    0:& \mbox{ otherwise }
  \end{array}
  \right.
  \;\;\;
  \hat{z}_{ijk} = 
  \left\{
  \begin{array}{c l}
    1:& \mbox{ if } z_{ijk} > 0 \\
    0:& \mbox{ otherwise }
  \end{array}
  \right.
\end{equation}

\subsection{Constraints}
\begin{enumerate}
  \item Time constraints for each of the four actions:
    \begin{enumerate}
      \item Time for $w_{ikl}$
        \begin{enumerate}
          \item $w_{ikl}$ must occur at least 3 time steps after earlier $w$ updates.
            \begin{equation}
              w_{ikl} \ge w_{ikl_1} + 3 \;\; \forall\; k \ge 2, i \ge l, l_1 < l < k
            \end{equation}
          \item $w_{ikl}$ must occur at least 3 time steps after earlier $y$ updates.
            \begin{equation}
              w_{ikl} \ge y_{ijkl_1} + y_{jikl_1} + 3 \;\; \forall\; k \ge 2, i \ge l,j,l_1 < l < k 
            \end{equation}
          \item $w_{ikl}$ must occur at least 3 time steps before $y$ updates in the current column.
            \begin{equation}
              w_{ikl} +3 \le y_{ijkl} + y_{jikl} + (1-\hat{y}_{ijkl} - \hat{y}_{jikl})T \;\; \forall\; i,j,l<k, k \ge 2
              \label{wygap}
            \end{equation}
          \item $w_{ikl}$ must occur at least 2 time steps before $x$.
            \begin{equation}
              w_{ikl} + 2 \le x_{ik} \;\; \forall\; i \ge k \ge 2, l < k 
              \label{wxgap}
            \end{equation}
          \item $w_{ikl}$ must occur at least 1 time step before $z$ actions.
            \begin{equation}
              w_{ikl} + 1 \le z_{ijk} + z_{jik} + (1-\hat{z}_{ijk} - \hat{z}_{jik})t \;\; \forall\; i,j, k \ge 2, l<k
              \label{wzgap}
            \end{equation}
        \end{enumerate}
      \item Time for $x_{ik}$
        \begin{enumerate}
          \item $x_{ik}$ must occur 2 time steps after any $w$ updates. (Identical to equation \eqref{wxgap}.)
          \item $x_{ik}$ must occur 2 time steps after any $y$ updates.
            \begin{equation}
              x_{ik} \ge y_{ijkl} + y_{jikl} + 2 \;\; \forall j,l,i \ge k, l<k
              \label{xygap}
            \end{equation}
          \item $x_{ik}$ must occur 1 time steps before any $z$ action.
            \begin{equation}
              x_{ik} + 1 \le z_{ijk} + z_{jik} + (1-\hat{z}_{ijk} - \hat{z}_{jik})T \;\; \forall\; i \ge k, j
              \label{xzgap}
            \end{equation}
        \end{enumerate}
      \item Time for $y_{ijkl}$
        \begin{enumerate}
          \item $y_{ijkl}$ must occur 3 time steps after $w$ updates originating from the same column. (Identical to equation \ref{wygap}.)
          \item $y_{ijkl}$ must occur 2 time steps before any $x_{ik}$ or $x_{jk}$. (Identical to equation \ref{xygap}.)
          \item $y_{ijkl}$ must be 3 time steps before or after any $y$ action involving rows $i$ or $j$. \\
            We must enforce 
            \begin{eqnarray*}
              y_{ijkl} + y_{jikl} + 3 &\le& y_{hikl} + y_{ihkl} + (1-\hat{y}_{hikl} -\hat{y}_{ihkl})T\\
              &\mbox{or}&\\
              y_{hikl} + y_{ihkl} + 3 &\le& y_{ijkl} + y_{jikl} + (1-\hat{y}_{ijkl} -\hat{y}_{jikl})T\\
            \end{eqnarray*}
            And 
            \begin{eqnarray*}
              y_{ijkl} + y_{jikl} + 3 &\le& y_{hjkl} + y_{jhkl} + (1-\hat{y}_{hjkl} -\hat{y}_{jhkl})T\\
              &\mbox{or}&\\
              y_{hjkl} + y_{jhkl} + 3 &\le& y_{ijkl} + y_{jikl} + (1-\hat{y}_{ijkl} -\hat{y}_{jikl})T\\
            \end{eqnarray*}
            We define the binary variables $\delta^1_{hijkl}$, $\delta^2_{hijkl}$, $\delta^3_{hijkl}$, and $\delta^4_{hijkl}$, and include the disjunctive constraints
            \begin{IEEEeqnarray}{rCl}
              y_{ijkl} + y_{jikl} + 3 &\le& y_{hikl} + y_{ihkl} +
              (1-\hat{y}_{hikl} -\hat{y}_{ihkl})T + \delta^1_{hijkl}T
              \IEEEeqnarraynumspace\\ 
              && \forall\; h,i,j, l<k \ge 2\nonumber\\
              y_{hikl} + y_{ihkl} + 3 &\le& y_{ijkl} + y_{jikl} +
              (1-\hat{y}_{ijkl} -\hat{y}_{jikl})T + \delta^2_{hijkl}T
              \IEEEeqnarraynumspace\\ 
              && \forall\; h,i,j, l<k \ge 2\nonumber\\
              \delta^1_{hijkl} + \delta^2_{hijkl} &\ge& 1 \;\; \forall\; h,i,j, l<k \ge 2
            \end{IEEEeqnarray}
            and 
            \begin{IEEEeqnarray}{rCl}
              y_{ijkl} + y_{jikl} + 3 &\le& y_{hjkl} + y_{jhkl} + (1-\hat{y}_{hjkl} -\hat{y}_{jhkl})T + \delta^3_{hijkl}T 
              \IEEEeqnarraynumspace\\ 
              && \forall\; h,i,j, l<k \ge 2\nonumber\\
              y_{hjkl} + y_{jhkl} + 3 &\le& y_{ijkl} + y_{jikl} + (1-\hat{y}_{ijkl} -\hat{y}_{jikl})T + \delta^4_{hijkl}T
              \IEEEeqnarraynumspace\\ 
              && \forall\; h,i,j, l<k \ge 2\nonumber\\
              \delta^3_{hijkl} + \delta^4_{hijkl} &\ge& 1 \;\; \forall\; h,i,j, l<k\ge 2
            \end{IEEEeqnarray}
          \item $y_{ijkl}$ must occur 3 time steps before any $z$ action involving rows $i$ or $j$.
            \begin{equation}
              \begin{array}{r l}
                y_{ijkl} + 3 \le & z_{hik} + z_{ihk} + (1-\hat{z}_{hik} - \hat{z}_{ihk})T \;\; \forall\; h,i,j,k>l, k\ge 2\\
                y_{ijkl} + 3 \le & z_{hjk} + z_{jhk} + (1-\hat{z}_{hjk} - \hat{z}_{jhk})T \;\; \forall\; h,i,j,k>l, k\ge 2
              \end{array}
              \label{yzgap}
            \end{equation}
        \end{enumerate}
      \item Time for $z_{ijk}$
        \begin{enumerate}
          \item $z_{ijk}$ must occur 1 time step after any $w$ action. (Identical to equation \ref{wzgap}.)
          \item $z_{ijk}$ must occur 1 time step after any $y$ action. (Identical to equation \ref{yzgap}.) 
          \item $z_{ijk}$ must occur 1 time step after any $x$ action. (Identical to equation \ref{xzgap}.)
          \item $z_{ijk}$ must occur 1 time step before or after any other $z$ action.
            \begin{enumerate}
              \item Case 1. We use $(i,k)$ to zero $(j,k)$ and $(i,k)$ to zero $(h,k)$ \\
                We need to enforce 
                \begin{eqnarray*}
                  z_{jik} + 1 &\le& z_{hik} + (1-\hat{z}_{hik})T \\
                  &\mbox{or}&\\
                  z_{hik} + 1 &\le& z_{jik} + (1-\hat{z}_{jik})T
                \end{eqnarray*}
                To do this, we define binary variables $\delta^5_{hijk}$ and $\delta^6_{hijk}$ and include the disjunctive constraints as follows.
                \begin{eqnarray}
                  z_{jik} + 1 &\le& z_{hik} + (1-\hat{z}_{hik})T + \delta^5_{hijk}T \;\; \forall\; h,i,j,k \\
                  z_{hik} + 1 &\le& z_{jik} + (1-\hat{z}_{jik})T + \delta^6_{hijk}T \;\; \forall\; h,i,j,k \\
                  \delta^5_{hijk} + \delta^6_{hijk} &\ge& 1 \;\; \forall\; h,i,j,k
                \end{eqnarray}
              \item Case 2. We use $(i,k)$ to zero $(j,k)$ and $(h,k)$ to zero $(i,k)$
                We need to enforce
                \begin{equation}
                  z_{jik} + 1 \le z_{ihk} + (1-\hat{z}_{ihk})T \;\; \forall\; h,i,j,k (i>j??) \\
                \end{equation}
            \end{enumerate}
        \end{enumerate}
    \end{enumerate}
  \item A tile can't zero itself, (and hence we can't update just a single row afterwards).
    \begin{equation}
      z_{iik} = 0 \;\; \forall \; i, k \qquad y_{iikl} = 0 \;\; \forall \; i,k,l
    \end{equation}
  \item Both tiles involved in TTQRT must be triangles before one can zero another
    \begin{equation}
      x_{ik} \le (1-\hat{z}_{ijk})T + z_{ijk} \;\; \forall\; i,j,k, \qquad x_{jk} \le (1-\hat{z}_{ijk})T + z_{ijk} \;\; \forall\; i,j,k
    \end{equation}
  \item Force updates after triangle and zeroing actions.
    \begin{enumerate}
      \item After a tile is triangularized, updates must occur in the next column.
        \begin{equation}
          x_{ik} \le w_{ilk} - 3 \qquad \forall\; i,k<q, i\ge k, l > k
        \end{equation}
      \item After a tile is zeroed, updates must occur in the next column.
        \begin{equation}
          z_{ijk} \le \left( y_{ijlk} + y_{jilk} \right) \qquad \forall\; i,j,k<l
        \end{equation}
    \end{enumerate}
  \item The updates of $(i,k)$ (arising from pivot $l$) from triangularizing must occur before updates from zeroing involving $(i,k)$ (also arising from pivot $l$).
    \begin{equation}
      w_{ikl} \le (1 - \hat{y}_{ijkl} - \hat{y}_{jikl})T + y_{ijkl} +  y_{jikl} \qquad \forall\;i,j,k>l
    \end{equation}
  \item No updates to a tile can occur after triangularization.
    \begin{equation}
      x_{ik} \ge w_{ikl} \qquad \forall\; i\ge k,k>l \qquad x_{ik} \ge y_{ijkl} + y_{jikl} \qquad \forall\; i\ge k,j,k>l
    \end{equation}
  \item After a tile $(i,k)$ is zeroed, we can't use it to zero.
    \begin{equation}
      z_{ijk} \ge z_{hik} \qquad \forall\; h,i,j,k
    \end{equation}
  \item Tiles on or below the diagonal must be triangularized at some point, (and we can't finish a triangularization until time step 2.)
    \begin{equation}
      x_{ik} \ge 2 \qquad \forall\; i \ge k
    \end{equation}
  \item Tiles strictly below the diagonal must be zeroed at some point.
    \begin{equation}
      \sum_j \hat{z}_{ijk} = 1 \qquad \forall\; i > k
    \end{equation}
  \item Can't triangularize above the diagonal. 
    \begin{equation}
      x_{ik} = 0 \;\; \forall\; i<k
    \end{equation}
  \item Force binary variables
    \begin{equation}
      \hat{y}_{ijkl} \le y_{ijkl} \qquad \hat{y}_{ijkl}*T \ge y_{ijkl} \qquad \forall i,j,k,l
    \end{equation}
    \begin{equation}
      \hat{z}_{ijk} \le z_{ijk} \qquad \hat{z}_{ijk}*T \ge z_{ijk} \qquad \forall i,j,k
    \end{equation}
\end{enumerate}

\subsubsection{Precedence constraints}
The most cumbersome constraints to formulate are those forcing the corresponding TTQRT and TTMQR operations to be executed in the same order. 

    For each tile $(i,k)$ involved in a zeroing process with $(j,k)$ and $(h,k)$, the order of the updates must follow the order of the zeroing processes. We want $z_{jik} < z_{ihk}$ (for some $h$) or $z_{jik} < z_{hik}$ (for some $h$), to force $y_{ij(k+1)k} < y_{ih(k+1)k}$. \\
\begin{enumerate}
  \item{$a^1$}
    \begin{equation}
      a^1_{hijk} =
      \left\{
          \begin{tabular}{r p{9cm}}
              1 : & if we [use $(i,k)$ to zero $(h,k)$] and also [use $(i,k)$ to zero
              $(j,k)$ ]\\
              0 : & otherwise  
          \end{tabular}
      \right.
    \end{equation}
    \begin{equation}
      \begin{array}[h!]{r c l}
        a^1_{hijk} &\le& \hat{z}_{hik}\\
        a^1_{hijk} &\le& \hat{z}_{jik}\\
        a^1_{hijk} + 1&\ge& \hat{z}_{hik} + \hat{z}_{jik}
      \end{array}
    \end{equation}

  \item{$a^2$}
    \begin{equation}
      a^2_{hijk} =
      \left\{
          \begin{tabular}{r p{9cm}}
              1 : & if we [use $(h,k)$ to zero $(i,k)$] after [using $(i,k)$ to zero
              $(j,k)$]\\
              0 : & otherwise  
          \end{tabular}
      \right.
    \end{equation}
    \begin{equation}
      \begin{array}[h!]{r c l}
        a^2_{hijk} &\le& \hat{z}_{ihk}\\
        a^2_{hijk} &\le& \hat{z}_{jik}\\
        a^2_{hijk} + 1&\ge& \hat{z}_{ihk} + \hat{z}_{jik}
      \end{array}
    \end{equation}

  \item{$b$}
    \begin{equation}
      b_{hijk} =
      \left\{
      \begin{array}{ll}
        1 & : \mbox{if }[z_{hik} > z_{jik} \mbox{] regardless of whether [} z_{hik} = 0 \mbox{]} \\
        0 & : \mbox{otherwise}  
      \end{array}
      \right.
    \end{equation}
    \begin{equation}
      \begin{array}[h!]{r c l}
        T b_{hijk} & \ge & z_{hik} - z_{jik} \\
        T(b_{hijk}-1) & \le & z_{hik} - z_{jik} 
      \end{array}
    \end{equation}

  \item{$c^1$}
    \begin{equation}
      c^1_{hijk} =
      \left\{
      \begin{array}{ll}
        1 & : \mbox{if }[z_{hik} > z_{jik}] \\
        0 & : \mbox{otherwise}  
      \end{array}
      \right.
    \end{equation}
    \begin{equation}
      \begin{array}[h!]{r c l}
        c^1_{hijk} &\le& a^1_{hijk} \\
        c^1_{hijk} &\le& b_{hijk} \\
        c^1_{hijk} +1 &\ge& a^1_{hijk} +  b_{hijk} \\
      \end{array}
    \end{equation}

  \item{$c$}
    \begin{equation}
      c_{hijk} =
      \left\{
          \begin{tabular}{r p{9cm}}
              1 : & if zeroing actions in column $k$ of rows $h$ and $i$ to occur after the zeroing actions of rows $i$ and $j$\\
              0 : & otherwise  
          \end{tabular}
      \right.
    \end{equation}
    \begin{equation}
      \begin{array}[h!]{r c l}
        c_{hijk} &\ge& c^1_{hijk} \\
        c_{hijk} &\ge& a^2_{hijk} \\
        c_{hijk} &\le& c^1_{hijk} + a^2_{hijk} 
      \end{array}
    \end{equation}

    So we now have a variable $c_{hijk}$ that is 1 when updates of row $h$ and $i$ must come before the updates of $i$ and $j$. We can define similar variables for the updates rather than the zeros.
  \item{$d$}
    \begin{equation}
      d_{hijk} =
      \left\{
          \begin{tabular}{r p{9cm}}
              1 : & if we [update $(i,k)$ and $(h,k)$ together] and
              also [update $(i,k)$ and  $(j,k)$ together]. (All updates occur because of zeroing in column $k-1$)\\
              0 : & otherwise  
          \end{tabular}
      \right.
    \end{equation}
    \begin{equation}
      \begin{array}[h!]{r c l}
        d_{hijk} &\le&  \hat{y}_{hik(k-1)} + \hat{y}_{ihk(k-1)} \\
        d_{hijk} &\le&  \hat{y}_{jik(k-1)} + \hat{y}_{ijk(k-1)} \\
        d_{hijk}+1 &\ge& \hat{y}_{hik(k-1)} + \hat{y}_{ihk(k-1)} + \hat{y}_{jik(k-1)} + \hat{y}_{ijk(k-1)}\\
      \end{array}
    \end{equation}

  \item{$e$}
    \begin{equation}
      e_{hijk} =
      \left\{
          \begin{tabular}{r p{9cm}}
              1 : & if updates in column $k$ of rows $h$ and $i$ to occur after updates of $i$ and $j$ or the updates in $i$ and $j$ never happen\\
              0 : & \mbox{otherwise}  
          \end{tabular}
      \right.
    \end{equation}
    \begin{equation}
      \begin{array}[h!]{r c l}
        T e_{hijk} & \ge & \left( y_{hik(k-1)} + y_{ihk(k-1)} \right) - \left( y_{jik(k-1)} + y_{ijk(k-1)} \right) \\
        T (e_{hijk}-1)  & \le & \left( y_{hik(k-1)} + y_{ihk(k-1)} \right) - \left( y_{jik(k-1)} + y_{ijk(k-1)} \right) \\
      \end{array}
    \end{equation}

  \item{$f$}
    \begin{equation}
      f_{hijk} =
      \left\{
      \begin{tabular}{r p{9cm}}
          1 : & if the updates in column $k$ of rows $h$ and $i$ to occur after
          updates of $i$ and $j$\\
          0 : & otherwise
      \end{tabular}
      \right.
    \end{equation}
    \begin{equation}
      \begin{array}[h!]{r c l}
        f_{hijk} &\ge& d_{hijk} \\
        f_{hijk} &\ge& e_{hijk} \\
        f_{hijk} &\le& d_{hijk} + e_{hijk} 
      \end{array}
    \end{equation}

    Lastly, to force the updates in order:
    \begin{equation}
      c_{hijk} \le f_{hijl} \qquad \forall h,i,j,k<l
    \end{equation}
\end{enumerate}
\subsection{Objective function}
Let total\_time be a variable such that 
\begin{eqnarray*}
  total\_time &\ge& w_{ikl} \qquad \forall i,k,l\\
  total\_time &\ge& x_{ik} \qquad \forall i,k\\
  total\_time &\ge& y_{ijkl} \qquad \forall i,j,k,l\\
  total\_time &\ge& z_{ijk} \qquad \forall i,j,k\\
\end{eqnarray*}
Then our objective function is:
\begin{equation}
  \min total\_time
\end{equation}

\end{appendix}

\bibliographystyle{plain}
\bibliography{thesis}

\glstoctrue
\glsaddall
\printglossaries

\end{document}